\newcommandx{\unsure}[2][1=]{\todo[linecolor=red,backgroundcolor=red!25,bordercolor=red,#1]{#2}}
\newcommandx{\change}[2][1=]{\todo[linecolor=blue,backgroundcolor=blue!25,bordercolor=blue,#1]{#2}}
\newcommandx{\info}[2][1=]{\todo[linecolor=OliveGreen,backgroundcolor=OliveGreen!25,bordercolor=OliveGreen,#1]{#2}}
\newcommandx{\improvement}[2][1=]{\todo[linecolor=Plum,backgroundcolor=Plum!25,bordercolor=Plum,#1]{#2}}
\newcommandx{\thiswillnotshow}[2][1=]{\todo[disable,#1]{#2}}
\theoremstyle{definition}
\newtheorem{thm}{Theorem}
\newtheorem{lem}{Lemma}
\definecolor{ForestGreen}{RGB}{34,139,34}
\newcounter{example}[section]
\title{
Optimal strategies for the growth of dual-seeded lattice structures
}
\date{\today}
\begin{document}
\newcommand{\edt}[1]{{\vbox{ \hbox{#1} \vskip-0.3em \hrule}}}
\newcommand{\RR}{{\mathbb R}}
\newcommand{\QQ}{{\mathbb Q}}
\newcommand{\NN}{{\mathbb N}}
\newcommand{\cf}[1]{{\bf{#1}}}
\newcommand{\EE}{{\mathbb E}}
\newcommand{\PP}{{\mathbb P}}
\newcommand{\cA}{{\mathcal A}}
\newcommand{\cX}{{\mathcal X}}
\newcommand{\cH}{{\mathcal H}}

\newcommand{\expect}{\operatorname{\EE}\expectarg}
\DeclarePairedDelimiterX{\expectarg}[1]{[}{]}{%
  \ifnum\currentgrouptype=16 \else\begingroup\fi
  \activatebar#1
  \ifnum\currentgrouptype=16 \else\endgroup\fi
}

\newcommand{\innermid}{\nonscript\;\delimsize\vert\nonscript\;}
\newcommand{\activatebar}{%
  \begingroup\lccode`\~=`\|
  \lowercase{\endgroup\let~}\innermid 
  \mathcode`|=\string"8000
}

\hyphenation{high-di-men-sio-nal}

\author{
\begin{tabular}{ccc}
Maike C.\ de Jongh$^{1}$ 
Cristian Spitoni$^{2}$ 
Emilio N.\ M.\ Cirillo$^{3}$
\end{tabular}
}

\date{}

\maketitle

\begin{center}
\small
$^{1}$Department of Applied Mathematics, University of Twente,\\
P.O. Box 217, NL--7500 AE Enschede, The Netherlands\\[0.5em]

$^{2}$Mathematics Department, Utrecht University,\\
Budapestlaan 6, 3584~CD Utrecht, The Netherlands\\[0.5em]

$^{3}$Dipartimento SBAI, Sapienza Università di Roma,\\
Via A.\ Scarpa 16, I--00161 Rome, Italy
\end{center}

\begin{abstract}
Optimal growth of structures governed by spatially
stochastic dynamics arises in many scientific settings,
for example in processes such as solution-based
crystallization and the formation of microbial biofilms
on patterned substrates or microfluidic networks. In
this work, we investigate lattice growth using a
two-dimensional, zero-temperature stochastic model of
short-range spin interactions. Our goal is to determine
how external perturbations can be optimized to steer the
system efficiently toward the uniformly positive state,
starting from two initial clusters of positive sites. To
achieve this, we cast the problem as a Markov decision
process adapted for a two-dimensional Ising model with zero-temperature dynamics. Within this framework, we compare
alternative growth geometries and identify the structure
of optimal strategies across three representative
regimes.

\noindent\textbf{Keywords:}
Ising model; Markov decision process; structure growth; optimal control; zero-temperature regime.

    %Optimal growth of structures subject to spatially stochastic dynamics arises naturally in many scientific contexts, including solution-based crystallization and microbial biofilms on structured substrates or microfluidic grids. In this work, we model the growth of lattice structures using the zero-temperature 2D stochastic Ising model on the square lattice with periodic boundary conditions and Metropolis dynamics. We study the nucleation trajectory to the all-plus configuration when the system is initially seeded with two small droplets of spin $+1$ sites and is subject to external perturbations. To optimize this trajectory, we formulate the problem as a Markov decision process (MDP). Using this MDP framework, we compare the performance of different growth geometries and characterize the structure of optimal growth strategies in three representative regimes. 
\end{abstract}

\section{Introduction}
\label{Introduction}
The optimization of structure growth in spatially
constrained domains is a compelling challenge across
many scientific fields. In such systems, a lattice or
spatial grid defines a substrate on which a structure
evolves by its intrinsic growth dynamics, yet the
outcome may be significantly improved by external
interventions—such as seeding nuclei, applying
external fields, or modulating resource fluxes—to
steer the growth process toward desired morphologies,
sizes, or uniformities. The fundamental questions
concern how internal kinetics (nucleation, diffusion,
aggregation) couple with externally applied controls,
and how one can design minimal control policies to
achieve targeted structural objectives under cost or
resource constraints. Approaches from statistical
physics, materials science, and bioengineering
converge in tackling these problems, and recent
studies emphasize the importance of spatiotemporal
control in addition to steady‐state modulation.

The growth of crystals via controlled nucleation
provides a clear illustration of this paradigm. In
solution-based crystallization, for example,
externally introduced nuclei or local heating may
allow one to reduce excessive nucleation and promote
growth of fewer, larger crystals with improved
quality and functionality. In a recent study, the
authors used near-infrared laser heating to locally
modulate supersaturation, thereby directing the
nucleation and growth of calcium carbonate crystals
into user-defined patterns \cite{Bistervels2023}.
Such interventions highlight how the interplay
between internal kinetics and external actuation can
be exploited to optimize performance. 

A second example arises in the context of microbial
biofilms grown on structured substrates or
microfluidic grids. Even when the microbial community
exhibits its intrinsic growth, nutrient consumption,
and structural dynamics, external seeding of access
points, nutrient injection, or substrate patterning
may considerably influence the ultimate size,
vertical height, and uniformity of the biofilm.
By explicitly controlling the spatial distribution of bacterial
cells during the initial inoculation, it is possible to steer
the development and architecture of a biofilm. 
In \cite{Jin2024} this principle is 
demonstrated using an optogenetic toolkit,
termed ``Multipattern Biofilm Lithography,'' which enables
precise, orthogonal patterning of multi-strain biofilms
\cite{Jin2024}. By temporally controlled light signals, the authors were able
to shape both the structure and functional properties of the
resulting biofilms. 

A mathematical framework in which this problem can be
approached is that of Markov Decision Processes (MDP)
\cite{Puterman}. Given
a system with its own state space, whose 
evolution is governed by a prescribed
dynamics, we assume that external \textit{actions} are
performed on it, controlled in both time and
space. The manner in which the external effects are
applied to the natural dynamics of the system is
called a \textit{policy} and is characterized by a
deterministic temporal schedule and a stochastic
selection among possible actions in different spatial
locations. The times at which external actions are
taken are called \textit{epochs}. At each epoch,
depending on the state of the system, a \textit{reward}
is assigned to the process. Its average, weighted with 
a suitable \textit{discount factor}, is
called the \textit{value function}. An \textit{optimal
policy} is a policy that maximizes the value function.

Markov Decision Processes were introduced in the context of
dynamic programming in the 1950s. We refer, for instance,
to the book \cite{Bellman1957}, in which the idea of optimal
policies is clearly presented, and to \cite{Howard1960} for the
introduction of the concept of these processes in relation
to optimal decision theory.
Since then, MDP theory has been applied to a variety
of contexts \cite{Puterman}. Here, we mention a few studies
that are in the same spirit as our investigation.

MDPs have proven to be a powerful and versatile modelling framework when one must make sequential decisions under uncertainty, particularly in systems characterized by stochastic propagation, spatial or networked structure, and limited intervention resources. 
For example, in wildfire management the challenge of allocating scarce firefighting assets across a spatial substrate subject to random ignition and spread has motivated the formulation of both MDP and partially observable MDP (POMDP) models: recent work uses POMDPs for resource deployment when fire state is incompletely observed~\cite{Diao2020,Altamimi2022}. 
Similarly, large-scale forest management has been cast as a spatial MDP for optimization of thinning, harvesting or suppression policies under budget and risk constraints~\cite{Haksar2019,Haksar2020}. 
In the domain of networked contagion, MDP-based strategies have been developed to determine optimal intervention timing (e.g.,  as vaccination, awareness, treatment or quarantine) under constrained budgets and uncertain transition dynamics; moreover, robust or distributionally robust MDPs extend this to cope with ambiguity in disease transmission parameters~\cite{Nasir2023,Song2023}. 
In materials science, the process of steering colloidal particles to assemble into defect-free crystalline structures has been controlled via MDPs, where the time-evolving configuration of the ensemble is regarded as a Markov chain and external field inputs are the control actions~\cite{Tang2016,Tang2017}. 
More abstractly, the question of where to allocate limited resources (samples, computational budget, sensors) within a decision-making model itself has been addressed via an MDP-centric approach, in which exploration and exploitation are balanced to reduce policy uncertainty or approximation error under a limited exploration budget~\cite{Munos2001}. 
In the social and information sciences, Ni~\cite{Ni2017} formulated sequential influence diffusion as an MDP, demonstrating how adaptive, stage-wise seeding can maximize spread efficiency under limited marketing capital. 

These domains, though diverse, share a common structure: a stochastic process propagating over a spatial or networked substrate, coupled with interventions that must be selected optimally in time and space.
In the same spirit, our present work examines a seeded lattice-growth problem that can naturally be formulated as a Markov decision process. 
Here, droplets of plus spins correspond to scarce intervention resources (seed insertions) that are deployed sequentially in time, while the lattice configuration evolves stochastically according to the zero-temperature Ising dynamics. 
The goal is to drive the system toward the absorbing all-plus configuration while minimizing the cost or duration of the intervention. 
This analogy connects directly with wildfire suppression (where retardant is deployed to contain a spreading front), epidemic control (where vaccines or awareness campaigns are administered in stages), and colloidal self-assembly (where external fields are tuned to drive structural order). 
In each case, the optimal policy must balance immediate resource expenditure against the future stochastic benefit of accelerating the desired transition. 
Thus, the MDP framework provides a unifying formalism and computational tool-set for sequential intervention in stochastic growth or spreading processes with limited resources and naturally motivates our approach to lattice growth optimization.

Focusing to our  problem of the growth of lattice
structures, building on \cite{deJongh2025}, we use as a modeling
tool the simplest possible dynamics, namely the 
zero-temperature 2D stochastic Ising model on the square lattice
with periodic boundary conditions and Metropolis dynamics.

The question we address can be formulated as follows: starting
the system from the fully minus state, in which a single small
square droplet of plus spins is inserted, and assuming that the
magnetic field is positive and small, we aim to describe the
transition to the all-plus state. At zero temperature, the
natural dynamics cannot induce an exit from the initial
configuration; therefore, we act externally by adding plus spins
according to a prescribed rule. After each external insertion of
a plus spin, the system evolves following the Metropolis dynamics,
quickly reaching a local minimum of the Hamiltonian. There it will
remain stuck, waiting for the next external update.

In \cite{deJongh2025}, for the scenario described above,
it has been proven that, by choosing as reward function one
if the configuration is all-plus and zero otherwise,
the optimal policy resembles the manner in which
Metropolis dynamics would trigger the transition at low
temperature, although diagonal growth is preferred to growth
orthogonal to the rectangle sides.
Such a problem, indeed,
has been widely studied in the context of metastability theory.
We refer, for instance, to \cite{neves1991critical} for an early
study in the case of the 2D Ising model, and to the papers
\cite{cirillo1998metastability, cirillo2022review, boviermanzo2002metastability}
where it was further investigated.

The fact that similar optimal trajectories are observed when
approaching the problem from these different points of view is
not entirely obvious. Indeed, while in the metastability setup
the Metropolis dynamics updates the state based on knowledge of
the current configuration, in the MDP
approach the external plus feeding is carefully tailored based
on knowledge of the entire process and aimed at optimizing the
full trajectory.

The problem addressed in the present paper concerns the
optimization of the all-plus configuration growth when the system
is initially seeded with two separate small droplets. This question
is no longer directly related to the metastability problem. In that
setup, the small droplets immediately disappear due to thermal
fluctuations, and the trajectory leading to the all-plus configuration
is essentially a stochastic trajectory starting from the all-minus
state.

In our MDP approach, on the other hand,
considering the zero-temperature Metropolis dynamics, the initial
seeds are not destroyed by the dynamics. %The question therefore
%becomes whether it is more convenient to grow only one droplet,
%waiting for the second to be essentially phagocytized by the first,
%or to grow both droplets simultaneously until they coalesce.
Interesting questions in this setting concern the details of the growth
mechanisms: the droplets could expand toward each other, in the
same direction, or in orthogonal directions. All these possibilities
must be carefully analyzed, and the MDP provides
a systematic tool to select the optimal strategy in view of the
chosen reward function.

We investigate the structure of the
optimal policy in three representative regimes of the two-seed
problem: stripe--stripe, stripe--droplet, and droplet--droplet.
For each case we construct an auxiliary MDP that drastically
reduces the configuration space and allows a controlled
comparison among a small set of candidate policies.

In the stripe--stripe regime, our computations show that the
two main policy classes, acting at distance~1 or distance~2,
achieve very similar values. Nevertheless, by combining the
numerical evidence with the analytical results of Section~4,
we identify a sharp transition at the critical discount factor $\lambda_{c}=15/17$. For $\lambda>\lambda_{c}$ the distance--1
policy is optimal, as it minimizes the hitting time to the
all--plus configuration; for $\lambda<\lambda_{c}$ the
distance--2 policy becomes preferable, since it generates very
fast trajectories toward absorption.

In the stripe--droplet regime, the auxiliary MDPs reveal a
richer competition among growth geometries. Simulations of
four candidate policies indicate that the picture
observed in the stripe--stripe case persists, namely,
rapid front expansion is advantageous.
In the droplet--droplet regime, the results show that
policies prioritizing diagonal growth in the region separating
the two clusters most effectively reduce the hitting time,
while policies acting on wider regions are less efficient.

The paper is organized as follows. In Section~\ref{Preliminaries},
we define the model and outline our strategy. In
Section~\ref{s:numerical_study}, we present our numerical
results for the case in which both initial seeds are striped,
as well as for those in which at least one of the two initial
droplets is not a stripe. 
In Section~\ref{s:rigorous}, we rigorously analyze the
two-stripe case and clarify some of the points discussed in
the preceding section. Finally, Section~\ref{s:conclusion}
provides brief concluding remarks.

\section{The model}
\label{Preliminaries}
We first briefly recall the definition of the two-dimensional Ising model to
fix the notation and parameters, and then we introduce the MDP.

\subsection{The stochastic two-dimensional Ising model at zero temperature}
\label{s:ising}

We consider the two-dimensional Ising model on the finite square lattice $\Lambda = \{0, \ldots, N-~1\}^2$ with periodic boundary conditions. 
We equip the lattice with a distance measure $\delta: \Lambda \times \Lambda \rightarrow \mathbb{N}_0$ given by
\begin{equation*}
    \delta(i,j) = \min\{|j_1-i_1|, N - |j_1-i_1|\} + \min\{|j_2-i_2|, N-|j_2-i_2|\}, 
\end{equation*}
where $i=(i_1,i_2)$ and $j=(j_1,j_2)$ are sites of $\Lambda$. Note that the definition incorporates a torus edge correction. Let $P(\Lambda)$ denote the power set of $\Lambda$. By misusing the notation, we define a distance measure $\delta: P(\Lambda) \times P(\Lambda) \rightarrow \mathbb{N}_0$ between subsets of the lattice as
\begin{equation*}
    \delta(W_1, W_2) = \min_{(x,y) \in W_1, (x',y') \in W_2} \delta((x,y), (x',y')), \quad W_1, W_2 \subseteq \Lambda.
\end{equation*}
%In a similar way, we define the horizontal  distance measures $\delta_h$  between two 
%sites of the lattice, but we set it equal to infinity if the two sites are not on the same horizontal line, namely, we set 

%$\delta_h: \Lambda \times \Lambda \rightarrow \mathbb{N}^{\infty}_0$ and $\tilde{\delta}_h: P(\Lambda) \times P(\Lambda) \rightarrow \mathbb{N}^{\infty}_0$ as
%\begin{align*}
%   &\delta_h(i,j) = \begin{cases}
%            \min\{|j_1-i_1|, N-|j_1-i_1|\}, &\text{if } i_2 = j_2\\           \infty, &\text{otherwise.}
 %     \end{cases} \\
% &\tilde{\delta}_h((x,y), W) = \min_{(x',y') \in V}\delta_h((x,y), (x',y')), \quad (x,y) \in \Lambda, \quad W \subseteq \Lambda.   
%\end{align*}
%The vertical distance measure $\delta_v$ 
%is defined similarly. The extensions to subsets of the lattice are straightforward.
%: \Lambda \times \Lambda \rightarrow \mathbb{N}^{\infty}_0$ and $\tilde{\delta}_v: P(\Lambda) \times P(\Lambda)$ are defined analogously. 

We say that $i,j\in\Lambda$ are \textit{neighbors} or 
\textit{nearest neighbors} if and only if $\delta(i,j)=1$.
Let $N_h(i) \subseteq \Lambda$ and $N_v(i) \subseteq \Lambda$ denote the sets of horizontal and vertical neighbors of a spin $i \in \Lambda$; moreover,
$N(i)=N_h(i)\cup N_v(i)$.
We say that $W\subset\Lambda$ is 
\textit{connected} if and only if for any $i,j\in W$
there exists a sequence $i_1,\dots,i_n$ of sites
in $W$ such that $i_1=i$, $i_n=j$, and $i_k$ is  a
nearest neighbor of $i_{k+1}$ for all 
$k=1,\dots,n-1$.

To each site, we associate the spin variable $\sigma(i) \in \{-1, +1\}$. 
We denote the configuration space by $S = \{-1, +1\}^{\Lambda}$ and 
the Hamiltonian  by
\begin{equation*}
    H(\sigma) = -\sum\limits_{\substack{i \in \Lambda \\ j \in N(i)}} \sigma(i)\sigma(j) - h \sum\limits_{i \in \Lambda} \sigma(i),
\end{equation*}
where $N(i)$ denotes the set of horizontal and vertical neighbours of a site $i \in \Lambda$ and $h \in (0,1)$ denotes the external magnetic field. We denote by $\sigma^i$ the configuration obtained by flipping the
spin at site $i \in \Lambda$ starting from a configuration
$\sigma \in S$. Similarly, the configuration resulting from flipping
all spins in a set $W \subseteq \Lambda$ is denoted by $\sigma^W$.

We assume that the model evolves according to the zero-temperature 
Metropolis dynamics, i.e., as a discrete-time Markov chain $\{X_t\}_{t \geq 0}$ on $S$ with transition probabilities
%\begin{equation*}
%    p_{\beta}(\sigma, \sigma^i) = \begin{cases}
%        1/N^2, & \text{if } H(\sigma^i) \leq H(\sigma),\\
%        (1/N^2)\exp(-\beta(H(\sigma^i) - H(\sigma))), & \text{otherwise,}
%    \end{cases}
%\end{equation*}
%and $p_{\beta}(\sigma, \sigma) = 1-\sum\limits_{i \in \Lambda} p_{\beta}(\sigma, \sigma^i)$, $\sigma \in S$, where $\beta > 0$ represents %the inverse temperature. The Metropolis dynamics is reversible with respect to the Gibbs measure on $S$, defined as
%\begin{equation*}
%    \mu_{\beta}(\sigma) = Z^{-1}_{\beta}\exp(-\beta H(\sigma)), \quad \sigma \in S,
%\end{equation*}
%with partition function 
%\begin{equation*}
%    Z_{\beta} = \sum\limits_{\sigma \in S} \exp(-\beta H(\sigma)).
%\end{equation*}
%We study the model in the zero-temperature regime. Let the zero-temperature Metropolis dynamics be given by
\begin{equation*}
    p(\sigma, \sigma^i) = \begin{cases}
        1/N^2, & \text{if } H(\sigma^i) \leq H(\sigma),\\
        0, & \text{otherwise,}
    \end{cases}, 
\end{equation*}
and $p(\sigma, \sigma) = 1- \sum\limits_{i \in \Lambda} \tilde{p}(\sigma, \sigma^i)$, $\sigma \in S$.

Given a sequence of configurations $\omega = (\sigma_0, \sigma_1, \ldots, \sigma_{\ell})$, $\ell \in \mathbb{N}$, 
let $p(\omega)$ denote the probability that this sequence occurs under the zero-temperature Metropolis dynamics, i.e.,
\begin{equation*}
    p(\omega) = \prod\limits_{k = 1}^{\ell-1} p(\sigma_k, \sigma_{k+1}).
\end{equation*}

A \textit{path} is a sequence of configurations 
$(\sigma_0, \sigma_1, \ldots, \sigma_{\ell})$, for some $\ell \in \mathbb{N}$, such that
for all $k = 0,1, \ldots, \ell-1$ it holds
$\sum_{i \in V} |\sigma_{k+1}(i) -~\sigma_k(i)| \leq 1$,
namely $\sigma_{k+1}$ is obtained by flipping one spin 
in $\sigma_k$.
Given a configuration $\sigma \in S$, a configuration $\sigma' \in S$ is called a \textit{downhill configuration} of $\sigma$ if there exists a path $\omega = (\sigma_0, \sigma_1, \ldots, \sigma_{\ell})$ for some $\ell \in \mathbb{N}$ such that $\sigma_0 = \sigma$, $\sigma_{\ell} = \sigma'$, and $H(\sigma_{k+1}) \leq H(\sigma_k)$ for all $k = 1, \ldots, \ell-1$. Note that such a sequence satisfies $p(\omega) > 0$. We denote by $\Gamma(\sigma) \subseteq S$ the set of all downhill configurations of a configuration $\sigma \in S$. Furthermore, we write $\Omega(\sigma, \eta)$ for the set of all downhill paths leading from a configuration $\sigma \in S$ to a configuration $\eta \in S$. 

A site $i \in \Lambda$ is called \textit{susceptible} in a configuration $\sigma \in S$ if $H(\sigma^i) \leq H(\sigma)$, i.e., if $p(\sigma, \sigma^i) > 0$. Since we assumed the external magnetic field to be small and positive, that is, $h \in (0,1)$, a site with spin $+1$ is susceptible if and only if at least three of its neighbors have spin $-1$ and a site with spin $-1$ is susceptible if and only if at least two of its neighbors have spin $+1$. Given a configuration $\sigma \in S$, we let the set of susceptible sites in $\sigma$ be denoted by $\Delta(\sigma)$. 

%An example of a configuration with a susceptible site is provided in Figure \ref{susceptible_spin} (de Jongh et al., 2025). 
%\begin{figure}
%\centering
%\includegraphics[width=0.2\linewidth]{Susceptible.pdf}
%\caption{Configuration depicted on the dual lattice. Each white square corresponds to a site with spin $-1$, each colored square to a site with spin $+1$. The site with spin $+1$ that is represented by the dark orange square is susceptible, since it is surrounded by three neighboring sites with spin $-1$. (First appeared in de Jongh et al.\ 2025).}
%\label{susceptible_spin}
%\end{figure} 

A configuration $\sigma \in S$ is called \textit{fragile} if it has at least one susceptible site, i.e., if $\Delta(\sigma) \neq \emptyset$. If a configuration has no susceptible sites, we call it a \textit{robust} configuration or a \textit{local minimum} of the 
Hamiltonian. Figure \ref{fragile_vs_robust} shows some examples of fragile and robust configurations.

\begin{figure}[t]
\centering
\includegraphics[width=0.7\linewidth]{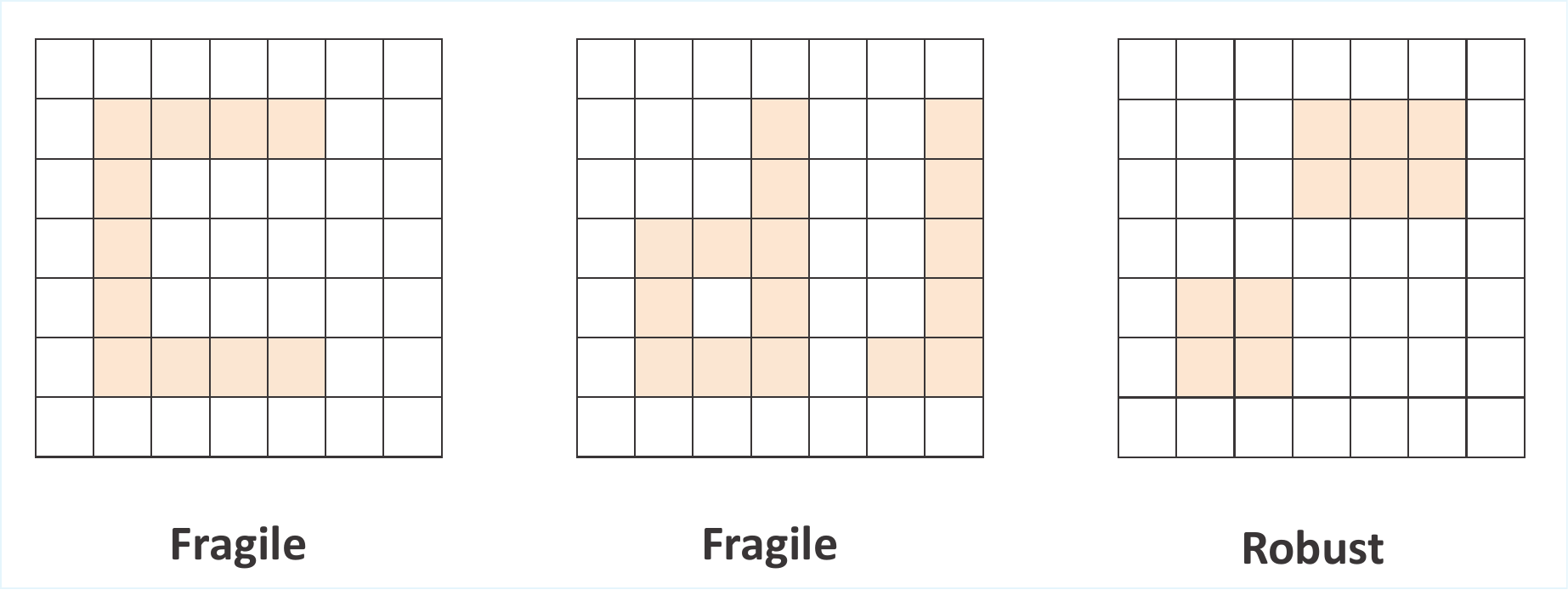}
\caption{Illustration of fragile versus robust configurations.}
\label{fragile_vs_robust}
\end{figure} 

We define $U \subseteq S$ as the set of all robust configurations. Given a configuration $\sigma \in S$, let $U(\sigma) \subset U$ denote the set of configurations in $U$ that are downhill configurations of $\sigma \in S$. Let $U^{(k)} \subset U$, $k \in 0, 1, \ldots$ denote the set of robust configurations in which the sites with plus spin form $k$ maximal connected components.

The set $U^{(1)}$ is made of configurations in which the sole plus component has the shape of a rectangle such that its longest side has a number of sites (side length) belonging to 
$\{2, 3, \ldots, N-3, N-2\}\cup\{ N\}$
and the smallest side lengths 
is arbitrary, if the longest is equal to $N$, and greater than or equal to $2$, otherwise.
For the proof of this statement see, e.g.,
\cite{neves1991critical,deJongh2025}.
On the other hand, see, e.g., 
\cite{olivieri2005large,neves1991critical,cirillo2022review},
a configuration  is in  $U^{(k)}$, with $k\ge2$, if and only if 
the sites with spin $+1$ form $k$ maximal connected components, each of which has the shape of a rectangle characterized as above and such that any two components 
$W_1, W_2$ satisfy $\delta(W_1, W_2) > 2$.
In the sequel, we shall often refer to the plus components
as \textit{droplets}, and we shall call them \emph{stripes}
when one of the two side lengths is equal to $N$.

\subsection{The Markov decision process}
\label{The Markov decision process}

An MDP is described by a tuple $(S, A, P, r)$, consisting of the following elements \cite{Puterman}:
\begin{itemize}
    \item[i)] a state space $S$ containing all possible states that the system can be in. We assume that $S$ is finite.
    \item[ii)] An action space $A$ containing the possible actions that the decision maker can select. We assume that the action space is finite. 
    \item[iii)] A \textit{transition probability kernel} $P: S \times A \times S$ describing the dynamics of the MDP. We denote by $P(s'|s, a)$ the probability that the system transitions to state $s' \in S$, given that its current state is $s \in S$ and action $a \in A$ was chosen.
    \item[iv)] A \textit{reward function} $r: S \rightarrow \mathbb{R}$ specifying the immediate reward $r(s)$ collected in state $s \in S$. We assume the reward function to be bounded. In the general MDP setup, the reward function may depend on both the state and the action selected in this state.    
\end{itemize}

Let $T = \{0, 1, 2, \ldots\}$ denote the set of points in time at which the decision maker can take an action, called the \textit{decision epochs}. The decision maker's behavior is described by a \textit{policy}. We restrict ourselves to policies that are \textit{stationary} and \textit{deterministic}. This type of policy repeatedly applies the same \textit{deterministic decision rule} $d: S \rightarrow A$ at each decision epoch, which prescribes an action $d(s) \in A$ for each state $s \in S$.
%This decision rule can be either \textit{deterministic} or \textit{randomized}. A \textit{deterministic decision rule} $d: S \rightarrow A$ prescribes an action $d(s) \in A$ for each state $s \in S$. A \textit{randomized decision rule} specifies a probability distribution $q_{d}(\cdot)$ on the action space $A$ for each $s \in S$. The stationary policy obtained from repeatedly applying a decision rule $d$ is written as $\pi = d^{\infty}$. 

We denote by $\Pi$ the set of stationary policies
and 
by $\mathbb{P}_s^\pi$ and $\mathbb{E}_s^\pi$, respectively,
the process probability with policy $\pi$ and the corresponding expectation
when the dynamics is started at the state $s\in S$.

The quality of a policy $\pi \in \Pi$ is measured by means of its expected total discounted reward, or the value function $v^{\pi}_{\lambda}: S \rightarrow \mathbb{R}$, given by
\begin{equation}\label{value_function}
    v^{\pi}_{\lambda}(s) = \mathbb{E}_{s}^{\pi}\left[\sum\limits_{t=0}^{\infty} \lambda^t r(X^\pi_t)\right], 
\end{equation}
where $X^\pi_t$  denotes
the state at decision 
epoch $t$ and $\lambda \in (0,1)$ 
is called \textit{discount factor},
and $s\in S$ is the initial state.
The value function is the unique solution, see, e.g.,
\cite[p. 151, Thm. 6.2.5]{Puterman}, 
of the 
following equations:
\begin{equation}
\label{eq:fp}
    v^{\pi}_{\lambda}(s) = r(s) + \lambda \sum\limits_{s' \in S} P(s'|s, d(s))v^{\pi}_{\lambda}(s'), \quad s \in S,
\end{equation}
which follow by conditioning on the state at the next decision epoch. 

From a stochastic--process viewpoint, \eqref{eq:fp} admits a natural interpretation in terms of renewal equations and resolvent potentials. Iterating \eqref{eq:fp} yields the classical resolvent representation
\begin{equation}\label{eq:resolvent}
v_\lambda
=
\sum_{t \ge 0} \lambda^t P^t r,
\end{equation}
which expresses the value function as a discrete convolution of the reward function with the powers of the transition kernel. This is the discrete analogue of the renewal equation 
\[
u = f + k * u
\]
and its solution via the renewal (resolvent) series, see~\cite{FellerVol2,Gripenberg}. Equation~\eqref{eq:fp} may therefore be regarded as a \emph{discrete Volterra equation of the second kind}, with $P_\pi$ acting as the renewal kernel and the discount factor~$\lambda$ playing the role of an exponential kernel in continuous-time formulations. This viewpoint is consistent with the potential theory of Markov chains: the operator
\[
R_\lambda = \sum_{t\ge0} \lambda^t P^t
\]
is the $\lambda$--resolvent of the kernel $P$, and \eqref{eq:resolvent} states simply that $v_\lambda = R_\lambda r$; see~\cite{KemenySnell,RevuzMarkovChains}.
An analogous identity holds in continuous time. If $L_\pi$ denotes the infinitesimal generator of a controlled Markov process, then the Laplace-transformed Kolmogorov backward equation reads
\[
(\rho I - L_\pi) u_\rho = r,
\qquad
u_\rho = \int_{0}^{\infty} e^{-\rho t} e^{t L_\pi} r \, dt.
\]
Thus, the correspondences $e^{-\rho t} \longleftrightarrow \lambda^t$, and $e^{t L_\pi} \longleftrightarrow P_\pi^t$,
identifies \eqref{eq:fp}--\eqref{eq:resolvent} as the discrete-time counterpart of the resolvent equation in continuous-time stochastic control; see ~\cite{Puterman, HLL}. In this sense, the value function $v_\lambda$ is  interpreted as the \emph{discounted potential} of the controlled Markov chain, solving a renewal/Volterra equation and coinciding with the resolvent of its transition dynamics.

%We let the function $\tilde{v}^{\pi}_{\lambda}: S \times A \rightarrow \mathbb{R}$, $\pi \in \Pi$, $\lambda \in (0,1)$ be defined as
%\begin{equation*}
%    \tilde{v}^{\pi}_{\lambda}(s, a) = r(s, a) + \lambda \sum\limits_{s' \in S} P(s'|s, a)v^{\pi}_{\lambda}(s'), \quad s \in S, \quad a \in A.
%\end{equation*}
%Hence, $\tilde{v}^{\pi}_{\lambda}(s, a)$, $s \in S$, $a \in A$, gives the expected total discounted reward that results from taking action $a$ at the first decision epoch and following policy $\pi \in \Pi$ afterwards. 

A stationary deterministic policy $\pi^*$ is \textit{optimal} if it satisfies
\begin{equation*}
    v^{\pi^*}_{\lambda}(s) \geq v^{\pi}_{\lambda}(s), \quad s \in S,
\end{equation*}
for each $\pi \in \Pi$. %The optimal value function of the MDP is given by 
%\begin{equation*}
%    v^*_{\lambda}(s) = \sup_{\pi \in \Pi} v^{\pi}_{\lambda}(s), \quad s \in S.
%\end{equation*}
We write the value function of such an optimal policy $\pi^*$ simply as $v^*_{\lambda}(s)$, $s \in S$.
It is possible to prove, see, 
\cite{Puterman}, 
that there exists an optimal stationary deterministic policy under the assumptions we made on the configuration space, the action space and the reward function. Furthermore, a policy $\pi^* \in \Pi$ is optimal if and only if its value function $v^{\pi^*}_\lambda$ is a solution to the \textit{optimality equations} or \textit{Bellman equations} \cite[p.~152]{Puterman}:
\begin{equation}\label{Bellman_equations}
    v_{\lambda}(s) = \sup_{a \in A}\{r(s) + \lambda \sum\limits_{s' \in S} p(s'|s, a) v_{\lambda}(s')\}, \quad s \in S.
\end{equation}
Observe that the listed assumptions on the configuration space, the action space and the reward function ensure the attainment of the supremum. For this reason, we will write a maximum instead in the remainder of the paper.

The Bellman optimality equations admit a precise interpretation as the
discrete--time counterpart of the Hamilton--Jacobi--Bellman (HJB)
equation in continuous--time stochastic control.  Consider a controlled
diffusion $(X_t)_{t\ge 0}$ on a domain $E\subset \mathbb{R}^d$ with
dynamics
\[
dX_t = b(X_t,a_t)\,dt + \sigma(X_t,a_t)\,dW_t,
\]
and let $\rho>0$ be the discount rate.  The value function of the
continuous--time control problem,
\[
v(x)=\sup_{a_\cdot}\,
\mathbb{E}_x\!\left[\int_0^\infty e^{-\rho t}\, r(X_t,a_t)\,dt\right],
\]
is known to satisfy the stationary HJB equation (see, e.g.,
\cite{FlemingSoner2006, OksendalSulem2007})
\begin{equation}\label{eq:HJB}
\rho\,v(x)
=
\sup_{a\in A(x)}
\Bigl\{
r(x,a) + (\mathcal{L}^a v)(x)
\Bigr\},
\end{equation}
where $\mathcal{L}^a$ is the infinitesimal generator of the diffusion,
\[
(\mathcal{L}^a v)(x)
=
b(x,a)\cdot \nabla v(x)
+\tfrac12 \mathrm{Tr}\!\left(\sigma(x,a)\sigma(x,a)^{\!\top} D^2 v(x)\right).
\]
The optimal feedback control selects an action $a^*(x)$ that maximizes
the expression on the right--hand side of~\eqref{eq:HJB}.

\medskip

In discrete time, for a controlled Markov chain with transition kernel
$P(\cdot\,|s,a)$ and discount factor $\lambda\in(0,1)$, the optimal value
function satisfies the Bellman equation (see
\cite{Puterman,BertsekasShreve1978})
\begin{equation}\label{eq:Bellman}
v^*(s)
=
\max_{a\in A(s)}
\left[
r(s,a)
+ \lambda \sum_{s'} P(s'|s,a)\,v^*(s')
\right].
\end{equation}
The connection with the HJB equation becomes explicit when the discrete
model is interpreted as a time discretization of the continuous one with
time step $\Delta t$.  In this case one has the classical approximations
\[
\lambda = e^{-\rho \Delta t}
= 1 - \rho\,\Delta t + O((\Delta t)^2),
\qquad
P(s'|s,a)
= \delta_{s'}(s)
+ \Delta t\,\mathcal{L}^a_{s\to s'} + o(\Delta t),
\]
where $\mathcal{L}^a$ appears as the first--order term in the expansion
of the discrete transition operator.  Substituting these relations into
\eqref{eq:Bellman} and letting $\Delta t\to 0$ yields the continuous--time
HJB equation~\eqref{eq:HJB}.  Thus, the Bellman equation is precisely a
backward--Euler time discretization of the HJB equation, with the
transition probabilities $(P(s'|s,a))_{s'}$ playing the role of the
exponential of the generator and $\lambda$ corresponding to
$e^{-\rho\Delta t}$.

\subsection{Remarks on the value function}
\label{s:valuemeaning}
In this section, we examine the meaning of the value
function to clarify the nature of the quantity being
optimized when an appropriate policy is selected for
the MDP. Clearly, the physical interpretation of the
value function depends crucially on the chosen reward
function. We shall discuss a case particularly relevant
to our application.

In several applications, e.g., in the Ising model case that we will discuss in the sequel, 
the MDP is introduced with the goal to optimize the path to some specific target state $\bar{s}\in S$.
Is these cases a typical choice for the the reward function is 
\begin{equation}\label{target_reward}
r(s) = 
    1 \text{ if } s = \bar{s}
    \;\textup{ and }\;
    r(s)=0 \text{ otherwise.}
\end{equation}
Given states $s, s' \in S$ and policy $\pi \in \Pi$, let $\tau^{s, \pi}_{s'}$ denote the first hitting time from state $s$ to state $s'$ under policy $\pi$, i.e.,
\begin{equation*}
    \tau^{s,\pi}(s') 
    = 
    \inf\{t \in \mathbb{N}:\,X^{\pi}_t = s'\},
\end{equation*}
where, we recall, $X^\pi_t$  denotes
the state at decision 
epoch $t$ 
and $s\in S$ is the initial state.

In \cite{deJongh2025}, the following relation between the value function and the first hitting time to the target state was established:
%\begin{equation*}
%v^{\pi}_{\lambda}(s) = \begin{cases}
%1 + \dfrac{\mathbb{E}[\lambda^{\tau^{\bar{s}, \pi}(\bar{s})}]}{1- \mathbb{E}[\lambda^{\tau^{\bar{s}, \pi}(\bar{s})}]}, 
%&
%\text{ if } s = \bar{s}, \\
%\dfrac{\mathbb{E}[\lambda^{\tau^{s, \pi}(\bar{s})}]}{1- \mathbb{E}[\lambda^{\tau^{s, \pi}(\bar{s})}]}, &\text{otherwise}.
%\end{cases}
%\end{equation*}
\begin{equation*}
v^{\pi}_{\lambda}(s) = 
\dfrac{\mathbb{E}_s^\pi[\lambda^{\tau^{s, \pi}(\bar{s})}]}{1- \mathbb{E}_s^\pi[\lambda^{\tau^{s, \pi}(\bar{s})}]}, 
\end{equation*}
for all $s\in S\setminus\{\bar{s}\}$. Moreover, 
if $\bar{s}$ is an absorbing state, this expression reduces to
\begin{equation}\label{exp_1}
    v^{\pi}_{\lambda}(s) = \dfrac{\mathbb{E}_s^\pi[\lambda^{{\tau}^{s, \pi}(\bar{s})}]}{1-\lambda}, \quad s \in S \setminus \{\bar{s}\},
\end{equation} 
which implies that the function
$G^{s,\pi}(\lambda)=(1-\lambda)v^\pi_\lambda(s)$ is the probability generating function of the first hitting time to $\bar{s}$.
%\begin{equation}
%    (1-\lambda)v^{\pi}_{\lambda}(s) = G_{\tau^{s, %\pi}_{s^*}}(\lambda), \quad s \in S.
%\end{equation}

In the following theorem, we identify the meaning of the
value function for $\lambda$ close to $1$. In fact, this
result provides a clear physical interpretation of the
value function in that regime: it shows that, in such a
case, the optimal policy, namely, the one maximizing the
value function, minimizes the first hitting time to
$\bar{s}$.

\begin{thm}
\label{t:value010}
Given a policy $\pi$ and $s,\bar{s}\in S$ 
such that $s\neq\bar{s}$ and $\bar{s}$ is an absorbing state. 
Then, 
\begin{equation}
\label{e:value000}
\lim_{\lambda\uparrow1}
\Big[
 \frac{1}
      {1-\lambda}  
      -v^\pi_\lambda(s)
\Big]
=
\mathbb{E}_s^\pi[{\tau}^{s, \pi}(\bar{s})].
\end{equation}
   \end{thm}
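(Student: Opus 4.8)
The plan is to reduce everything to the closed-form expression \eqref{exp_1}, which already represents $v^\pi_\lambda(s)$ in terms of the transform $\mathbb{E}_s^\pi[\lambda^{\tau^{s,\pi}(\bar{s})}]$, and then to study the limit $\lambda\uparrow1$ of the resulting quantity by a monotone convergence argument. Throughout I would write $\tau=\tau^{s,\pi}(\bar{s})$ for brevity; since $s\neq\bar{s}$ we have $X_0^\pi=s\neq\bar{s}$ and hence $\tau\ge1$ almost surely.

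First, substituting \eqref{exp_1} and using linearity of expectation together with $\mathbb{E}_s^\pi[1]=1$, I would compute
\begin{equation*}
\frac{1}{1-\lambda} - v^\pi_\lambda(s)
= \frac{1 - \mathbb{E}_s^\pi[\lambda^{\tau}]}{1-\lambda}
= \mathbb{E}_s^\pi\!\left[\frac{1 - \lambda^{\tau}}{1-\lambda}\right].
\end{equation*}
The purpose of this rewriting is that the integrand admits the elementary finite-geometric-sum identity
\begin{equation*}
\frac{1-\lambda^{\tau}}{1-\lambda} = \sum_{k=0}^{\tau-1} \lambda^k,
\end{equation*}
valid for every $\lambda\in(0,1)$ and every integer value $\tau\ge1$, and read as $+\infty$ on the event $\{\tau=\infty\}$, consistently with $\lambda^\tau=0$ there.

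Next, I would note that for each fixed outcome the partial sum $\sum_{k=0}^{\tau-1}\lambda^k$ is nondecreasing in $\lambda$ on $(0,1)$, since each summand $\lambda^k$ is. Hence, as $\lambda\uparrow1$, the integrand increases monotonically to its pointwise limit $\sum_{k=0}^{\tau-1}1=\tau$. By the monotone convergence theorem I may interchange the limit and the expectation, obtaining
\begin{equation*}
\lim_{\lambda\uparrow1}\mathbb{E}_s^\pi\!\left[\sum_{k=0}^{\tau-1}\lambda^k\right]
= \mathbb{E}_s^\pi[\tau] = \mathbb{E}_s^\pi[\tau^{s,\pi}(\bar{s})],
\end{equation*}
which is exactly the claimed identity \eqref{e:value000}.

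The only point requiring care, and thus the main technical step, is the justification of this limit--expectation interchange, for which the monotonicity of the geometric partial sums in $\lambda$ is precisely what is needed; note that no dominated-convergence hypothesis nor any a priori finiteness of $\mathbb{E}_s^\pi[\tau]$ is required. The possibility that $\tau=\infty$ with positive probability (the policy may fail to absorb at $\bar{s}$) is handled automatically in the extended sense: on $\{\tau=\infty\}$ one has $(1-\lambda^\tau)/(1-\lambda)=1/(1-\lambda)\to+\infty$, so both sides of \eqref{e:value000} equal $+\infty$ whenever $\mathbb{P}_s^\pi(\tau=\infty)>0$, and the equality persists.
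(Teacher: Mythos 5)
Your proof is correct, and it starts from the same algebraic pivot as the paper: both reduce the left-hand side of \eqref{e:value000} to $\bigl(1-G^{s,\pi}(\lambda)\bigr)/(1-\lambda)$ via \eqref{exp_1}. Where you diverge is in how that limit is evaluated. The paper recognizes this quotient as the difference quotient of the probability generating function at $\lambda=1$ and simply cites the standard fact that $G^{s,\pi}(1)=1$ and $\lim_{\lambda\to1^-}\frac{\mathrm d}{\mathrm d\lambda}G^{s,\pi}(\lambda)=\mathbb{E}_s^\pi[\tau^{s,\pi}(\bar{s})]$, whereas you prove the needed limit from scratch by pushing the quotient inside the expectation, expanding $\frac{1-\lambda^{\tau}}{1-\lambda}=\sum_{k=0}^{\tau-1}\lambda^{k}$, and invoking monotone convergence. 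Your version is more self-contained and slightly more robust: it makes no use of $G^{s,\pi}(1)=1$ (which implicitly presumes $\mathbb{P}_s^\pi(\tau<\infty)=1$) and it handles, in the extended sense, both $\mathbb{E}_s^\pi[\tau]=\infty$ and $\mathbb{P}_s^\pi(\tau=\infty)>0$, cases the paper's appeal to generating-function properties passes over. The paper's version buys brevity by delegating exactly this monotone-convergence step to a known result about probability generating functions; in substance you have reproved that known result rather than cited it. Either way the argument is sound.
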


\begin{proof}
%\cs{\textbf{Cristian}:I guess we should define explicitly $G$ saying that it is the pgf of the first hitting time of $\bar{s}$}\\
Recalling the properties of the probability generating function of a positive discrete random variable, we have that,
\begin{displaymath}
   G^{s,\pi}(1)=1
   \;\textup{ and }\;
   \lim_{\lambda\to 1^-}
   \frac{\textup{d}}{\textup{d}\lambda}
   G^{s,\pi}(\lambda)
   =
   \mathbb{E}_s^\pi[{\tau}^{s, \pi}(\bar{s})].
   \end{displaymath}
Thus, we have
\begin{displaymath}
\frac{G^{s,\pi}(1)-G^{s,\pi}(\lambda)}
      {1-\lambda} 
      =
   \frac{1-G^{s,\pi}(\lambda)}
      {1-\lambda}    
      =
      \frac{1}
      {1-\lambda}  
      -v^\pi_\lambda(s),
    \end{displaymath}
    which implies 
    \eqref{e:value000}.
\end{proof}

A more refined computation, based on an asymptotic expansion,
can relate the value function to the higher moments of the
hitting time. We start from the expression
\begin{equation}
\label{e:value010}
v_\lambda^\pi(s)
= \frac{G^{s,\pi}(\lambda)}{1-\lambda}
\end{equation}
and construct an expansion as the discount factor
$\lambda \uparrow 1$. Since $G^{s,\pi}(1)=1$, the value
function $v_\lambda^\pi(s)$ diverges as $1/(1-\lambda)$.
Because $\lambda=1$ is a simple pole, the function is not
analytic at $\lambda=1$, and no Taylor expansion exists in
the usual sense. Nevertheless, one can develop a
\emph{formal asymptotic expansion} in powers of
$\varepsilon = 1-\lambda$ as $\varepsilon \downarrow 0$.

A function $f(\lambda)$ is said to admit an \emph{asymptotic expansion}
$f(\lambda) \sim \sum_{k=0}^{n} a_k (1-\lambda)^{\alpha_k}$
as $\lambda \uparrow 1$ if
$$
\lim_{\lambda \uparrow 1} 
\frac{f(\lambda) - \sum_{k=0}^{m} a_k (1-\lambda)^{\alpha_k}}
{(1-\lambda)^{\alpha_m}} = 0
\quad \text{for all } m \in\mathbb{N},
$$
where $a_k\in\mathbb{R}$ and $\alpha_k$ is 
an increasing sequence of reals.
The equality \(f(\lambda) \sim g(\lambda)\) indicates that
$f(\lambda)/g(\lambda) \to 1$ as $\lambda \uparrow 1$.

Let $\varepsilon = 1-\lambda$. 
For integer-valued nonnegative $\tau$, one has
$$
(1-\varepsilon)^{\tau}
= 1 - \varepsilon\tau
+ \frac{\varepsilon^2}{2}\tau(\tau-1)
- \frac{\varepsilon^3}{6}\tau^\pi(\tau-1)(\tau-2)
+ O(\varepsilon^4),
$$
and, therefore,
\begin{align*}
\mathbb{E}_s^\pi[(1-\varepsilon)^{{\tau}^{s, \pi}(\bar{s})}]
 = 1
&- \varepsilon\,\mathbb{E}_s^\pi[{\tau}^{s, \pi}(\bar{s})]
+ \frac{\varepsilon^2}{2}\,\mathbb{E}_s^\pi[{\tau}^{s, \pi}(\bar{s})({\tau}^{s, \pi}(\bar{s})-1)]
\\
&
- \frac{\varepsilon^3}{6}\,\mathbb{E}_s^\pi[{\tau}^{s, \pi}(\bar{s})({\tau}^{s, \pi}(\bar{s})-1)({\tau}^{s, \pi}(\bar{s})-2)]
+ O(\varepsilon^4).
\end{align*}
Substituting into the expression for $v_\lambda^\pi(s)$ gives, as
$\varepsilon \downarrow 0$,
\begin{align}
\label{eq:divergence}
v_{1-\varepsilon}^\pi(s)
\sim
\frac{1}{\varepsilon}
&
- \mathbb{E}_s^\pi[{\tau}^{s, \pi}(\bar{s})]
+ \frac{\varepsilon}{2}\,\mathbb{E}_s^\pi[{\tau}^{s, \pi}(\bar{s})({\tau}^{s, \pi}(\bar{s})-1)]
\\
&
- \frac{\varepsilon^2}{6}\,\mathbb{E}_s^\pi[{\tau}^{s, \pi}(\bar{s})({\tau}^{s, \pi}(\bar{s})-1)({\tau}^{s, \pi}(\bar{s})-2)]
+ O(\varepsilon^3).
\end{align}
Rewriting the expression by isolating the divergent part yields
\begin{align*}
\frac{1}{1-\lambda} - v_\lambda^\pi(s)
\sim
\mathbb{E}_s^\pi[{\tau}^{s, \pi}(\bar{s})]
&
- \frac{1-\lambda}{2}\,\mathbb{E}_s^\pi[{\tau}^{s, \pi}(\bar{s})({\tau}^{s, \pi}(\bar{s})-1)]
\\
&
+ \frac{(1-\lambda)^2}{6}\,\mathbb{E}_s^\pi[{\tau}^{s, \pi}(\bar{s})({\tau}^{s, \pi}(\bar{s})-1)({\tau}^{s, \pi}(\bar{s})-2)]
+ O((1-\lambda)^3).
\end{align*}

Taking the limit $\lambda\uparrow1$ we find 
again \eqref{e:value000}. But, we can also provide 
an interpretation in terms of the higher moment 
of the hitting time, for instance, for the 
second moment we get
$$
\lim_{\lambda \uparrow 1} 
\frac{2}{1-\lambda}
\Big(
-
\frac{1}{1-\lambda} 
+ v_\lambda^\pi(s)
+
\mathbb{E}_s^\pi[{\tau}^{s, \pi}(\bar{s})]
\Big)
= 
\mathbb{E}_s^\pi[{\tau}^{s, \pi}(\bar{s})({\tau}^{s, \pi}(\bar{s})-1)]
.
$$

We conclude this section discussing the physical meaning of the
value function for small values of $\lambda$. On a
heuristic basis, we may argue that, when $\lambda$ is small,
only trajectories reaching the all-plus configuration within a
short time significantly contribute to the value function.
Therefore, the optimal policy is expected to be the policy capable of
selecting those trajectories that accomplish a short flight to
the target configuration.

In order to make this heuristic idea precise, we denote by
$t^{s,\pi}(\bar{s})$ the deterministic number providing 
the shortest number of epochs required
for the MDP to reach the target configuration $\bar{s}$.
This notion is known in the literature and is sometimes
referred to as the \emph{minimal
path length} on the graph induced by the Markov chain. 
More precisely, we set
\begin{equation}
    \label{eq:valu050}
t^\pi_{s,\bar{s}}=\min\{{t\in T}:\,\mathbb{P}_s^\pi(X_s^\pi(t)=\bar{s})>0\}.
\end{equation}
The following theorem shows that, for small values of
$\lambda$, the optimal policy is the one with minimal
path length.

\begin{thm}
 \label{t:value050}
Let $s,\bar{s}\in S$ 
such that $s\neq\bar{s}$ and $\bar{s}$ is an absorbing state. 
Consider two policies $\pi$ and $\pi'$ such that 
$t^\pi_{s,\bar{s}}
<
t^{\pi'}_{s,\bar{s}}$.
If 
\begin{equation}
    \label{eq:value050}
\lambda
\le
\Big[
\mathbb{P}_s^\pi(\tau^{s, \pi}(\bar{s}) = t^\pi_{s,\bar{s}})
\Big]^{1/(t^{\pi'}_{s,\bar{s}}-t^\pi_{s,\bar{s}})},
\end{equation}
then $v^{\pi}_{\lambda}(s) \geq v^{\pi'}_{\lambda}(s)$.
\end{thm}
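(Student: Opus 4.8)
The plan is to reduce the comparison of the two value functions to a comparison of the numerators in the hitting-time representation \eqref{exp_1}, and then to control those numerators using only the leading (minimal-time) behaviour of each hitting-time law. Since $\bar{s}$ is absorbing and $s\neq\bar{s}$, formula \eqref{exp_1} gives $v^{\pi}_{\lambda}(s)=\mathbb{E}_s^\pi[\lambda^{\tau^{s,\pi}(\bar{s})}]/(1-\lambda)$ and the analogous identity for $\pi'$. Because $\lambda\in(0,1)$ forces $1-\lambda>0$, the desired inequality $v^{\pi}_{\lambda}(s)\ge v^{\pi'}_{\lambda}(s)$ is equivalent to
\[
\mathbb{E}_s^\pi\big[\lambda^{\tau^{s,\pi}(\bar{s})}\big]
\;\ge\;
\mathbb{E}_s^{\pi'}\big[\lambda^{\tau^{s,\pi'}(\bar{s})}\big],
\]
so the entire argument can be carried out at the level of these two generating functions evaluated at $\lambda$.

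First I would bound the left-hand side from below. Abbreviating $t=t^{\pi}_{s,\bar{s}}$ and $p=\mathbb{P}_s^\pi(\tau^{s,\pi}(\bar{s})=t)$, every summand $\lambda^{k}\,\mathbb{P}_s^\pi(\tau^{s,\pi}(\bar{s})=k)$ is nonnegative, so discarding all terms except the one at the minimal time $t$ yields $\mathbb{E}_s^\pi[\lambda^{\tau^{s,\pi}(\bar{s})}]\ge \lambda^{t}\,p$. Next I would bound the right-hand side from above. By the definition \eqref{eq:valu050} of $t'=t^{\pi'}_{s,\bar{s}}$ as the minimal path length for $\pi'$, absorption before time $t'$ has probability zero, so $\tau^{s,\pi'}(\bar{s})\ge t'$ on the absorption event; since $\lambda\in(0,1)$ makes $k\mapsto\lambda^{k}$ decreasing, each surviving summand obeys $\lambda^{k}\le\lambda^{t'}$, and because the relevant probabilities sum to at most one we obtain $\mathbb{E}_s^{\pi'}[\lambda^{\tau^{s,\pi'}(\bar{s})}]\le\lambda^{t'}$.

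Combining the two estimates, it suffices to verify $\lambda^{t}\,p\ge\lambda^{t'}$. As $t<t'$ and $\lambda>0$, dividing by $\lambda^{t}$ turns this into $p\ge\lambda^{t'-t}$, and taking the $(t'-t)$-th root (a monotone operation on nonnegative reals) reproduces precisely the hypothesis \eqref{eq:value050}. This chain then delivers $v^{\pi}_{\lambda}(s)\ge v^{\pi'}_{\lambda}(s)$, completing the argument.

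I do not anticipate a serious obstacle; the one point demanding care is the possibly defective hitting-time distribution, since trajectories that never reach $\bar{s}$ contribute $\lambda^{\infty}=0$ and must be handled so that the upper bound $\mathbb{E}_s^{\pi'}[\lambda^{\tau^{s,\pi'}(\bar{s})}]\le\lambda^{t'}$ survives even when absorption is not certain. Both bounds are deliberately crude—retaining a single term on one side and the coarsest exponential envelope on the other—so the resulting threshold on $\lambda$ is sufficient but not sharp; a sharp statement would instead require the finer asymptotics already recorded in \eqref{eq:divergence}, which is unnecessary for the present claim.
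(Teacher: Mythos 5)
Your proposal is correct and follows essentially the same route as the paper: both arguments use the hitting-time representation \eqref{exp_1}, bound $\mathbb{E}_s^\pi[\lambda^{\tau^{s,\pi}(\bar{s})}]$ from below by the single minimal-time term $\lambda^{t^\pi_{s,\bar{s}}}\mathbb{P}_s^\pi(\tau^{s,\pi}(\bar{s})=t^\pi_{s,\bar{s}})$, bound $\mathbb{E}_s^{\pi'}[\lambda^{\tau^{s,\pi'}(\bar{s})}]$ from above by $\lambda^{t^{\pi'}_{s,\bar{s}}}$, and then observe that the hypothesis \eqref{eq:value050} is exactly the condition making the first quantity dominate the second. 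Your remark about defective hitting-time distributions is handled automatically, since non-absorbed trajectories contribute zero and can only strengthen the upper bound.
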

\begin{proof}
To achieve the proof we consider the following simple lower and upper bounds
to the value function:
     using \eqref{exp_1} and \eqref{eq:valu050} we obtain
     \begin{equation}
     \label{small_lambda_exp1}
         v^{\pi}_{\lambda}(s) 
         = 
         \dfrac{1}{1-\lambda}
         \sum\limits_{t = t^\pi_{s,\bar{s}}}^{\infty} 
         \lambda^t
          \mathbb{P}_s^\pi(\tau^{s, \pi}(\bar{s}) = t)
     \geq 
     \dfrac{\lambda^{t^\pi_{s,\bar{s}}}}{1-\lambda} 
     \mathbb{P}_s^\pi(\tau^{s, \pi}(\bar{s}) =t^\pi_{s,\bar{s}} ) 
     \end{equation}
     and
     \begin{equation}\label{small_lambda_exp2}
         v^{\pi'}_{\lambda}(s) 
         = 
         \dfrac{1}{1-\lambda}
         \sum\limits_{t = t^{\pi'}_{s,\bar{s}}}^{\infty} 
         \lambda^t
         \mathbb{P}(\tau^{s, \pi'}(\bar{s}) = t)
         \leq \dfrac{\lambda^{t^{\pi'}_{s,\bar{s}}}}{1-\lambda} .
     \end{equation}
     Combining expressions \eqref{small_lambda_exp1} and \eqref{small_lambda_exp2}, 
    recalling $t^\pi_{s,\bar{s}}<t^{\pi'}_{s,\bar{s}}$,
     we see that if $\lambda$ is so small that
     $
     \lambda^{t^\pi_{s,\bar{s}}}
     \mathbb{P}_s^\pi(\tau^{s, \pi}(\bar{s}) = t^\pi_{s,\bar{s}}) 
     \ge
     \lambda^{t^{\pi'}_{s,\bar{s}}}
     $
     then
      $v^{\pi}_{\lambda}(s) \geq v^{\pi'}_{\lambda}(s)$.
      And this proves the theorem.
     \end{proof}

\subsection{Definition of the Ising MDP}
\label{s:IsingMDP}
In order to control the Ising model at zero temperature, inspired by \cite{deJongh2025},
we formulate a MDP ranging only over the robust configurations, or the local minima of the Hamiltonian. 

Hence, the state space of the MDP is the set $U$. The decision maker has the power to flip any chosen spin from $\Lambda$, after which the system evolves according to the Metropolis dynamics for a certain period of time. Specifically, we let the process evolve until it reaches a next robust configuration. It is the goal of the decision maker to reach the all-plus configuration $\sigma^+$. Letting the action $a = 0$ represent the choice of not flipping any spins, the action space of the Ising MDP is given by $A = A(\sigma)$, where $A(\sigma) = \Lambda \cup \{0\}$ for all $\sigma \in U$. 
Coherently with the notation introduced in 
Section~\ref{s:ising},
we denote
the configuration obtained from $\sigma \in U$ after taking action $a \in A$, or the \textit{post-decision configuration}, by $\sigma^{(a)}$.

We define the transition probability kernel $P: U \times A \times U$ of the MDP as 
\begin{equation*}
    P(\sigma'|\sigma, a) = \sum\limits_{\omega \in \Omega(\sigma^{(a)}, \sigma')} p(\omega), \quad \sigma, \sigma' \in U, \quad a \in A.
\end{equation*} 
The objective of the decision maker, to drive the system towards the all-plus configuration, is captured by a reward function $r: U \rightarrow \mathbb{R}$ defined as
\begin{equation}\label{reward_function_isi}
    r(\sigma) = \begin{cases}
        1, &\text{if } \sigma = \sigma^+, \\
        0, &\text{otherwise,}
    \end{cases} \quad \sigma \in U, \quad a \in A.
\end{equation}
From the fact that the reward function is bounded, it follows that the value function is finite.
Note that \eqref{reward_function_isi} is simply the adaptation of \eqref{target_reward} to the Ising case.

\section{Numerical study of the double seeded Ising MDP}
\label{s:numerical_study}
This section presents a numerical investigation of the structure of the optimal policy in the two-droplet regime. First, we study the control problem for the case in which the two droplets form stripes that are wrapped around the torus. Then, we extend our analysis to the case in which only one of the droplets forms a stripe. Finally, we consider the scenario in which neither of the droplets forms a stripe.

In this numerical investigation, we do not claim to
determine the optimal policy. Rather, we consider a few
promising candidates and compute their associated value
functions numerically, so as to obtain a well-informed
conjecture about optimality. In the case of the two-stripe
initial seeds, our numerical results will be compared with
the rigorous analytical findings that will be established in
the following section.

The choice of the three initial configurations
listed above, the stripe–stripe pair, the
stripe–droplet pair, and the droplet–droplet
pair, is motivated by several considerations.
These settings provide a controlled framework
in which specific mechanisms of growth and
interaction can be isolated and examined with
precision.

The first objective is to model the expansion of
a front, represented by the initial stripe, and
the evolution of a small nucleus, represented by
a droplet of limited size. The mixed case serves
to illustrate how these two distinct initial
conditions interact, thereby offering insight
into intermediate regimes where different growth
dynamics coexist.

A further motivation concerns the double-stripe
configuration. Because the number of microscopic
situations to be analysed is inherently limited,
one can obtain rigorous control of the feeding
policies governing the system. This level of
control is sufficient to identify, in some
instances, an optimal policy. Such a result is of
considerable relevance in our context, as it
helps assess the reliability and interpretive
value of numerical simulations.

\subsection{The two-stripe case}
We study the control problem for the configurations in the set $U^{(2)}$ in which the two droplets with spin $+1$ form stripes that are wrapped around the torus. Let the set of such configurations be denoted by $U^{2, x}$. Analogously, let $U^{1, x}$ denote the set of configurations in which the sites with spin $+1$ form a single stripe that is wrapped around the torus. 

\subsubsection{The auxiliary MDP}
\label{The auxiliary MDP_strip_strip}
In order to find the optimal policy for configurations in the set $U^{2,x}$, we construct an auxiliary MDP denoted by $(S^x, A^x, P^x, r^x)$. Let the state space \(S^x\) be defined as
\begin{equation*}
    S^x = \{(i,j) \mid i,j \in \{0,2,3,\ldots,N\}\}.
\end{equation*}
Each element \((i,j)\in S^x\) should be interpreted as an \emph{equivalence class} of lattice
configurations in which the two stripes of \(+1\)-spins are separated by horizontal gaps of
lengths \(i\) and \(j\), respectively.  In other words, we consider on the set of stripe--stripe
configurations an equivalence relation identifying all configurations that share the same pair
of separating distances, irrespective of their absolute position on the torus.  The set
$S^x$ may therefore be regarded as a quotient space obtained by collapsing each such
equivalence class to a single representative;  Figure~\ref{Aux_state_space_two_strips} (left panel) illustrates this
representation. 

Here, a state $(i,j)$ in which either $i = 0$ or $j = 0$ corresponds to a configuration with a single stripe of spins in state $+1$ and the state $(0, 0)$ corresponds to the all-plus configuration. 

\begin{figure}
\centering
\includegraphics[width=0.4\linewidth]
{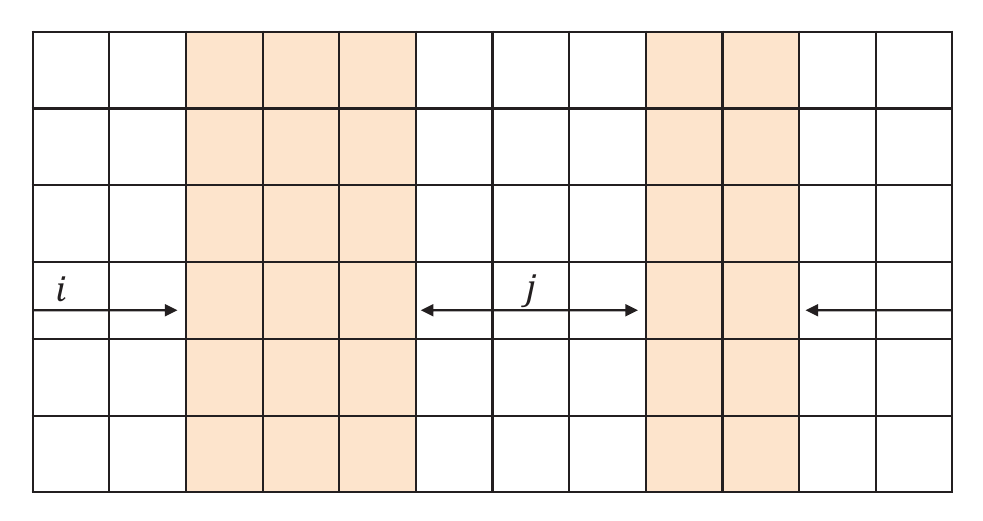}
\raisebox{-0.5mm}{
\includegraphics[width=0.435\linewidth]
{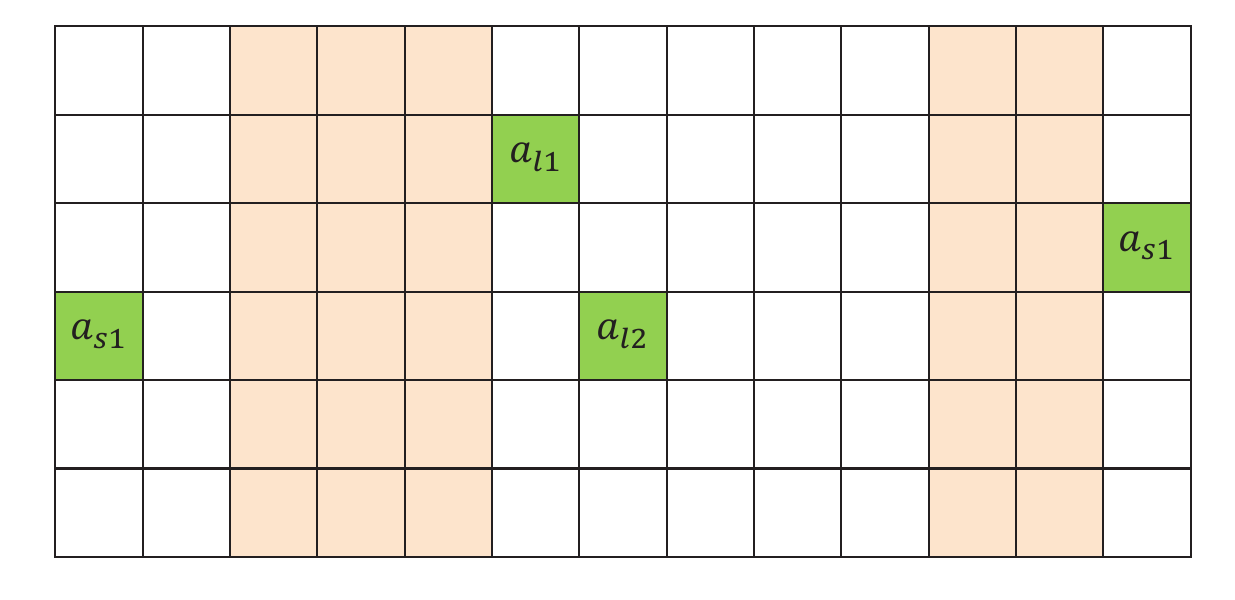}
}
\caption{Illustration of the state space $S^x$ (left) and of the action space $A^x$ (right).}
\label{Aux_state_space_two_strips}
\end{figure} 

%We define the action space $A^x(i,j)$ corresponding to state $(i,j) \in S^x$ as
%\begin{equation*}
%    A^x(i,j) = \begin{cases}
%        \{0\}, &\text{if } i = j = 0, \text{ or } i = N, \text{ or } j = N, \\
%        \{a_{\ell 1}, 0\}, &\text{if } i = 0, j = 2, \text{ or } i = 2, j = 0, \text{ or } i = j = 2\\
%        \{a_{\ell 1}, a_{\ell 2}, 0\}, &\text{if } i = 0,\quad 2 < j < N, \text{ or } 2 < i < N,\quad j = 0, \text{ or } 2 < i, j < N, \quad i = j,\\
%        \{a_{\ell 1}, a_{\ell 2}, a_{s1}, 0\}, &\text{if } 3 < i < N,\quad j = 2, \text{ or } i = 2, \quad 3 < j < N, \\
%        \{a_{\ell 1}, a_{\ell 2}, a_{s1}, a_{s2}, 0\}, &\text{if } 3 < i < N, \quad 3 < j < N, \quad i \neq j.
%    \end{cases}
%\end{equation*}
We define the action space as $A^x = \{a_{\ell 1}, a_{\ell 2}, a_{s1}, a_{s2}, 0\}$.
Here, $a_{\ell 1}$ represents the set of sites at distance 1 from either of the stripes at the side of the longest gap and $a_{\ell 2}$ represents the set of sites at distance 2 from either of the stripes at the side of the longest gap. Analogously, $a_{s1}$ and $a_{s2}$ represent the set of sites at distance 1 and distance 2 from either of the stripes at the side of the shortest gap. The action space $A^x$ is visualized in Figure \ref{Aux_state_space_two_strips}. By taking an action $a \in A^x$, we mean flipping the spin at one of the sites in the corresponding set. In our experiments, we choose this site uniformly at random. The transition probability kernel $P^x$ and the reward function $r^x$ follow in a natural way from their counterparts in the original MDP. Their formal definitions are given in Section \ref{s:rigorous}, where we provide a rigorous treatment of the two-stripe case.

%\enmc{[For Maike: here there is something that must be clarified. For instance: $a_{\ell 1}$ must be 
%a specific site, since the policy is deterministic. So we should not say that it is the set of sites at distance one, but we have to identify which site. Is it correct saying that it is one of the site at distance one prescribe dwith some deterministic rule (e.g., one of those closer to the mid point along the vertical direction). Another thing that should be clarified is the 
%following: when the action set $A_{1,2}(i,j)$ has more than one elemnt, how is made the choice? I think you explained me 
%in Rome, but I do not remember.]}

%\begin{figure}
%\centering
%\includegraphics[width=0.4\linewidth]{Aux_action_space_two_strips.pdf}
%\caption{Illustration of the action space $A^x$.}
%\label{Aux_action_space_two_strips}
%\end{figure} 

\subsubsection{Candidates for optimality}
We compare the performance of two distinct policies $\pi_1 = (d_1)^{\infty}$ and $\pi_2 = (d_2)^{\infty}$ defined in the auxiliary MDP, where $d_1(i,j) \in A_1(i,j)$ and $d_2(i,j) \in A_2(i,j)$. Here, letting $P(A^x)$ denote the power set of the action space $A^x$, the functions $A_k: S^x \rightarrow P(A^x)$, $k = 1,2$, specify a set of actions for each state $s \in S^x$. Note that the functions $A_1$ and $A_2$ define two families of policies: a policy in the family that corresponds to function $A_k$, $k = 1,2$, prescribes an action $a \in A_k(i,j)$ for each $(i,j) \in S^x$. 

In words, a policy from the first class is constructed by flipping, at
the decision epochs, the minus spins located at sites at
distance one from the growing cluster. Conversely, in
a policy from the second class, spins located at distance two are
flipped.

More precisely, the functions $A_k: S^x \rightarrow P(A^x)$, $k = 1,2$, for states $(i,j)$, $i \geq j$, are defined as follows and 
visualized in 
Figs.~\ref{opt_pol_two_strips_fig-ssc}. The cases in which the two policies share the same actions are
\begin{equation}
\label{eq:action-ssc}
\begin{array}{llll}
    A_k(0, 0) = \{0\}, 
    &
    A_k(2, 2) = \{a_{\ell 1}, a_{s1}\}, 
    &
    A_k(3,2) = \{a_{\ell 2}, a_{s1}\}, 
    &
    A_k(4,2) = \{a_{\ell 1}, a_{s1}\}, 
    \\
    A_k(3, 3) = \{a_{\ell 2}, a_{s2}\},
    &
    A_k(4, 3) = \{a_{\ell 1},a_{s2}\}, 
    &
    A_k(4, 4) = \{a_{\ell 1}, a_{s1}\},
    & 
    \end{array}
\end{equation}
    for $k=1,2$. On the contrary, in the following cases the two policies have different actions
    \begin{equation}
\label{eq:action-ssnc}
        \begin{array}{llll}
    A_1(i,j)=
        \{a_{\ell 1}, a_{s1}\} 
        &
        \text{ and }
        &
        A_2(i,j)=
        \{a_{\ell 2}, a_{s1}\}, 
        &
        \text{ for }
        i \geq 5, \quad j = 2,4,
        \\
         A_1(i,3)=
        \{a_{\ell 1}, a_{s2}\} 
        &
        \text{ and }
        &
        A_2(i,3)=
        \{a_{\ell 2}, a_{s2}\}, 
        &
        \text{ for }
        i \geq 5, 
        \\
        A_1(i,j)=
        \{a_{\ell 1},a_{s1}\} 
        &
        \text{ and }
        &
        A_2(i,j)=
        \{a_{\ell 2},a_{s2}\}, 
        &
        \text{ for }
        i,j \geq 5.
    \end{array}
    \end{equation}
In Section \ref{s:rigorous}, we rigorously prove that a policy defined by the function $A_1$ is optimal for $\lambda \in~[\lambda_c, 1)$, whereas a policy specified by the function $A_2$ is optimal for $\lambda \in (0, \lambda_c]$, where $\lambda_c = 15/17$. In simulating the two types of policies, we use randomized versions of the decision rules $d_1$ and $d_2$. That is, in each state $(i,j)$, an action is chosen uniformly at random from the set $A_k(i,j)$, for $k = 1,2$. 

\begin{figure}
\centering
\includegraphics[width=0.2\linewidth]{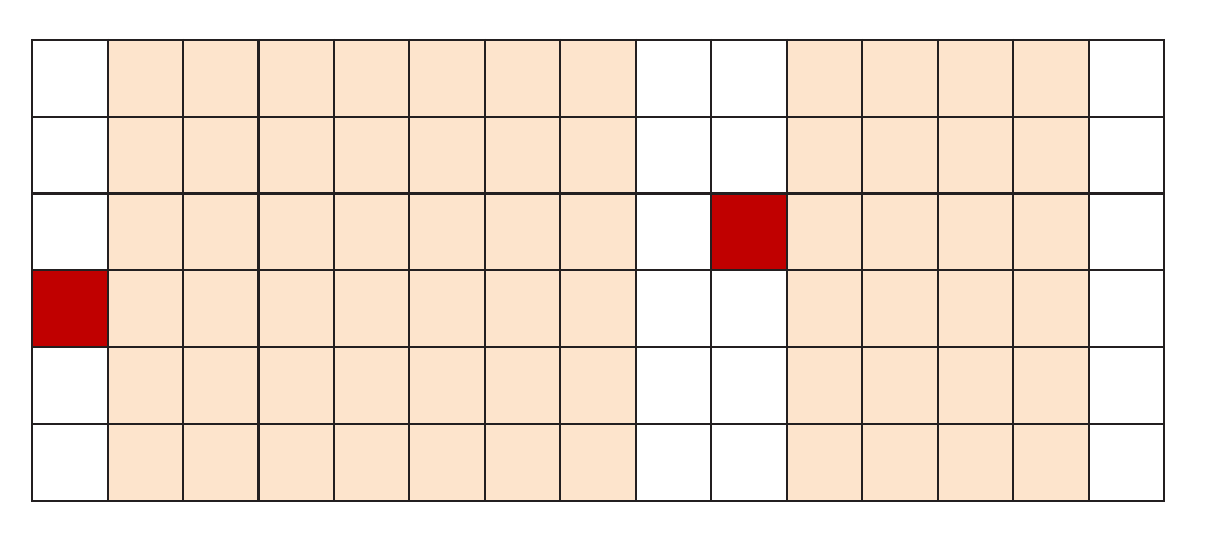}
 \includegraphics[width=0.2\linewidth]{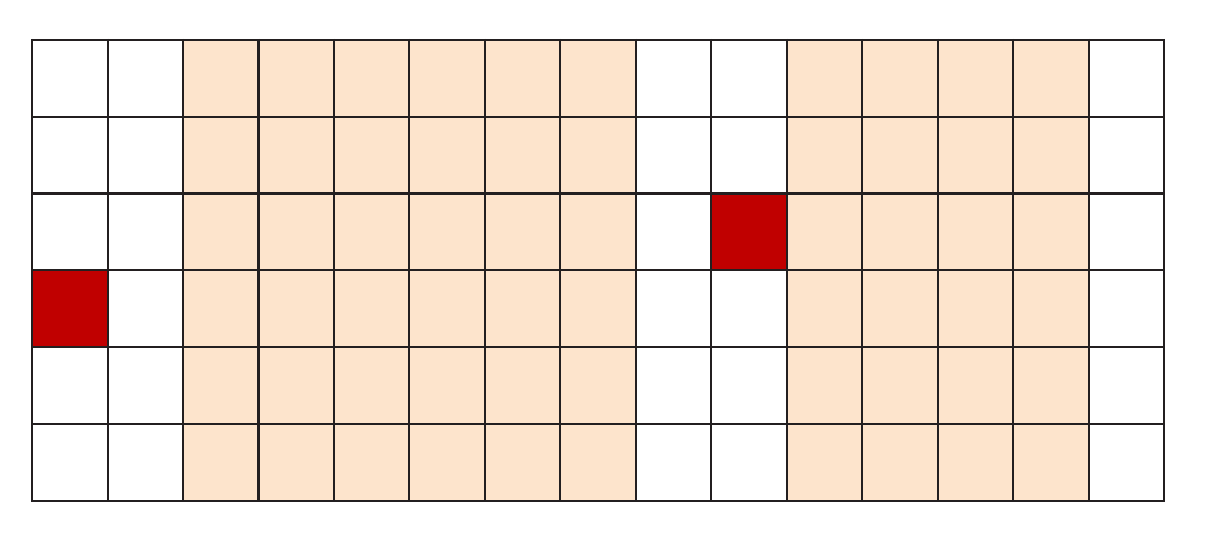}
   \includegraphics[width=0.2\linewidth]{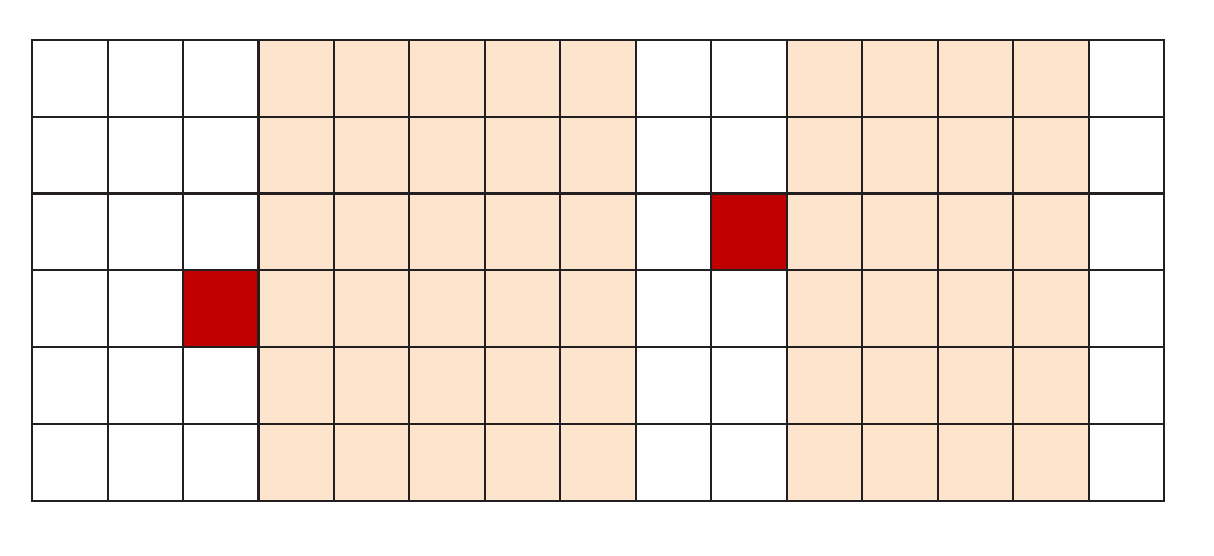}
   \\
   \includegraphics[width=0.2\linewidth]{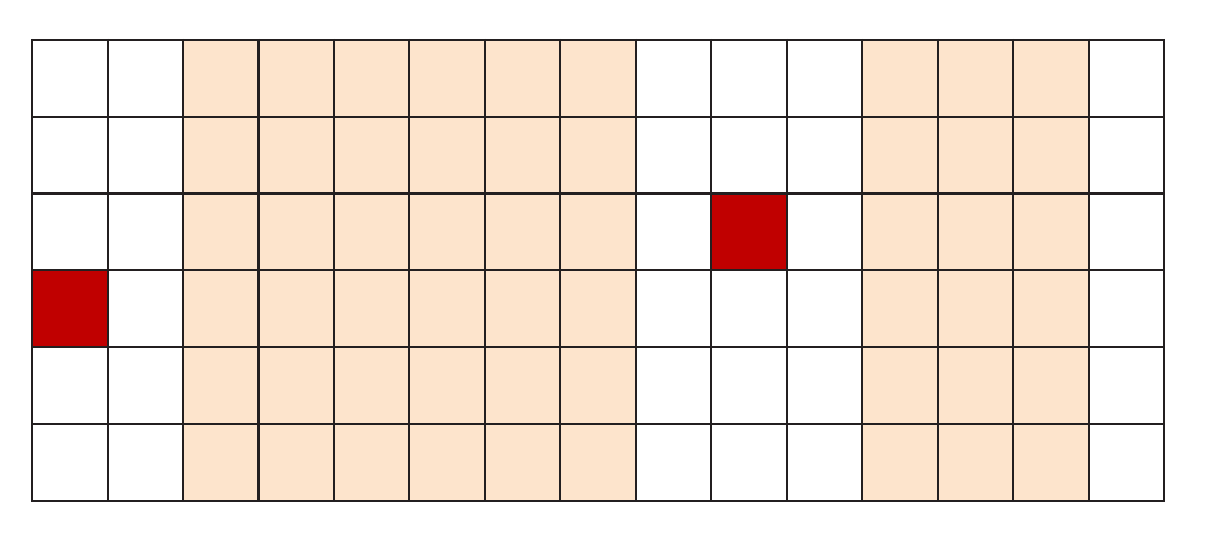}
    \includegraphics[width=0.2\linewidth]{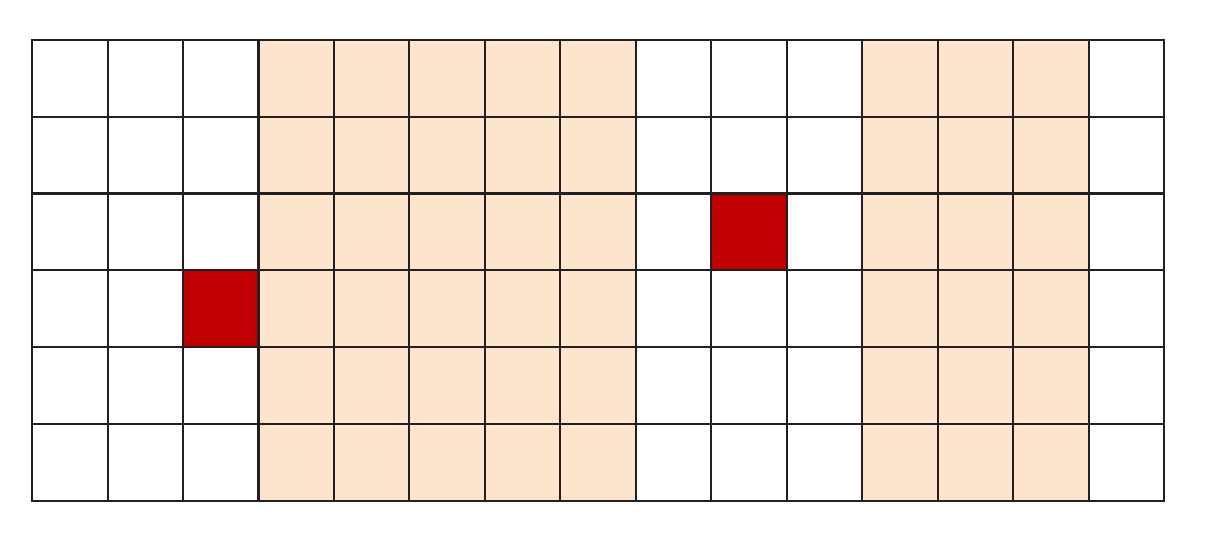}
     \includegraphics[width=0.2\linewidth]{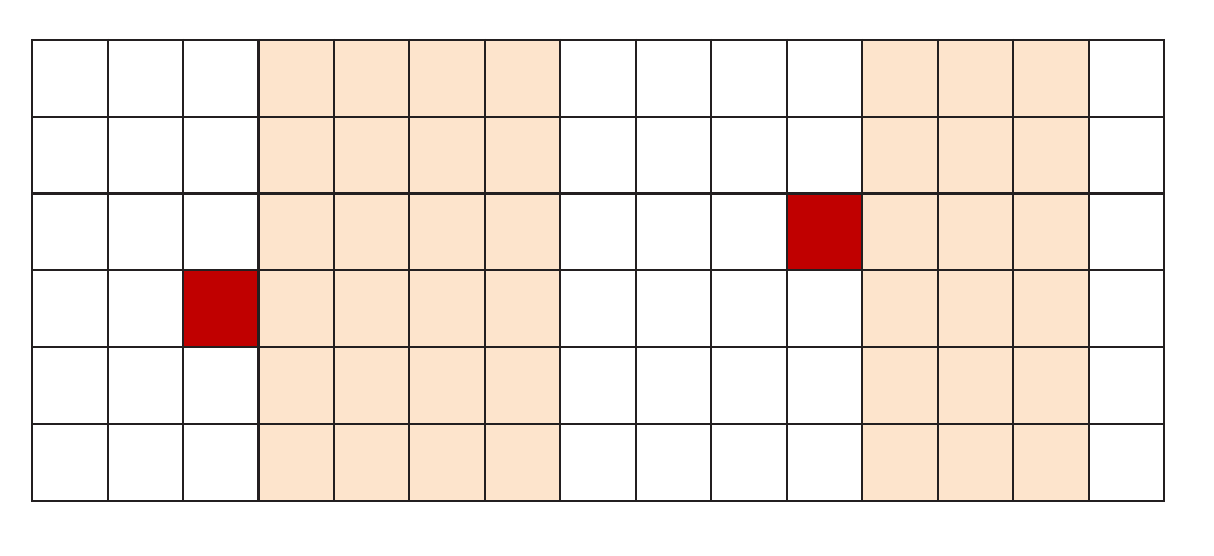}
\caption
    {Visualization of the action sets defined in 
    \eqref{eq:action-ssc}.Top: $(i,j)=(2,2),(3,2),(4,2)$.
    Bottom: $(i,j)=(3,3),(3,4),(4,4)$}
   \label{opt_pol_two_strips_fig-ssc}
\end{figure}

\begin{figure}
\centering
\includegraphics[width=0.2\linewidth]{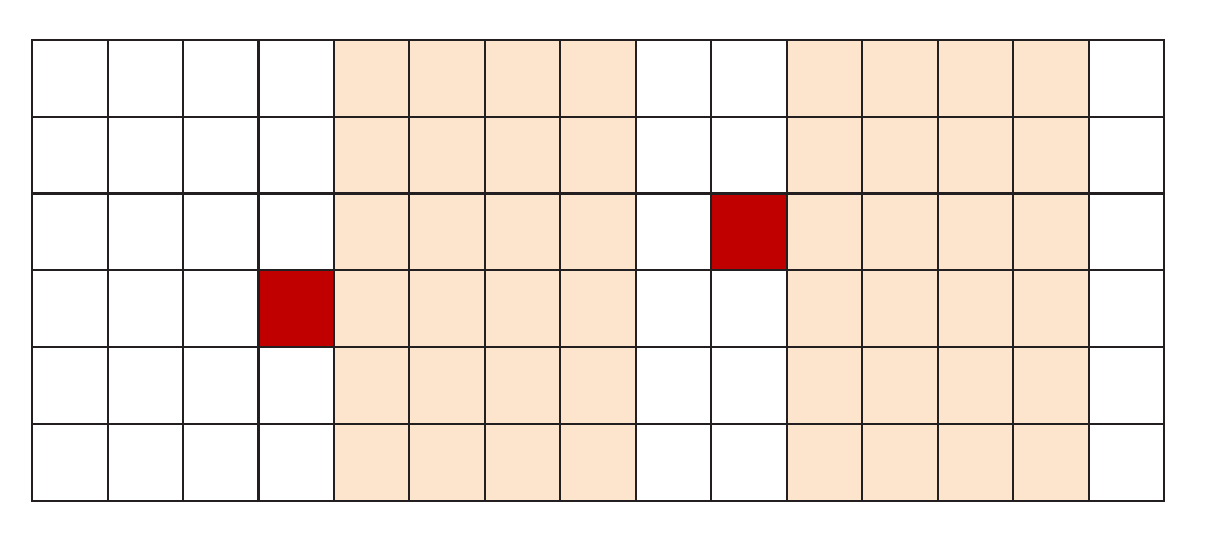}
 \includegraphics[width=0.2\linewidth]{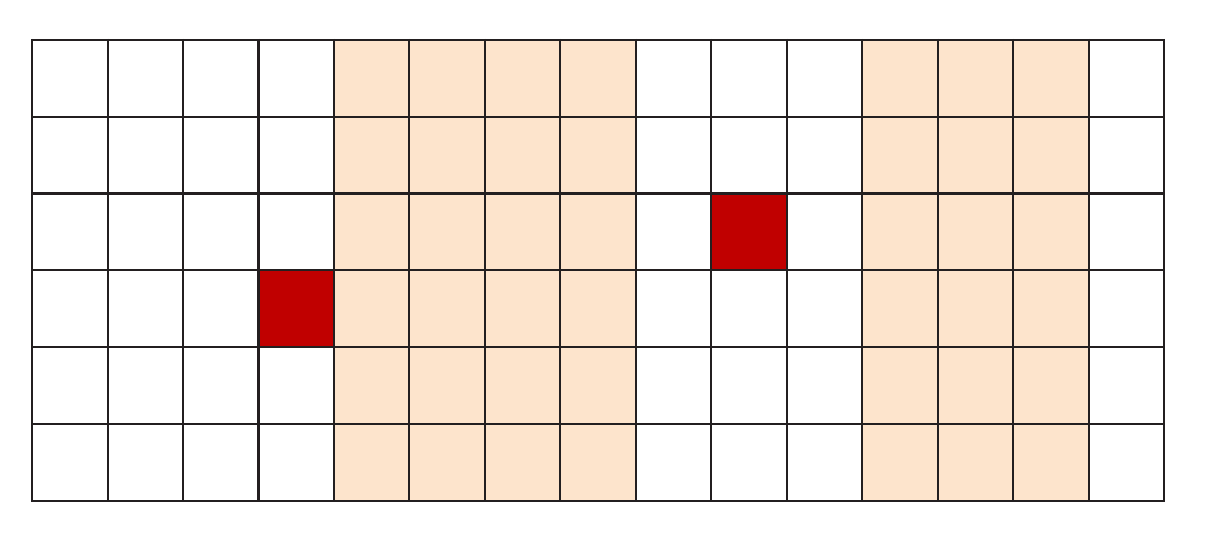}
   \includegraphics[width=0.2\linewidth]{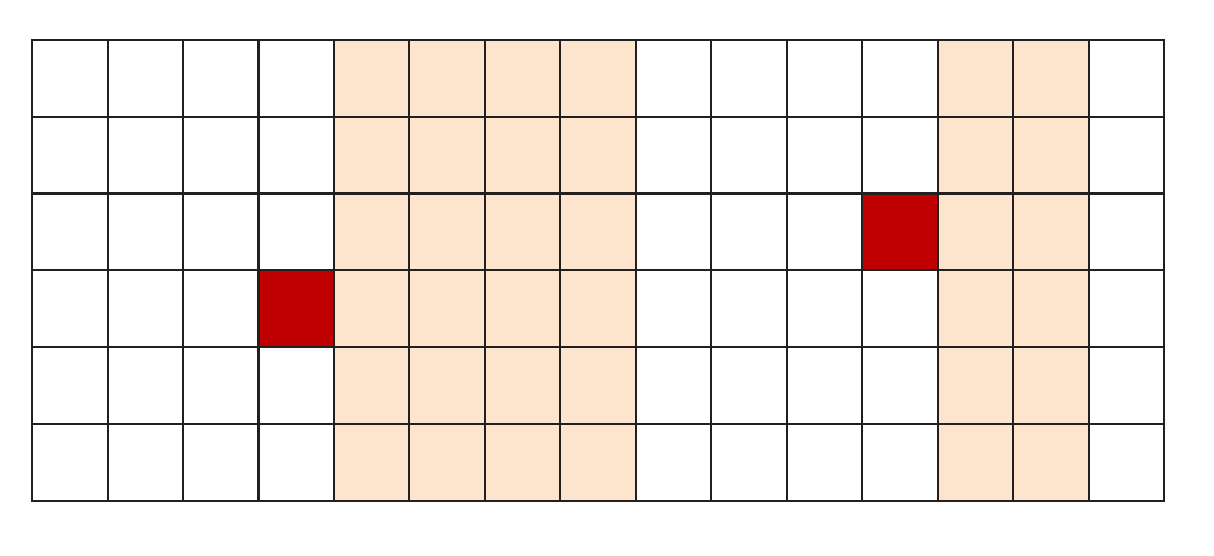}
    \includegraphics[width=0.2\linewidth]{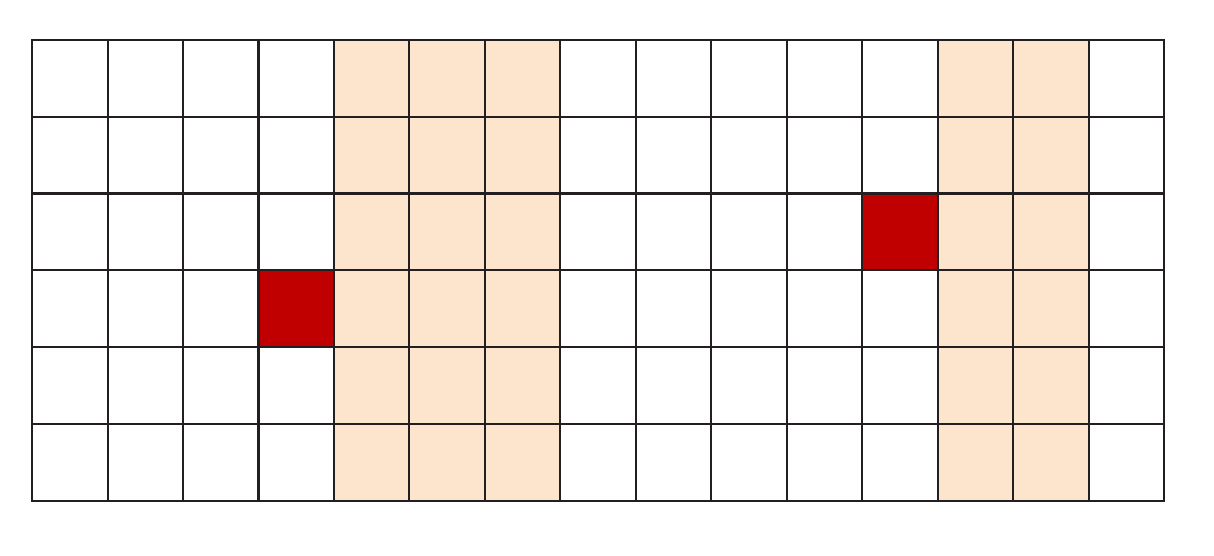}
   \\
   \includegraphics[width=0.2\linewidth]{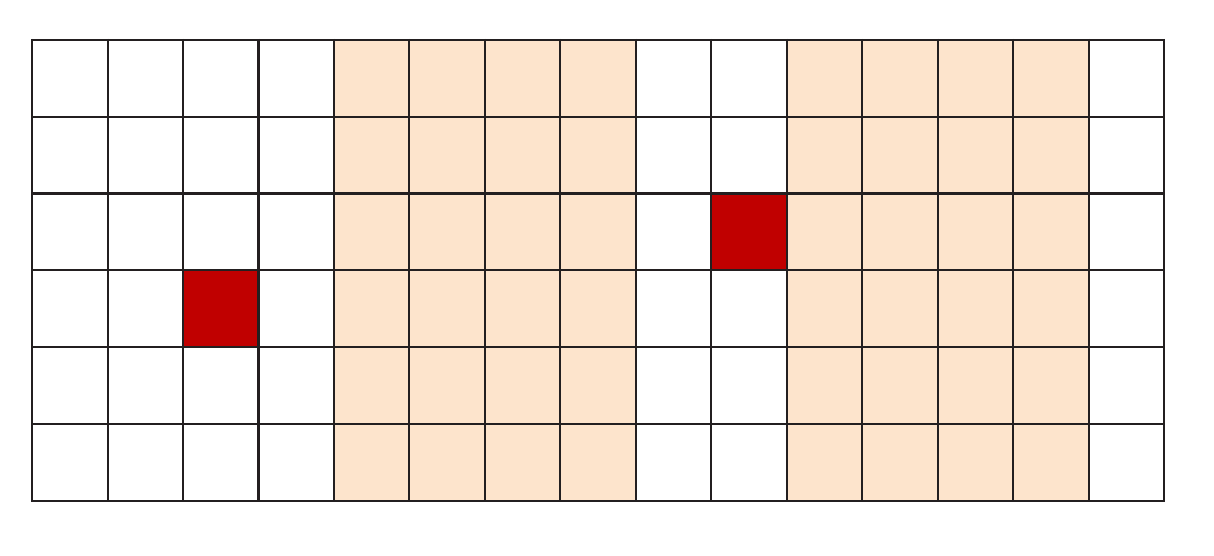}
 \includegraphics[width=0.2\linewidth]{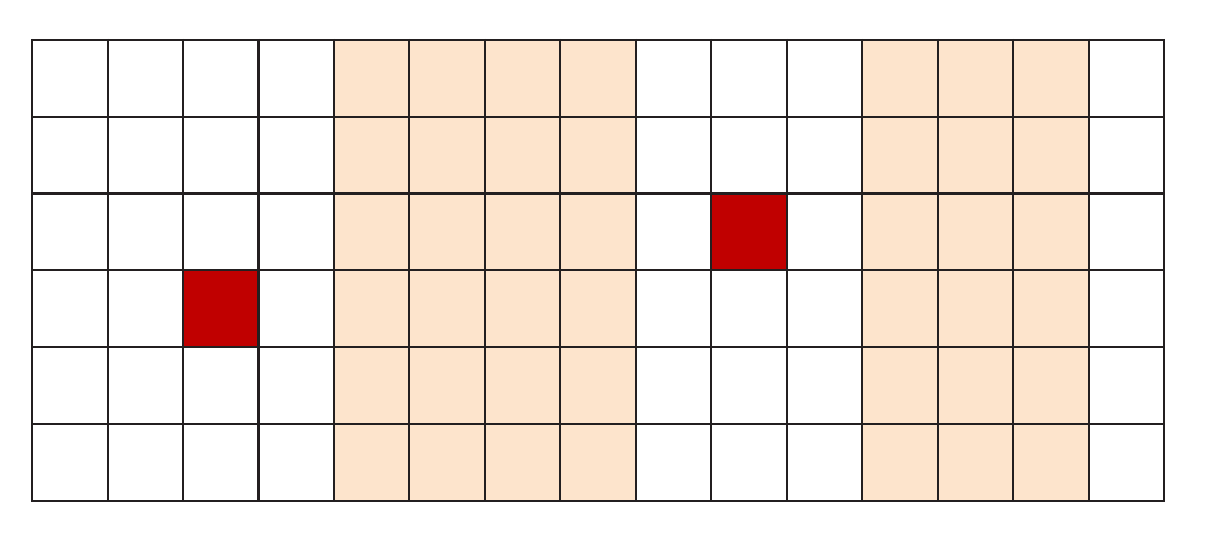}
   \includegraphics[width=0.2\linewidth]{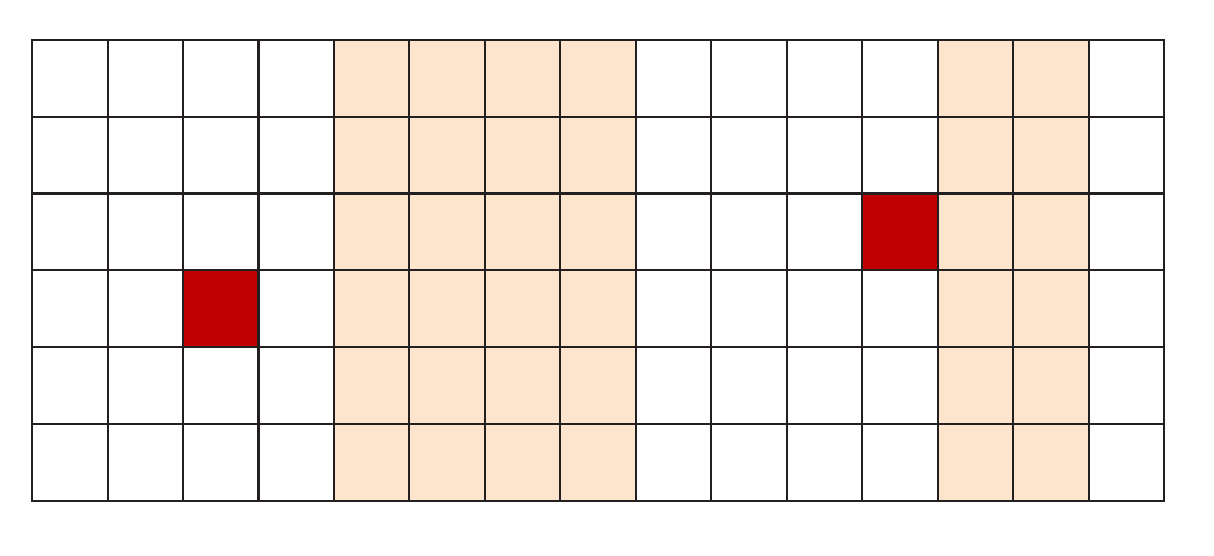}
    \includegraphics[width=0.2\linewidth]{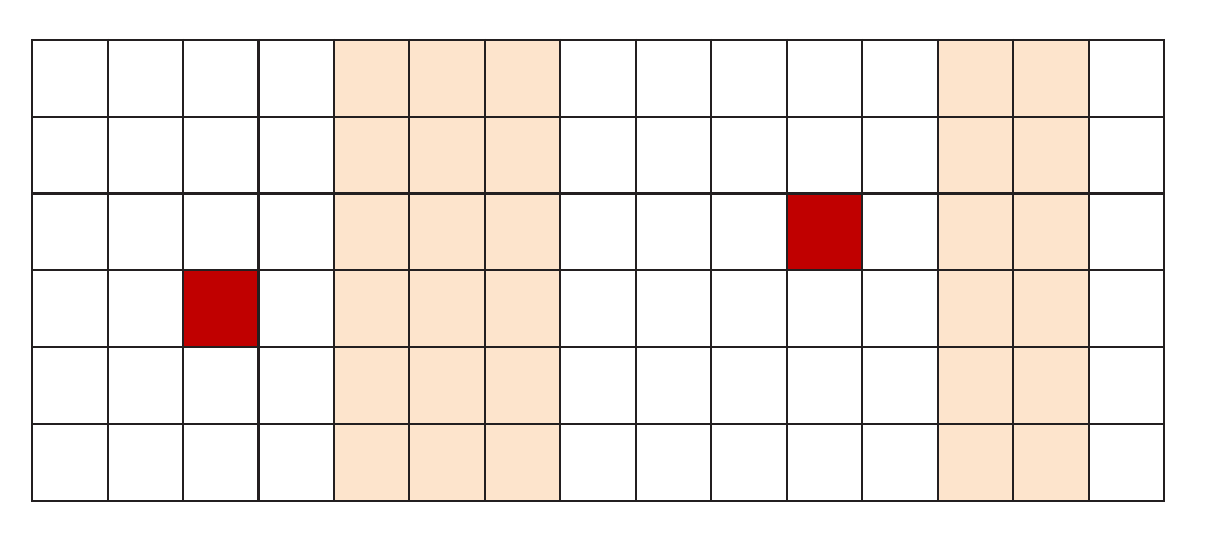}
\caption
    {Visualization of the action sets defined in 
    \eqref{eq:action-ssnc}. Top: sets $A_1(i,j)$ for $i\ge5$ and $j=2,3,4$ and $j\ge5$ (from the left to the right).
   Bottom: as above for $A_2(i,j)$.}
   \label{opt_pol_two_strips_fig-ssnc}
\end{figure}

\subsubsection{Discussion of results}
\label{s:ss-res}
In order to simulate the Ising MDP, a parameter must be
considered. Indeed, after each epoch, that is,
after the MDP flips one spin, the Metropolis zero-temperature
dynamics must be run until a robust configuration is reached,
that is, until the system settles in a local minimum of the
Hamiltonian. To save simulation time, we run it for $\kappa$
steps and choose $\kappa$ sufficiently large so that a local
minimum is reached with a reasonably good approximation.

%In the simulations, we use $N=32$ and the value
%$\kappa=200,000$. To justify this choice, we plot the 
%time-dependent configuration of the system in
%Fig.~\ref{Two_strips_simulation_k5000}, also for
%$\kappa=5000$.

The pictures in Fig.~\ref{Two_strips_simulation_k5000} show the expected behavior for $N = 100$ and $\kappa = 5,000$ (left group) and $\kappa = 20,000$ (right group): after
each MDP action, that is, after a plus spin at distance one or
two is added, the plus cluster grows in the direction
orthogonal to the stripe. Because $\kappa$ is finite, the stripe
structure is not maintained during the evolution, as should
occur according to the theoretical definition of the dynamics.
However, it is clear that this property is better satisfied
when $\kappa$ is larger.

An important characteristic of the dynamics is the expected hitting
time to the all-plus configuration. It is difficult to infer
which of the two policies minimizes the hitting time by simply
looking at the configuration plots of Fig.~\ref{Two_strips_simulation_k5000}. Indeed, this
random variable is strongly affected by the fact that, since
$\kappa$ is finite, the growth process is far from being a
simple horizontal thickening of the stripes, as it would be in
the theoretical case.

We have, however, computed the expected value of the hitting
time in for $N = 32$, with initial seeds consisting of two stripes
of width 3 at distance 13, averaging over $2,000$ independent realizations of the MDP process. In these simulations, we use $\kappa=100,000$. Given the evolution of the system shown in Fig.~\ref{Two_strips_simulation_k5000} for $N = 100$, this value of $\kappa$ is expected to be large enough to maintain the stripe structure to a satisfactory extent. We found $34.915$ for policy $\pi_1$, with a $95\%$-confidence interval of $(34.732, 35.098)$ and $37.569$ for
policy $\pi_2$, with a $95\%$-confidence interval of $(37.278, 37.861)$. Thus, we can conclude that policy $\pi_1$ is
faster in reaching the all-plus configuration. Policy $\pi_2$ exhibits greater variability in its hitting times. 

We remark that this result is not at all obvious. Indeed, in
policy $\pi_2$ the stripes may increase their width by two
units at each epoch. However, a plus spin added at distance 2 is
surrounded by four minus spins, and therefore has a high
probability of being flipped back to minus by the subsequent
Metropolis zero-temperature dynamics, producing no net increase
in stripe width. Hence, the expected hitting time results from
the combination of these two contrasting effects.

%\begin{figure}
%\centering
%\includegraphics[width=0.14\linewidth]
%{Two_strips_pi1_k5000_t50.pdf}
%\includegraphics[width=0.14\linewidth]
%{Two_strips_pi1_k5000_t150.pdf}
%\includegraphics[width=0.14\linewidth]
%{Two_strips_pi1_k5000_t250.pdf}
%\hspace{1.cm}
%\includegraphics[width=0.14\linewidth]
%{Two_strips_pi1_k20000_t50.pdf}
%\includegraphics[width=0.14\linewidth]
%{Two_strips_pi1_k20000_t100.pdf}
%\includegraphics[width=0.14\linewidth]
%{Two_strips_pi1_k20000_t150.pdf}
%\\
%\vspace{2mm}
%\includegraphics[width=0.14\linewidth]
%{Two_strips_pi1_k5000_t50.pdf}
%\includegraphics[width=0.14\linewidth]
%{Two_strips_pi2_k5000_t150.pdf}
%\includegraphics[width=0.14\linewidth]
%{Two_strips_pi2_k5000_t250.pdf}
%\hspace{1.cm}
%\includegraphics[width=0.14\linewidth]
%{Two_strips_pi2_k20000_t50.pdf}
%\includegraphics[width=0.14\linewidth]
%{Two_strips_pi2_k20000_t100.pdf}
%\includegraphics[width=0.14\linewidth]
%{Two_strips_pi2_k20000_t150.pdf}
%\caption{Illustration of policies $\pi_1$ (upper row) and $\pi_2$ (bottom row) for $N=100$ with initial seeds two stripe of width
%$3$ at distance $47$.
%Left group: $\kappa = 5000$ 
%at times $t = 50, 150,250$ (from left to right).
%Right group:
%$\kappa = 20,000$ at times 
%$t = 50,100,150$ (from left to right).
%}
%\label{Two_strips_simulation_k5000}
%\end{figure}

The data from the simulation were also used
to estimate the value function,
averaging \\$\sum_{t=0}^{\infty} \lambda^t r_t(X^\pi_t)$ over the several realizations of the process; 
see the definition in expression \eqref{value_function}. To compute this quantity, we inserted the observed first hitting times to the all-plus configuration in expression (\ref{exp_1}). Our results are
reported in the left panel of Fig.~\ref{Two_strips_policy_comparison},
together with the exact results computed in the next
Section~\ref{s:rigorous}.

The first remark is that the numerical values are compatible
with the exact ones within the statistical error; indeed, we
may say that they are very close. We also observe that the
statistical error becomes quite large when $\lambda$
approaches $1$, reasonably because the value function diverges
as $1/(1-\lambda)$; see \eqref{eq:divergence}.

\begin{figure}
\centering
\includegraphics[width=0.12\linewidth]
{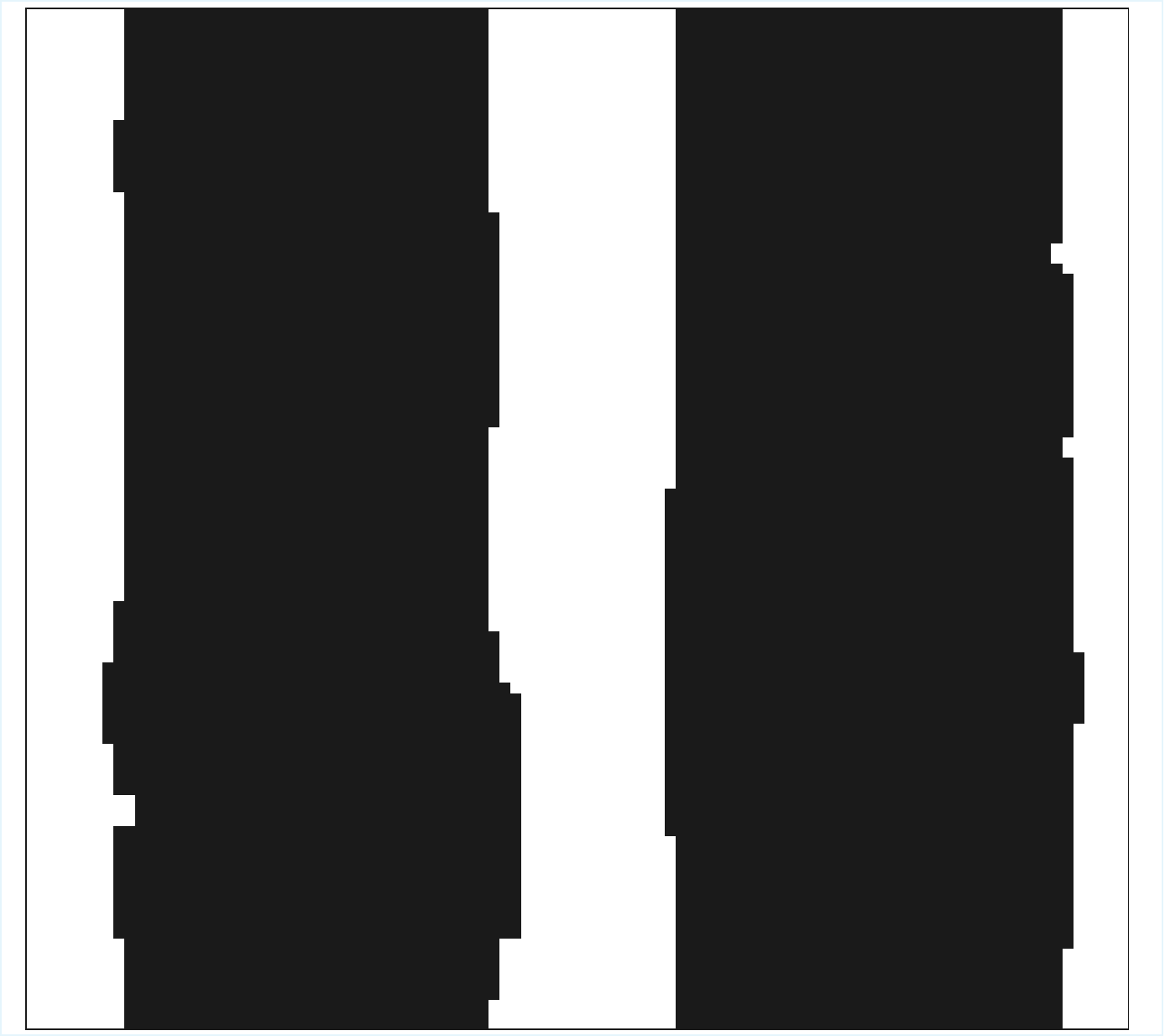}
\includegraphics[width=0.12\linewidth]
{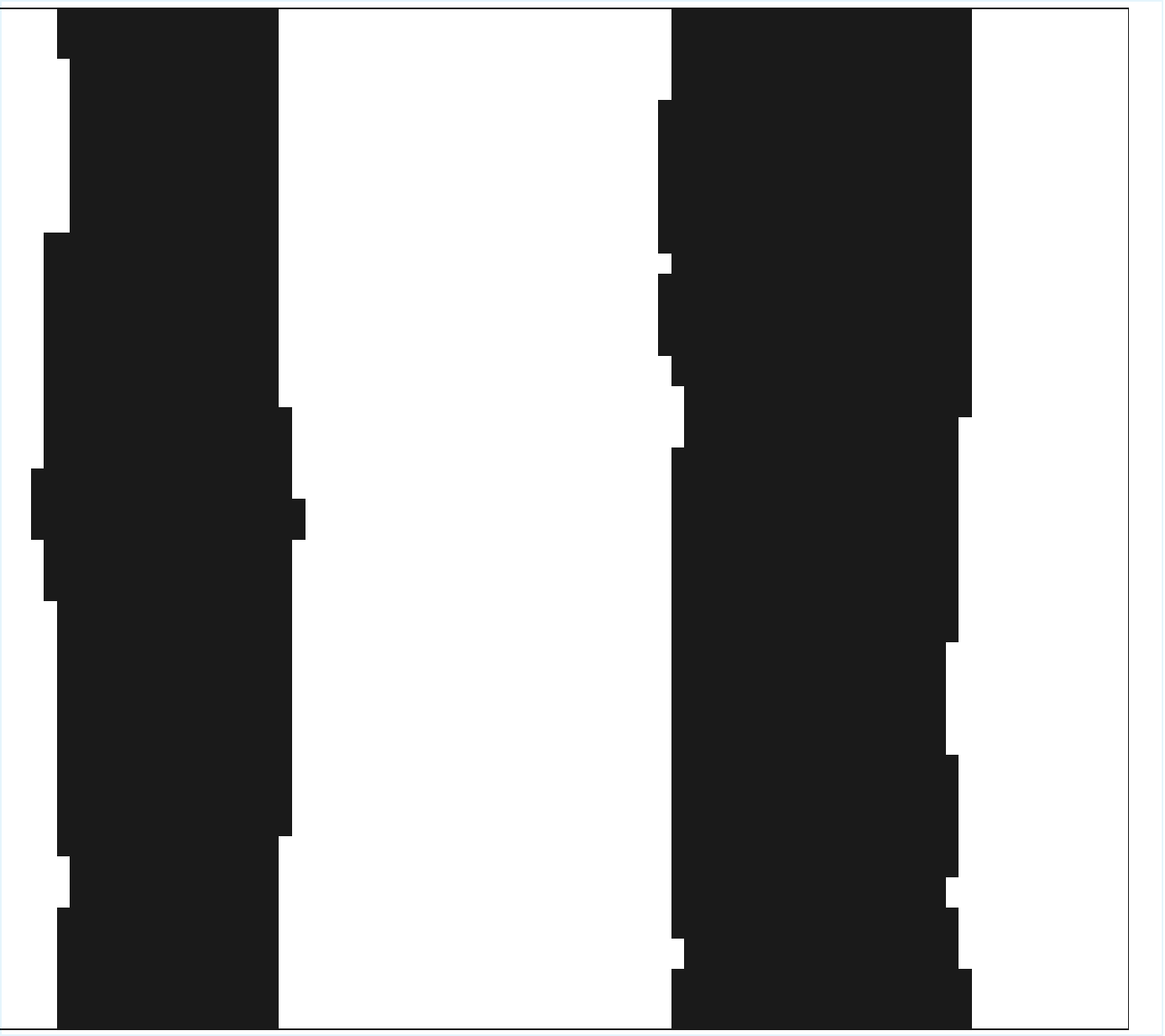}
\includegraphics[width=0.12\linewidth]
{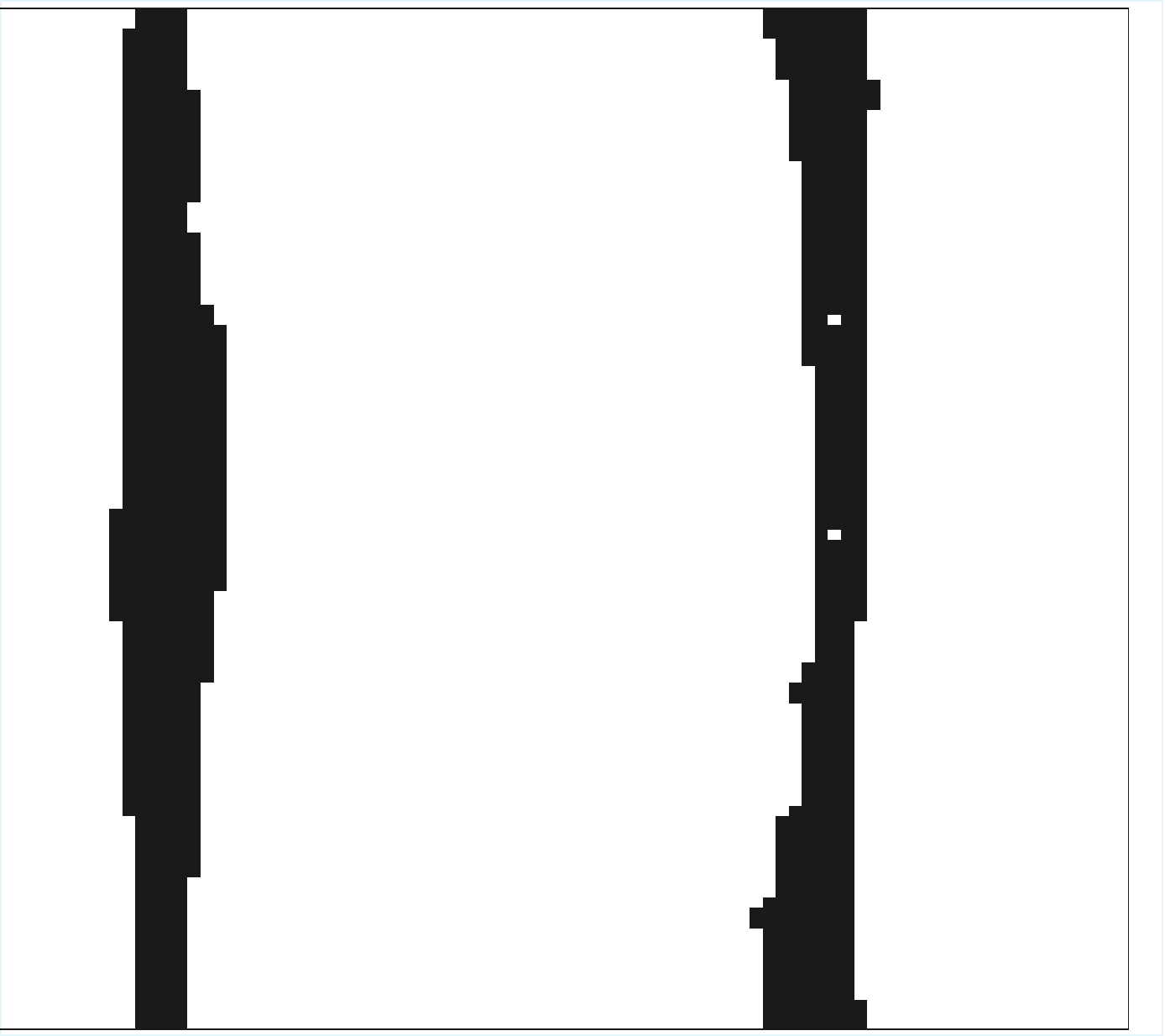}
\hspace{1.cm}
\includegraphics[width=0.12\linewidth]
{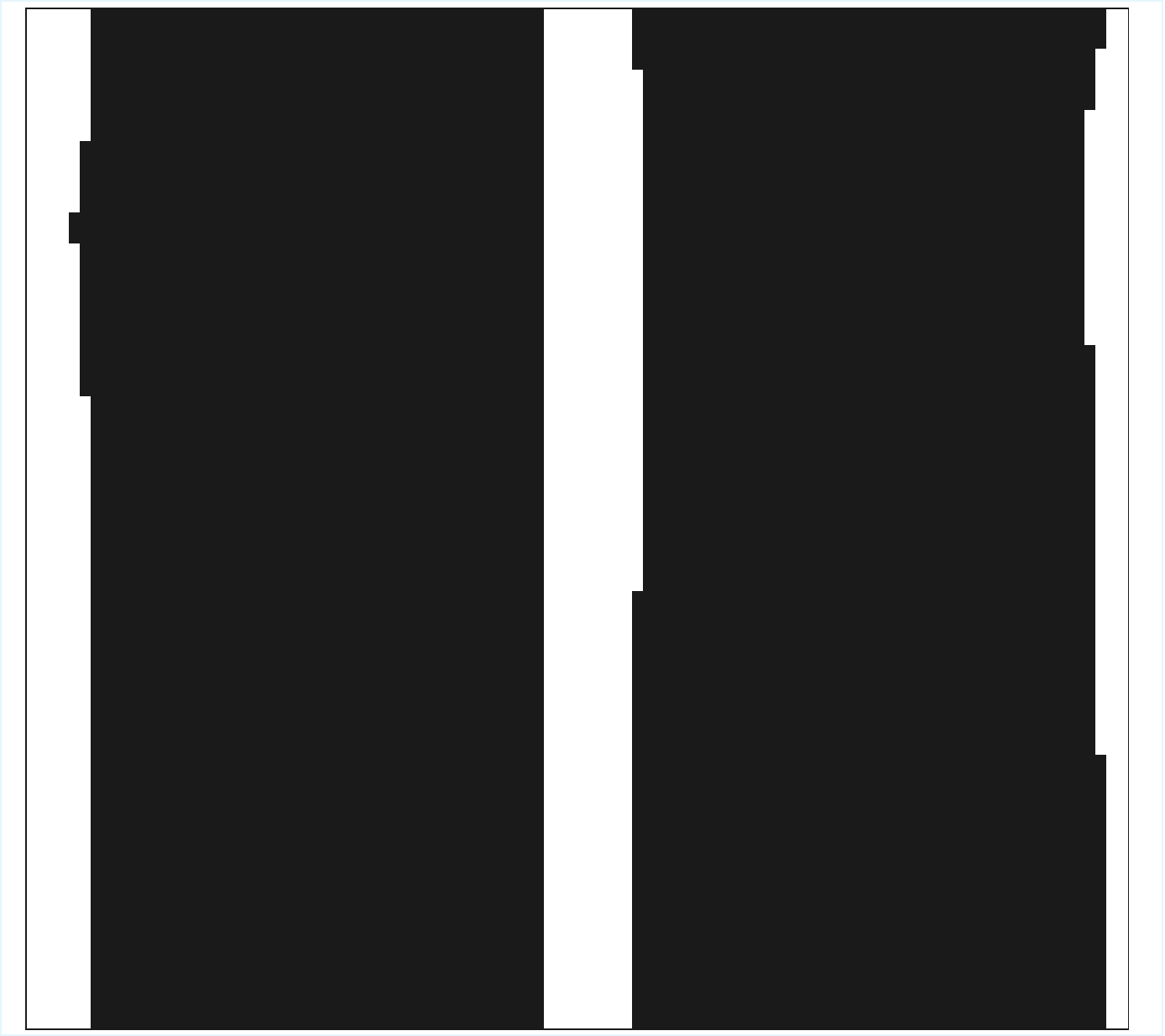}
\includegraphics[width=0.12\linewidth]
{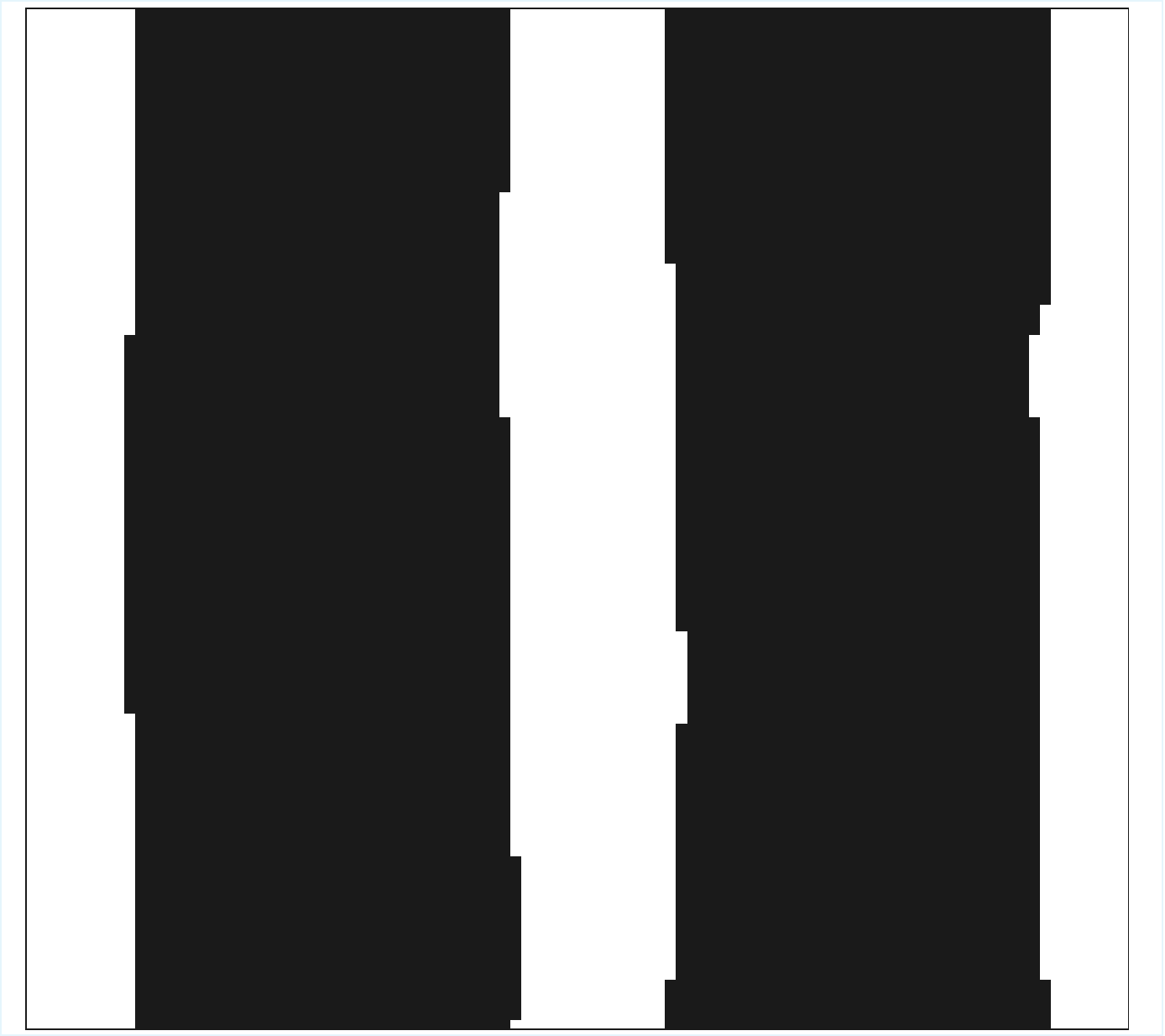}
\includegraphics[width=0.12\linewidth]
{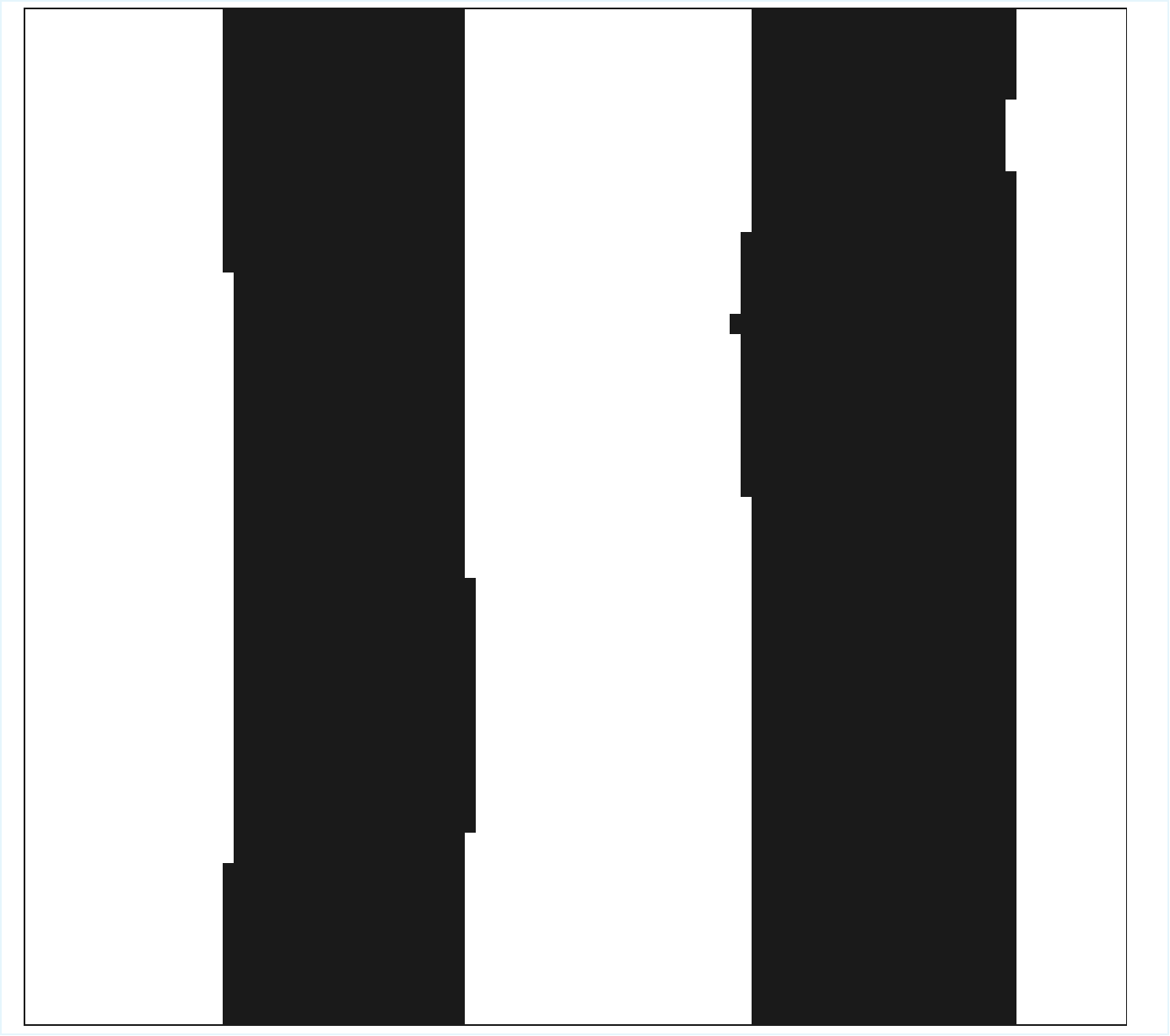}
\\
\vspace{1mm}
\includegraphics[width=0.12\linewidth]
{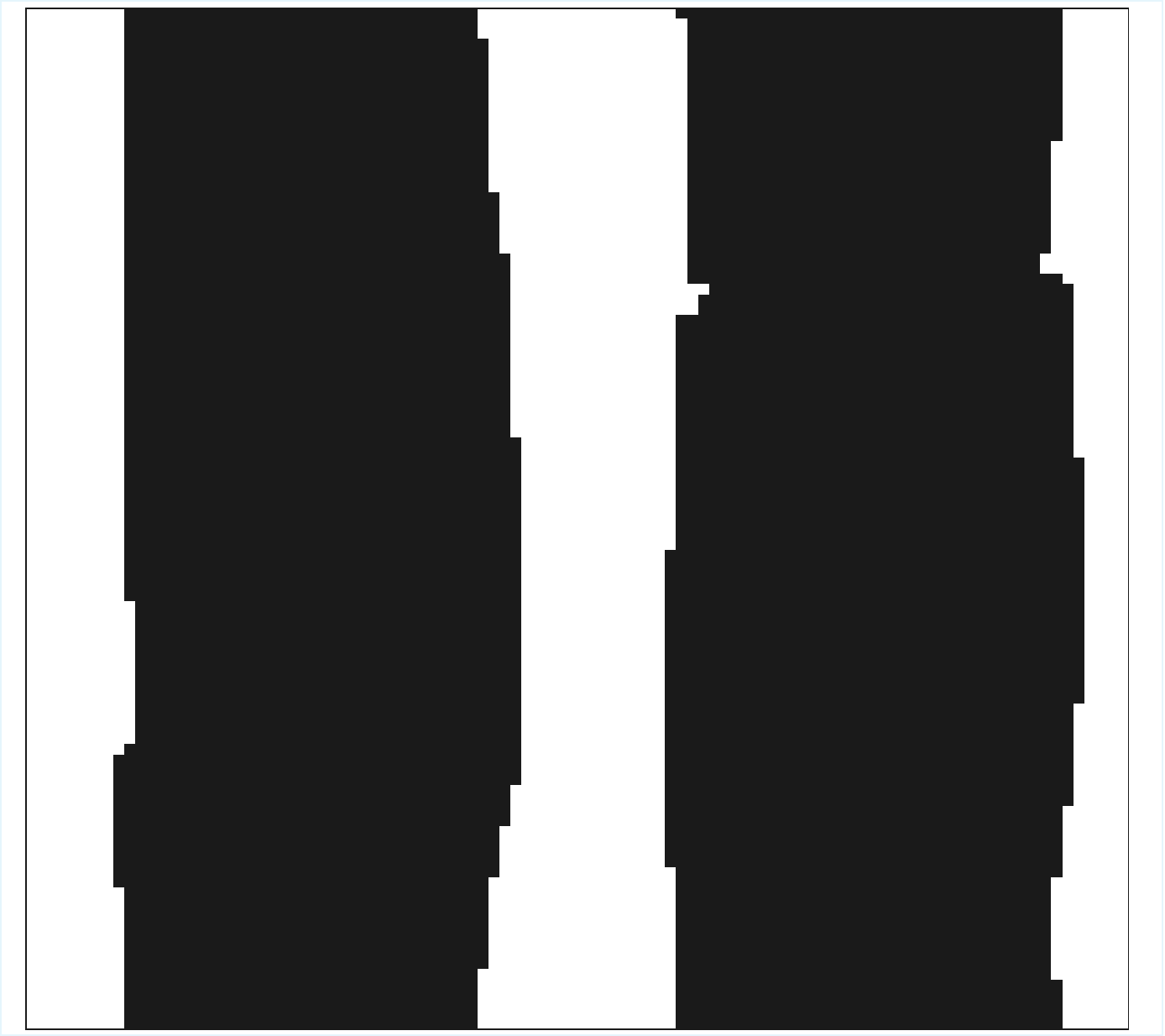}
\includegraphics[width=0.12\linewidth]
{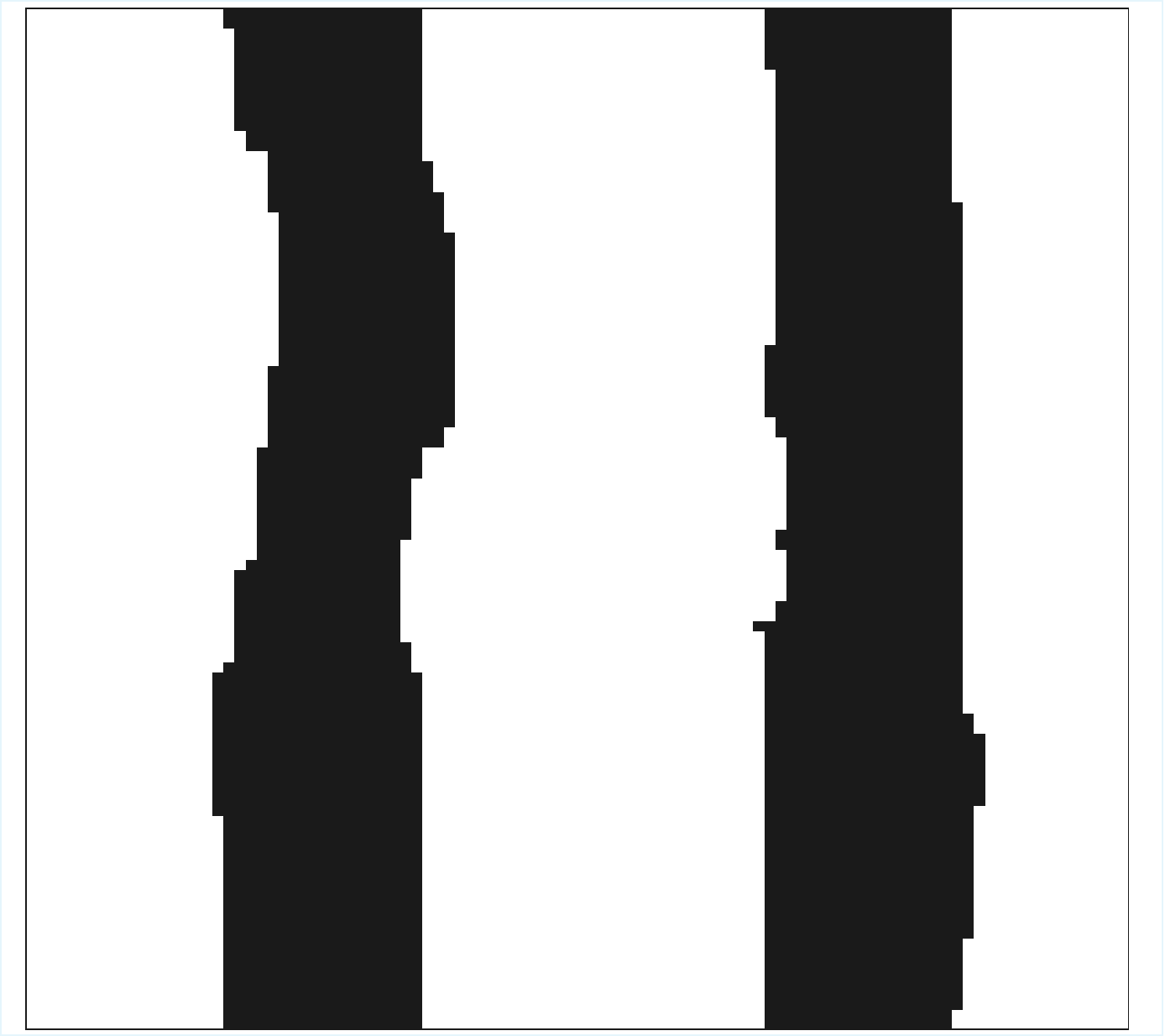}
\includegraphics[width=0.12\linewidth]
{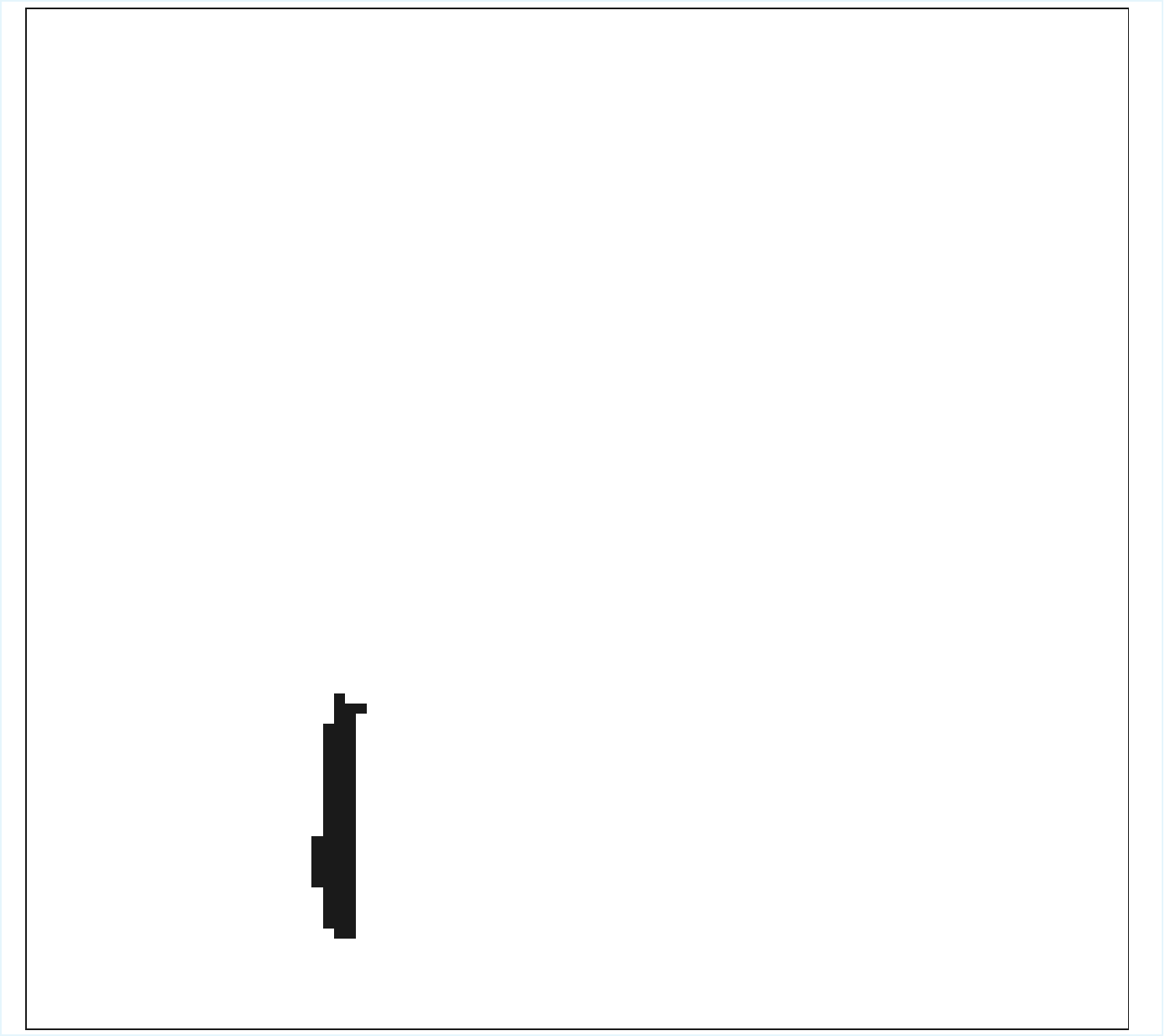}
\hspace{1.cm}
\includegraphics[width=0.12\linewidth]
{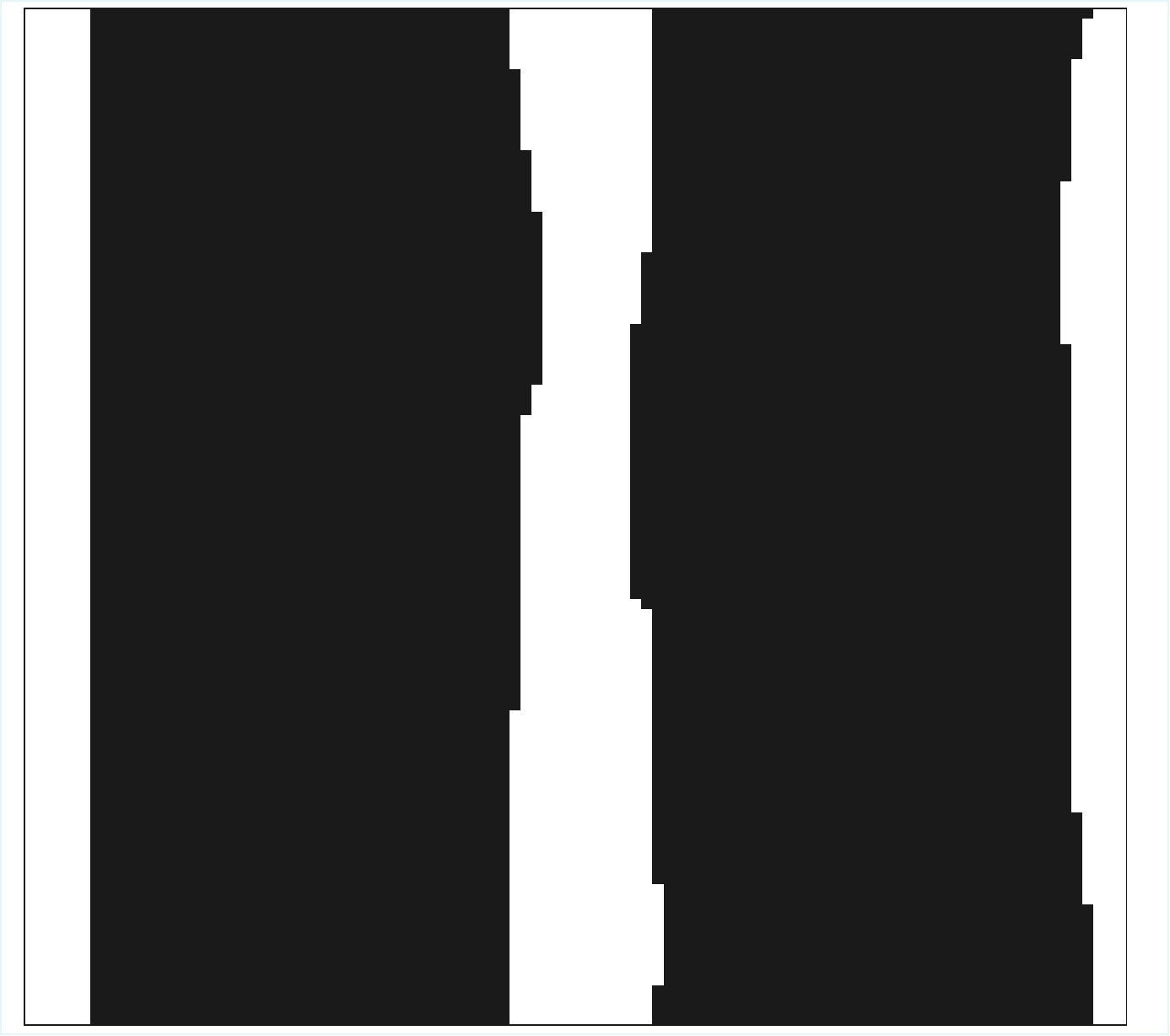}
\includegraphics[width=0.12\linewidth]
{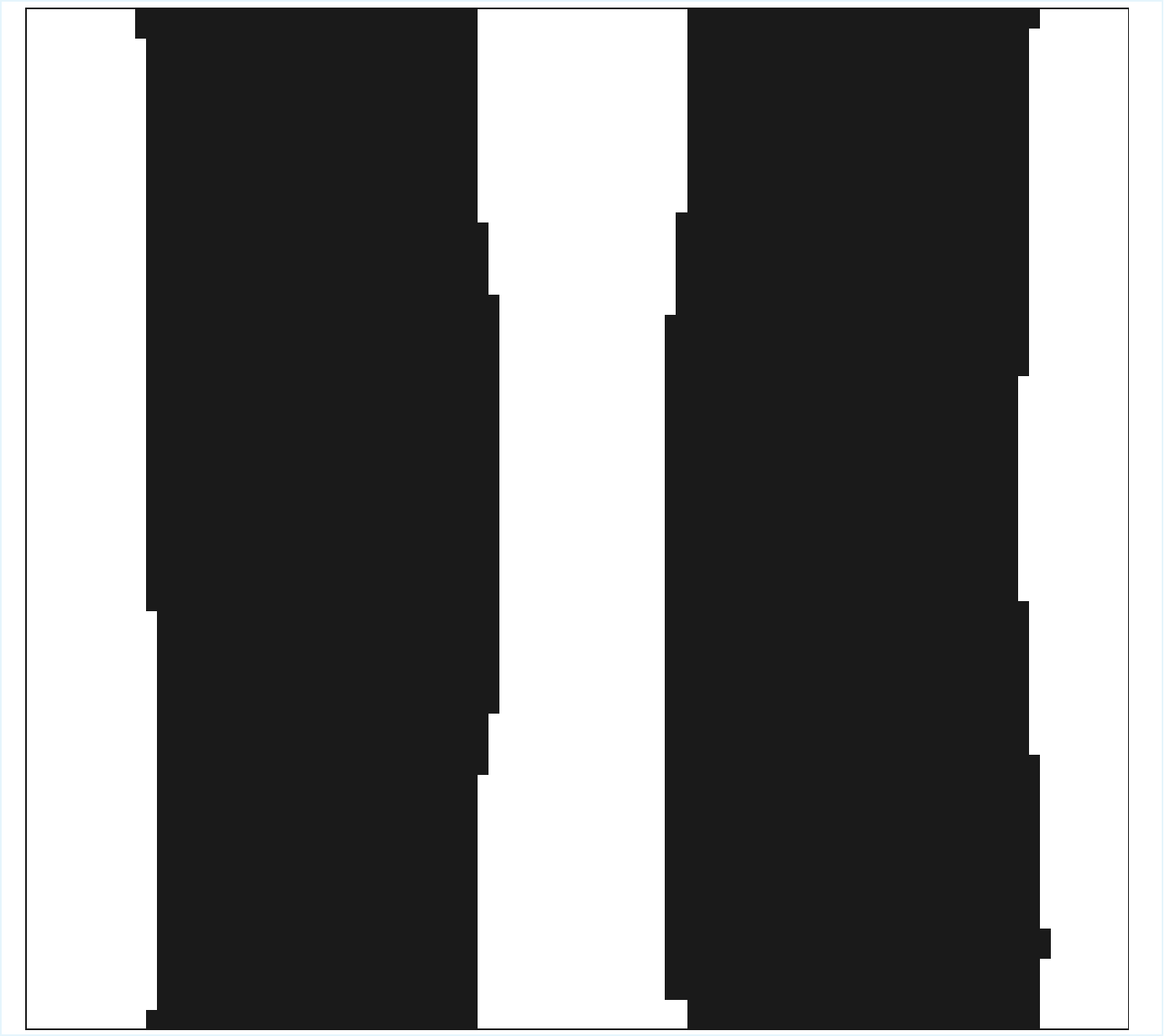}
\includegraphics[width=0.12\linewidth]
{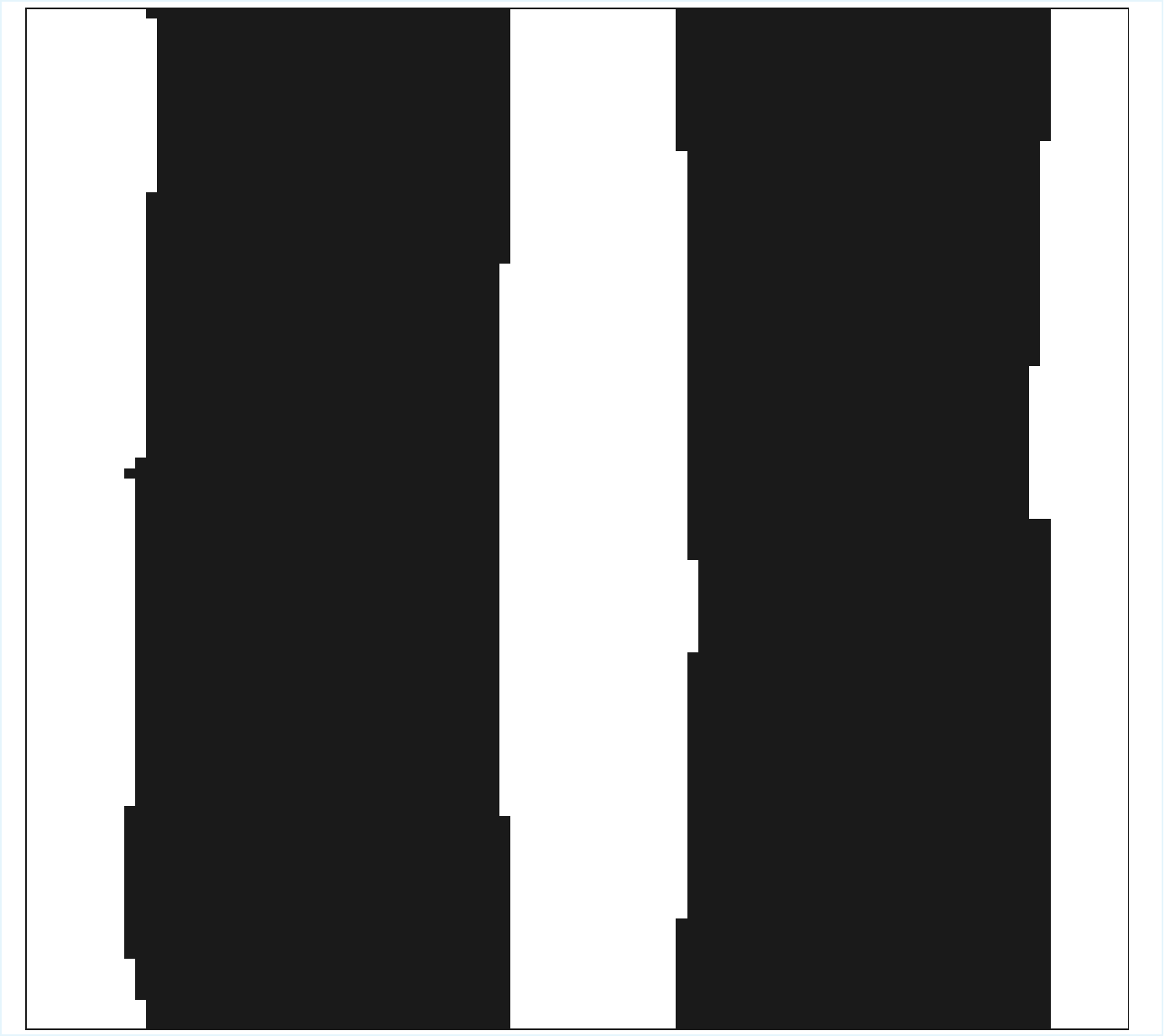}
\caption{Illustration of policies $\pi_1$ (upper row) and $\pi_2$ (bottom row) for $N=100$ with initial seeds two stripe of width
$3$ at distance $47$.
Left group: $\kappa = 5000$ 
at times $t = 200, 400,600$ (from left to right).
Right group:
$\kappa = 20,000$ at times 
$t = 50,100,150$ (from left to right).
}
\label{Two_strips_simulation_k5000}
\end{figure}

Another important observation is that the numerical
computations cannot help us determine which policy, among
$\pi_1$ and $\pi_2$, is the best, since the corresponding value
functions are so close that their difference is much smaller
than the statistical error.

On this basis, we may rely on the rigorous analytical result
to be established in Section~\ref{s:rigorous} and anticipated
in Fig.~\ref{Two_strips_policy_comparison}. The right panel of
the figure clearly indicates that the value functions of the
two policies intersect at $\lambda_\text{c}\approx 0.88$, showing
that $\pi_1$ is optimal for $\lambda>\lambda_\text{c}$, whereas
$\pi_2$ prevails for smaller values of $\lambda$. This outcome
is fully consistent with the discussion in
Section~\ref{s:valuemeaning}, where in Theorem~\ref{t:value010}
we proved that, for
$\lambda$ approaching $1$, maximizing the value function with
the reward \eqref{reward_function_isi} is equivalent to minimizing
the hitting time to the all-plus configuration. As observed
above, this hitting time under policy $\pi_1$ is shorter than
that obtained under the competing policy $\pi_2$.

On the other hand, recalling Theorem~\ref{t:value050}, 
we may argue that, when $\lambda$ is small,
only trajectories reaching the all-plus configuration within a
short time significantly contribute to the value function.
Therefore, $\pi_2$ is expected to be the policy capable of
selecting those trajectories that accomplish a short flight to
the all-plus configuration.

\begin{figure}[t]
\centering
\includegraphics[width=0.465\linewidth]
{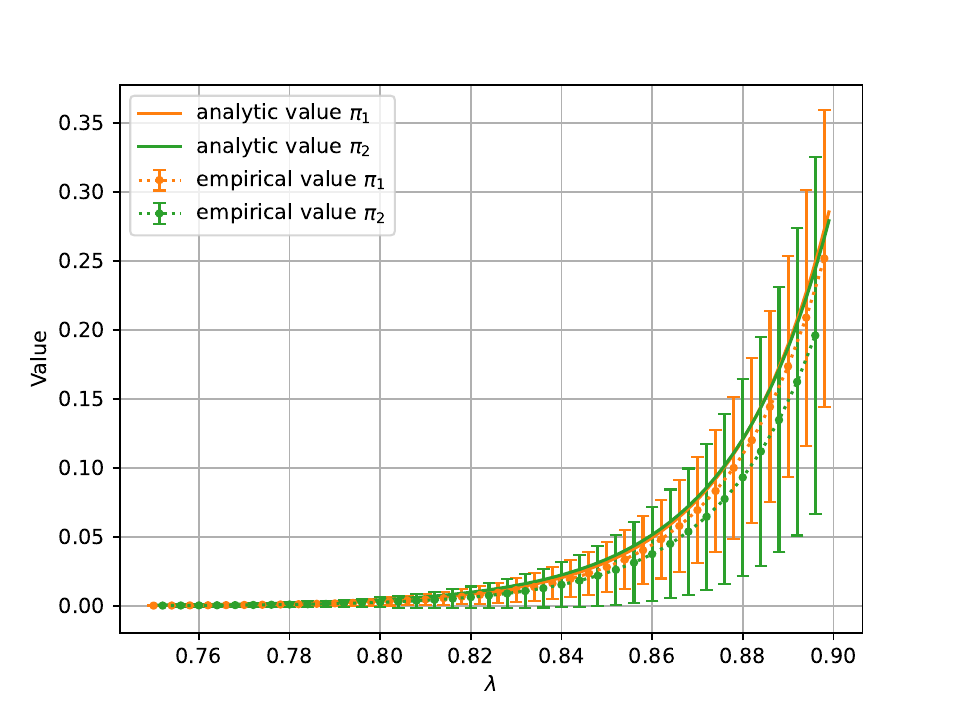}
\hspace{-1cm}
\raisebox{-0.5mm}{
\includegraphics[width=0.465\linewidth]
{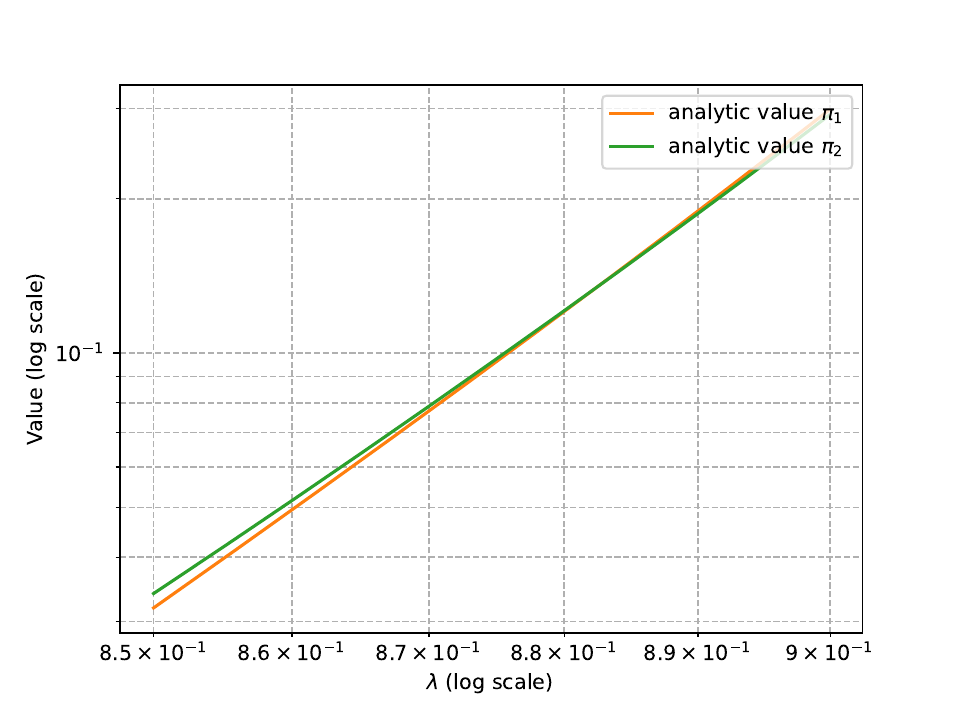}
}
\caption{Empirical and analytic values of policies $\pi_1$ and $\pi_2$
(left), and analytic values of policies $\pi_1$ and $\pi_2$ on
a log scale (right), for $N = 32$ and $\kappa = 100,000$, with
initial seeds consisting of two stripes of width $3$ at a
distance of $13$. Solid
lines indicate the analytic values, while dots with error bars
represent the numerical estimates. Orange and green are
respectively associated with policies $\pi_1$ and $\pi_2$.
}
\label{Two_strips_policy_comparison}
\end{figure}

\subsection{The stripe-droplet case}
We proceed to investigate the structure of the optimal policy for configurations in which only one of the droplets forms a stripe that is wrapped around the torus. We denote the set of such configurations by $U^{2,y}$. 

\subsubsection{The auxiliary MDP}
We again construct an auxiliary MDP for this particular type of configurations, which provides a compact description of the control problem. Let this MDP be denoted by $(S^y, A^y, P^y, r^y)$. Here, the state space $S^y$ is defined as 
\begin{equation*}
    S^y = \{(i,j,k)|i,j,k = 0, 2, 3, \ldots, N\}.
\end{equation*}
A state $(i, j, k) \in S^y$ is a representation of the set of configurations 
in which the distances between the stripe and the rectangle equal $i$ and $j$, and
$k=N-\ell$, where $\ell$ is the side length of the rectangle in the direction 
parallel to the stripe. In other words, $k$ is 
the distance between the two boundaries of the rectangle 
measured around the torus. Here, a state $(i,j,0) \in S^y$ is equivalent to state $(i,j) \in S^x$, in the sense that they represent the same set of configurations. The state space $S^y$ is illustrated in Figure \ref{Aux_spaces_strip_droplet}.

Let the action space $A^y$ be defined as 
\begin{equation*}
    A^y = \{a_{\ell 1}, a_{\ell 2}, a_{s1}, a_{s2}, a_{d1}, a_{d2}, a_0, 0\}.
\end{equation*}
Here, $a_{\ell1}$ and $a_{\ell2}$ represent the sets of sites at distance 1 and 2 respectively from either the stripe or the droplet in the longest gap between the stripe and the droplet. Similarly, $a_{s1}$ and $a_{s2}$ represent the sets of sites at distance 1 and 2 respectively from either the stripe or the droplet in the shortest gap between the two. The actions $a_{d1}$ and $a_{d2}$ represent the sets of sites at distance 1 or 2 from the sides of the droplet that are perpendicular to the front of the stripe. Finally, action $a_0$ represents the set of sites that are diagonally adjacent to the droplet. The action space $A^y$ is illustrated in Figure \ref{Aux_spaces_strip_droplet}.

\subsubsection{Candidates for optimality}
\label{s:sd-opt}
For this scenario, we conduct a numerical comparison between policies $\pi_1 = (d_1)^{\infty}$, $\pi_2 = (d_2)^{\infty}$, $\pi_3 = (d_3)^{\infty}$ and $\pi_4 = (d_4)^{\infty}$, defined in the auxiliary MDP, where $d_q(i,j, k) \in A_q(i,j, k)$, $q = 1, 2, 3, 4$. Here, the mappings $A_q: S^y \rightarrow P(A^y)$, $q = 1, 2, 3, 4$, again associate to each state $s \in S^y$ a corresponding set of actions, defined for states $(i,j, k) \in S^y$ as follows:
\begin{align*}
    A_1(i,j,k) &= \{a_{s1}, a_{\ell 1}\}, \\
    A_2(i,j,k) &= \begin{cases}
        \{a_{d1}\}, &\text{if } k \neq 0, \\
        \{a_{\ell 1}, a_{s1}\}, &\text{if } k = 0, 
    \end{cases}\\
    A_3(i,j,k) &= \begin{cases}
        \{a_{d1}, a_{\ell 1}, a_{s1}\}, &\text{if } k \neq 0, \\
        \{a_{\ell 1}, a_{s1}\}, &\text{if } k = 0, 
    \end{cases}\\
    A_4(i,j,k) &= \begin{cases}
        \{a_0, a_{\ell 1}, a_{s1}\}, &\text{if } k \neq 0, \\
        \{a_{\ell 1}, a_{s1}\}, &\text{if } k = 0, 
    \end{cases}
\end{align*}

\begin{figure}
\centering
\includegraphics[width=0.31\linewidth]
{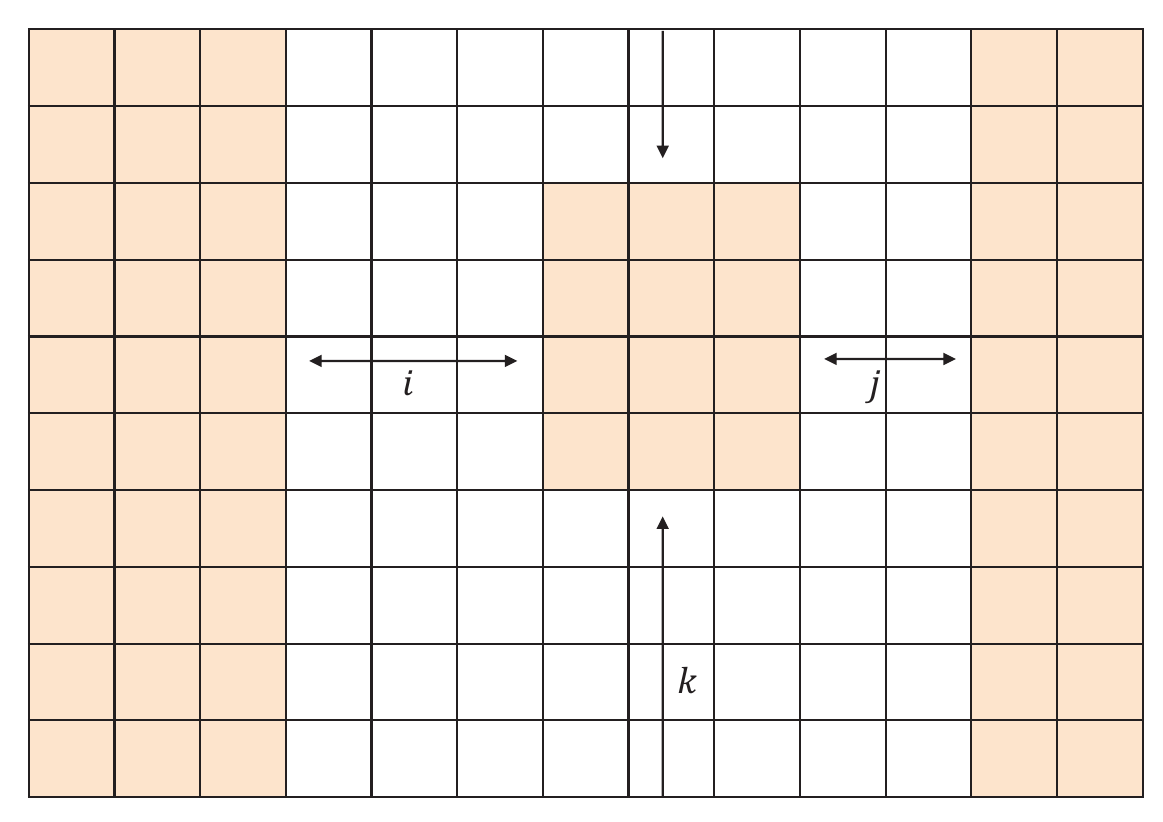}\hspace{2mm}
\raisebox{-0.5mm}{
\includegraphics[width=0.4\linewidth]
{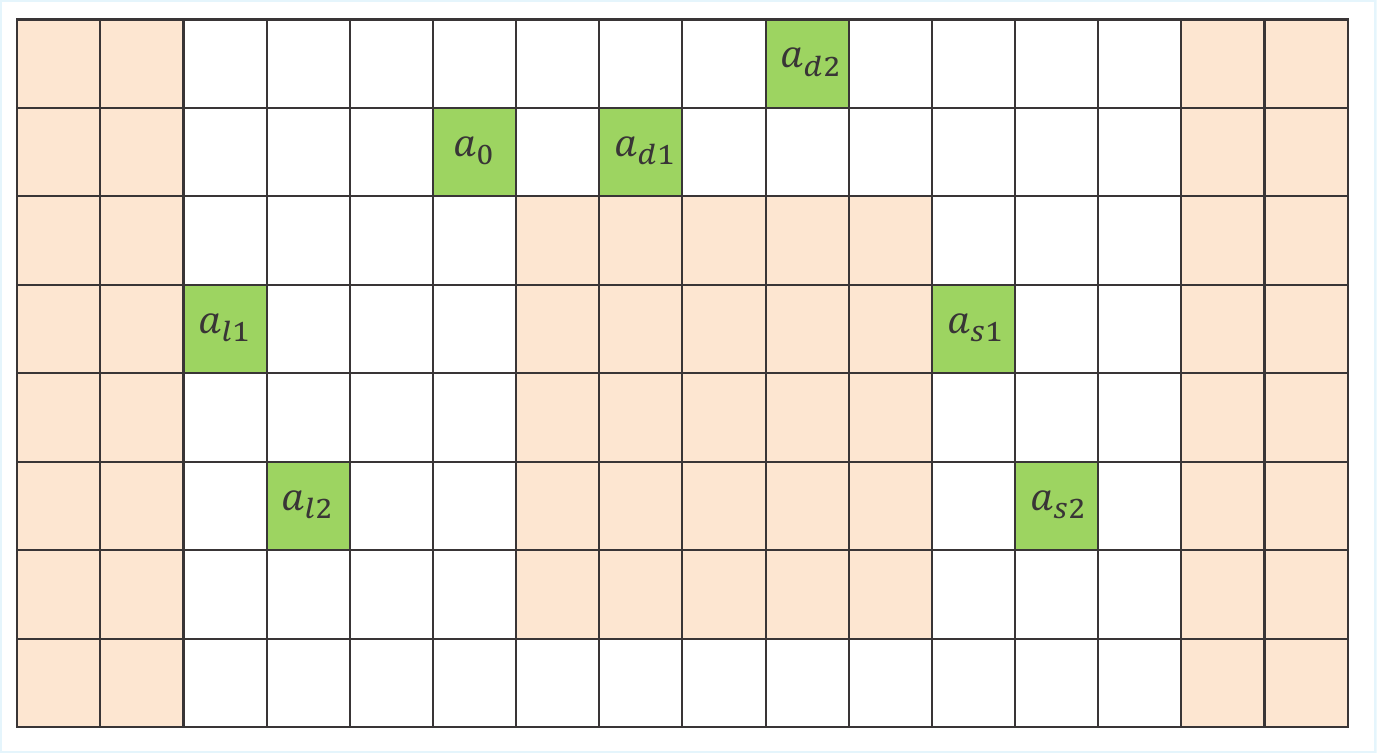}
}
\caption{Illustration of state space $S^y$ (left) and action space $A^y$ (right).}
\label{Aux_spaces_strip_droplet}
\end{figure}

As in the two-stripe case, we use randomized versions of decision rules $d_q$, $q = 1, 2, 3, 4$, to simulate the policies. Under policy $\pi_1$, the stripe and the droplet grow simultaneously towards each other until they meet. Policy $\pi_2$ causes the droplet to first grow into a stripe, after which the two stripes grow in each others direction. Under policy $\pi_3$, the droplet grows into a stripe, while the stripe grows in the direction of the droplet until the two components form a single stripe. Finally, policy $\pi_4$ causes the droplet to grow diagonally, while the stripe expands in the direction of the droplet. The four policies are illustrated in Fig.~\ref{policies_strip_droplet}.

\begin{figure}
\centering
\includegraphics[width=0.2\linewidth]
{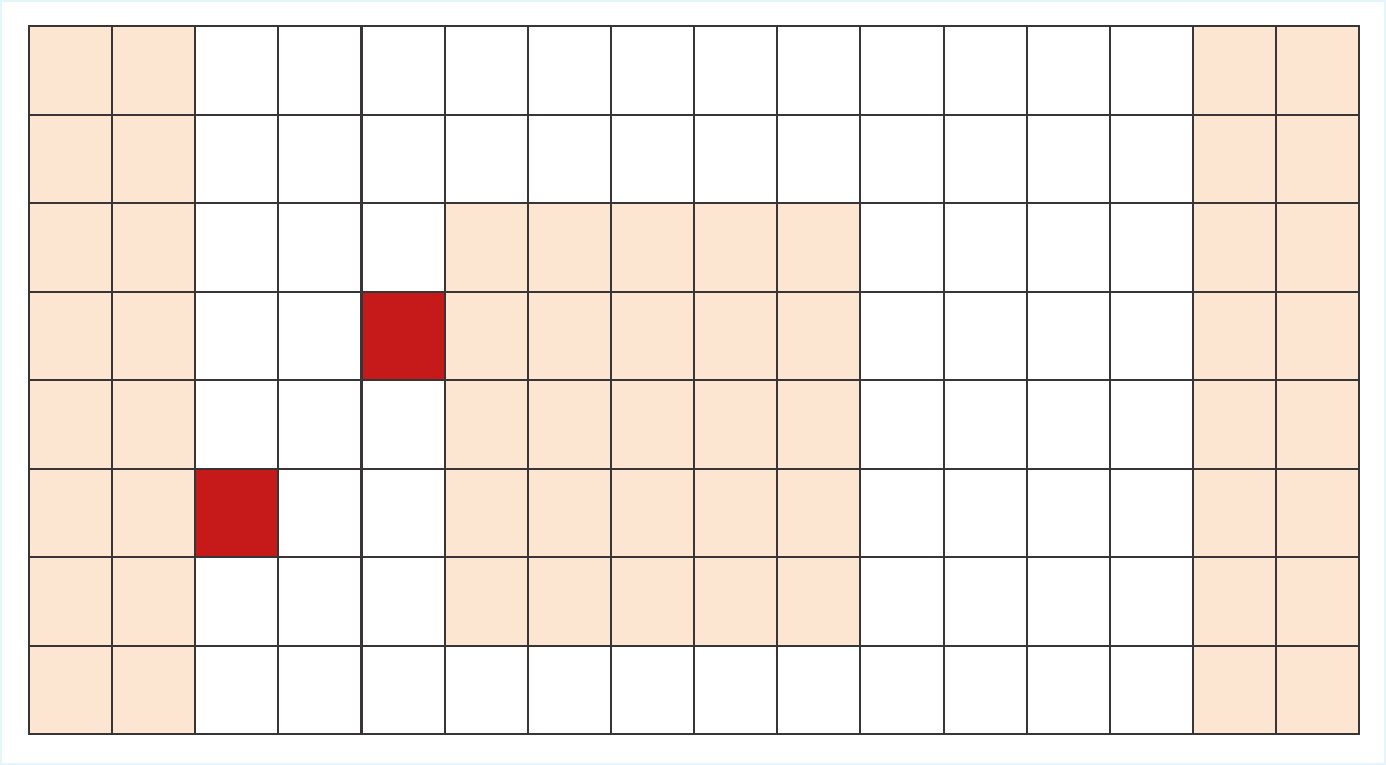}\hspace{2mm}
\includegraphics[width=0.2\linewidth]
{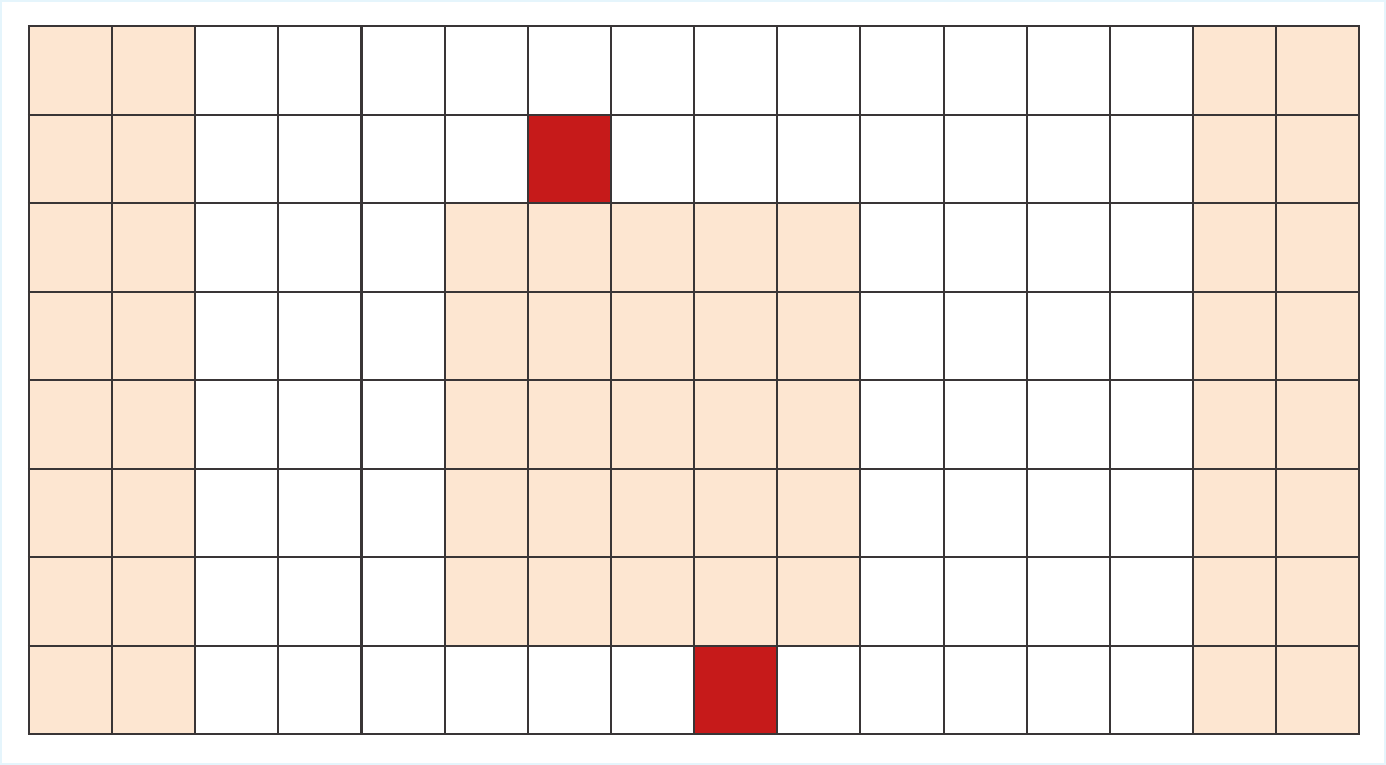}
\includegraphics[width=0.2\linewidth]
{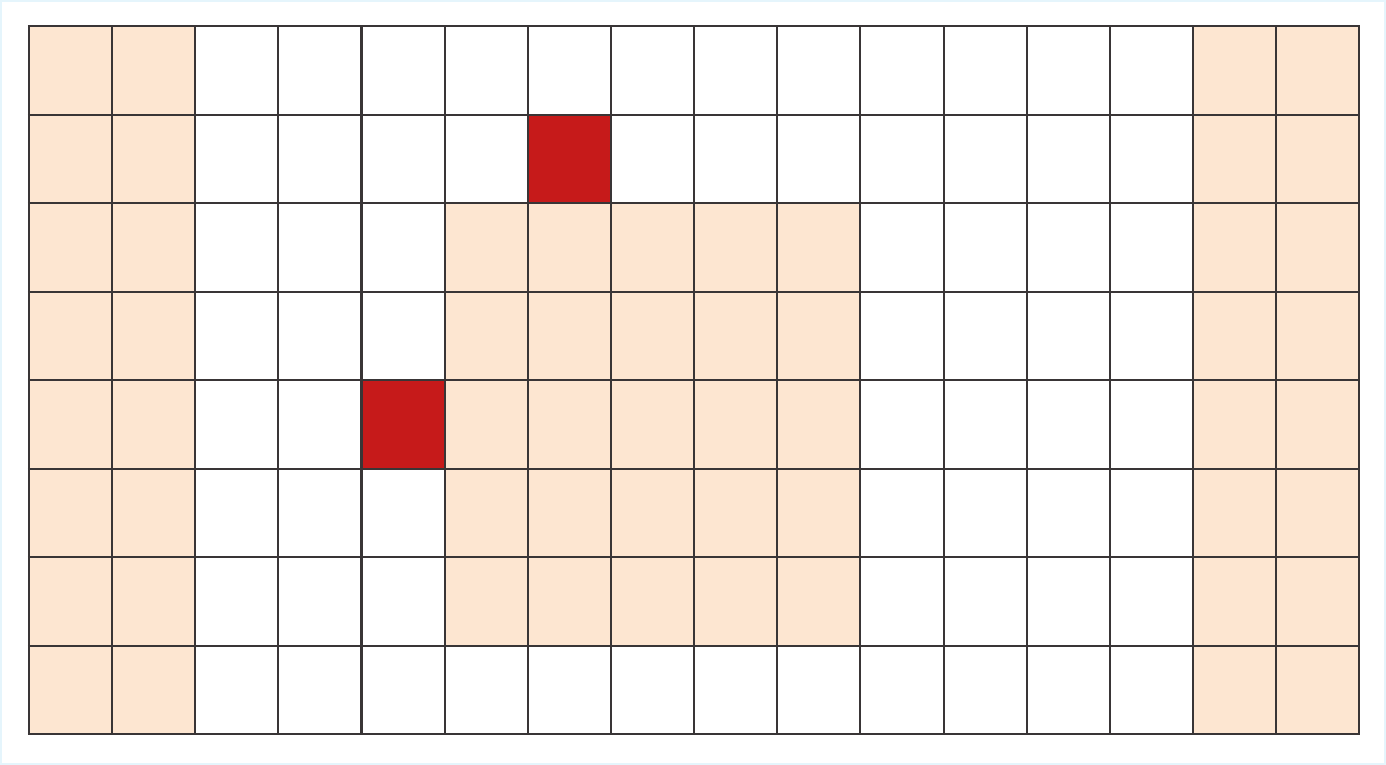}\hspace{2mm}
\includegraphics[width=0.2\linewidth]
{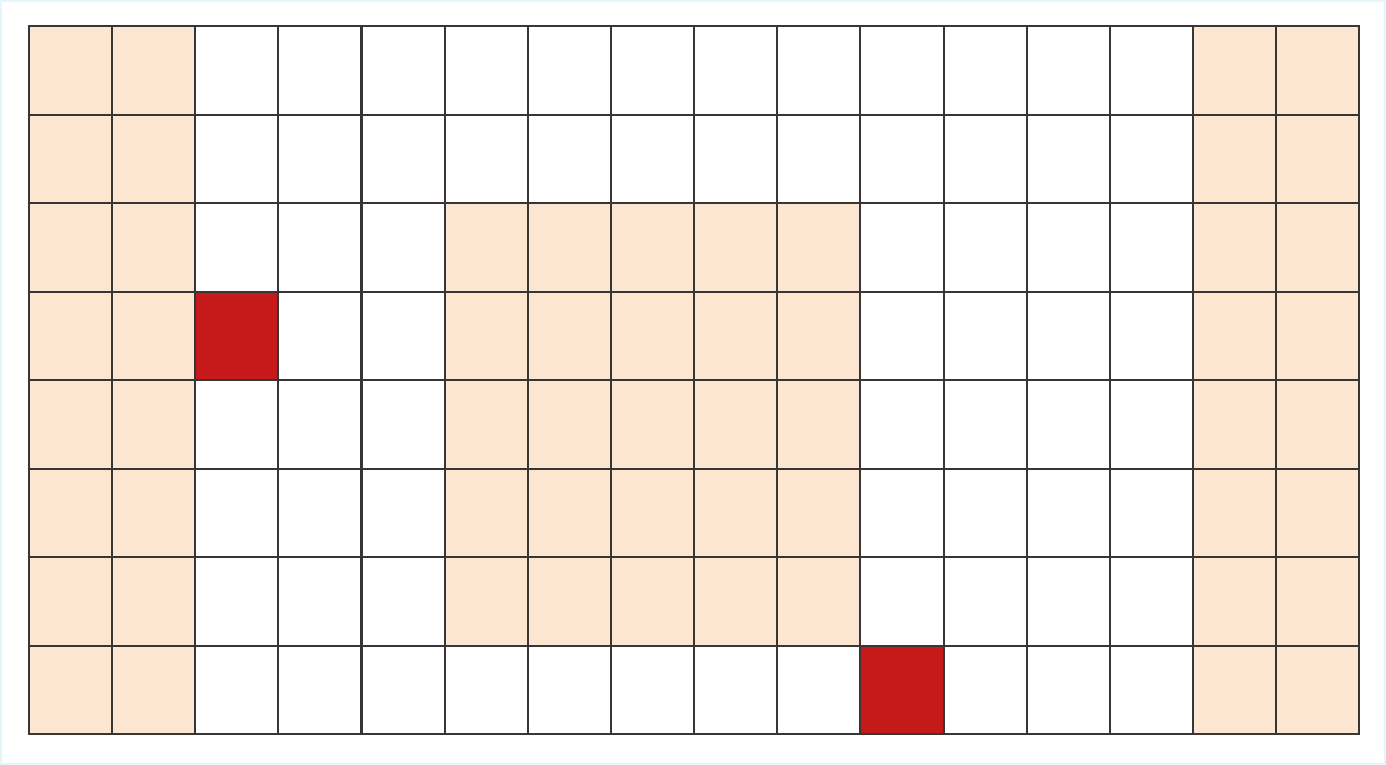}
\caption{Illustration of policies $\pi_1$, $\pi_2$, $\pi_3$, and $\pi_4$ 
(from left to right).}
\label{policies_strip_droplet}
\end{figure}

\subsubsection{Discussion of results}
\label{s:sd-res}

The behavior of the system under the four candidate policies is illustrated in Figure \ref{Strip_droplet_simulations} for $N = 100$ and $\kappa = 5,000$ (left group) and $\kappa = 20,000$ (right group). Again, we observe that for larger $\kappa$, the system evolves more accurately in accordance with a stripe-rectangle structure. 

As in the two-stripe case, we assess the performance of each of the candidate policies by measuring the mean hitting time and mean value across 2,000 independent realizations of the MDP. This simulation study is based on $N = 32$ and $\kappa = 100,000$, with initial seeds one stripe of width 3 and one 3x3 droplet at distance 13.  

\begin{figure}
\centering
\includegraphics[width=0.12\linewidth]
{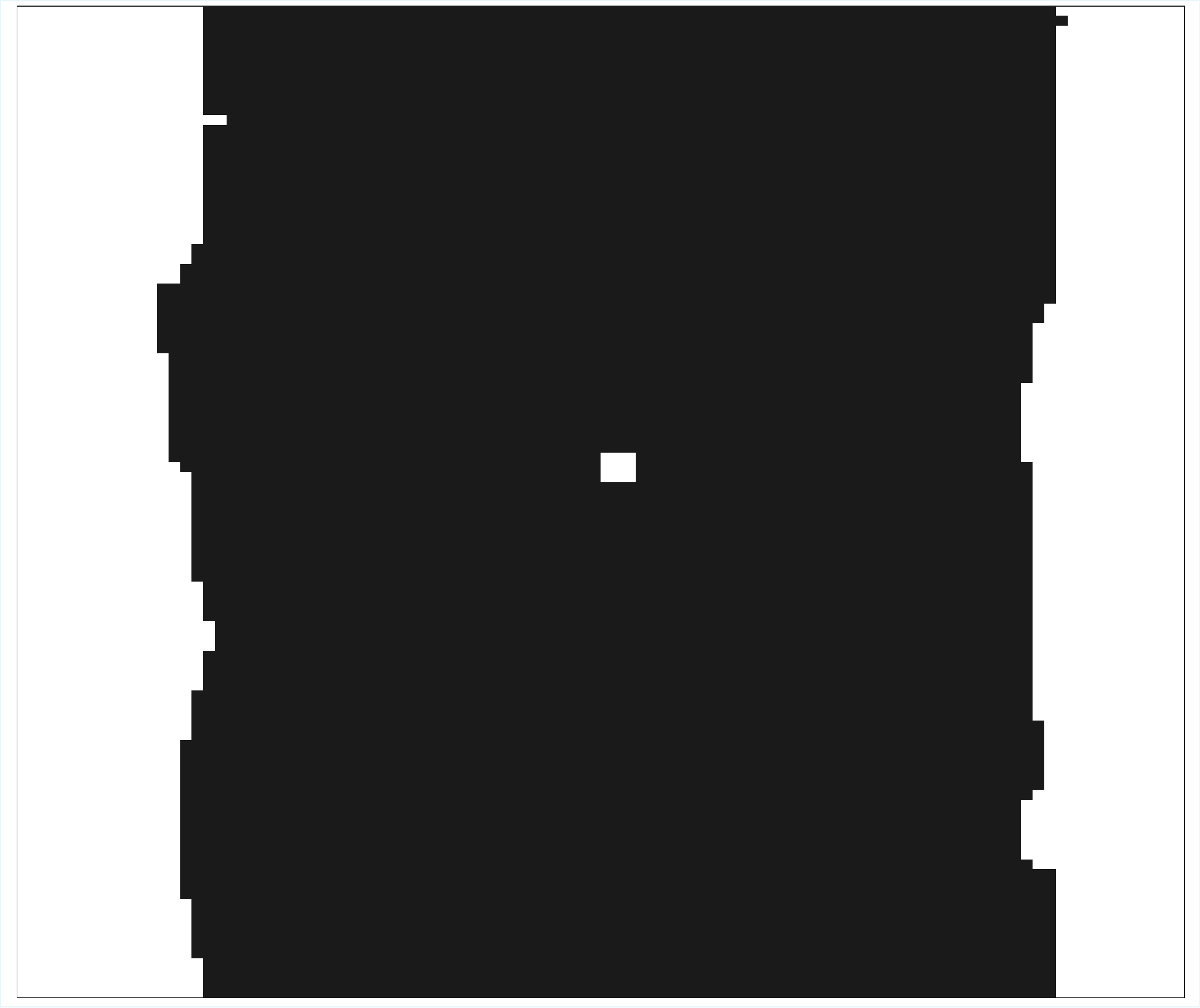}
\includegraphics[width=0.12\linewidth]
{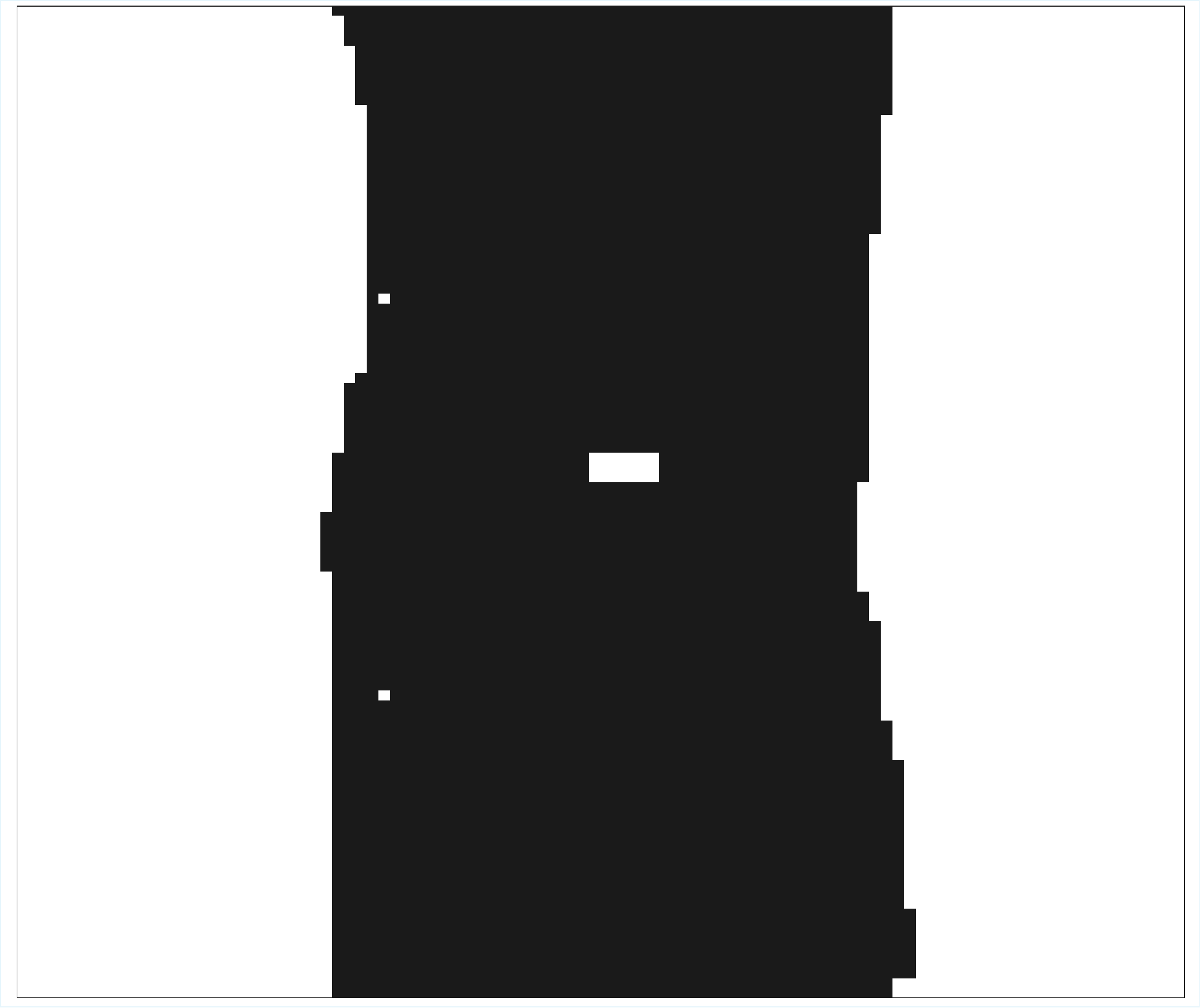}
\includegraphics[width=0.12\linewidth]
{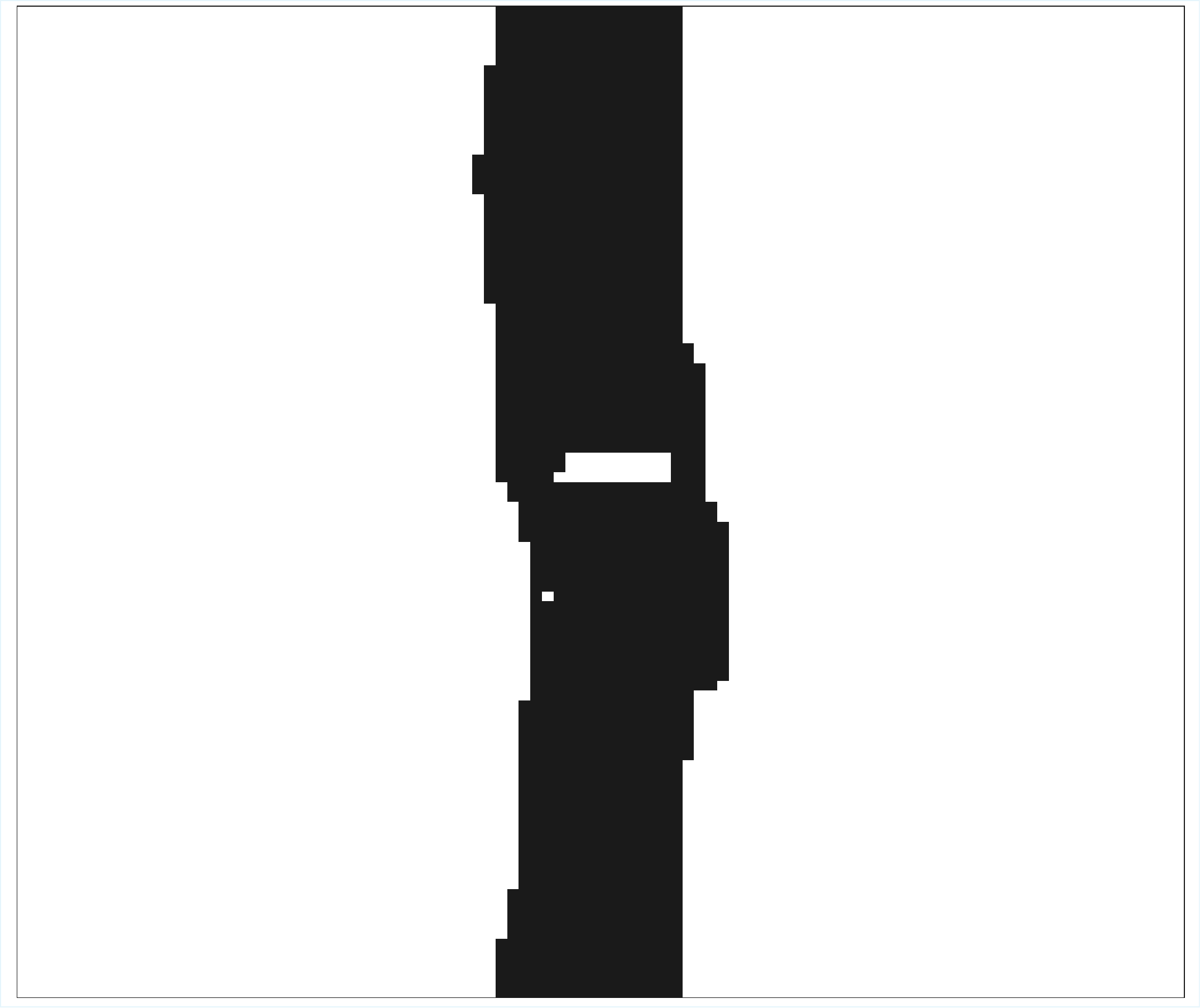}
\hspace{1.cm}
\includegraphics[width=0.12\linewidth]
{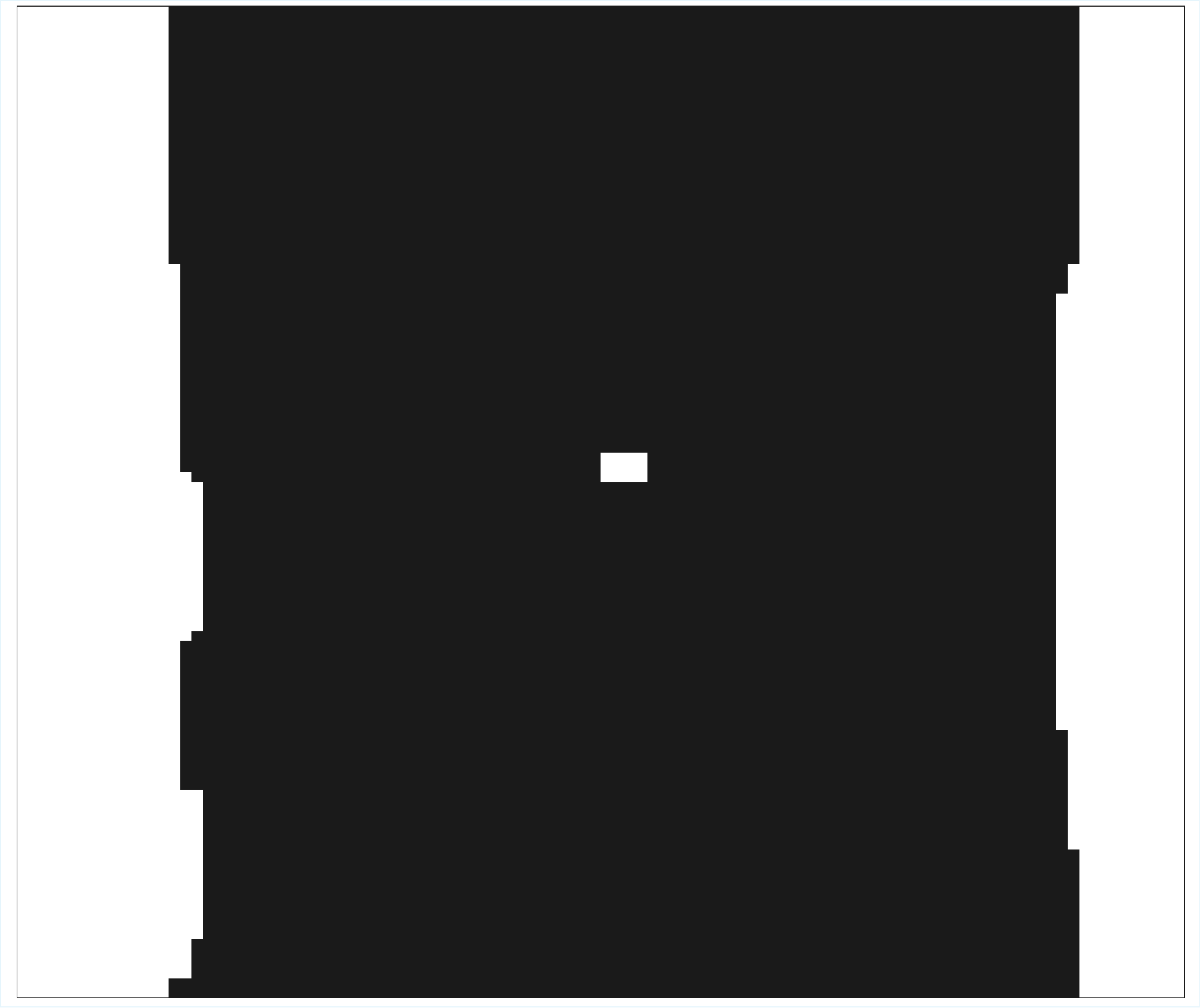}
\includegraphics[width=0.12\linewidth]
{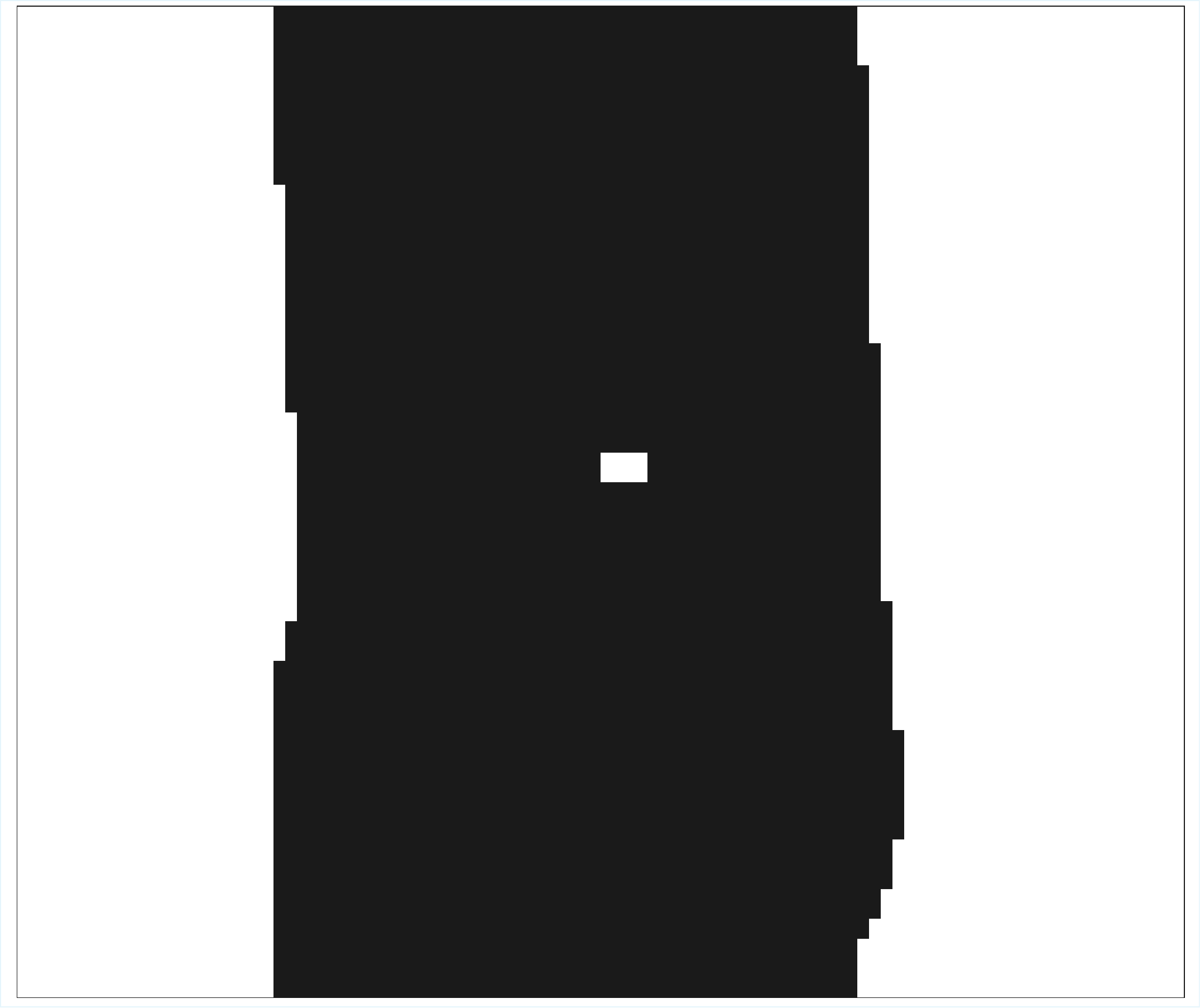}
\includegraphics[width=0.12\linewidth]
{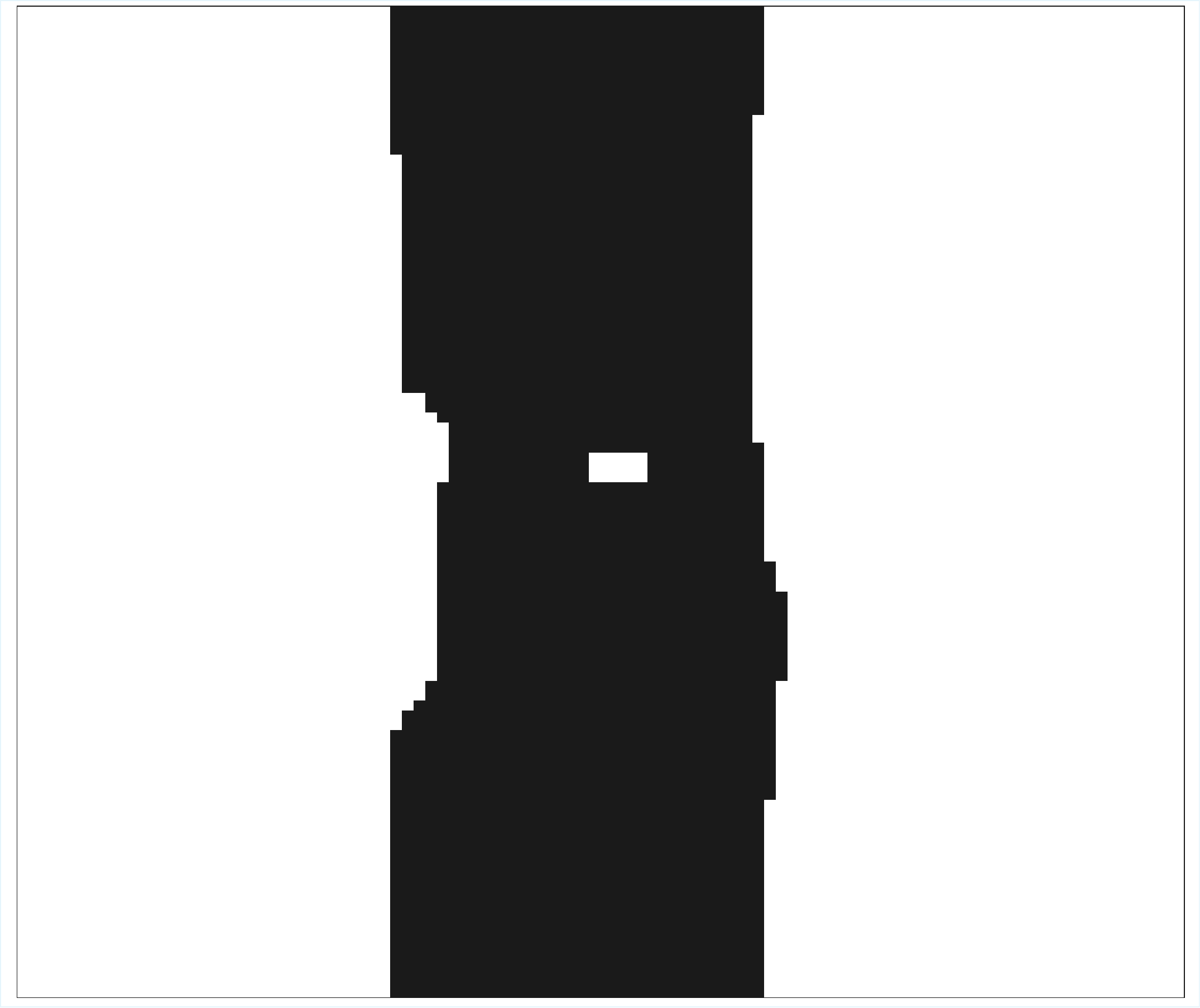}
\\
\vspace{1mm}
\includegraphics[width=0.12\linewidth]
{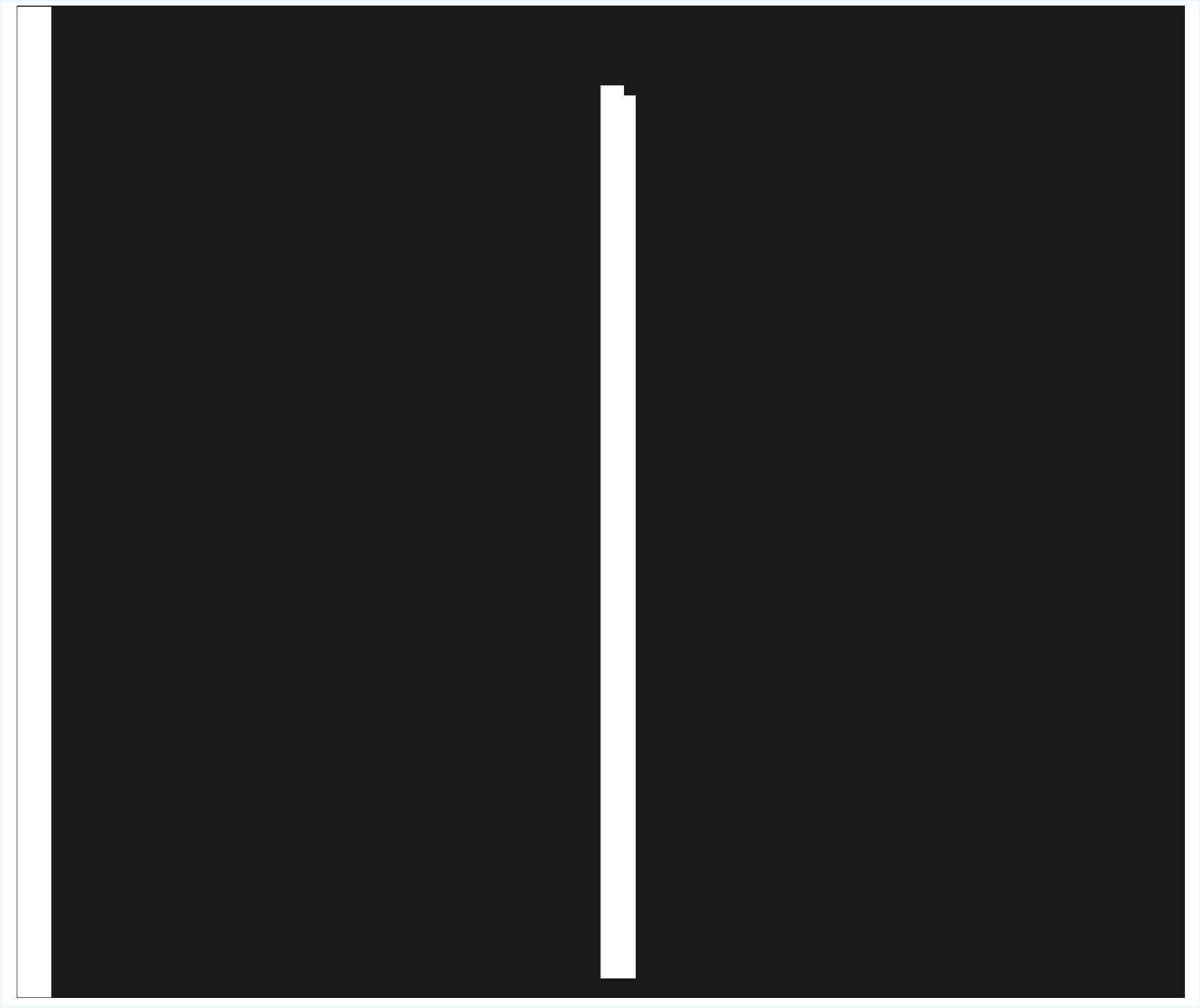}
\includegraphics[width=0.12\linewidth]
{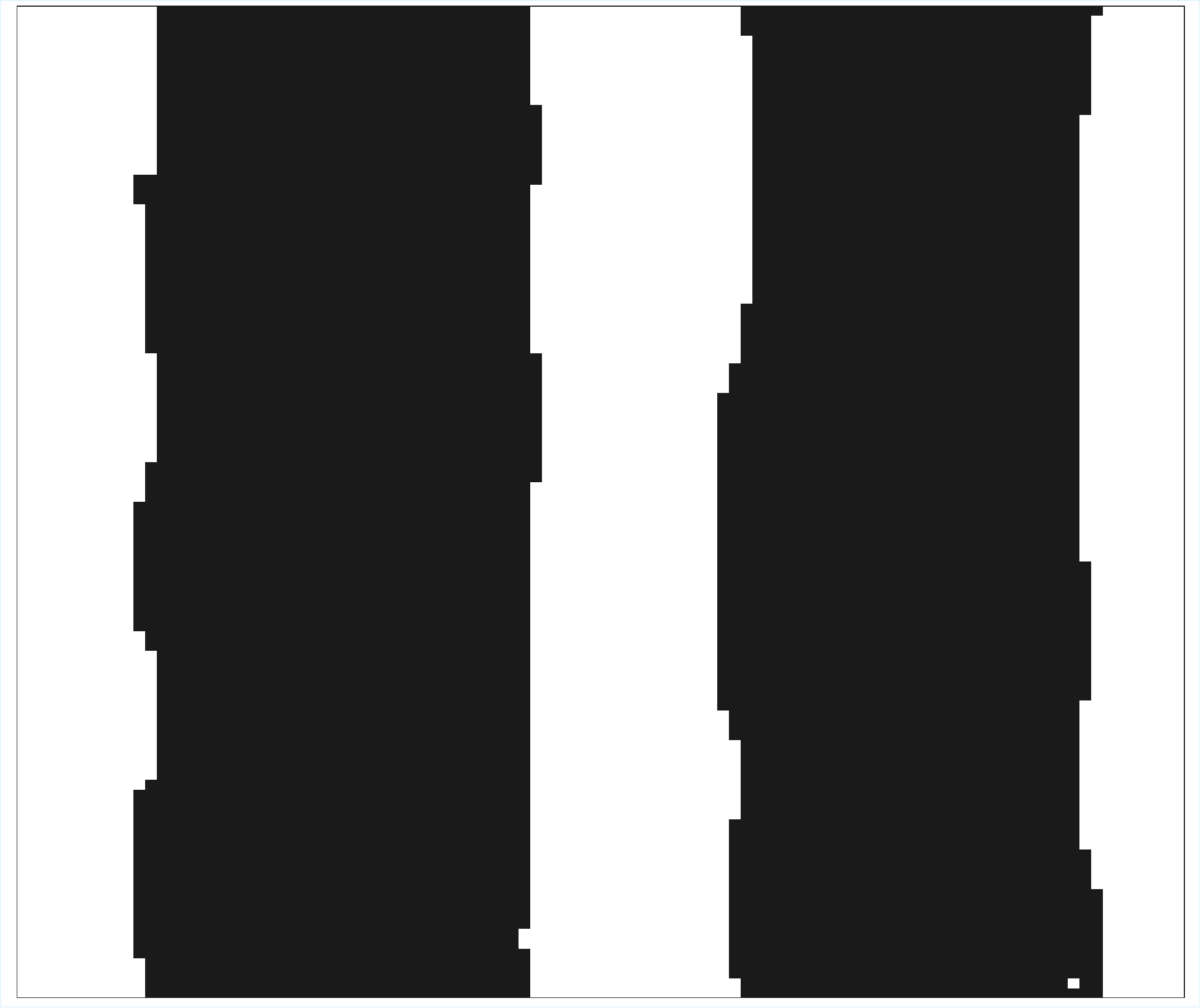}
\includegraphics[width=0.12\linewidth]
{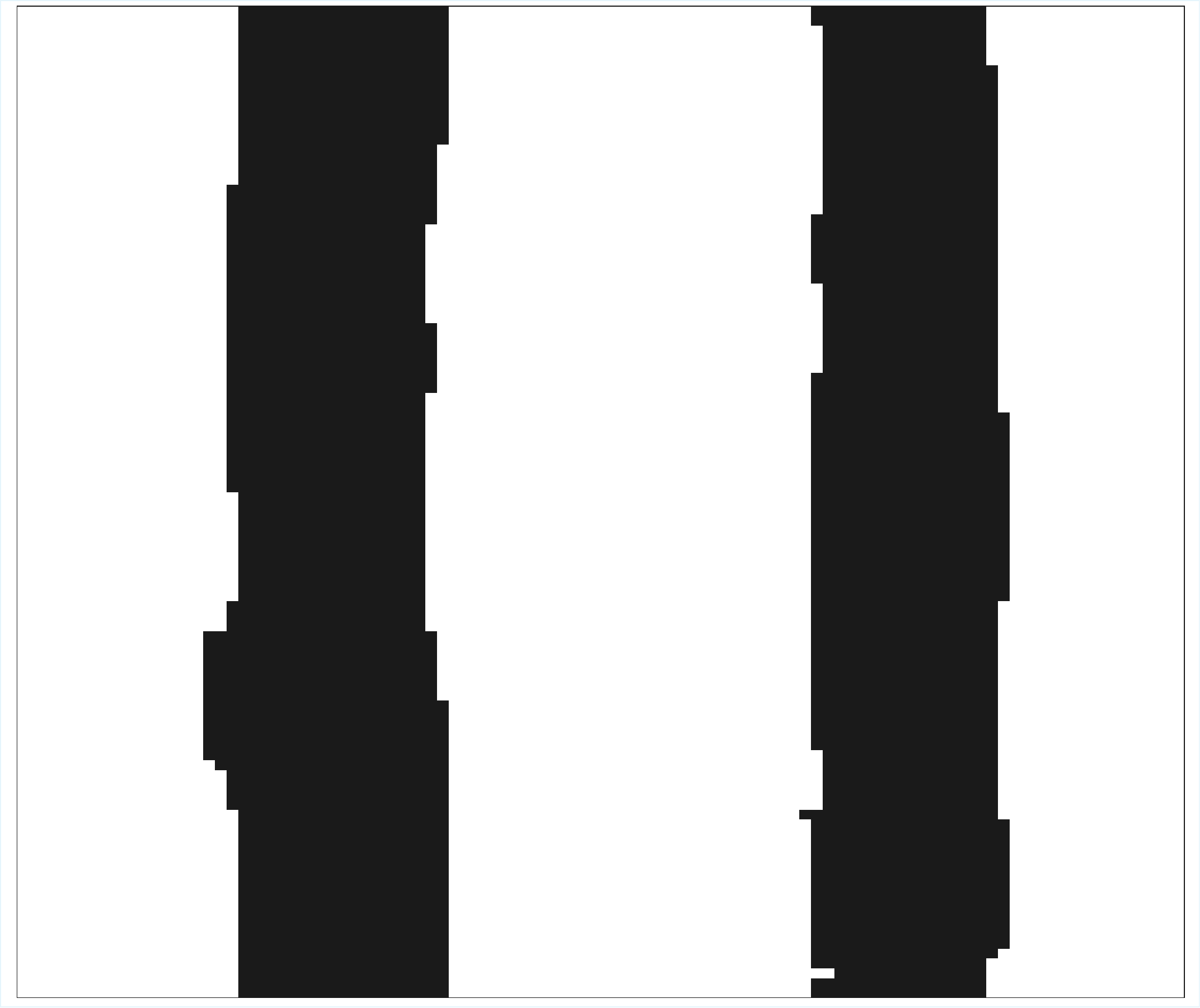}
\hspace{1.cm}
\includegraphics[width=0.12\linewidth]
{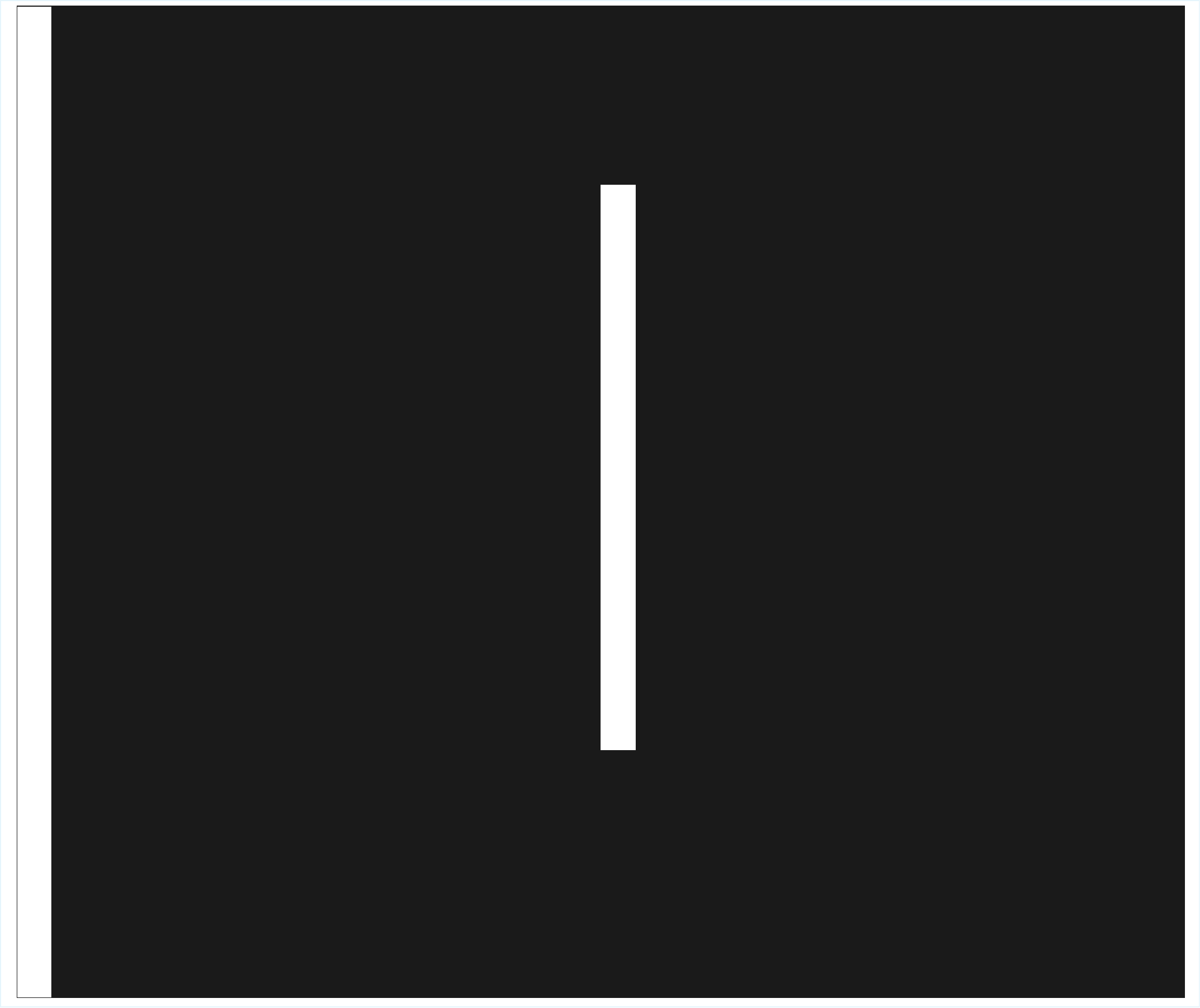}
\includegraphics[width=0.12\linewidth]
{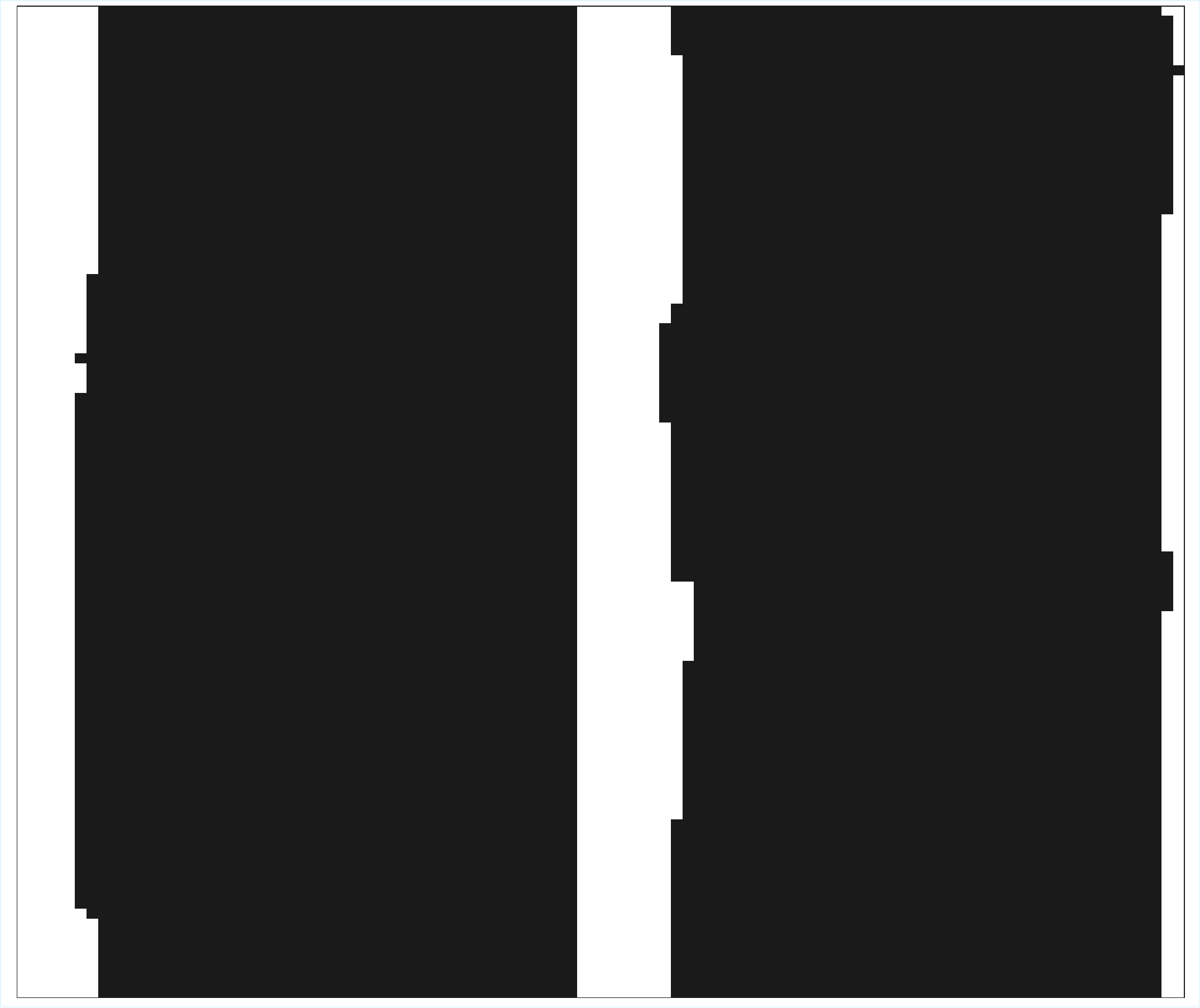}
\includegraphics[width=0.12\linewidth]
{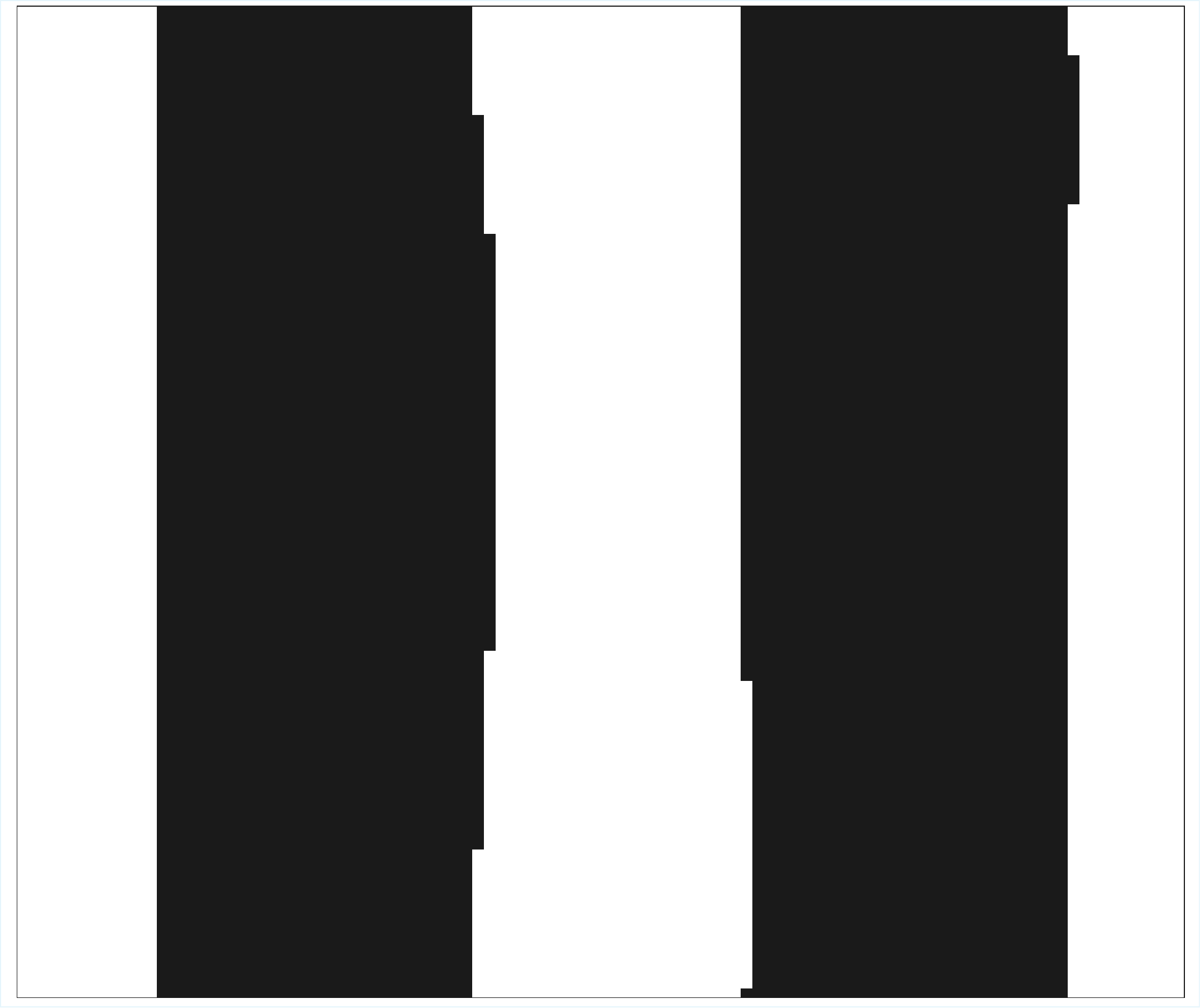}
\\
\vspace{1mm}
\includegraphics[width=0.12\linewidth]
{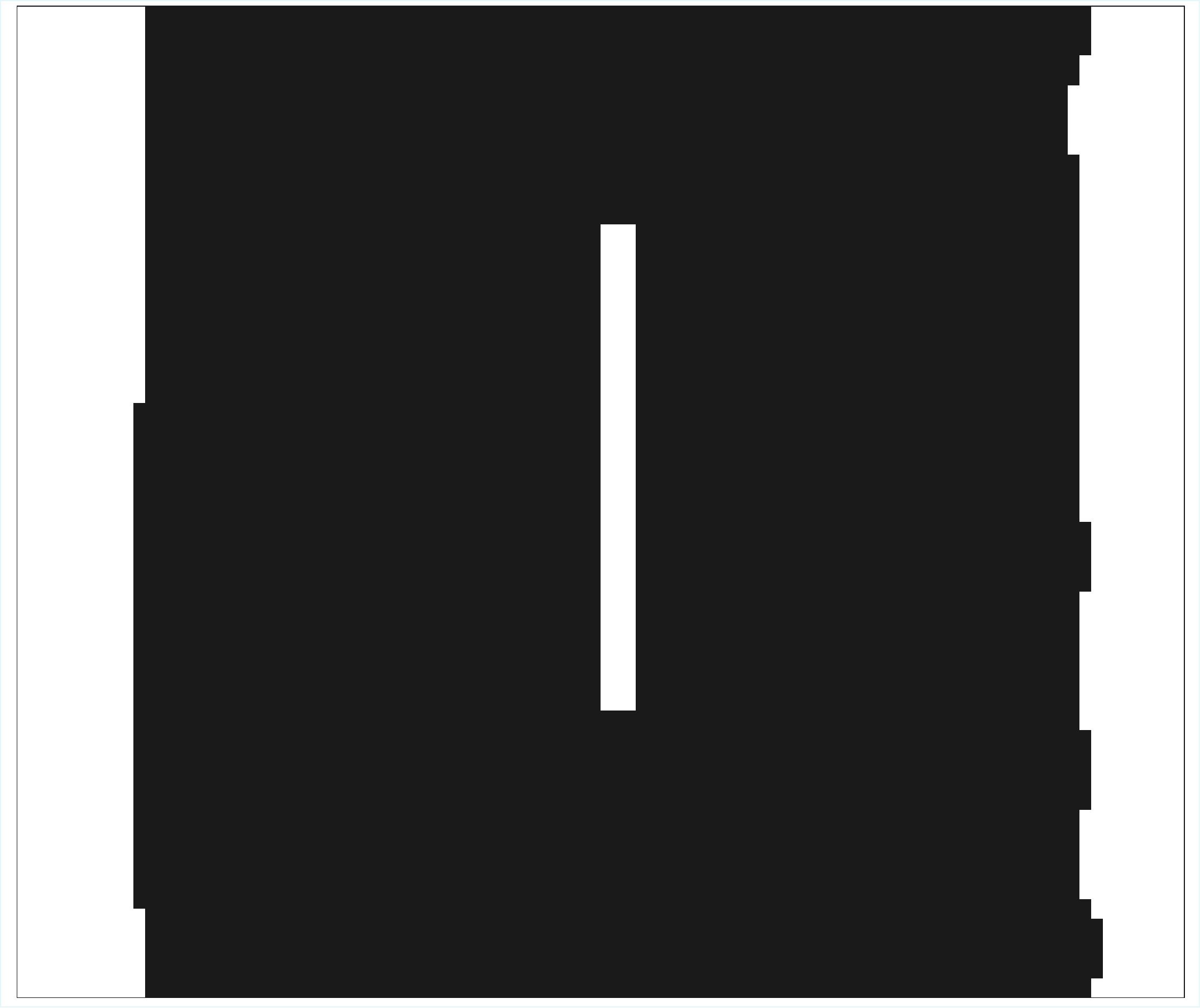}
\includegraphics[width=0.12\linewidth]
{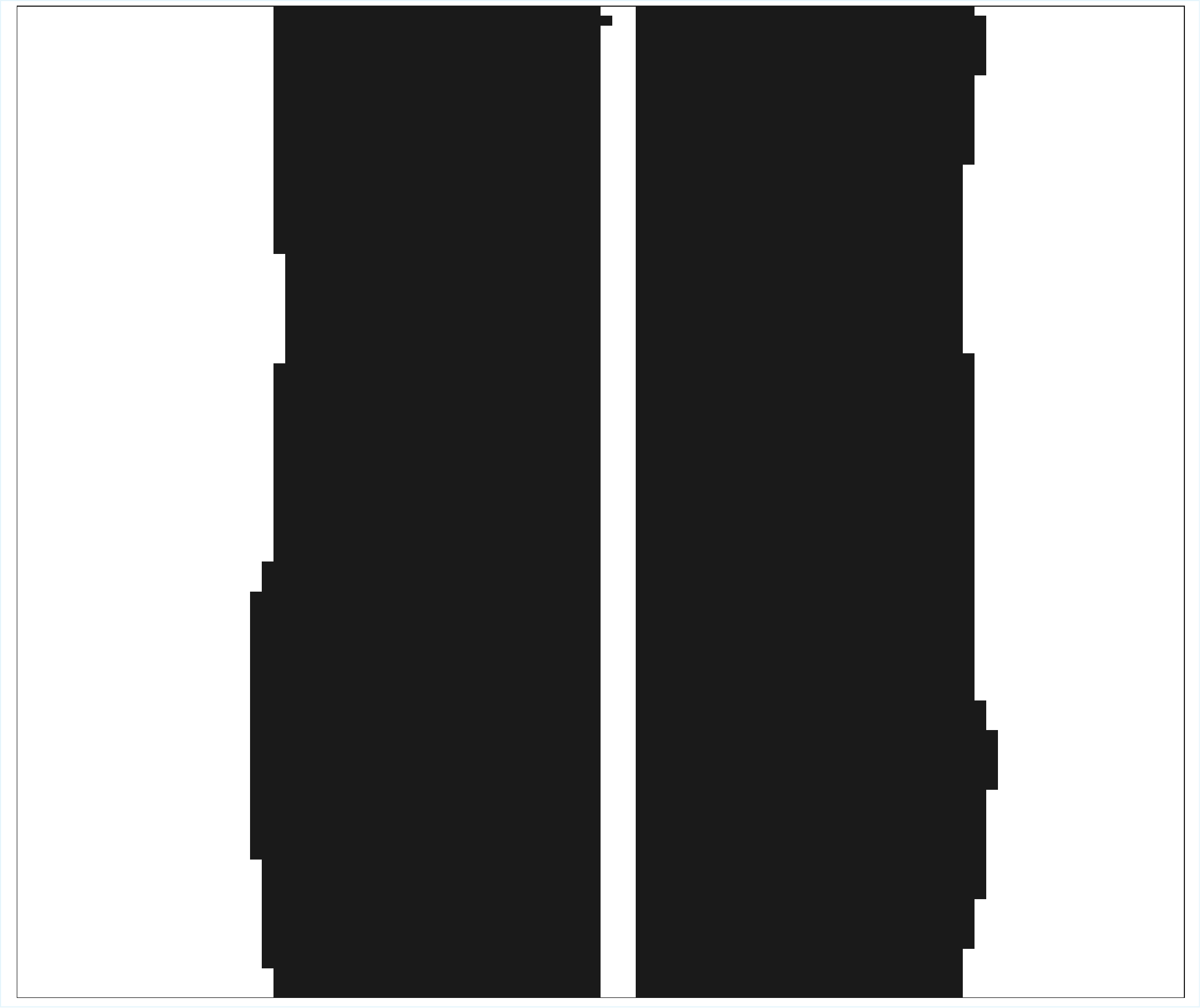}
\includegraphics[width=0.12\linewidth]
{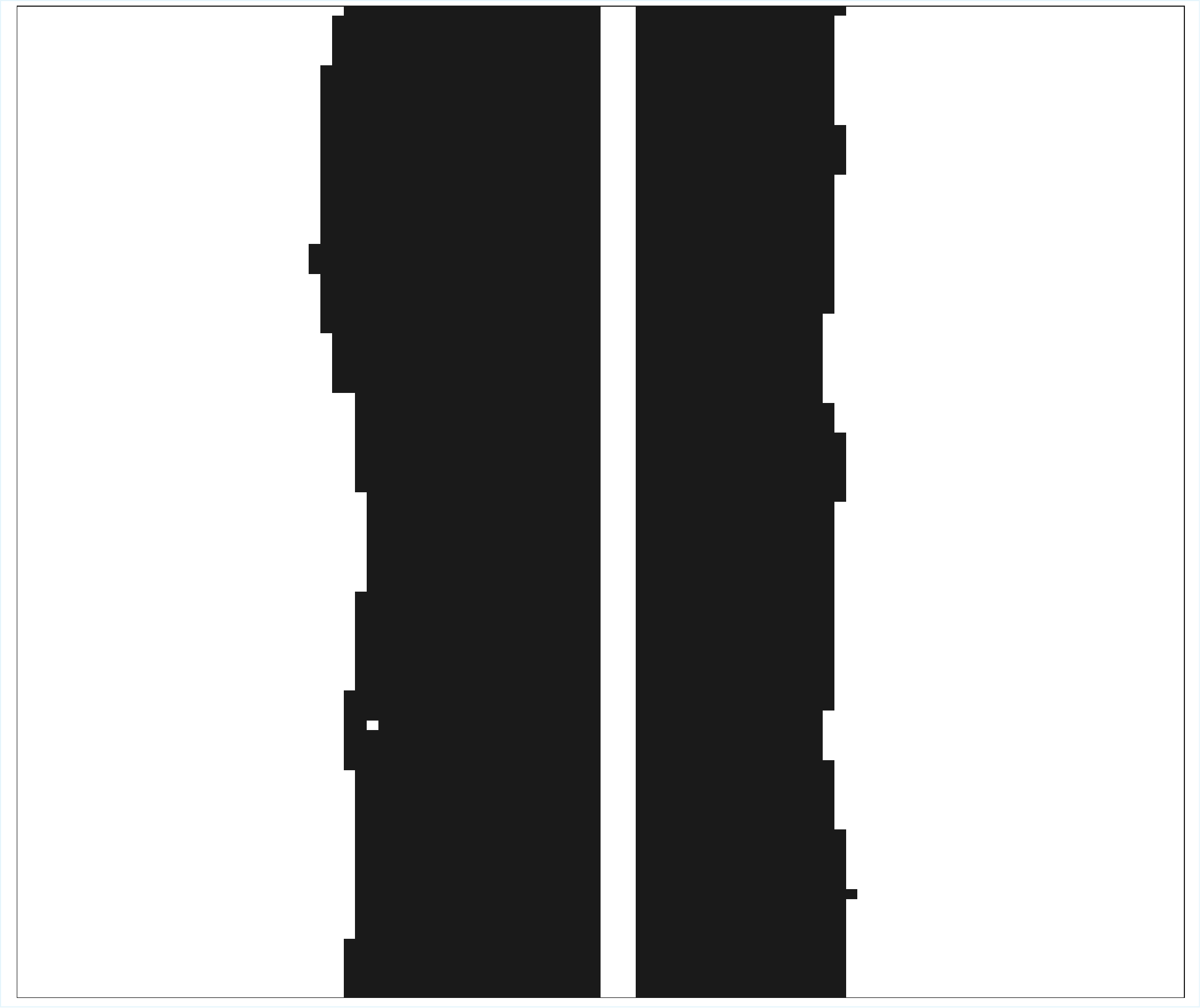}
\hspace{1.cm}
\includegraphics[width=0.12\linewidth]
{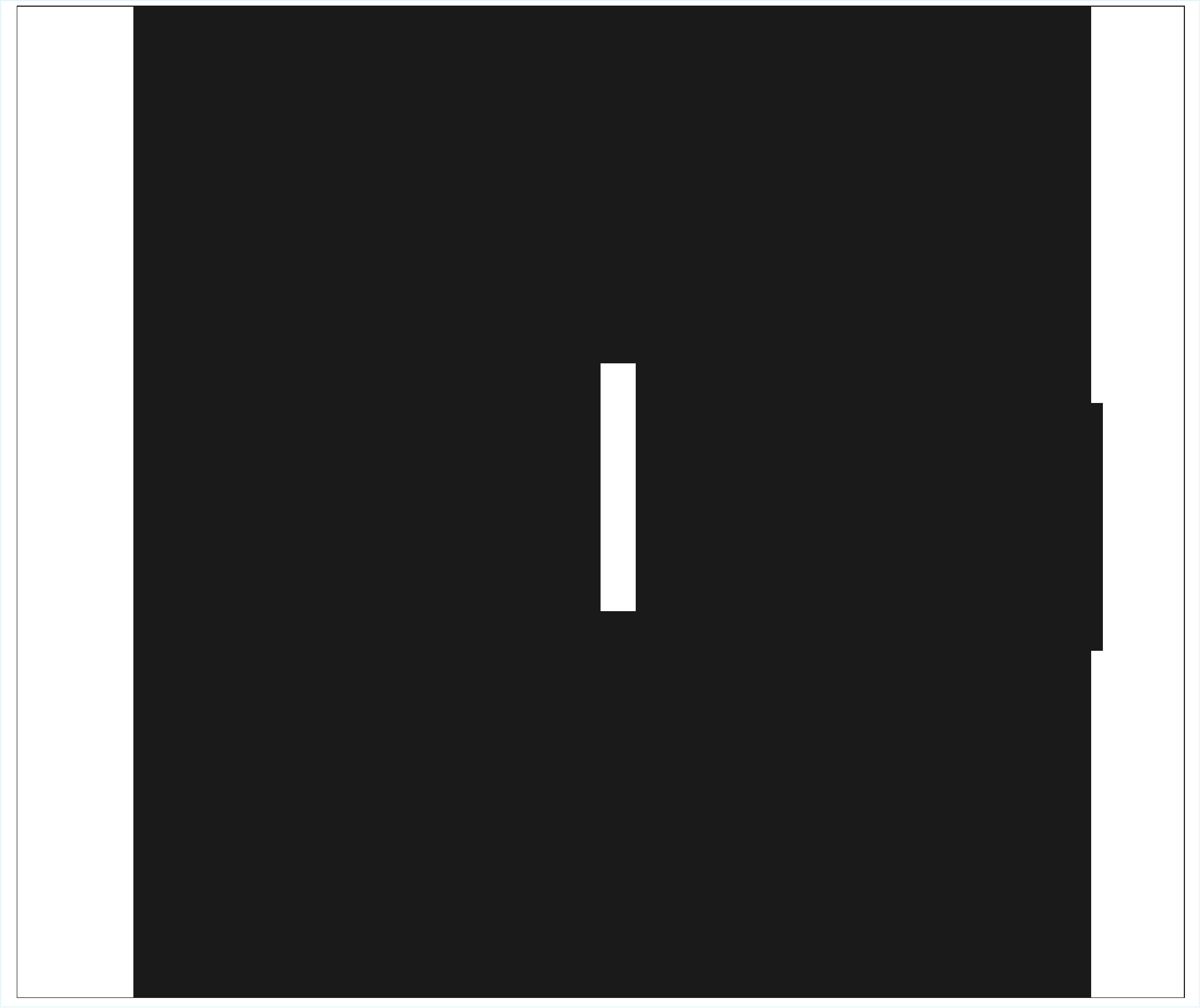}
\includegraphics[width=0.12\linewidth]
{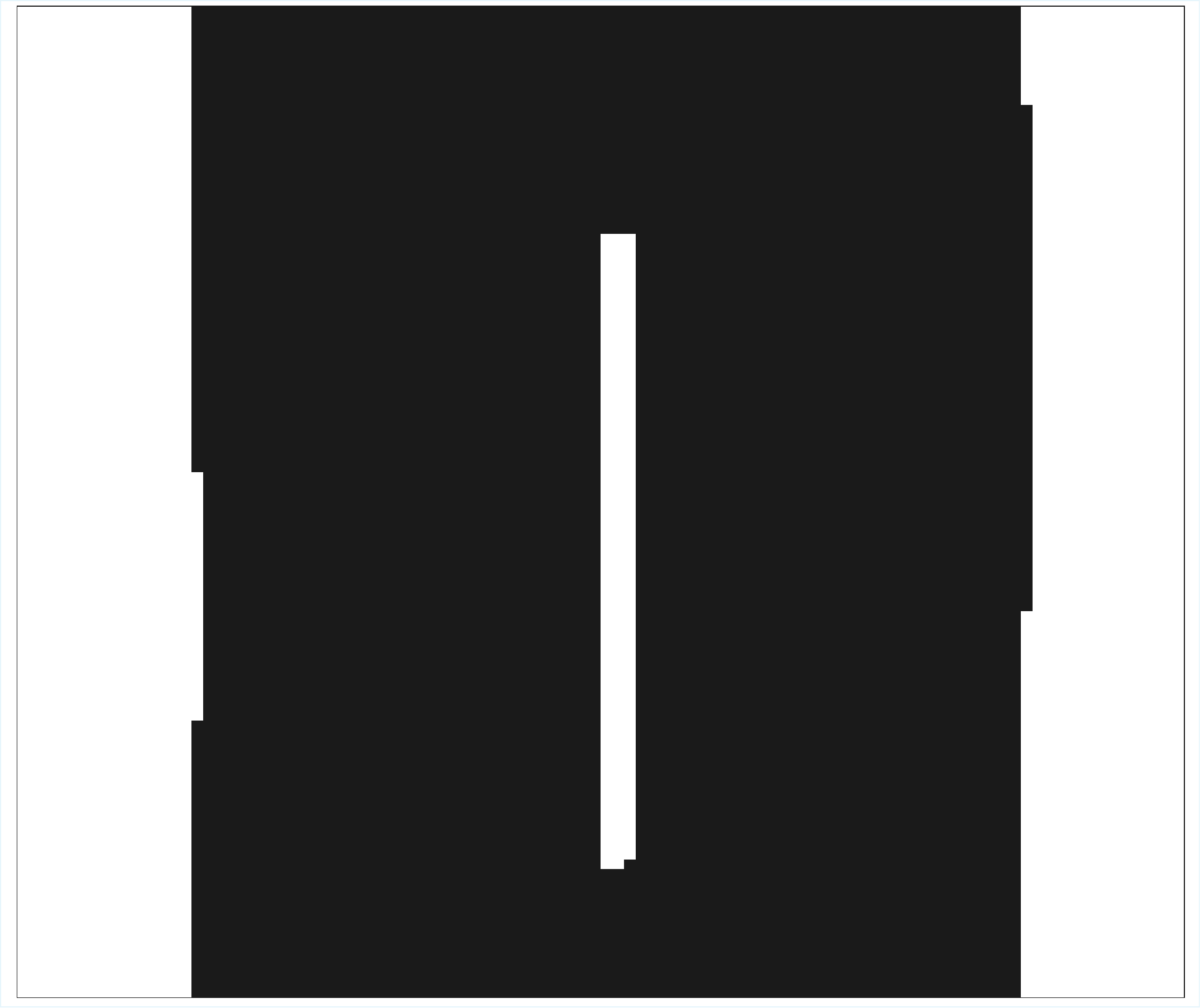}
\includegraphics[width=0.12\linewidth]
{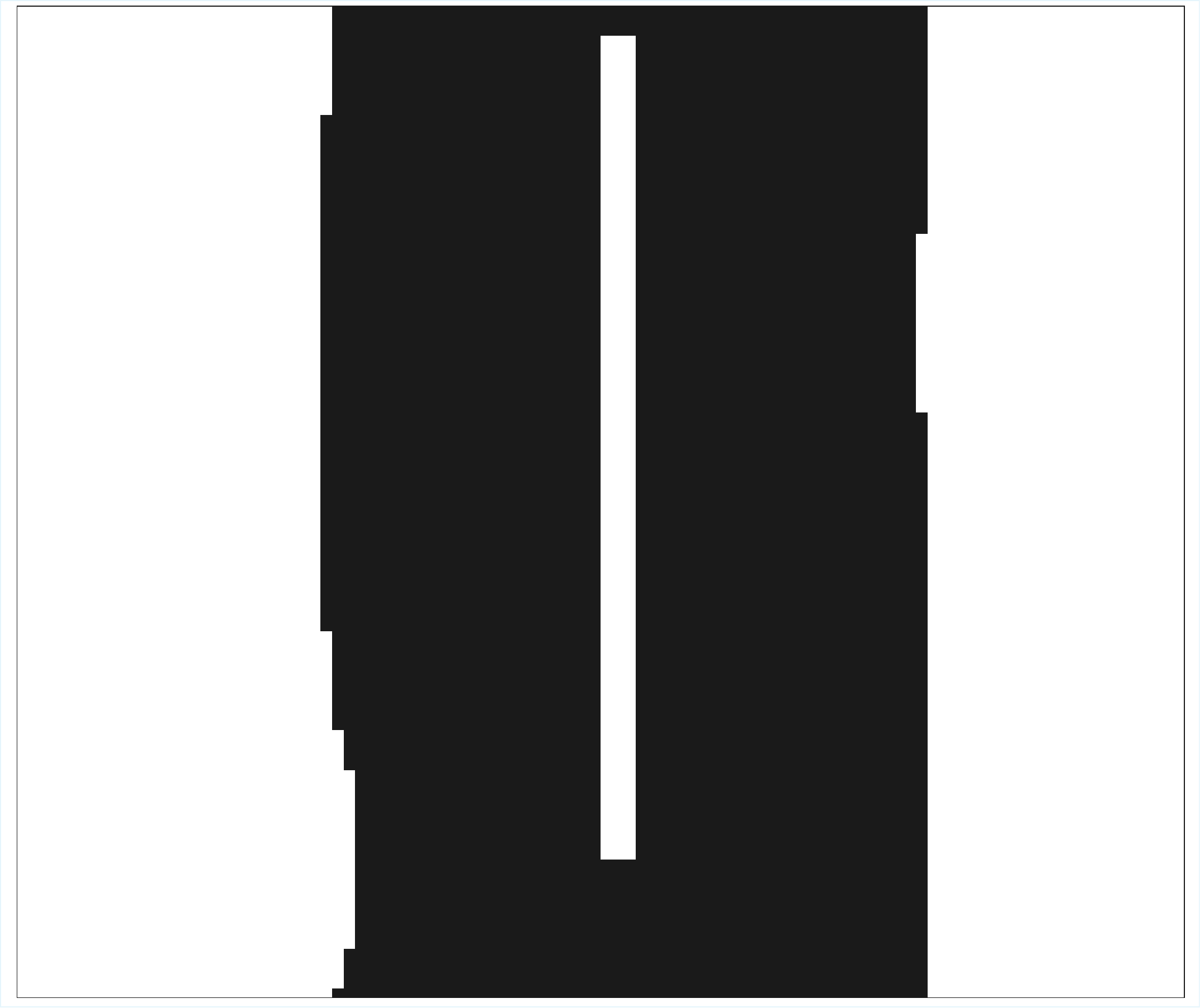}
\\
\vspace{1mm}
\includegraphics[width=0.12\linewidth]
{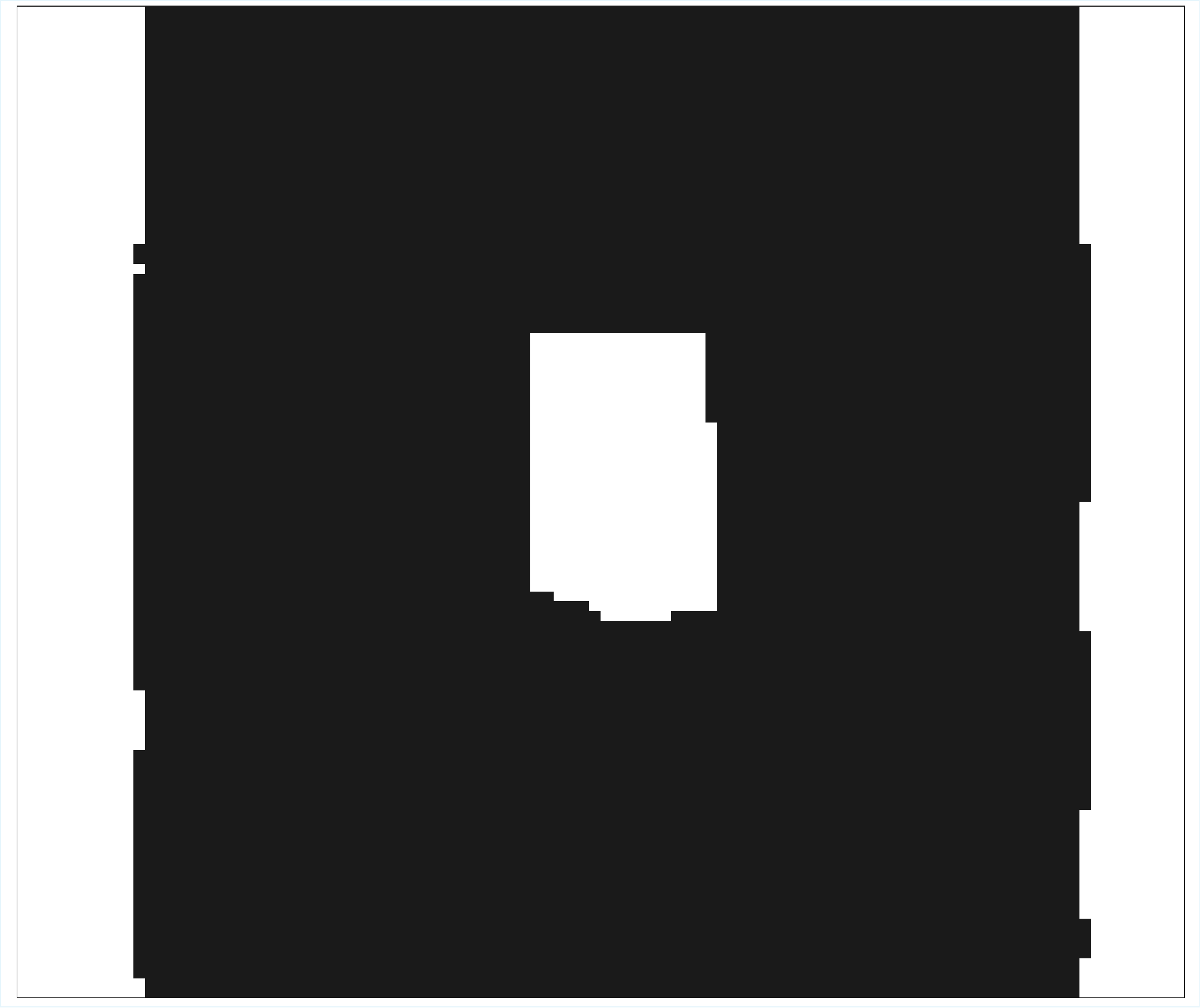}
\includegraphics[width=0.12\linewidth]
{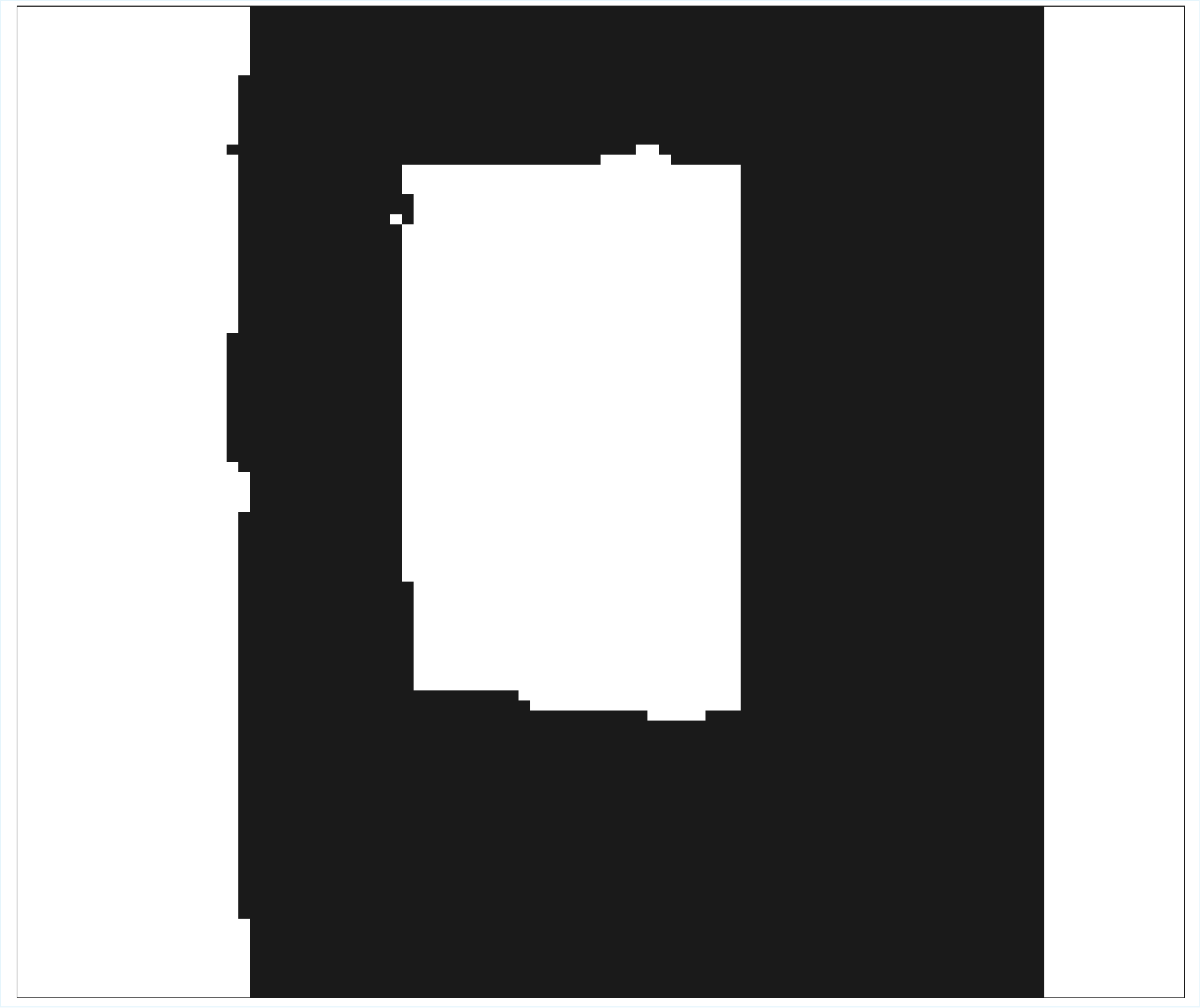}
\includegraphics[width=0.12\linewidth]
{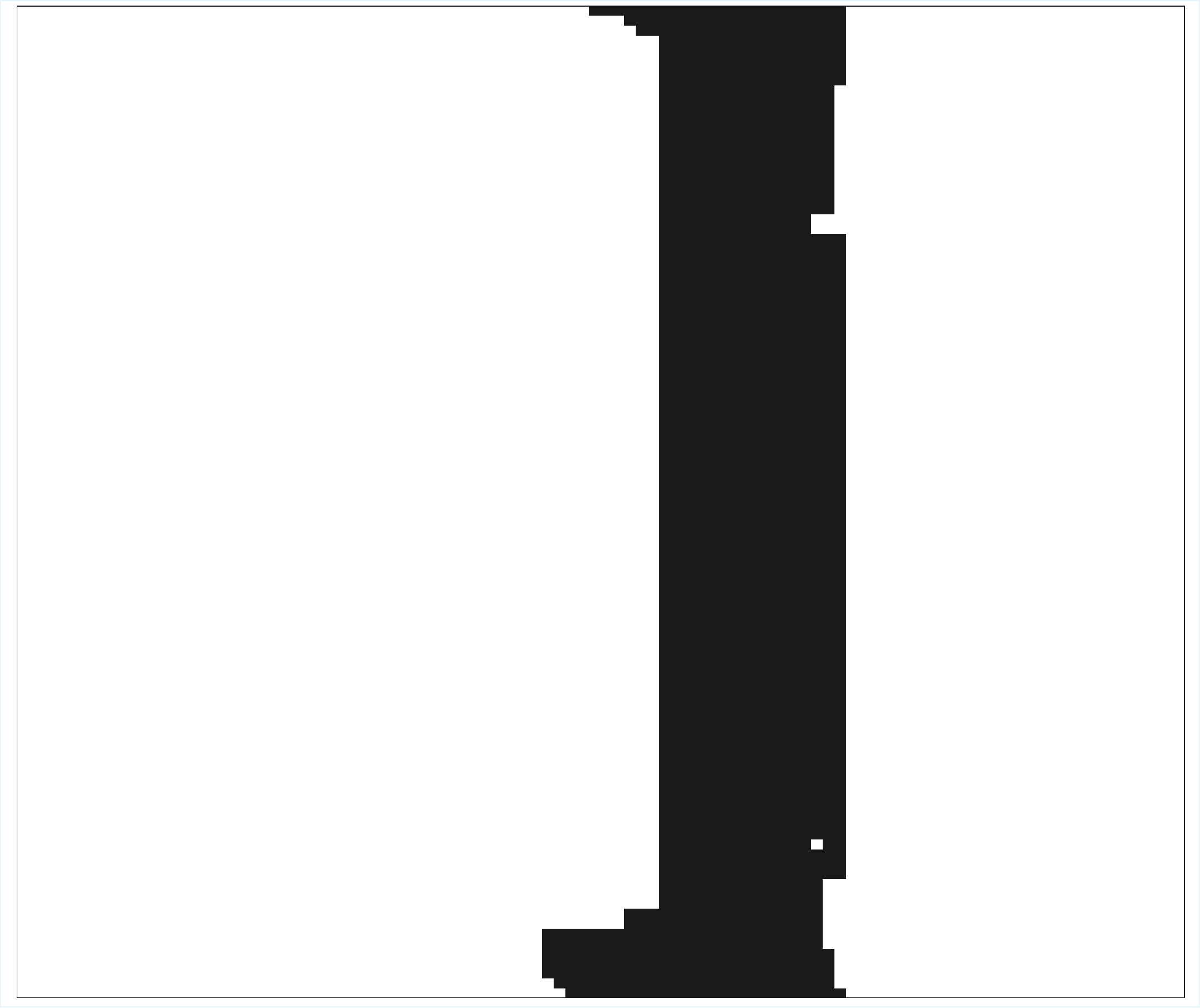}
\hspace{1.cm}
\includegraphics[width=0.12\linewidth]
{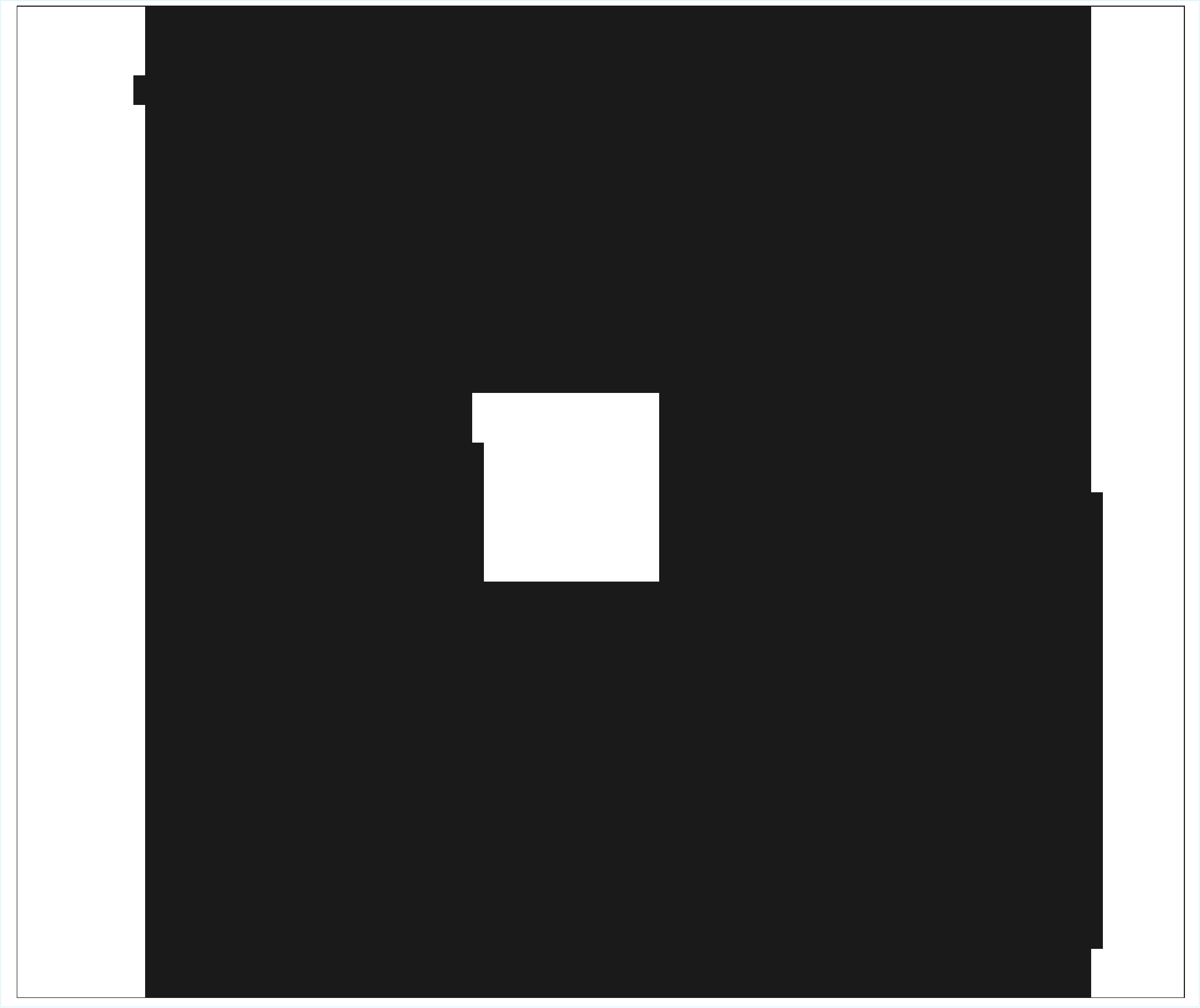}
\includegraphics[width=0.12\linewidth]
{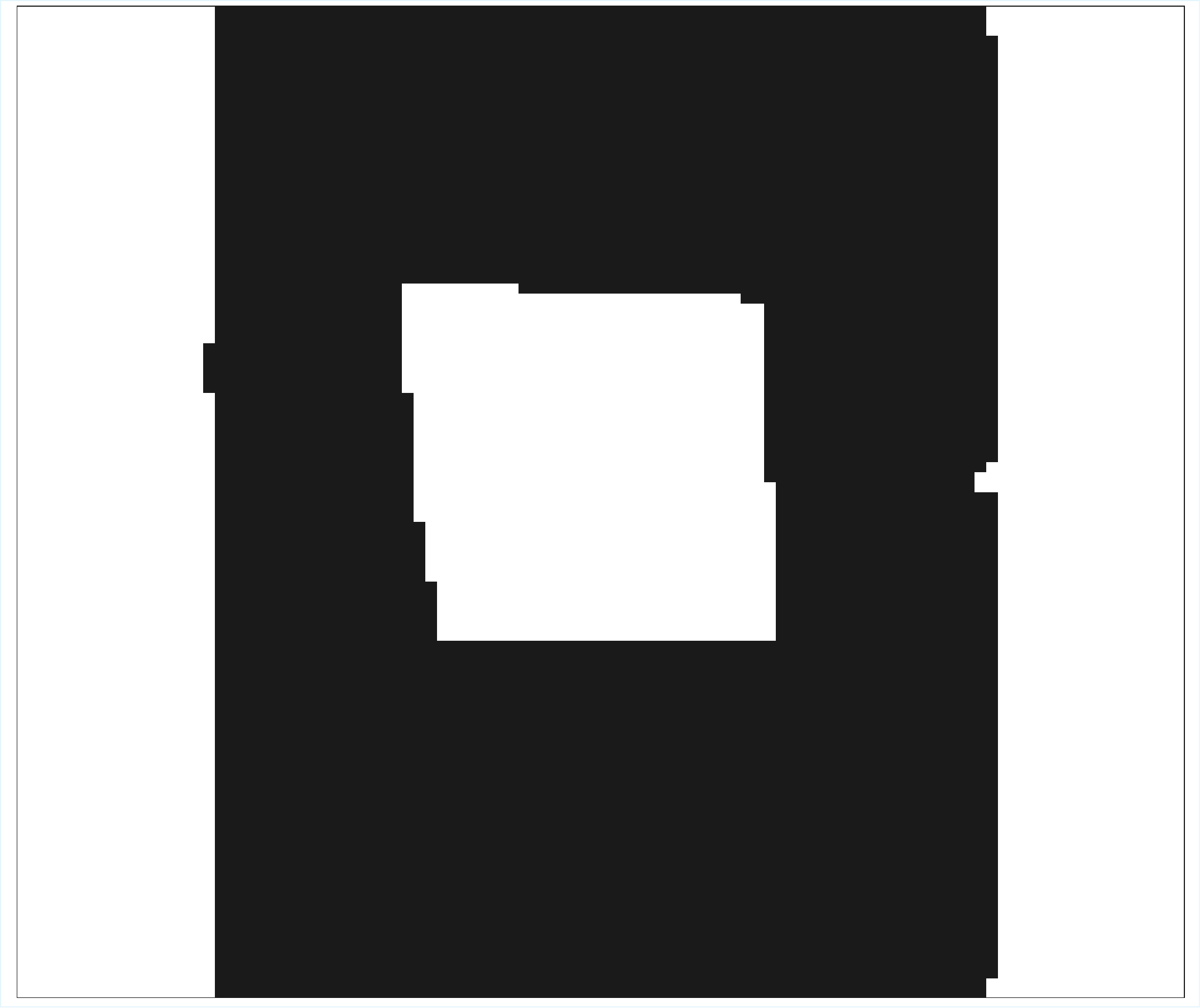}
\includegraphics[width=0.12\linewidth]
{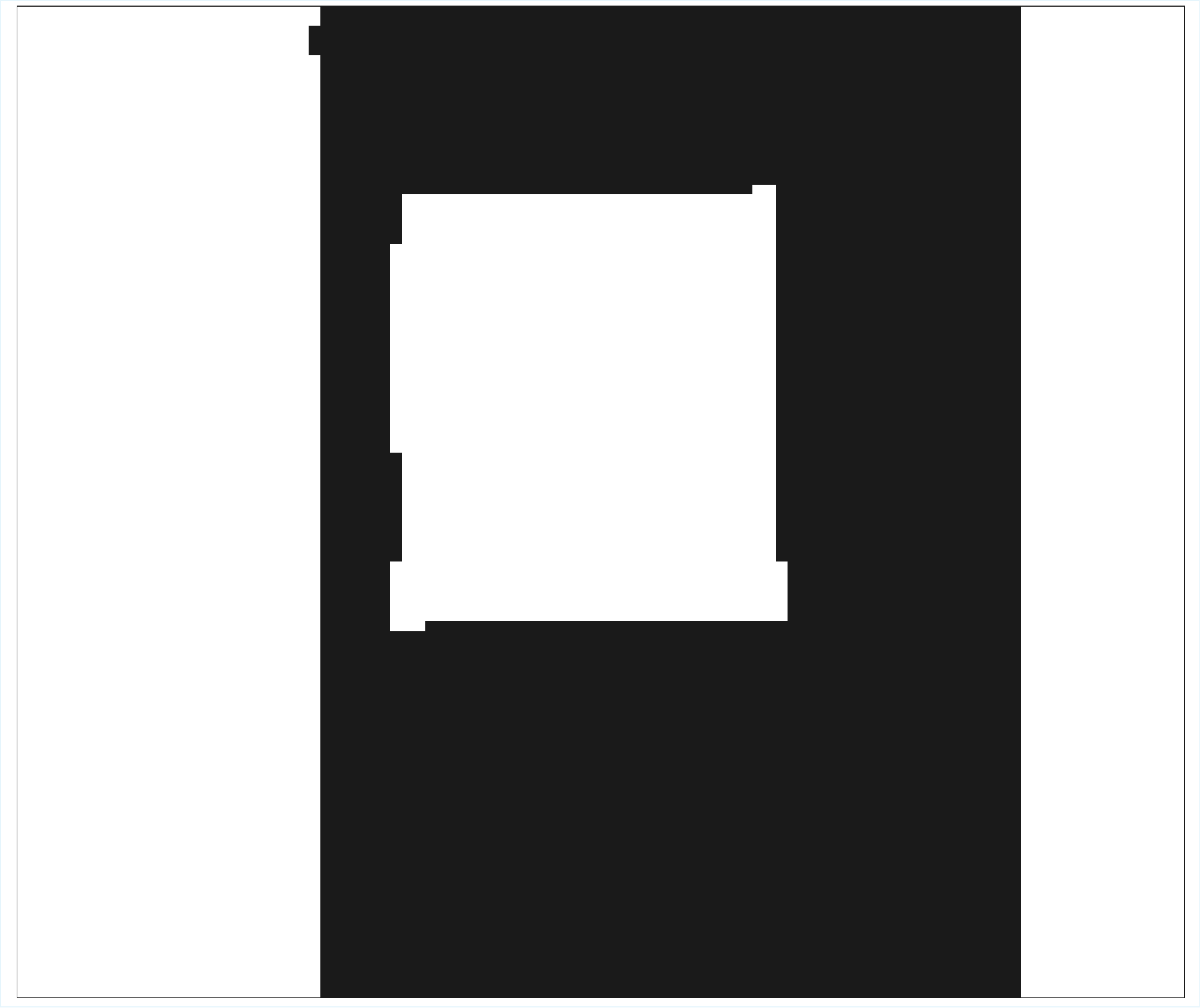}
\caption{Illustration of policies $\pi_1$ (first row), $\pi_2$ (second row), $\pi_3$ (third row) and $\pi_4$ (fourth row) for $N=100$ with initial seeds one stripe of width
$3$ and one 3x3 droplet at distance $47$.
Left group: $\kappa = 5,000$ 
at times $t = 150,250, 350$ (from left to right).
Right group:
$\kappa = 20,000$ at times 
$t = 100,150, 200$ (from left to right).
}
\label{Strip_droplet_simulations}
\end{figure}

Figure \ref{Strip_droplet_policy_comparison} displays the average values of the policies $\pi_1$, $\pi_2$, $\pi_3$ and $\pi_4$ on linear scale (left) and on log-scale (right), starting from a configuration with a stripe of width 3 and a 3x3 droplet at distance 13. The left panel also provides standard deviation error bars for the expected value of policy $\pi_1$. The results clearly indicate that policy $\pi_1$, in which the stripe and the droplet grow towards each other in a horizontal way, achieves the best performance in terms of the expected total discounted reward. 

To gain more insight into the characteristics of the optimal control strategy, we also consider an analogue of policy $\pi_1$, in which the decision maker flips spins at distance 2 rather than at distance 1 from either the stripe or the droplet. More precisely, we define a policy $\pi'_1 = (d_1')^{\infty}$, where $d_1'(i,j, k) \in A'_1(i,j, k)$ with $A_1': S^y \rightarrow P(A^y)$, as follows:
\begin{equation*}
    A_1'(i,j,k) = \begin{cases}
        \{a_{s1}, a_{\ell 1}\}, &\text{if } i = j = 2, \\
        \{a_{s1}, a_{\ell 2}\}, &\text{if } i > 2, j = 2, \text{ or } i = 2, j > 2,\\
        \{a_{s2}, a_{\ell 2}\}, &\text{if } i,j > 2.
    \end{cases}
\end{equation*}
The right panel of Figure \ref{Strip_droplet_policy_comparison} compares the performance of policies $\pi_1$ and $\pi_1'$, in which the stripe and the droplet grow horizontally, either through flipping spins at distance 1 or distance~2. The figure shows that the two policies achieve very similar performance and are not statistically distinguishable. Policy $\pi_1'$, which flips spins at distance 2, exhibits a higher standard deviation.

In addition to the value function, we use the data obtained from the simulations to measure the average value of the first hitting time to the all-plus configuration. The results, including 95\%-confidence intervals, are provided in Table \ref{Hitting_time_results_stripe_droplet}.

\begin{table}[h!]
\caption{Average values of first hitting times with 95\%-confidence intervals for policies $\pi_1$, $\pi_1'$, $\pi_2$, $\pi_3$ and $\pi_4$ for $N = 32$ and $\kappa = 100,000$.}
\centering
\begin{tabular}{l | l | l}
\hline
\textbf{Policy} & \textbf{Mean first hitting time} & \textbf{95\%-confidence interval} \\
\hline
$\pi_1$ & 35.816 & (35.636, 35.996) \\
$\pi_1'$ & 36.884 & (36.570, 37.198) \\
$\pi_2$ & 77.587 & (77.321, 77.853) \\
$\pi_3$ & 67.636 & (67.217, 68.056) \\
$\pi_4$ & 60.165 & (59.412, 60.918) \\
\hline
\end{tabular}

\label{Hitting_time_results_stripe_droplet}
\end{table}

The table indicates that $\pi_1$ is the optimal policy for minimizing the expected first hitting time among the candidate policies. Its counterpart $\pi_1'$, which flips spins at distance 2, is, however, not far behind.

\begin{figure}
\centering
\includegraphics[width=0.45\linewidth]
{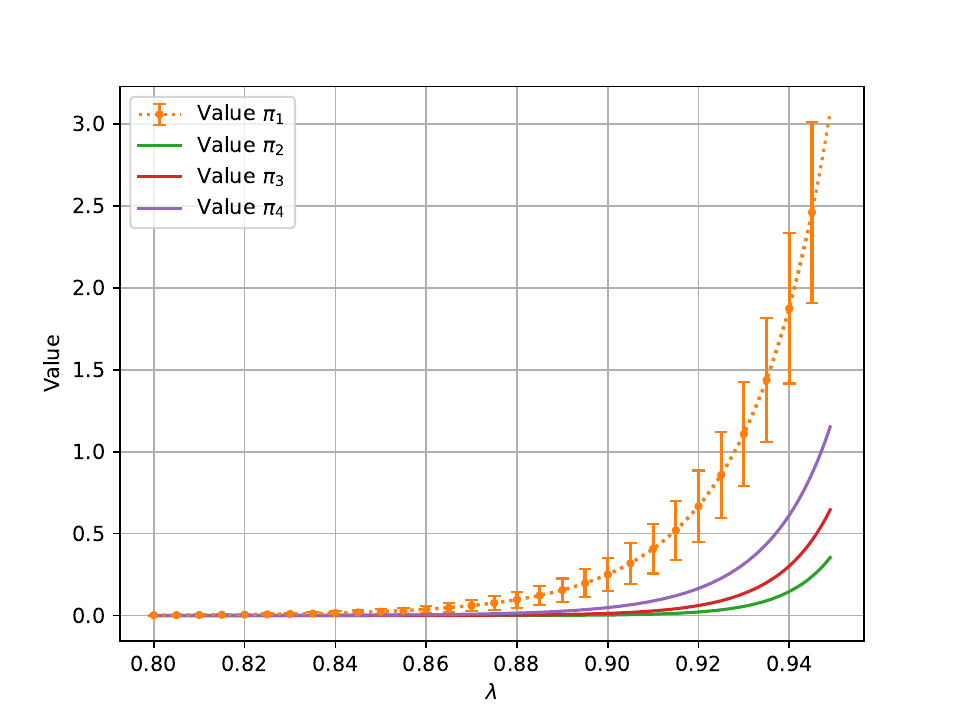}\hspace{2mm}
\raisebox{-0.5mm}{
\includegraphics[width=0.45\linewidth]
{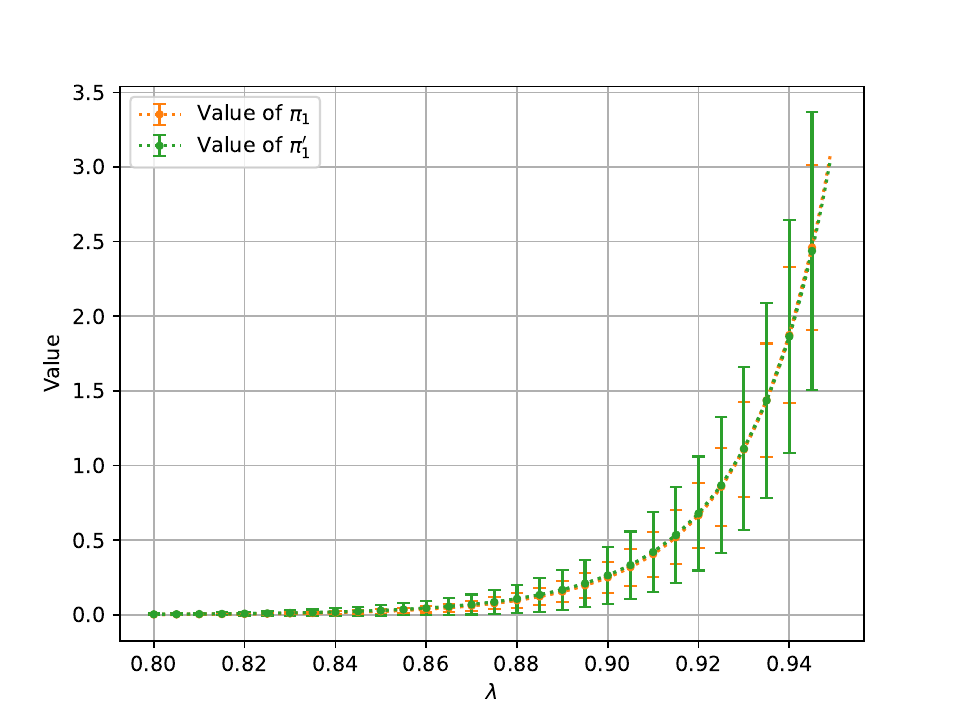}
}
\caption{Average values of policies $\pi_1$, $\pi_2$, $\pi_3$ and $\pi_4$ (left) and average values of policies $\pi_1$ and $\pi_1'$ (right) for $N = 32$ and $\kappa = 100,000$, starting from a configuration with a stripe of width 3 and a 3x3 droplet at distance 13.}
\label{Strip_droplet_policy_comparison}
\end{figure}

\subsection{The two-droplet case}
Finally, we consider configurations in which neither of the two droplets forms a stripe. Let the set of these configurations be denoted by $U^{2,z}$. 

\subsubsection{The auxiliary MDP}
\label{s:dd-aux}
We define an auxiliary MDP $(S^z, A^z, P^z, r^z)$ with state space and action space, respectively, 
\begin{equation*}
    S^z = \{(i, j, k, \ell, m, n)| i, j, k, \ell, m, n = 0, 2, 3, \ldots, N\}
\;\textup{ and }\;
    A^z = \{a_h, a_v, a_0\}.
\end{equation*}
Here, a state $(i, j, k, \ell, m, n)$ corresponds to the set of configurations in which the horizontal distances between the narrowest vertical stripes that circumscribe the droplets are $i$ and $j$, the vertical distances between the narrowest horizontal stripes that enclose the droplets are $k$ and $\ell$ and the vertical distances between the horizontal boundaries of the respective droplets are $m$ and $n$, as illustrated in Figure \ref{Aux_spaces_2drops} (left).

The actions $a_h$ and $a_v$ correspond to the sets of sites at horizontal and vertical distance 1 from either of the droplets. Action $a_0$ represents the set of sites that are diagonally adjacent to either of the droplets. By taking an action $a \in A^z$, we again mean flipping the spin at one of the sites in the corresponding set. A visualization of the action space $A^z$ is provided in Figure \ref{Aux_spaces_2drops} (right).

\begin{figure}
\centering
\includegraphics[width=0.4\linewidth]
{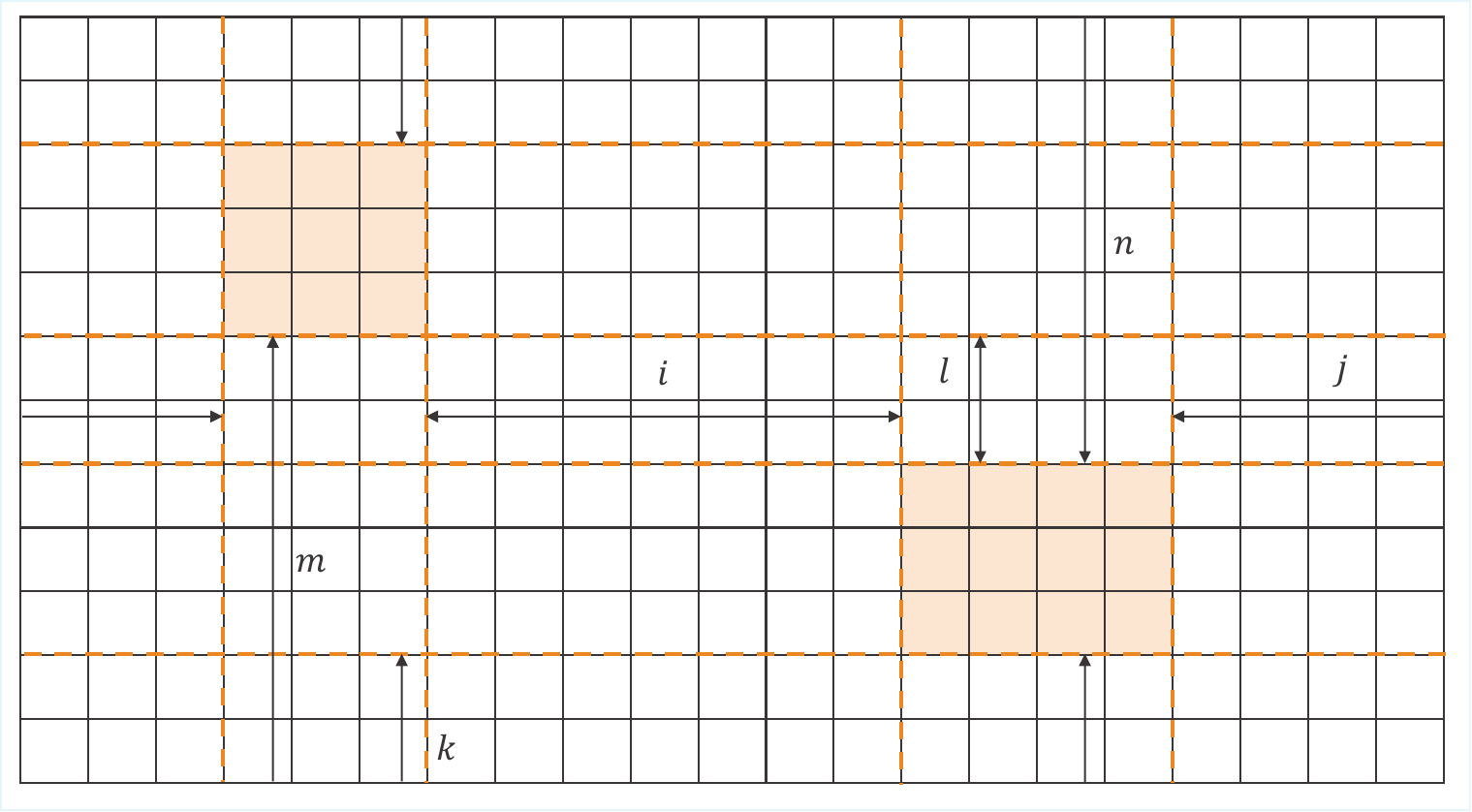}\hspace{2mm}
\raisebox{-0.5mm}{
\includegraphics[width=0.4\linewidth]
{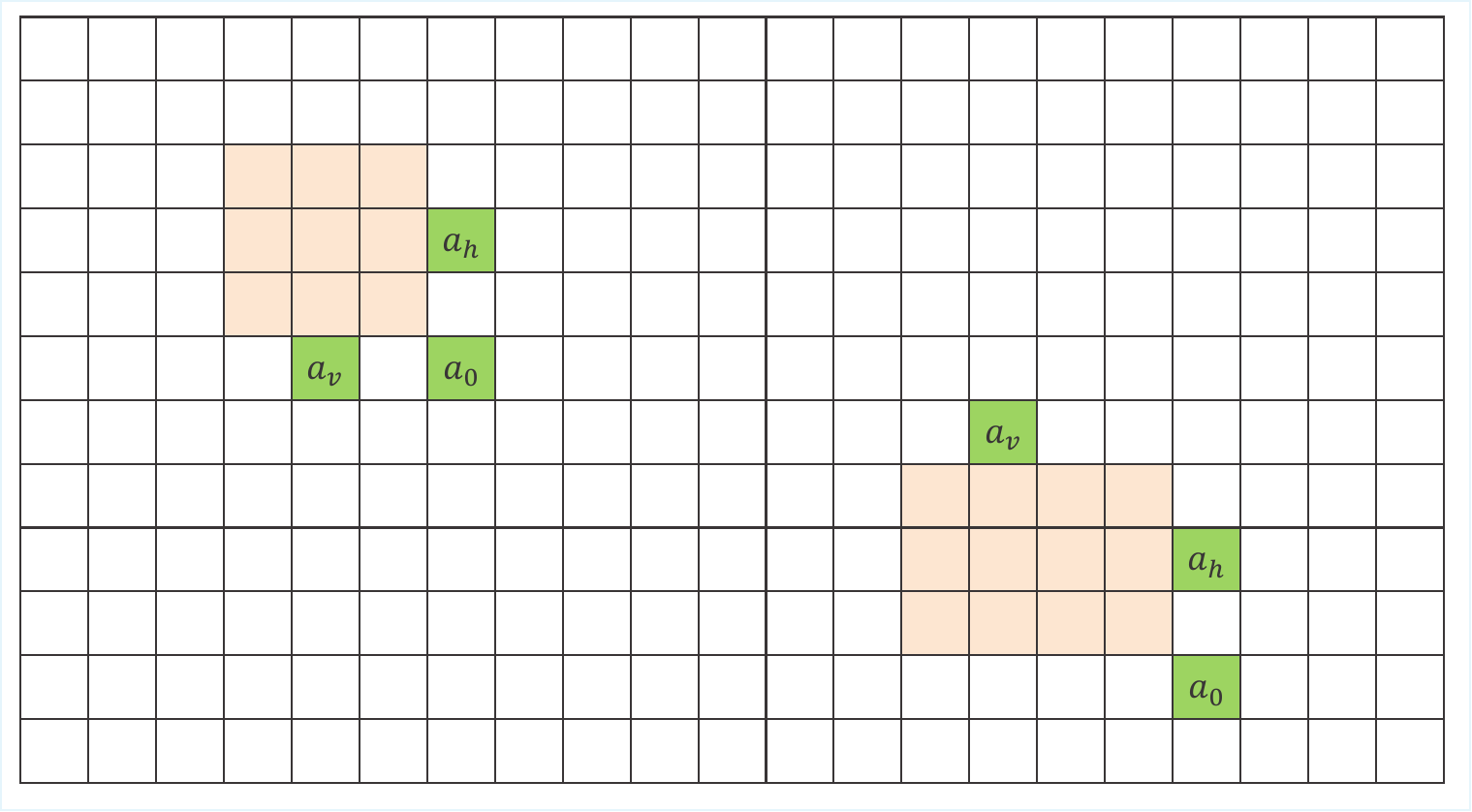}
}
\caption{Illustration of state space $S^z$ (left) and action space $A^z$ (right).}
\label{Aux_spaces_2drops}
\end{figure}

\subsubsection{Candidates for optimality}
\label{s:dd-opt}

We compare the performance of three heuristic policies $\pi_1 = (d_1)^{\infty}$, $\pi_2 = (d_2)^{\infty}$ and $\pi_3 = (d_3)^{\infty}$, defined in the auxiliary MDP, where $d_q(i,j,k, \ell, m, n) = A_q(i,j,k,\ell, m, n)$, $q = 1,2,3$. The mappings $A_q: S^z \rightarrow P(A^z)$, $q = 1, 2, 3$, are defined for states $(i,j,k,\ell, m, n) \in S^z$ as follows:
\begin{align*}
    A_1(i,j,k,\ell,m,n) &= \begin{cases}
        \{a_v\}, &\text{if } m > 0, \text{ or } n > 0, \\
        \{a_h\}, &\text{otherwise.}
    \end{cases}\\
    A_2(i,j,k,\ell,m,n) &= \begin{cases}
        \{a_v\}, &\text{if } k > 0, \text{ or } \ell > 0, \\
        \{a_h\}, &\text{otherwise.}
    \end{cases}\\
    A_3(i,j,k,\ell,m,n) &= \{a_0\}.
\end{align*}
The policies are illustrated in Figure \ref{policies_2drops}.

% \begin{figure}
% \centering
% \includegraphics[width=0.2\linewidth]
% {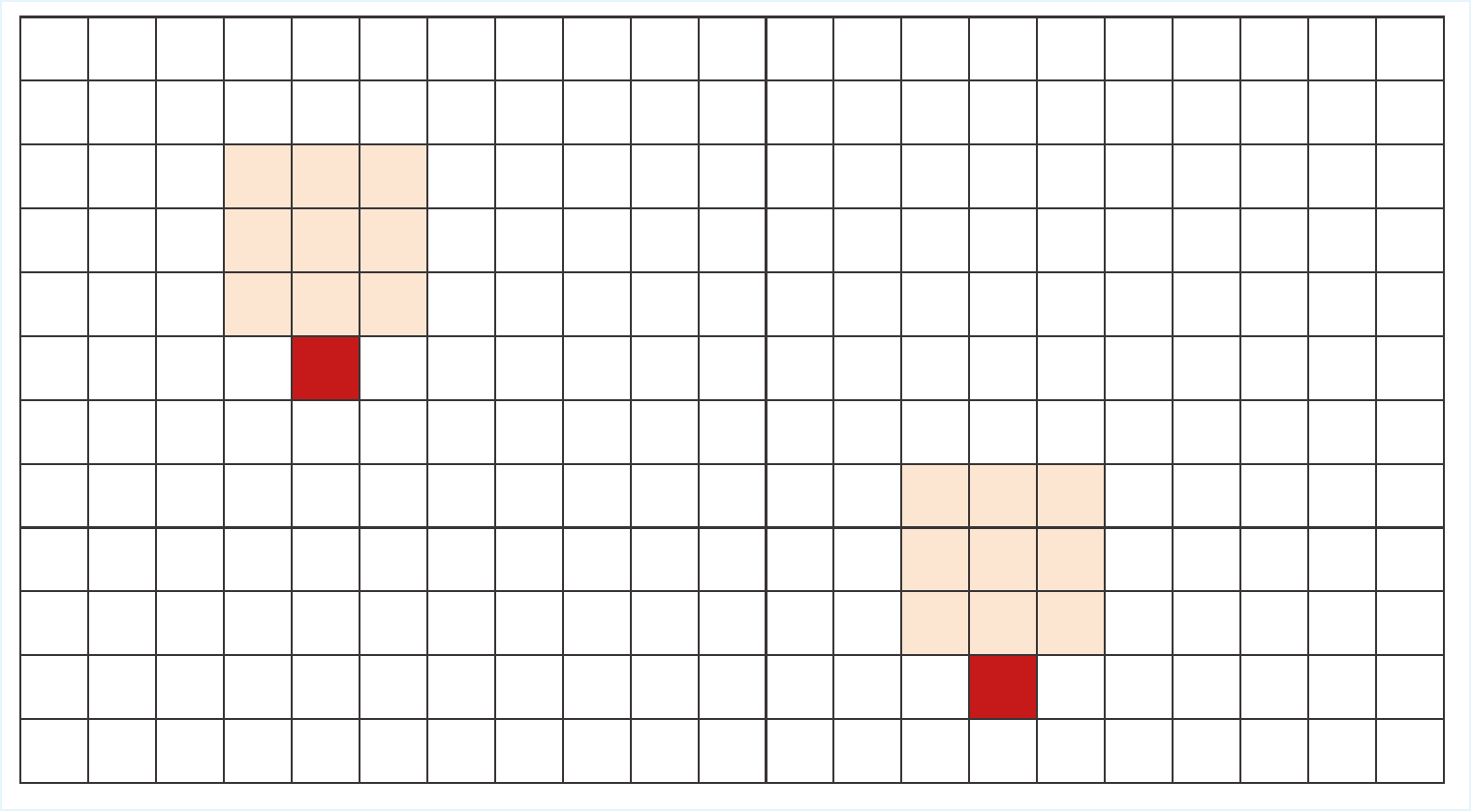}
% \hspace{2mm}
% \includegraphics[width=0.2\linewidth]
% {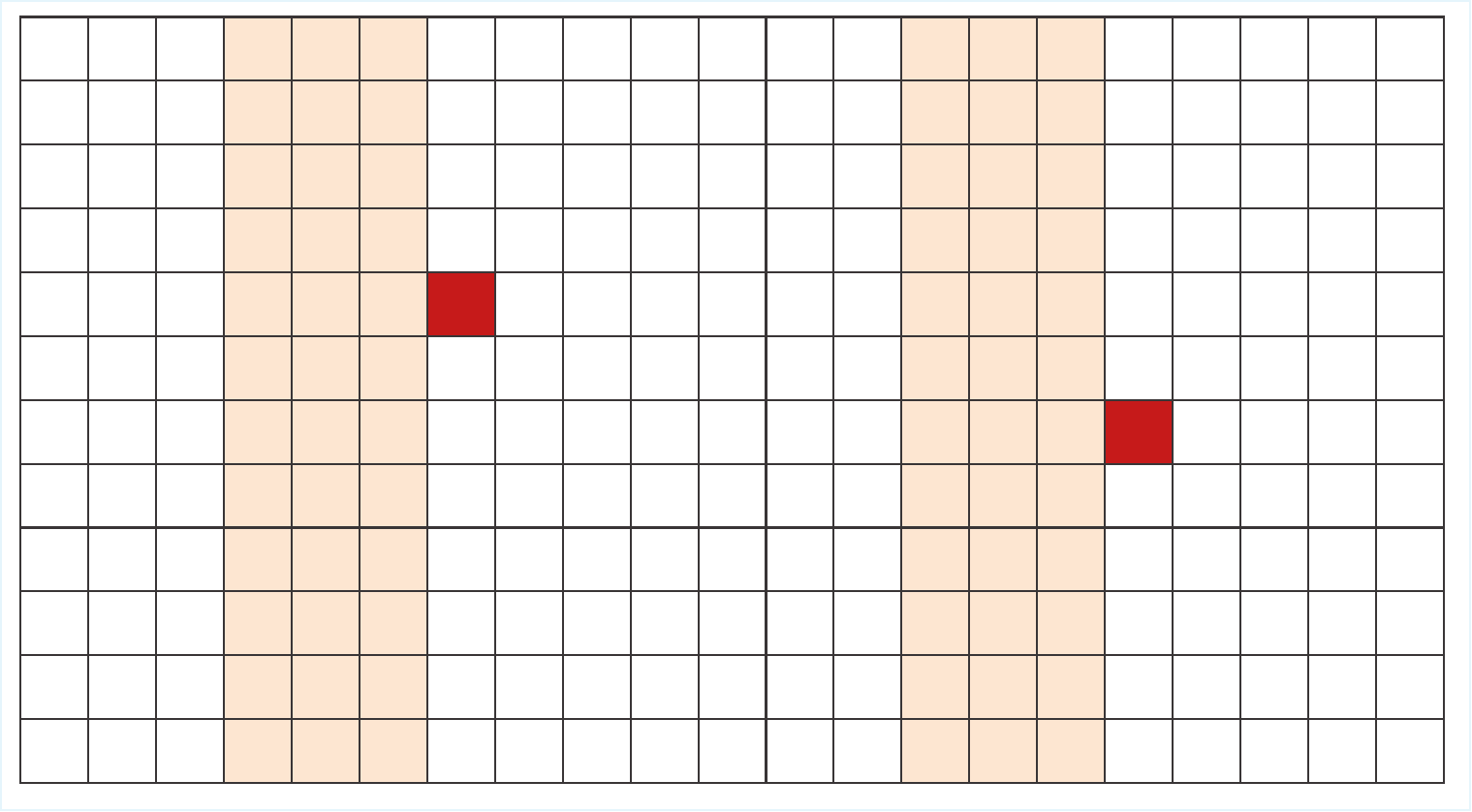}
% \\
% \vspace{2mm}
% \includegraphics[width=0.2\linewidth]
% {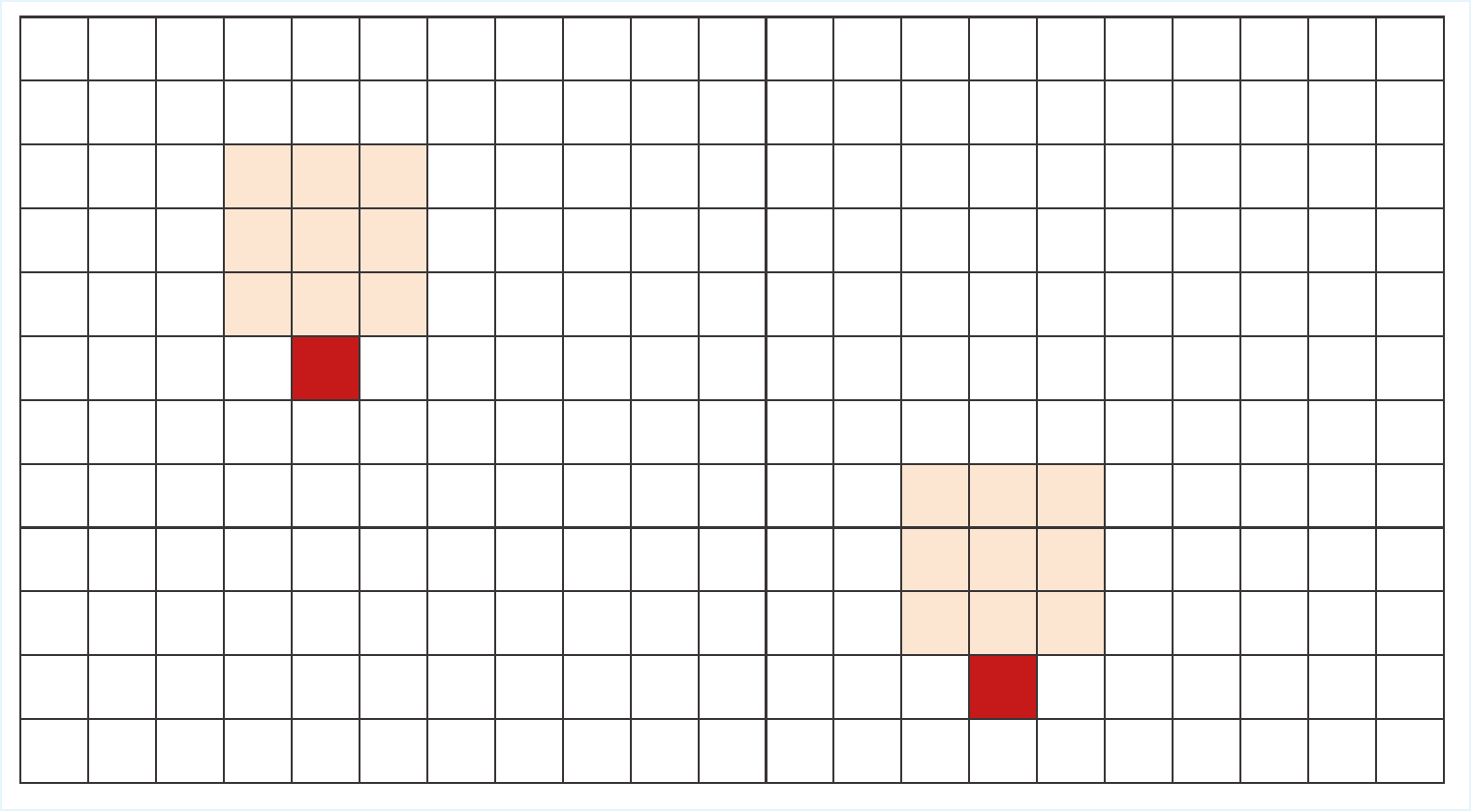}
% \hspace{2mm}
% \includegraphics[width=0.2\linewidth]
% {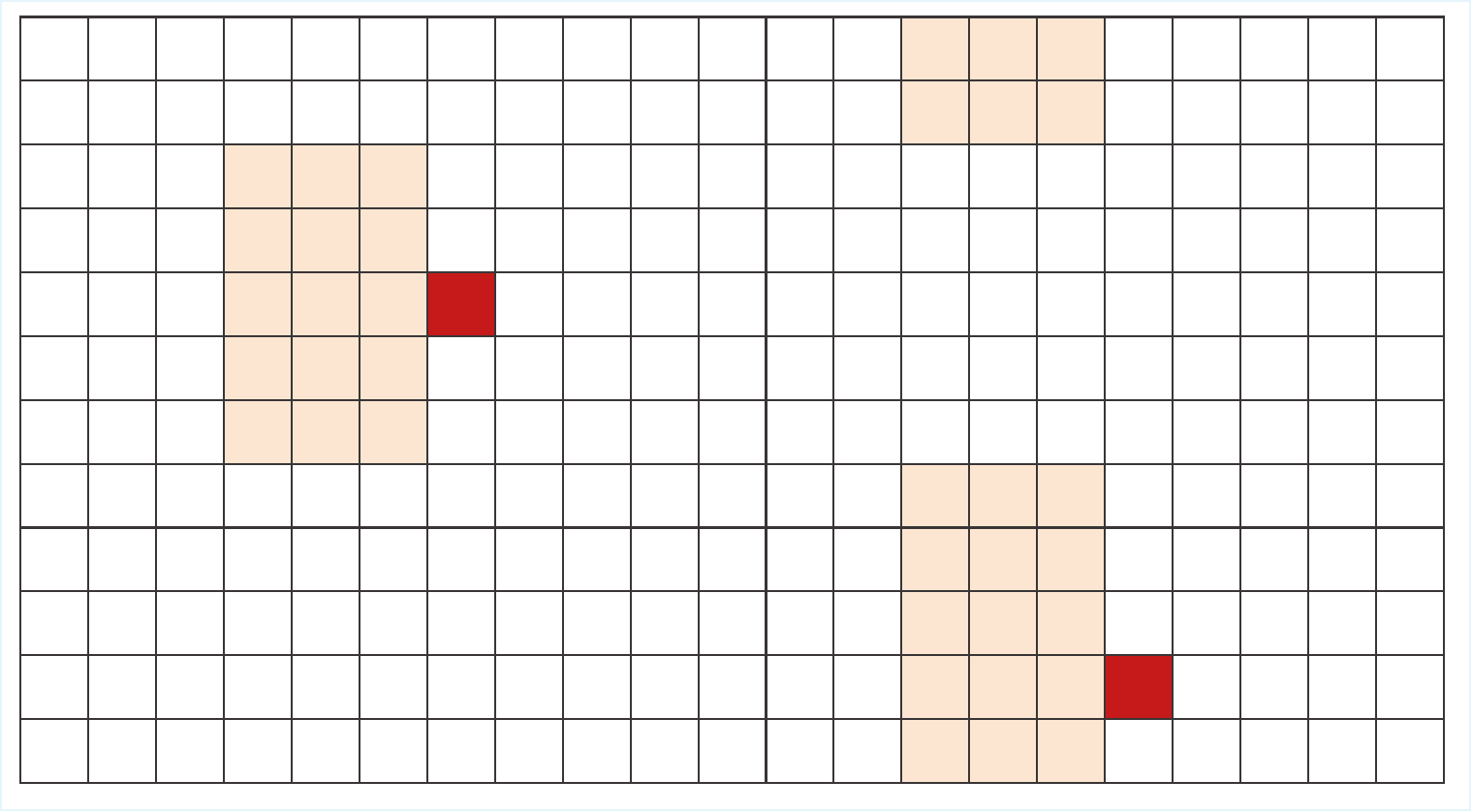}
% \\
% \vspace{2mm}
% \includegraphics[width=0.2\linewidth]
% {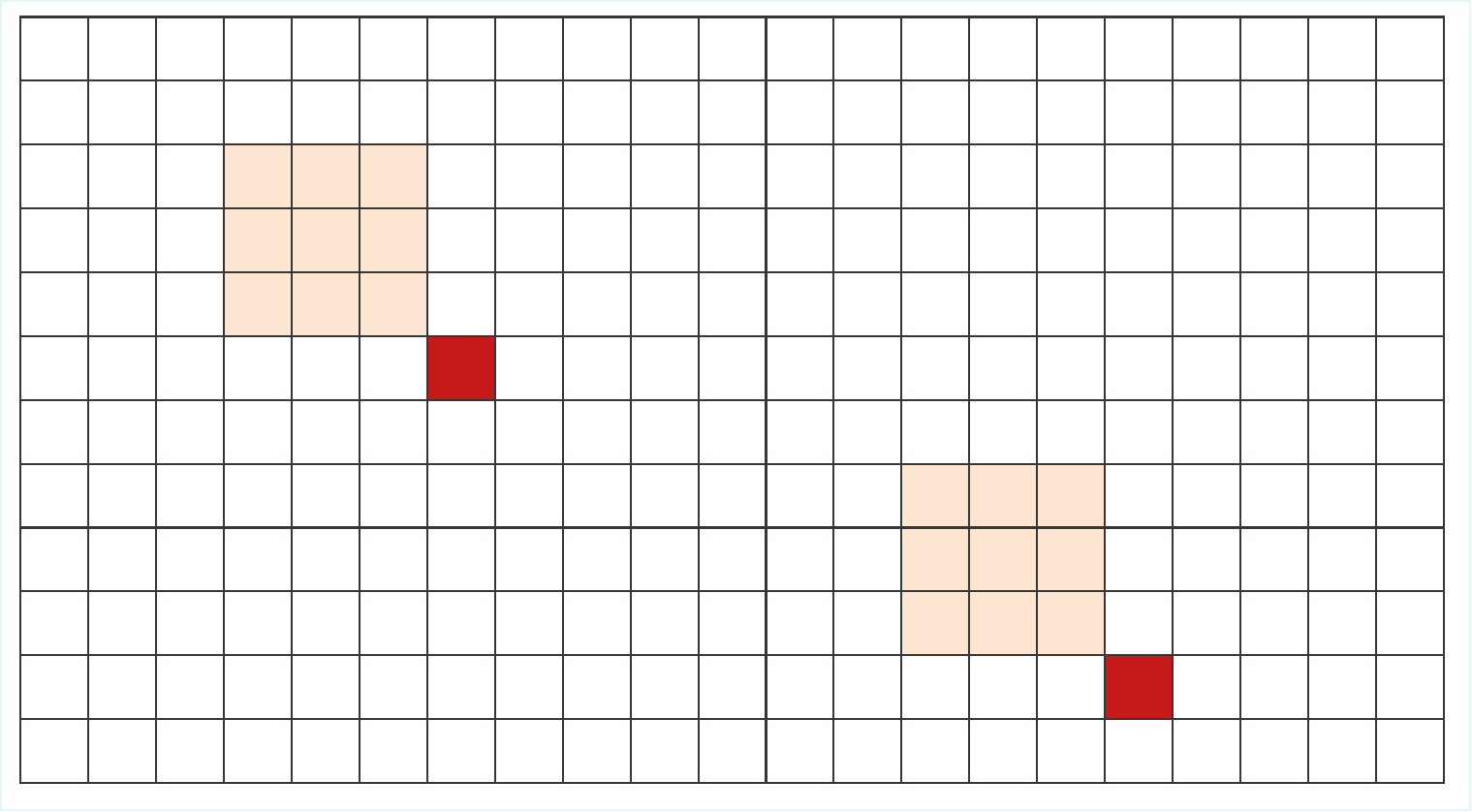} 
% \hspace{2mm}
% \includegraphics[width=0.2\linewidth]
% {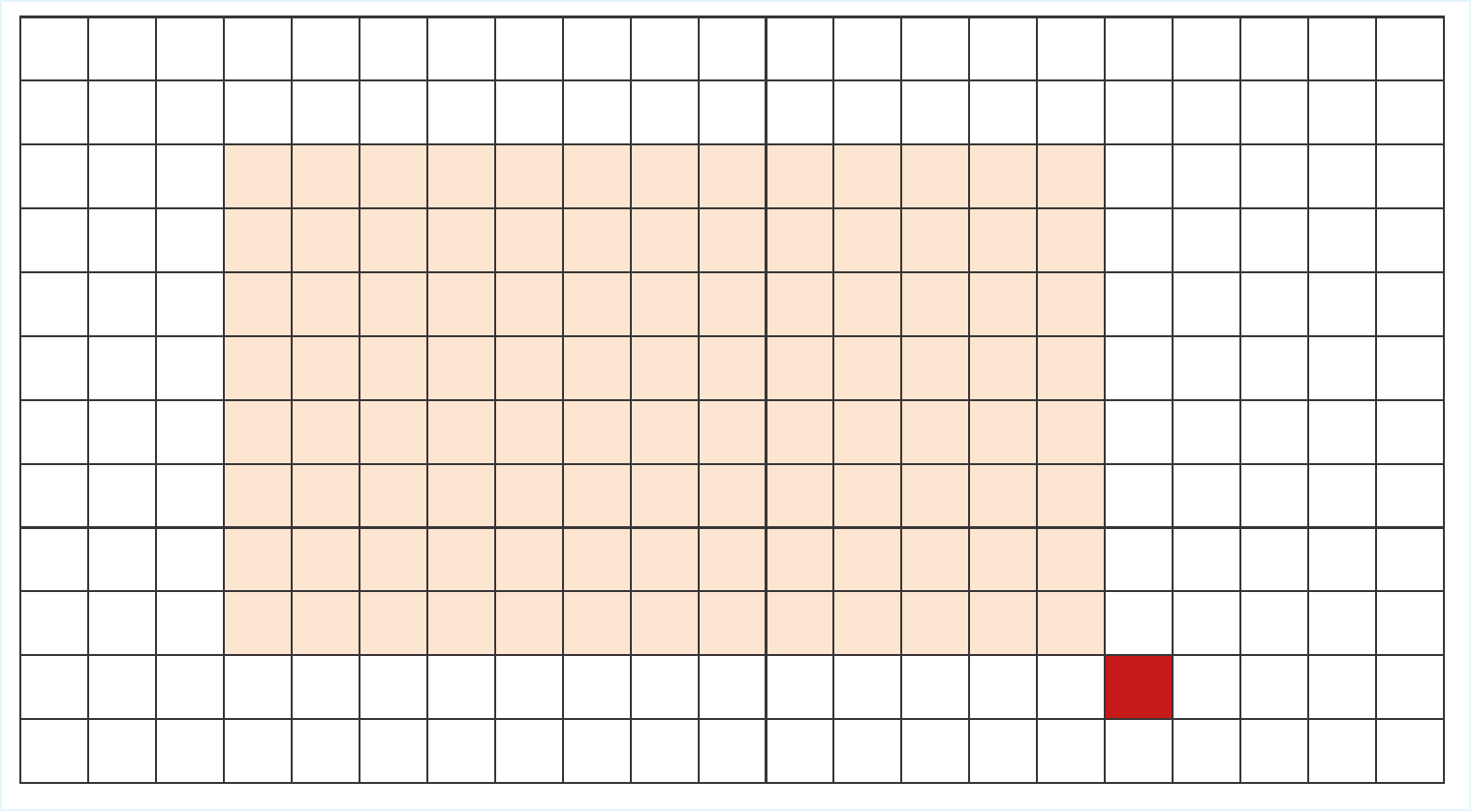}
% \caption{Illustration of policies $\pi_1$ (first row), $\pi_2$ (second row) and $\pi_3$ (third row).
% }
% \label{policies_2drops}
% \end{figure}

\begin{figure}
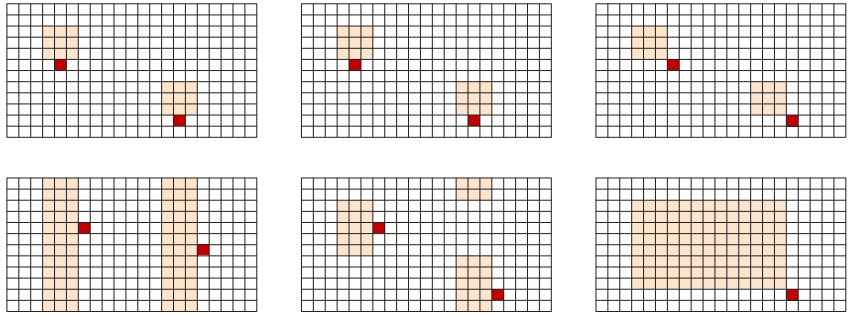

\centering
\includegraphics[width=0.2\linewidth]
{2drops_pi1_1.pdf}
\hspace{2mm}
\includegraphics[width=0.2\linewidth]
{2drops_pi2_1.pdf}
\hspace{2mm}
\includegraphics[width=0.2\linewidth]
{2drops_pi3_1.pdf}
\\
\vspace{4mm}
\includegraphics[width=0.2\linewidth]
{2drops_pi1_2.pdf}
\hspace{2mm}
\includegraphics[width=0.2\linewidth]
{2drops_pi2_2.pdf} 
\hspace{2mm}
\includegraphics[width=0.2\linewidth]
{2drops_pi3_2.pdf}
\caption{Illustration of policies $\pi_1$ (first column), $\pi_2$ (second column) and $\pi_3$ (third column).
}
\label{policies_2drops}
\end{figure}

\subsubsection{Discussion of results}
\label{s:dd-res}

Figure \ref{Drop_drop_simulations} illustrates the evolution of the system under each of the candidate policies for $N = 100$ and $\kappa = 5,000$ (left group) and $\kappa = 20,000$ (right group). 

For each policy, we again evaluate the average hitting times and the average values across 2,000 independent runs of the MDP, for the case $N = 32$ and $\kappa = 100,000$. 

Figure \ref{Drop_drop_policy_comparison} shows the average values of the policies $\pi_1$, $\pi_2$ and $\pi_3$. In each experiment, we started from a configuration with two 3x3 droplets at distance 13 in both the horizontal and vertical directions. The results clearly imply that policy $\pi_3$ outperforms the other two candidates. The figure includes standard deviation error bars for the expected value of policy $\pi_4$. The results indicate that diagonal growth is the optimal choice for efficient nucleation, as quantified by the expected total discounted reward.

\begin{figure}
\centering
\includegraphics[width=0.12\linewidth]
{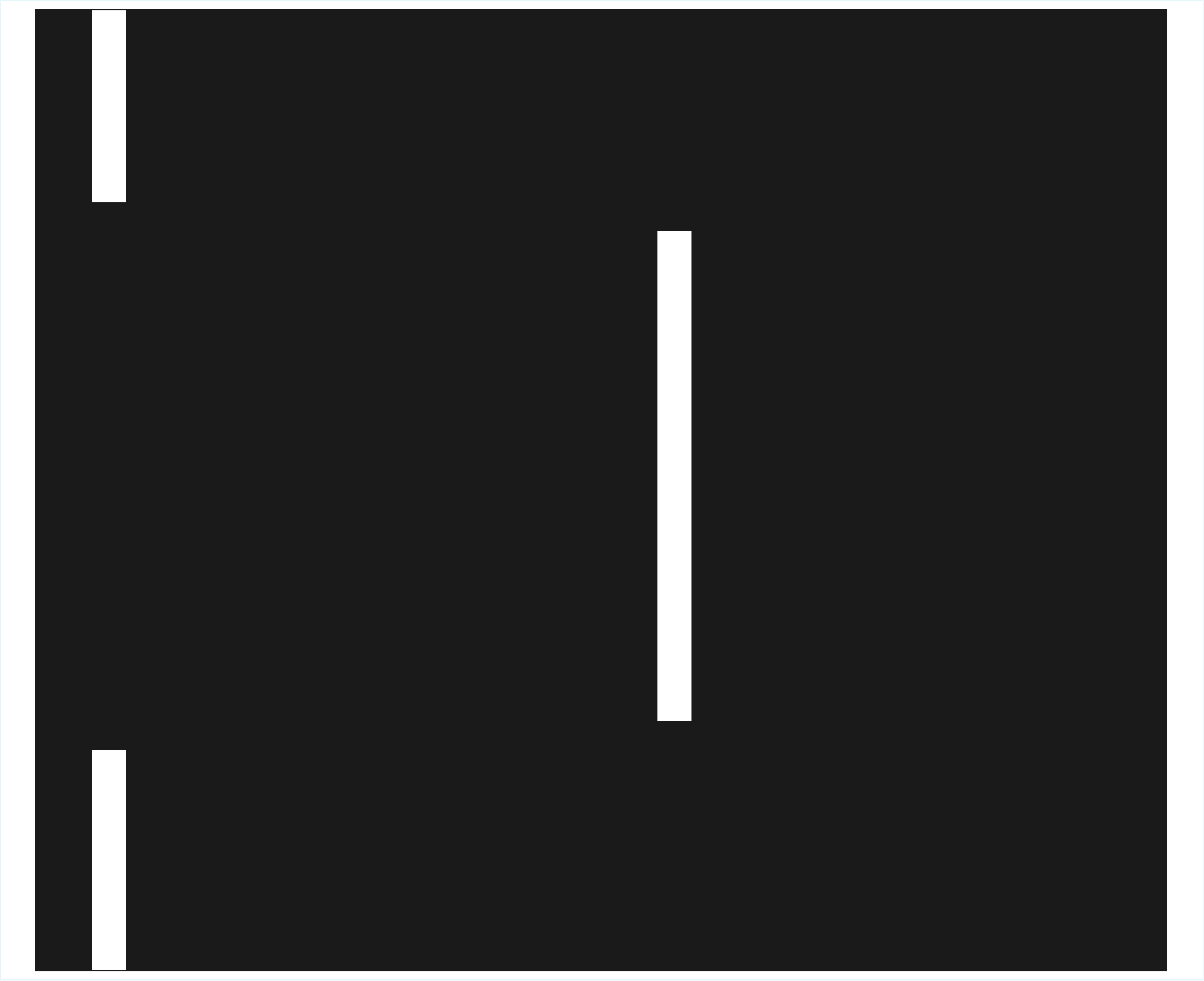}
\includegraphics[width=0.12\linewidth]
{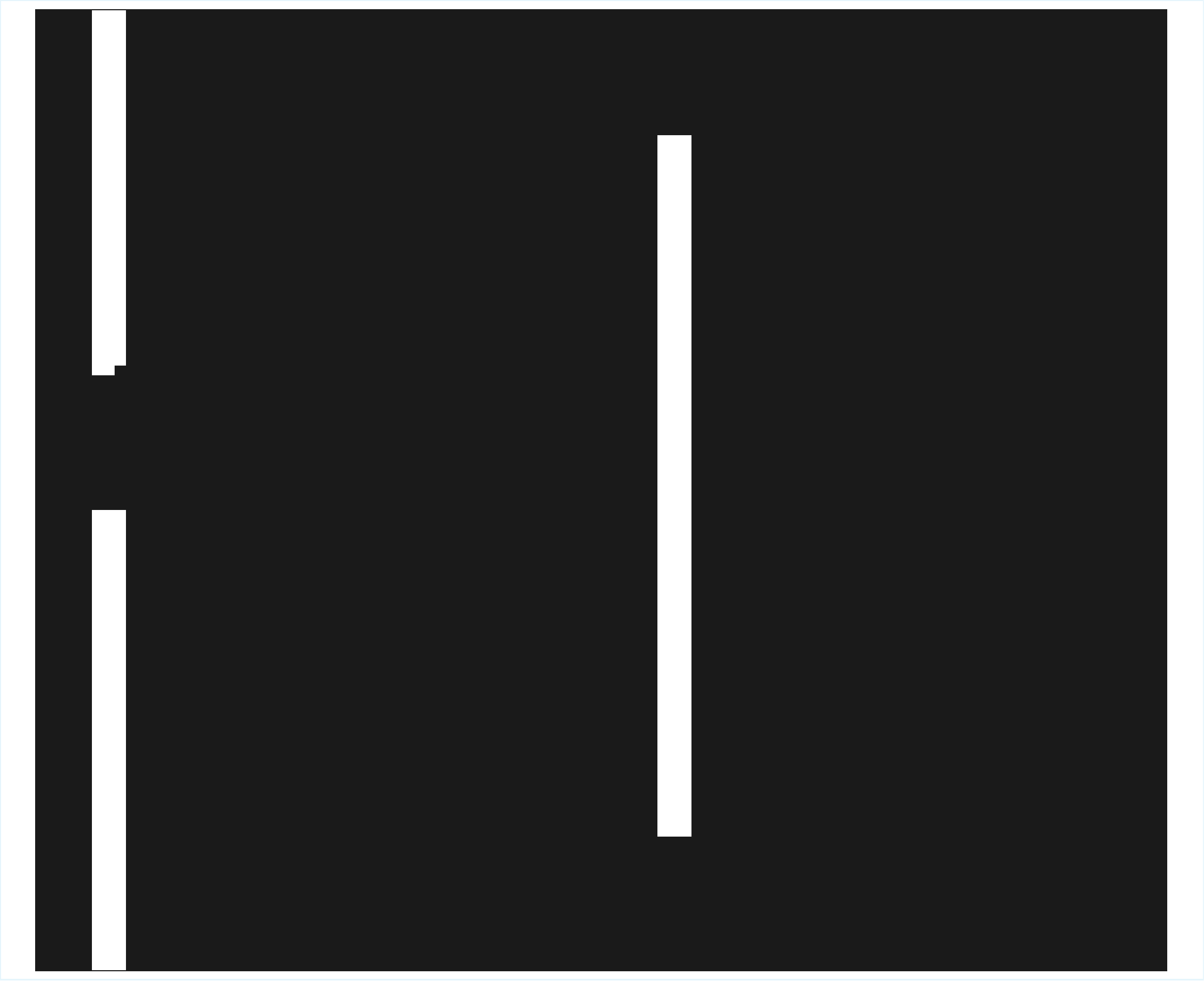}
\includegraphics[width=0.12\linewidth]
{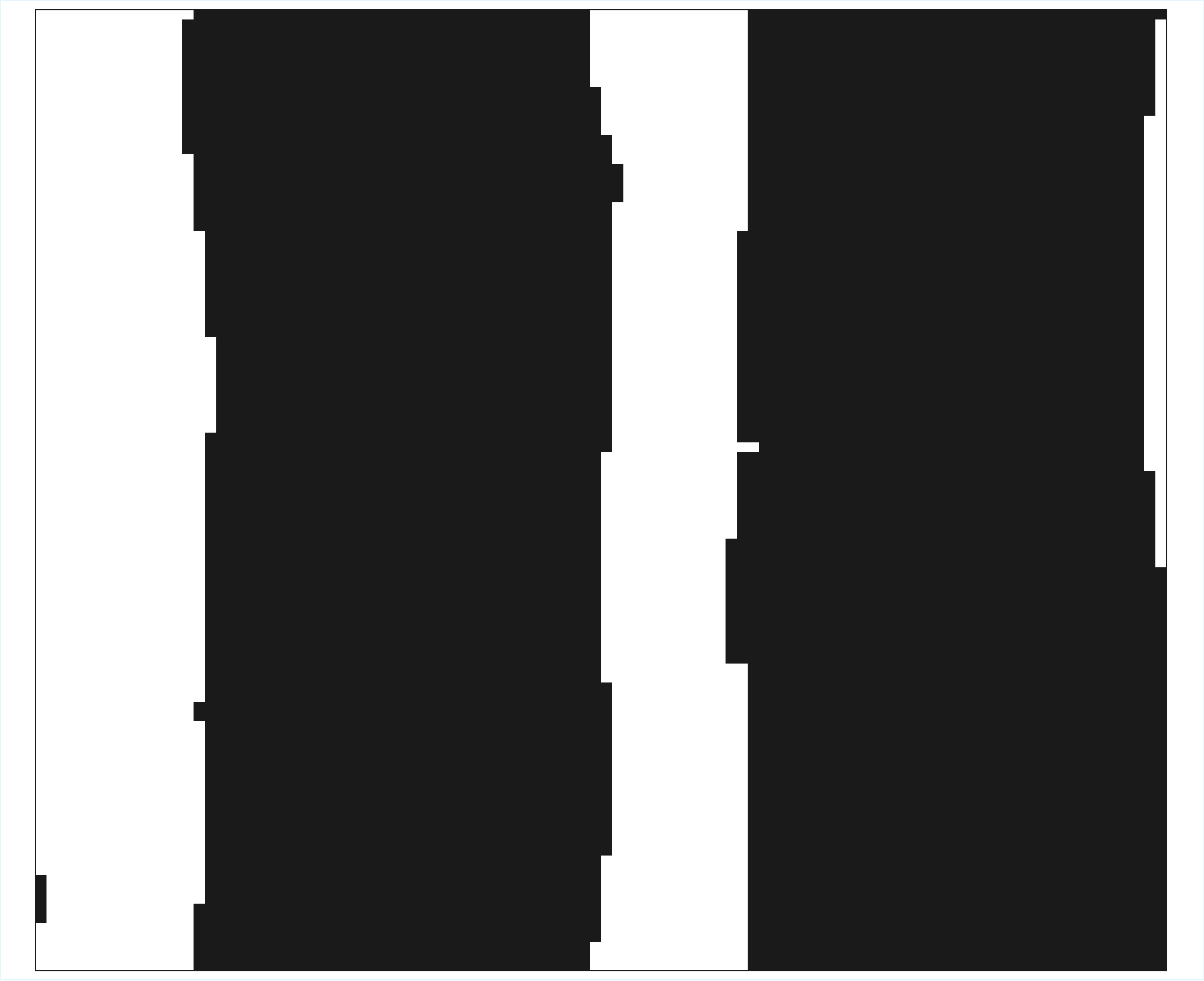}
\hspace{1.cm}
\includegraphics[width=0.12\linewidth]
{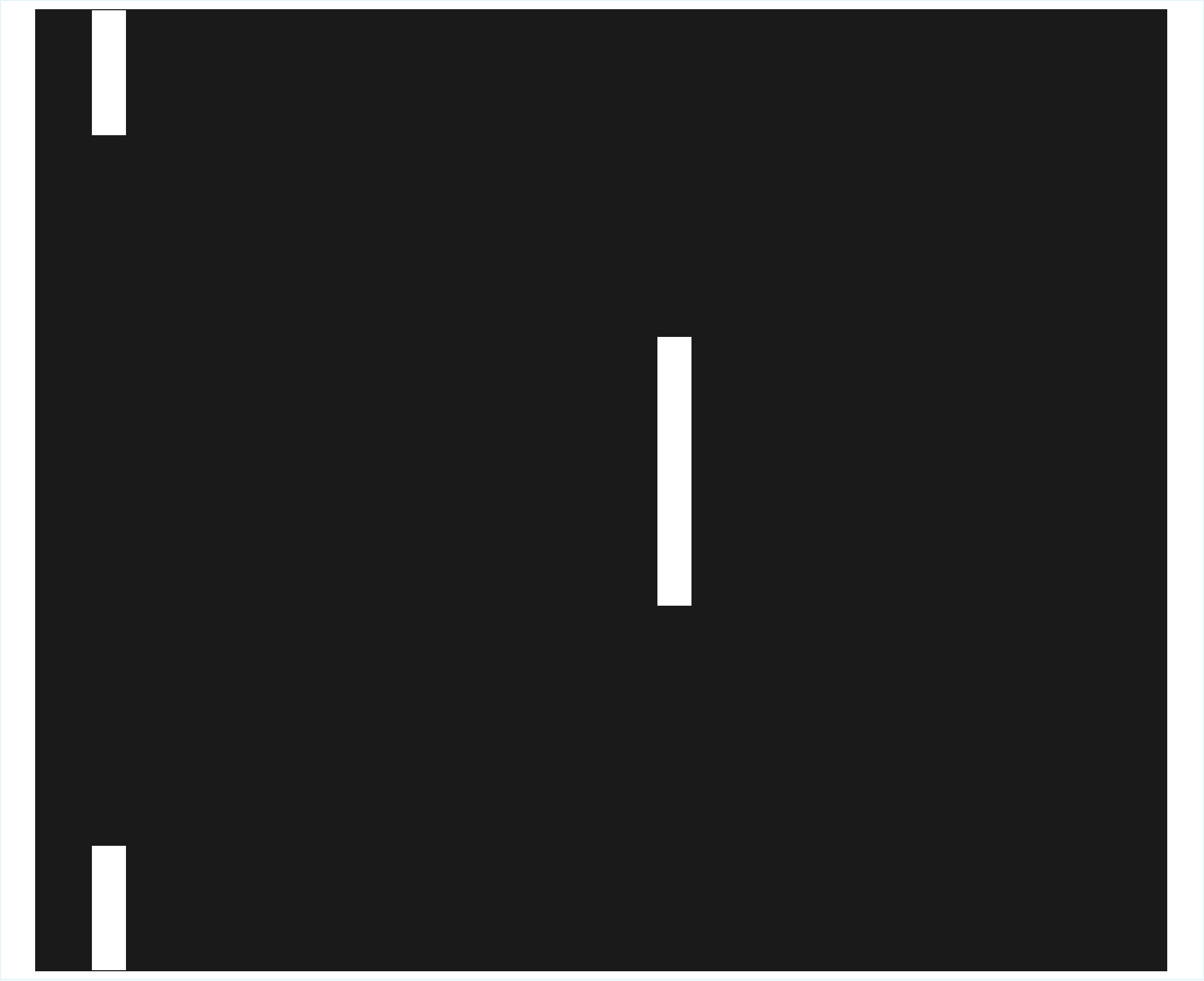}
\includegraphics[width=0.12\linewidth]
{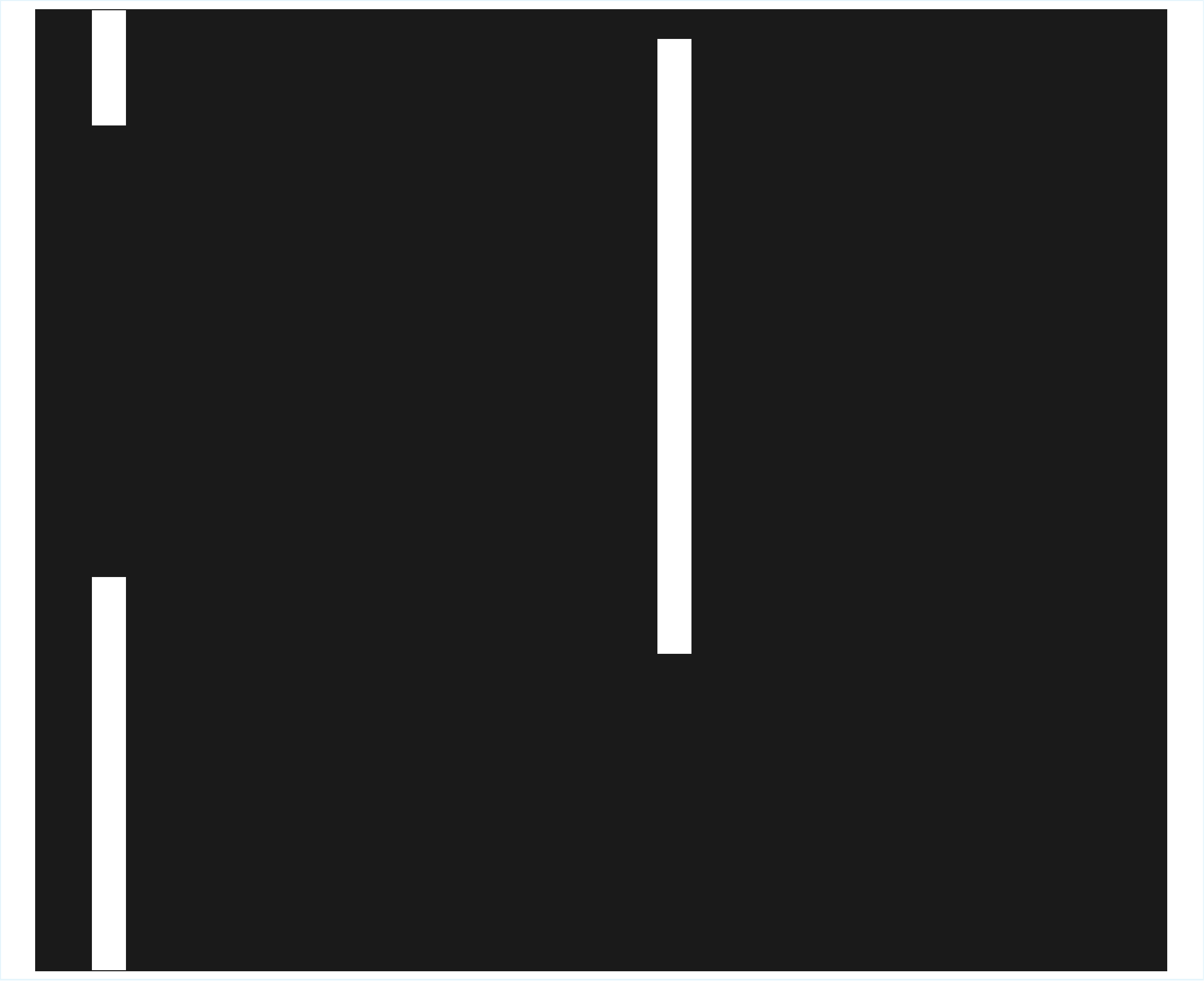}
\includegraphics[width=0.12\linewidth]
{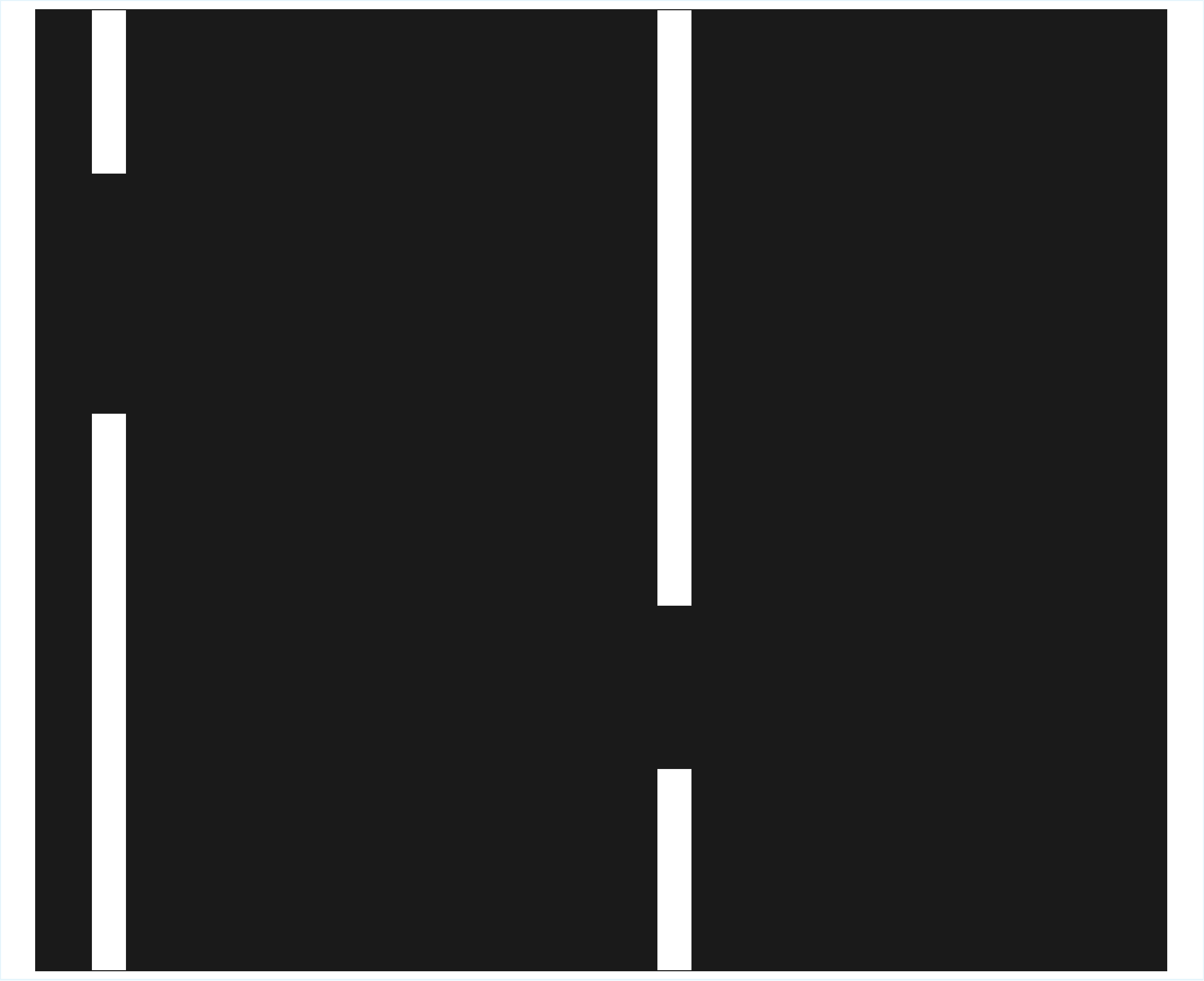}
\\
\vspace{1mm}
\includegraphics[width=0.12\linewidth]
{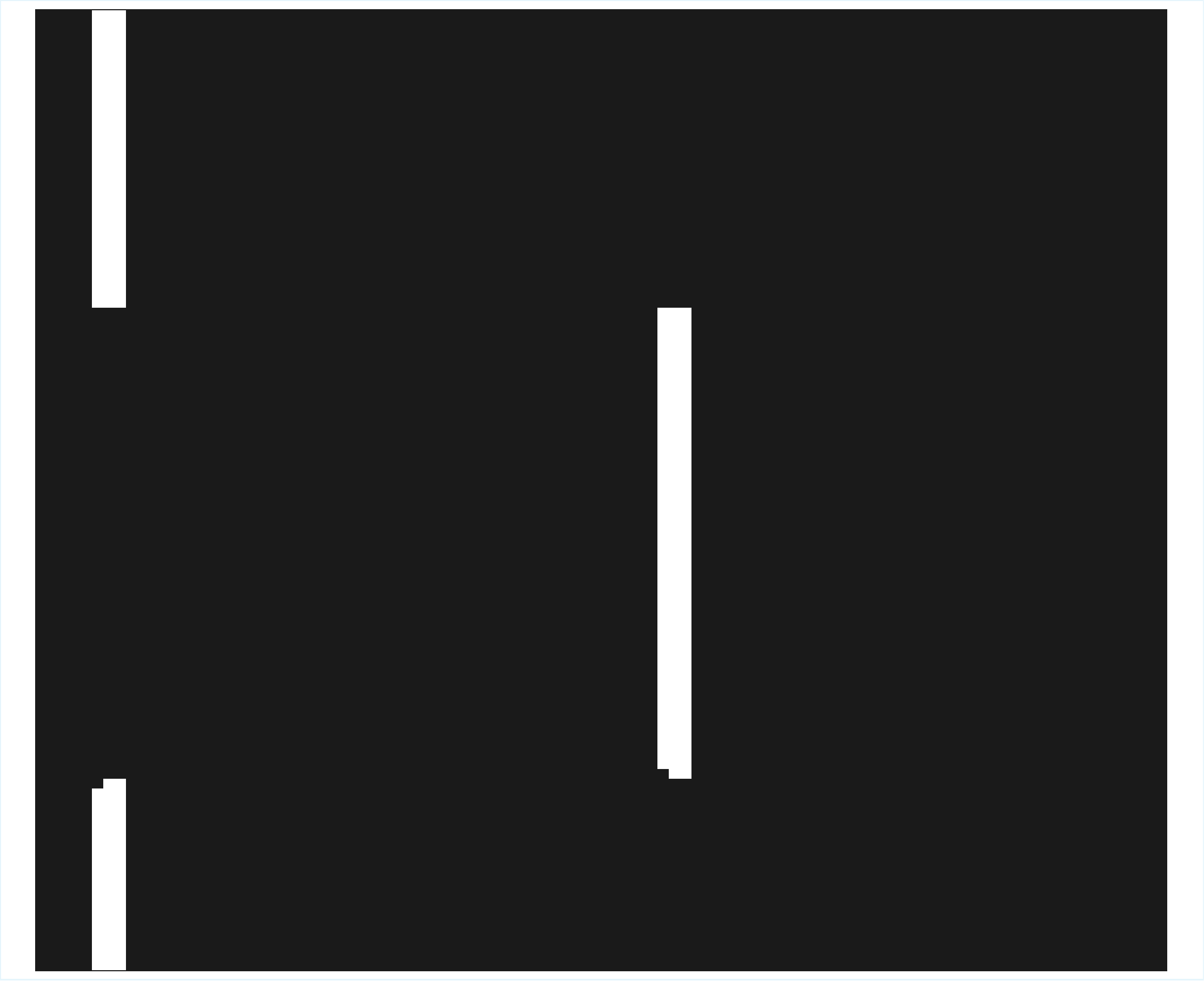}
\includegraphics[width=0.12\linewidth]
{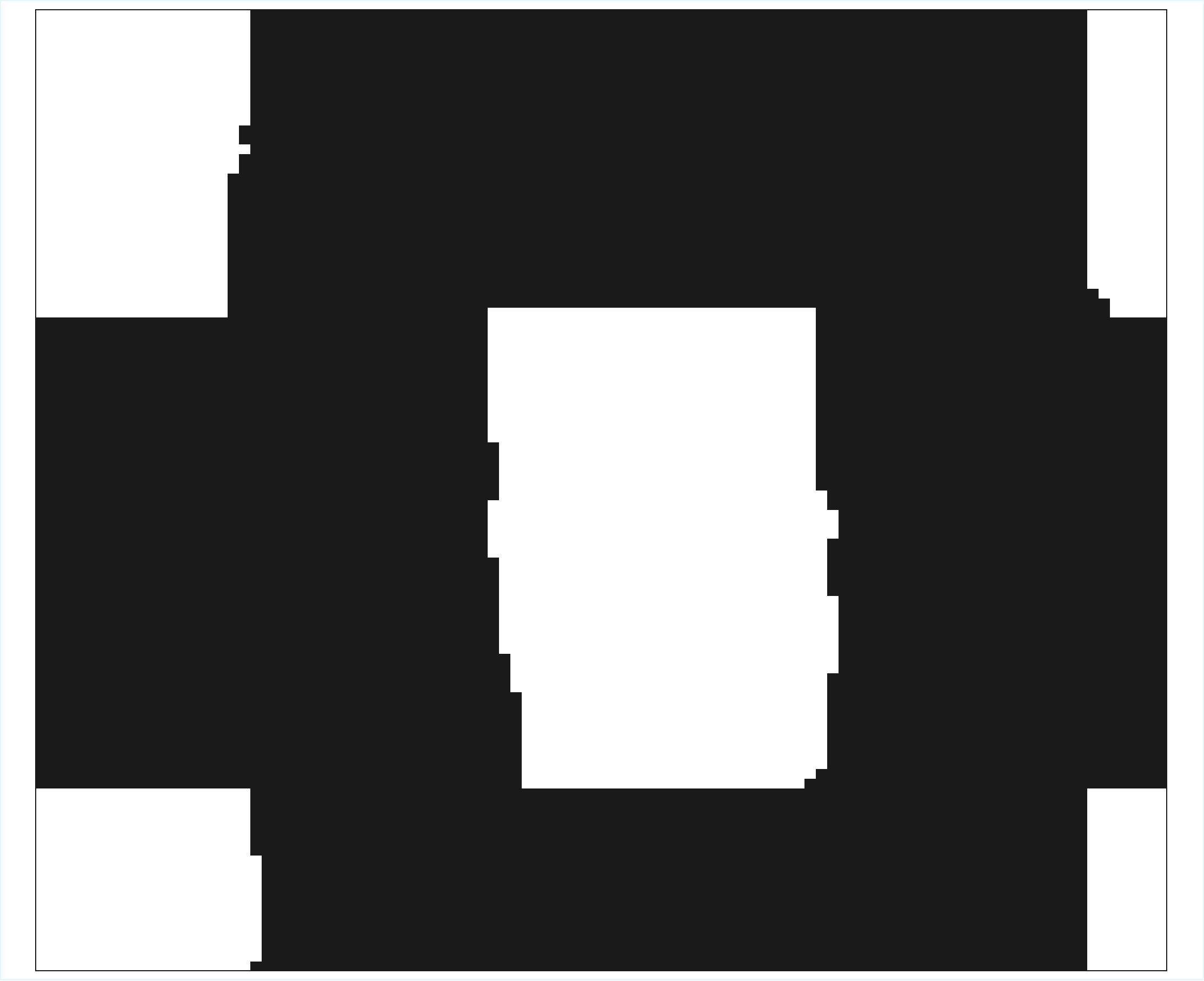}
\includegraphics[width=0.12\linewidth]
{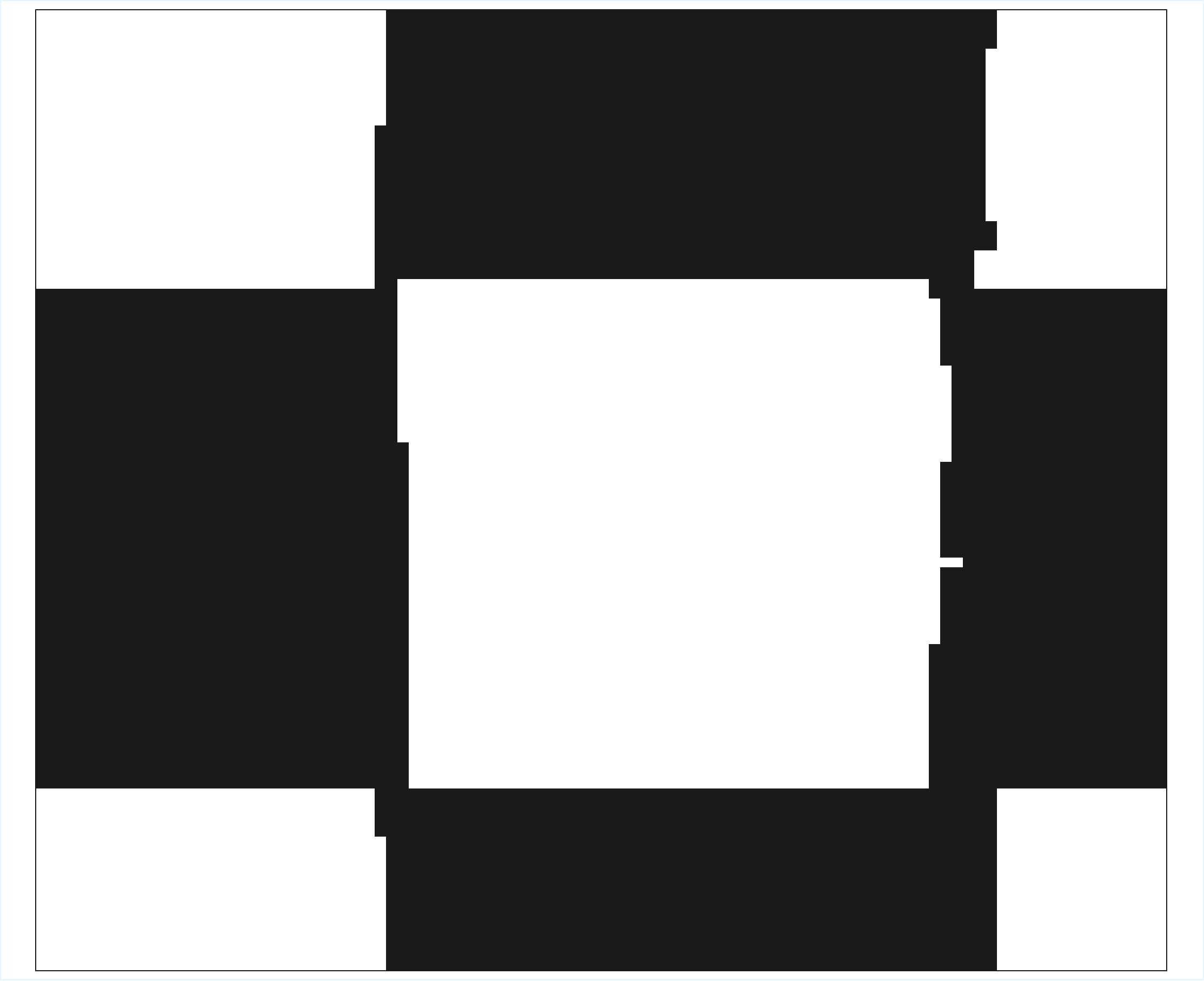}
\hspace{1.cm}
\includegraphics[width=0.12\linewidth]
{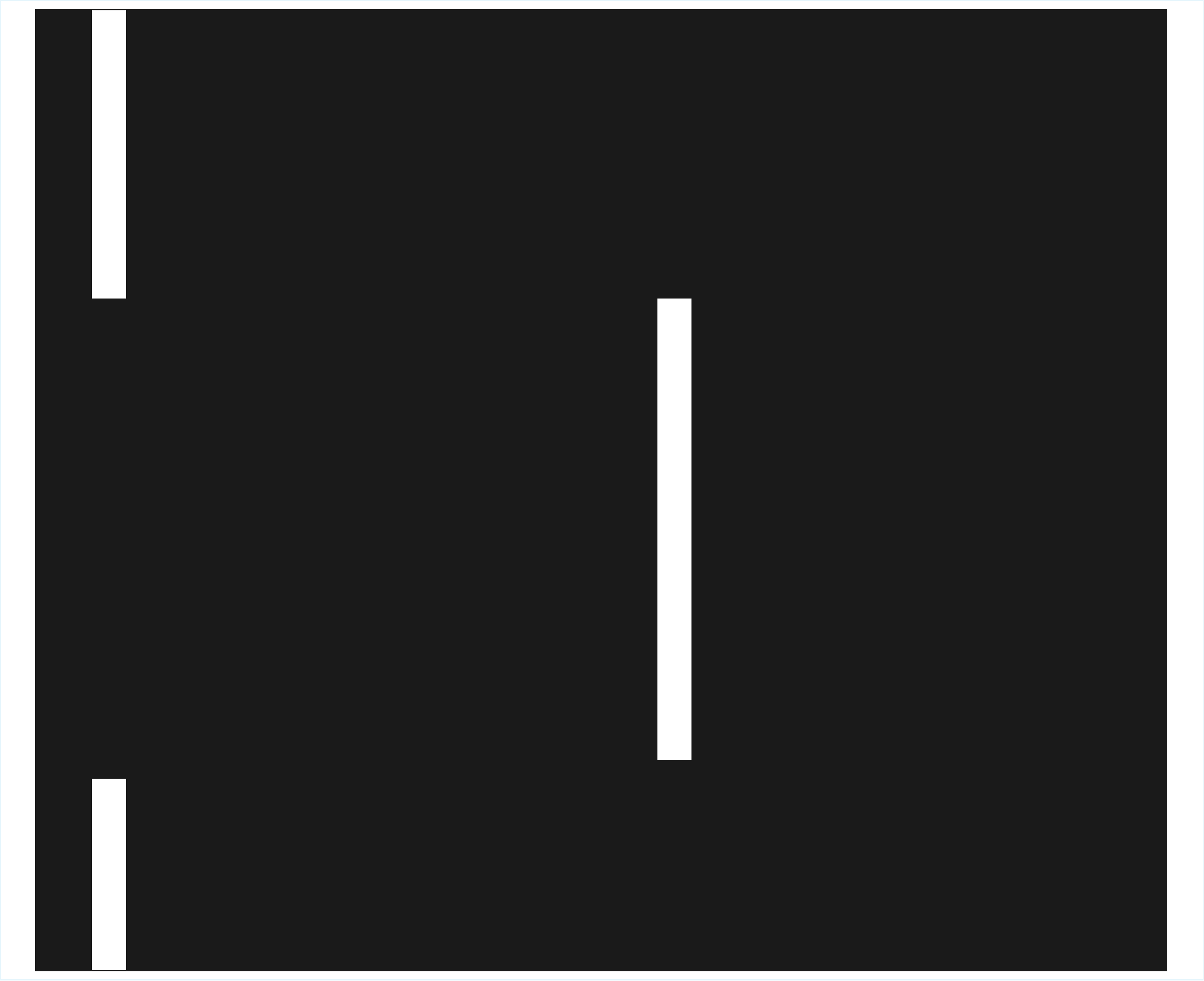}
\includegraphics[width=0.12\linewidth]
{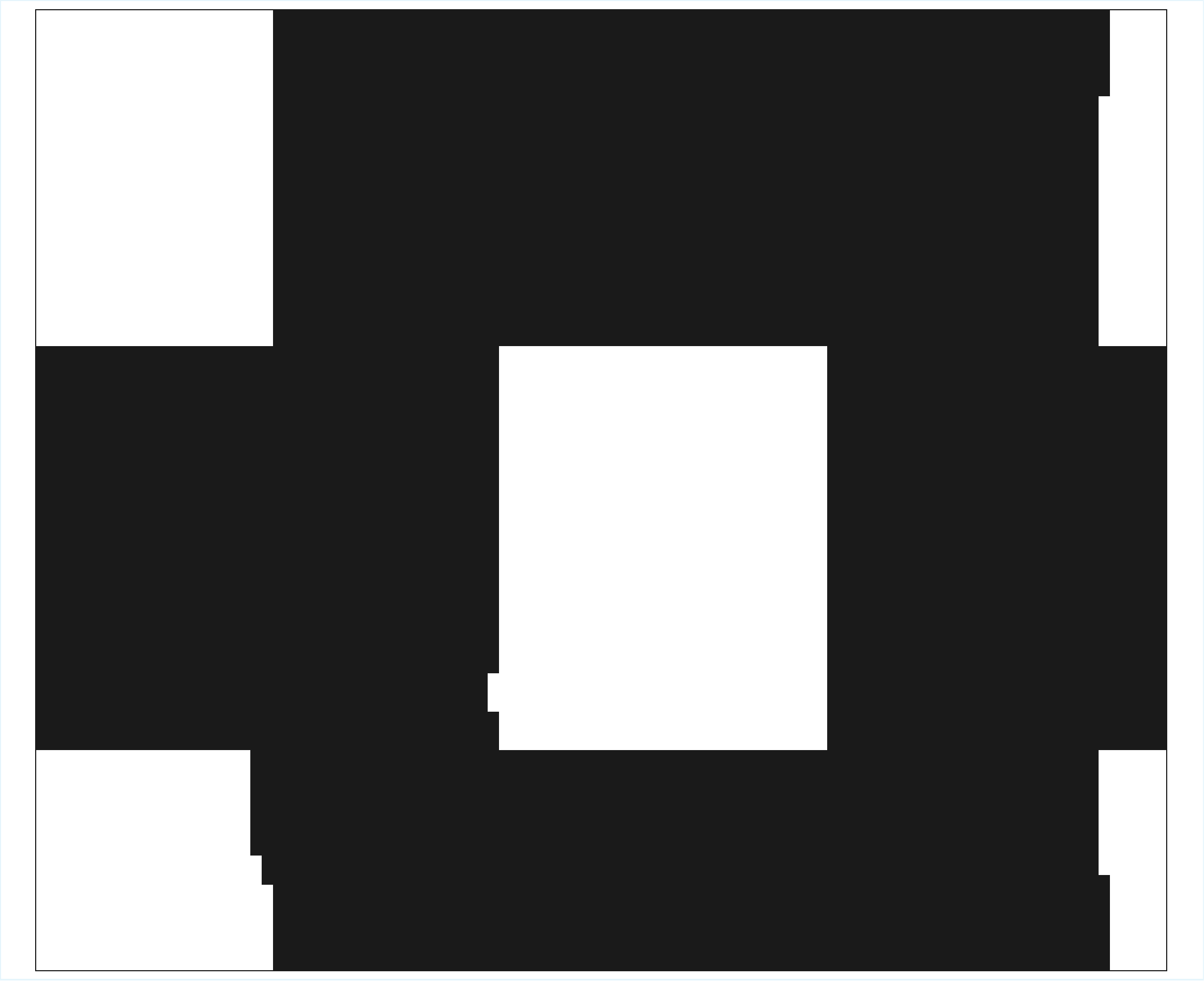}
\includegraphics[width=0.12\linewidth]
{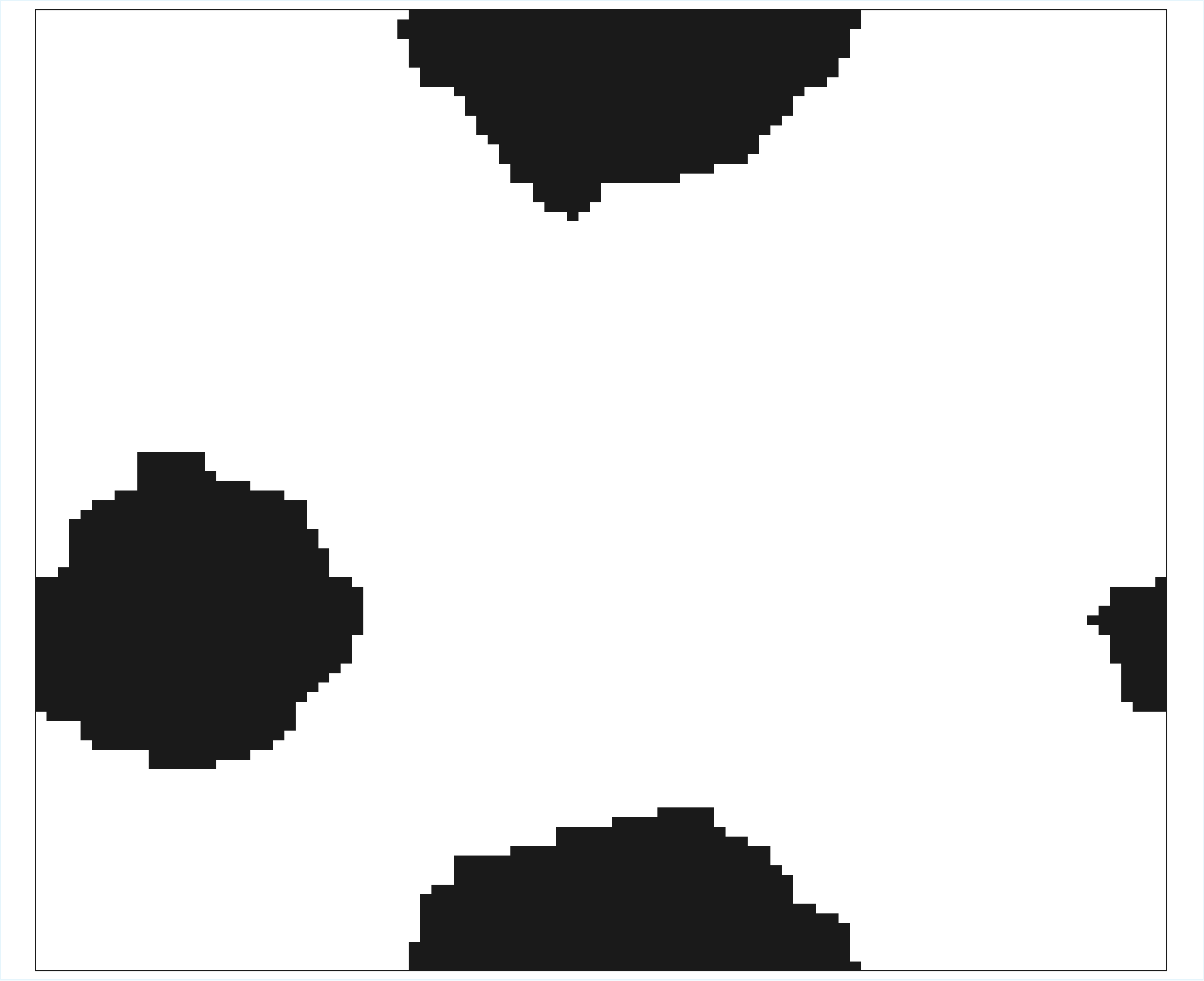}
\\
\vspace{1mm}
\includegraphics[width=0.12\linewidth]
{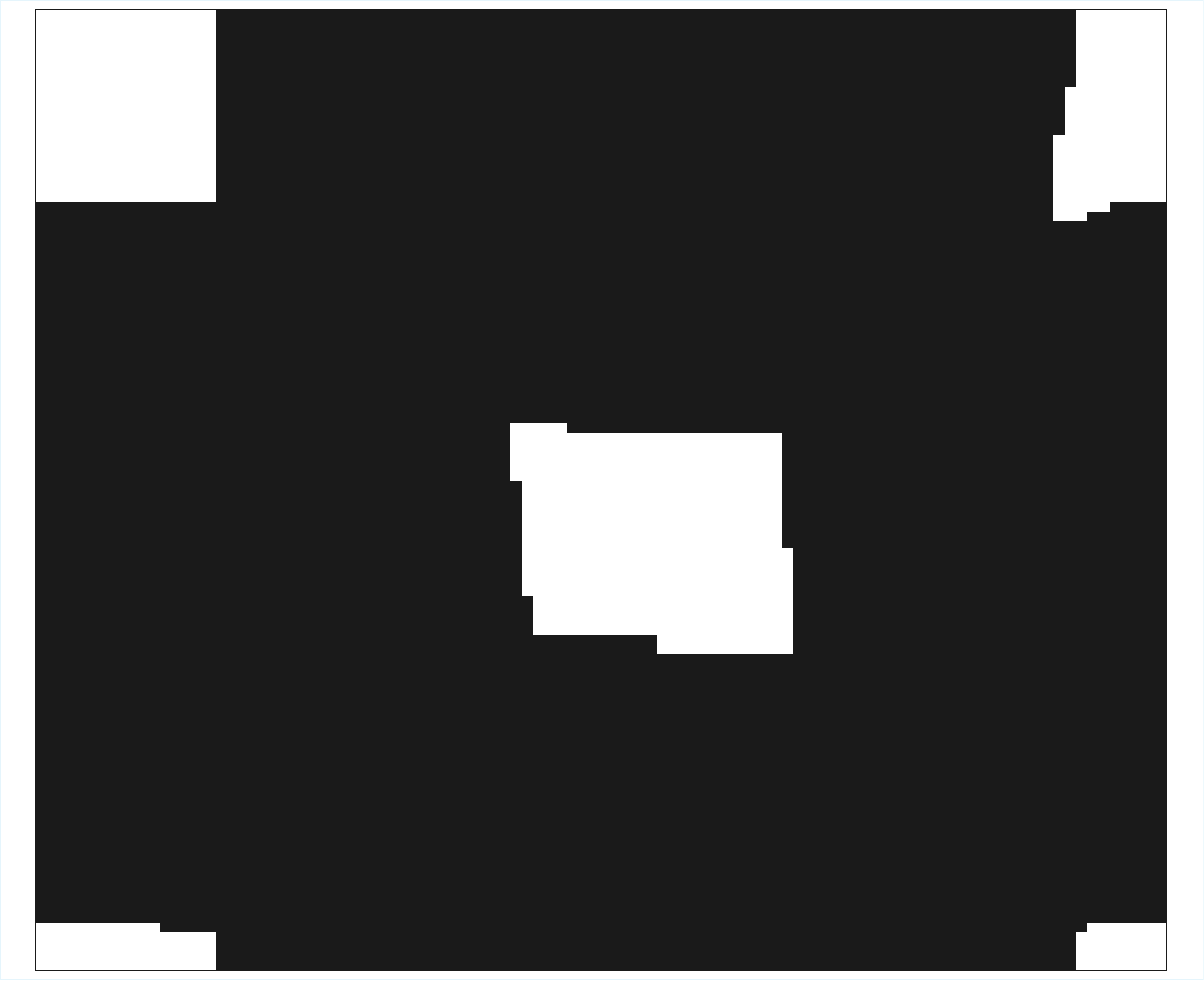}
\includegraphics[width=0.12\linewidth]
{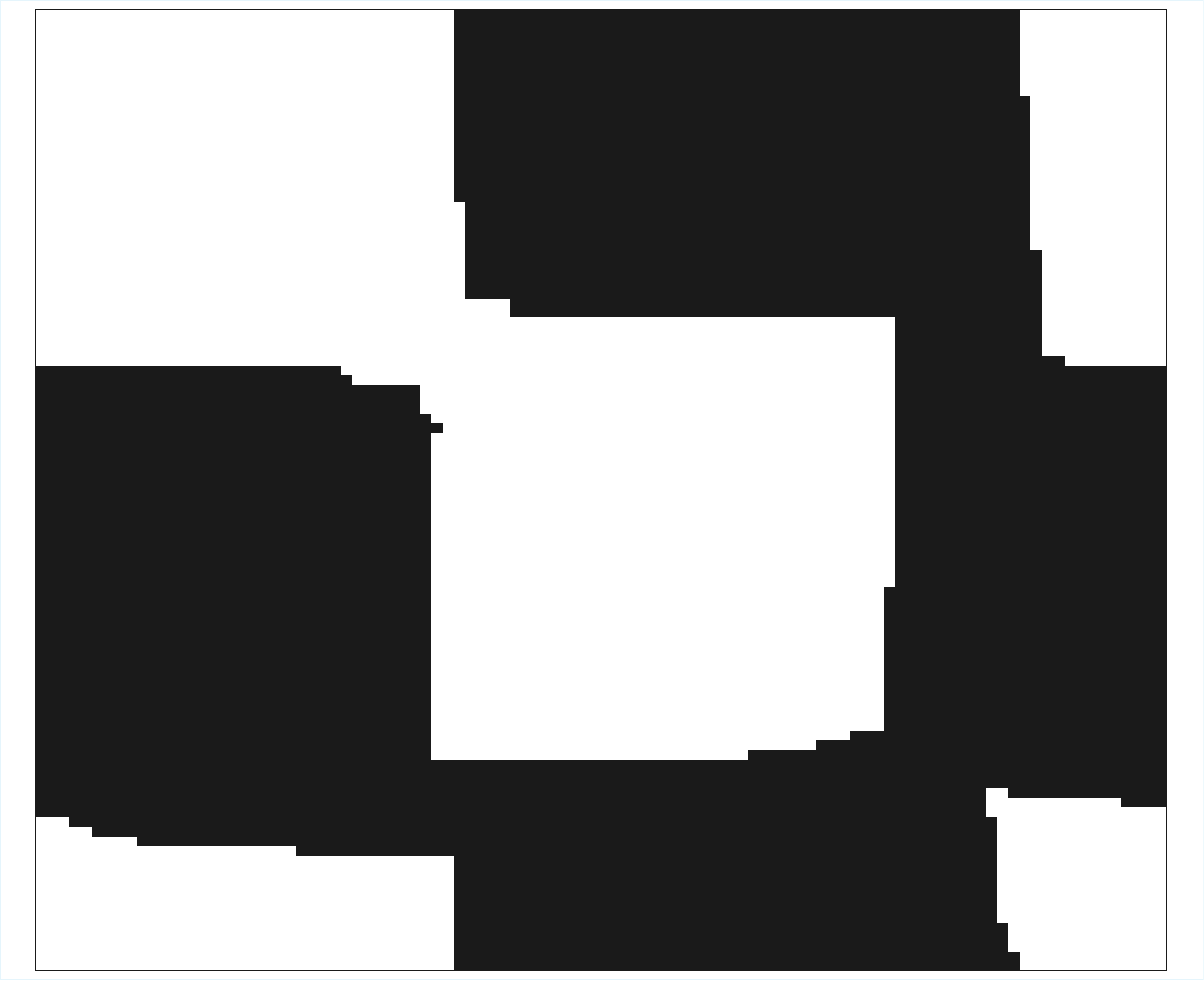}
\includegraphics[width=0.12\linewidth]
{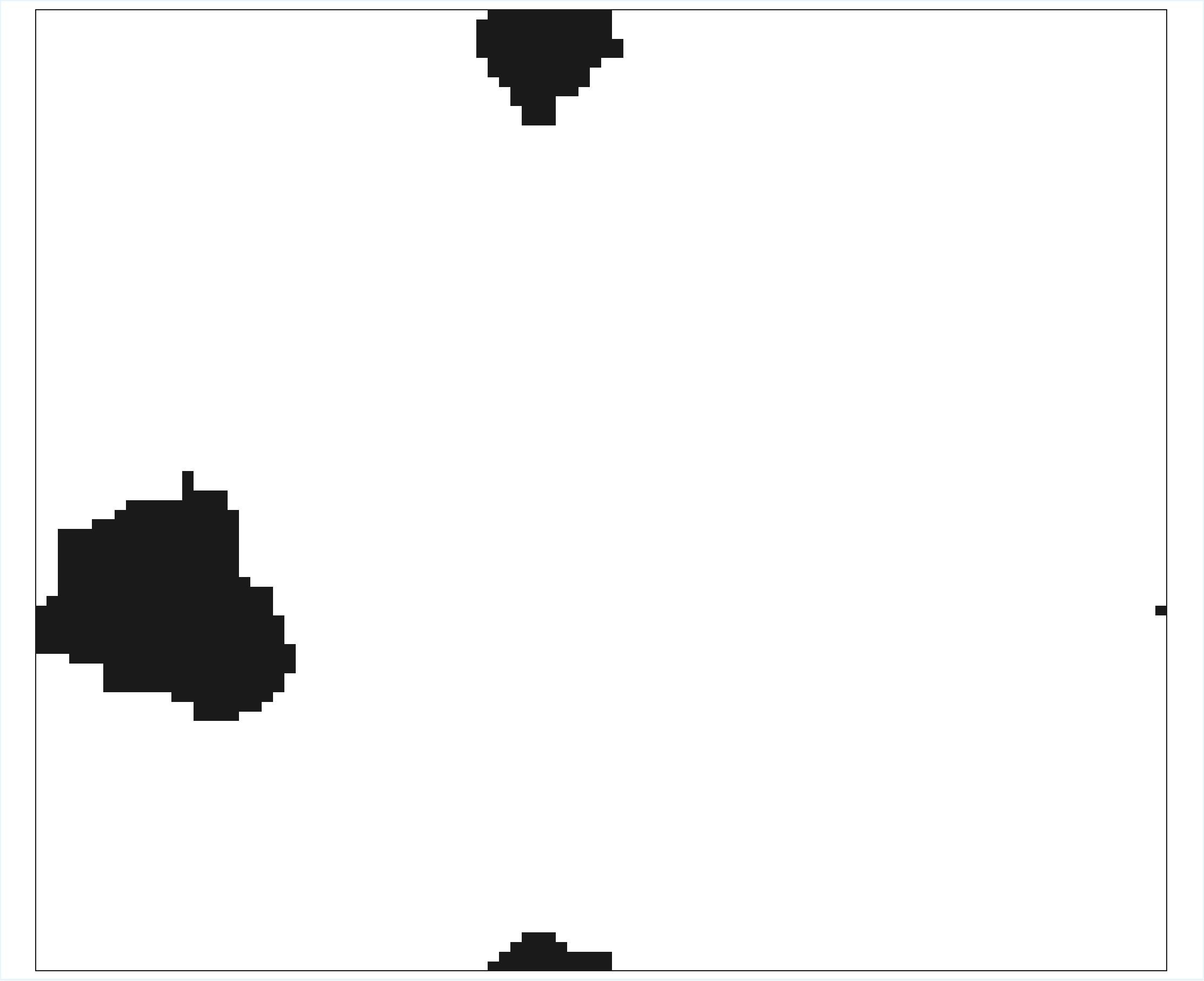}
\hspace{1.cm}
\includegraphics[width=0.12\linewidth]
{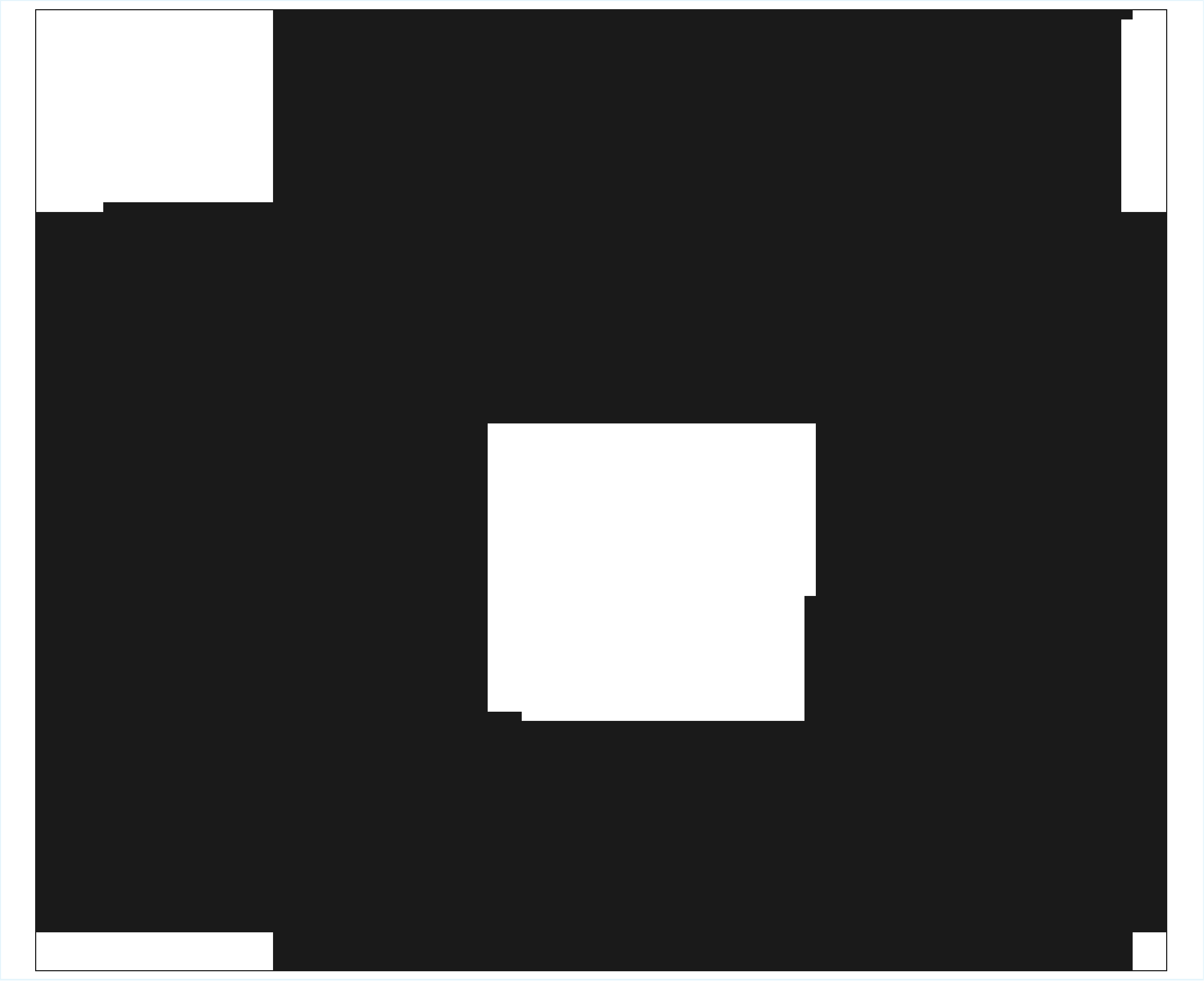}
\includegraphics[width=0.12\linewidth]
{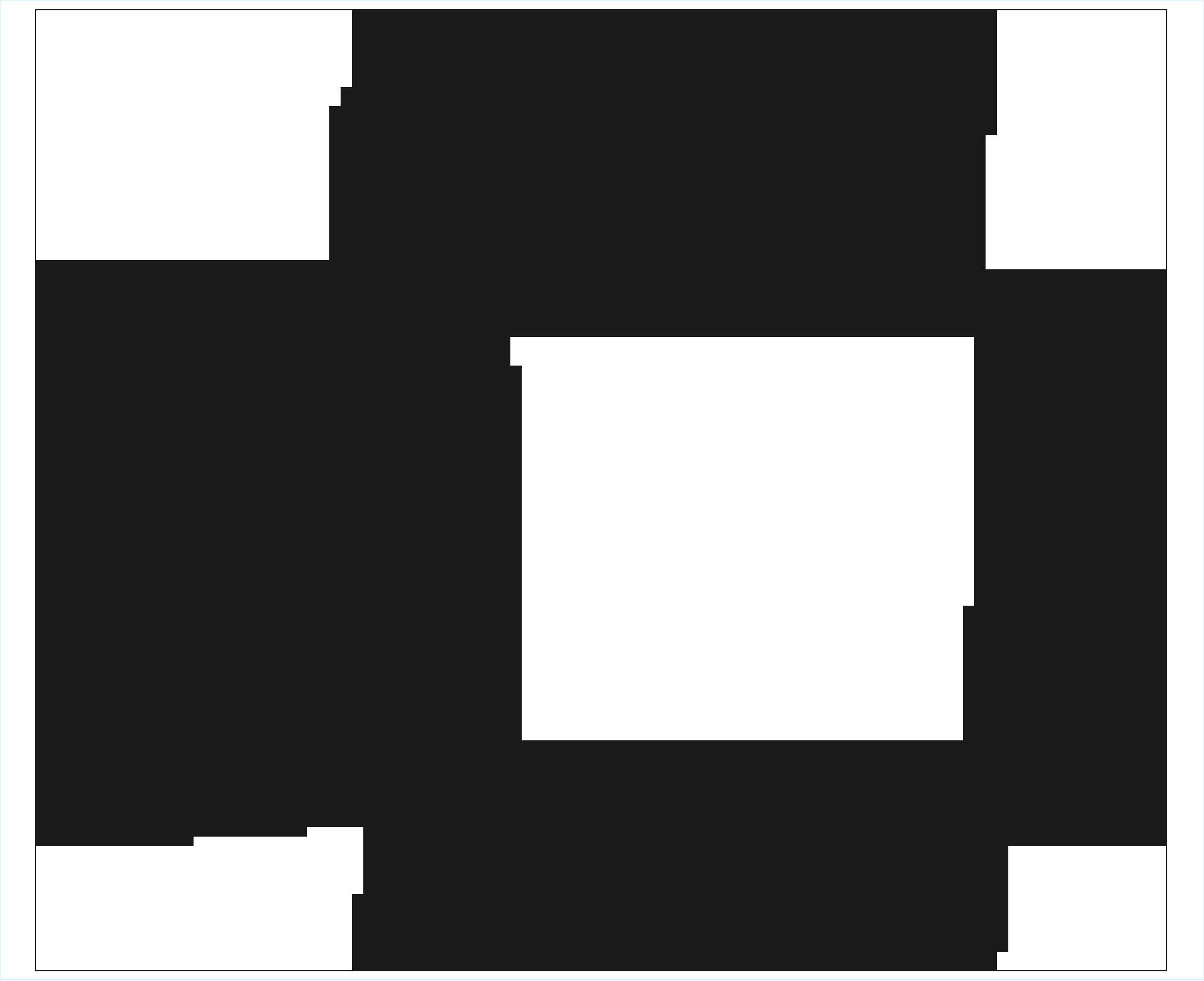}
\includegraphics[width=0.12\linewidth]
{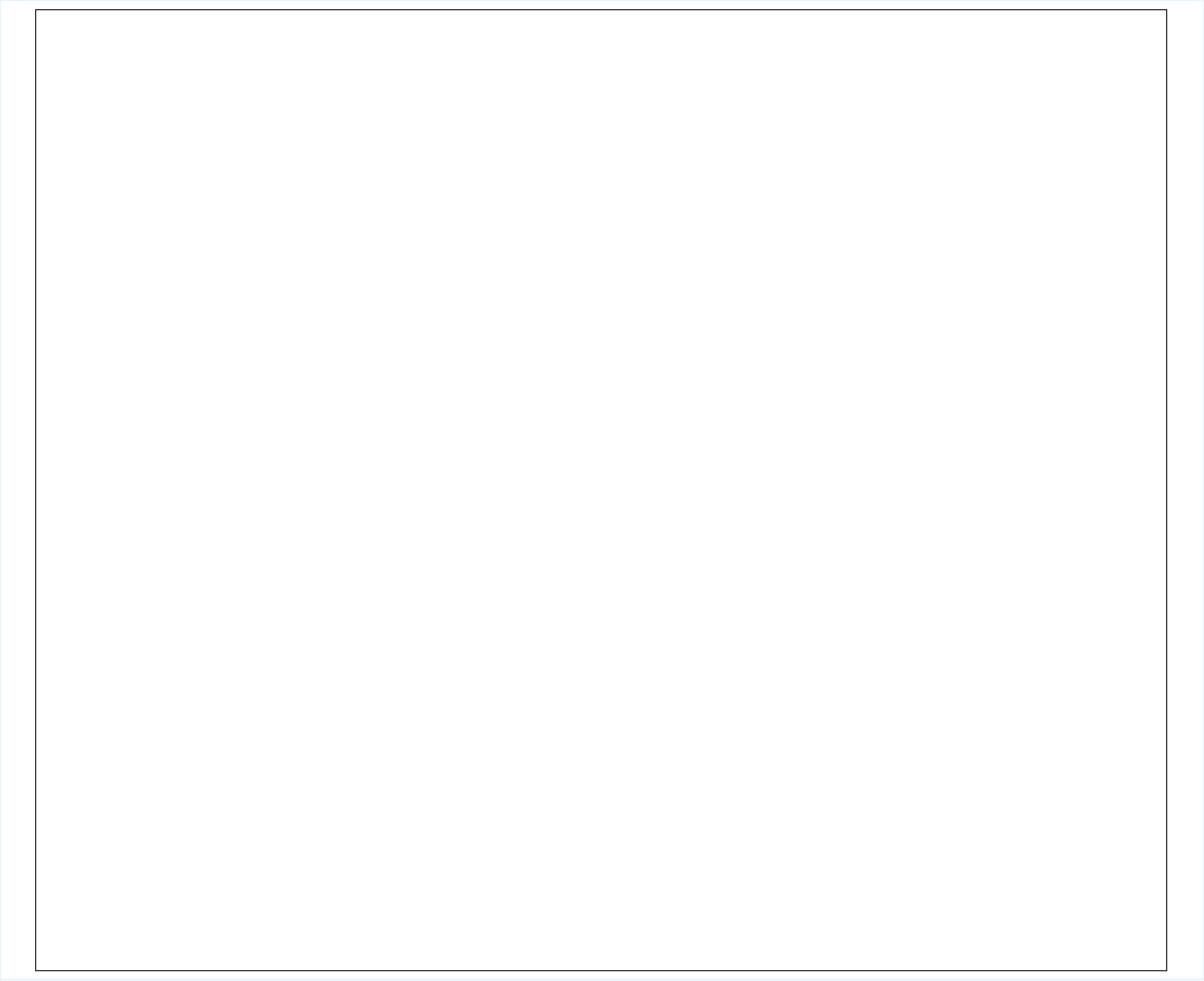}
\caption{Illustration of policies $\pi_1$ (first row), $\pi_2$ (second row) and $\pi_3$ (third row) for $N=100$ with initial seeds two 3x3 droplets at horizontal and vertical distances $47$.
Left group: $\kappa = 5,000$ 
at times $t = 250,500, 750$ (from left to right).
Right group:
$\kappa = 20,000$ at times 
$t = 150,300, 450$ (from left to right).
}
\label{Drop_drop_simulations}
\end{figure}

\begin{figure}
\centering
\includegraphics[width=0.6\linewidth]
{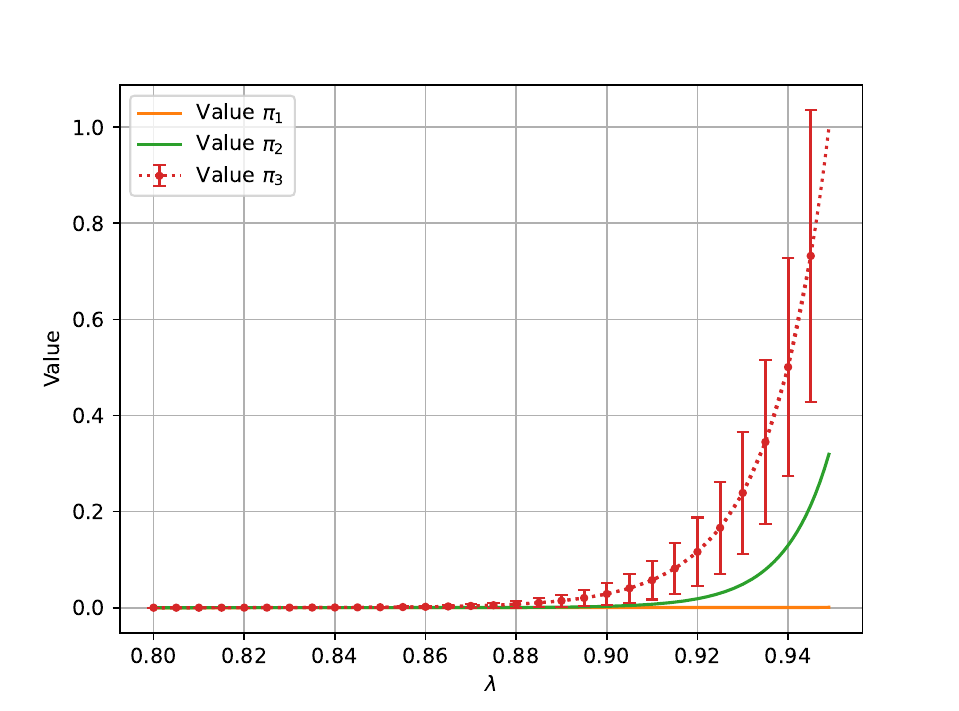}
\caption{Average values of policies $\pi_1$, $\pi_2$ and $\pi_3$ for $N = 32$ and $\kappa = 100,000$, starting from a configuration with two 3x3 droplets at distance 13 in horizontal and vertical directions.}
\label{Drop_drop_policy_comparison}
\end{figure}

We also use the simulation data to compute the average first hitting time to the all-plus configuration. The results, together with $95\%$-confidence intervals, are shown in Table \ref{Hitting_time_results_droplet_droplet}. 

\begin{table}[h!]
\caption{Average values of first hitting times with 95\%-confidence intervals for policies $\pi_1$, $\pi_2$ and $\pi_3$ for $N = 32$ and $\kappa = 100,000$.}
\centering
\begin{tabular}{l | l | l}
\hline
\textbf{Policy} & \textbf{Mean first hitting time} & \textbf{95\%-confidence interval} \\
\hline
$\pi_1$ & 199.567 & (198.768, 200.366) \\
$\pi_2$ & 79.758 & (79.470, 80.047) \\
$\pi_3$ & 58.318 & (57.988, 58.648) \\
\hline
\end{tabular}

\label{Hitting_time_results_droplet_droplet}
\end{table}

Among the candidate policies, $\pi_3$ appears to reach the all-plus configuration substantially faster than the others, rendering it the optimal choice with respect to both the expected total discounted reward and the expected first hitting time.

\section{Rigorous study of the two-stripe case}
\label{s:rigorous}
In \cite{deJongh2025}, the optimal policy in this MDP was derived analytically for robust configurations in which the sites with spin $+1$ form a single droplet by constructing an auxiliary MDP based on the geometric characterization of such configurations. In this section, we extend the analysis to configurations in the set $U^{2,x}$, that is, configurations in which there are two stripes with spin $+1$. 

\subsection{Optimal policy in case of two stripes}
\label{Optimal policy in case of two strips}
%To formalize the connection between the original low-temperature Ising MDP and the auxiliary MDP, we define a mapping $\mathcal{I}_x: U_x^{(1)} \cup U_x^{(2)} \rightarrow S^x$ as $\mathcal{I}_x(\sigma) = (i,j)$ if the distances between the two stripes in configuration $\sigma$ equal $i$ and $j$. 
%Given a configuration $\sigma \in  U_x^{(1)} \cup U_x^{(2)}$, let $\mathcal{J}_{\sigma}: A(\sigma) \rightarrow A^x(\mathcal{I}_x(\sigma))$ denote the mapping that gives for each $a \in A(\sigma)$ the corresponding element of $A^x(\mathcal{I}_x(\sigma))$. Spins that are neither at distance 1 nor 2 from the stripes are mapped to the action $0$. \mcdj{Do you think it is necessary to define these mappings? We're not using them anywhere.}

Recall the definition of the auxiliary MDP for the two-stripe case provided in Section \ref{The auxiliary MDP_strip_strip}. We now formally specify the transition probability kernel $P^x$ and the reward function $r^x$. The transition probability kernel can be computed by means of the method outlined in \cite[Lemmas 5.4--5.6]{deJongh2025}. The arguments can be easily extended to our setting. Lemma \ref{Aux_trans_probs_two_strips} provides the analogue of \cite[Lemma 5.6]{deJongh2025}.

\begin{lem}\label{Aux_trans_probs_two_strips}
    The transition probability kernel $P^x: S^x \times A^x \times S^x \rightarrow [0,1]$ for states $(i,j) \in S^x$, $i \geq j$, is given by
    \begin{align}
    \label{trans_probs_two_strips_1}
        P^x((i',j')|(i,j), a_{\ell 1}) &= \begin{cases}
            1/3, &\text{if } i' = i, \quad j' = j, \\
            2/3, &\text{if } i' = i-1, \quad j' = j, \\
            0, &\text{otherwise,}
        \end{cases}\quad \text{if }
        i > 2, \quad 1 < j \leq i, \\
    \label{trans_probs_two_strips_2}
        P^x((i',j')|(i,j), a_{\ell 2}) &= \begin{cases}
            5/9, &\text{if } i' = i, \quad j' = j, \\
            7/27, &\text{if } i' = i-1, \quad j' = j, \\
            5/27, &\text{if } i' = i-2, \quad j' = j, \\
            0, &\text{otherwise,}
        \end{cases} \quad \text{if }
        i > 3, \quad 1 < j \leq i,  \\
    \label{trans_probs_two_strips_3}
        P^x((i',j')|(i,j), a_{s1}) &= \begin{cases}
            1/3, &\text{if } i' = i, \quad j' = j, \\
            2/3, &\text{if } i' = i, \quad j' = j-1, \\
            0, &\text{otherwise,}
        \end{cases} \quad \text{if } i > j, \quad j > 2, \\
    \label{trans_probs_two_strips_4}
        P^x((i', j')|(i,j), a_{s2}) &= \begin{cases}
            5/9, &\text{if } i' = i, \quad j' = j, \\
            7/27, &\text{if } i' = i, \quad j' = j-1, \\
            5/27, &\text{if } i' = i, \quad j' = j-2, \\
            0, &\text{otherwise,}
        \end{cases} \quad \text{if } i > j, \quad j > 3, \\
    \label{trans_probs_two_strips_5}
        P^x((i',j')|(2,2), a_{\ell 1}) &= \begin{cases}
            1/4, &\text{if } i' = 2, \quad j' = 2, \\
            3/4, &\text{if } i' = 0, \quad j' = 2, \\
            0, &\text{otherwise,}
        \end{cases} \\
    \label{trans_probs_two_strips_6}
        P^x((i',j')|(3,j),a_{\ell 2}) &= \begin{cases}
            7/18, &\text{if } i' = 3, \quad j' = j, \\
            31/144, &\text{if } i' = 2, \quad j' = j, \\
            19/48, &\text{if } i' = 0, \quad j' = j, \\
            0, &\text{otherwise,}
        \end{cases}  \quad \text{if } j = 2, 3, \\
    \label{trans_probs_two_strips_7}
        P^x((i',j')|(i,2), a_{s1}) &= \begin{cases}
            1/4, &\text{if } i' = i, \quad j' = 2, \\
            3/4, &\text{if } i' = i, \quad j' = 0, \\
            0, &\text{otherwise,}
        \end{cases} \quad \text{if } i > 2, \\
    \label{trans_probs_two_strips_8}
        P^x((i',j')|(i,3), a_{s2}) &= \begin{cases}
            7/18, &\text{if } i' = i, \quad j' = j, \\
            31/144, &\text{if } i' = i, \quad j' = 2, \\
            19/48, &\text{if } i' = i, \quad j' = 0, \\
            0, &\text{otherwise,}
        \end{cases} \quad \text{if } i > 3.
    \end{align}
    The expressions for states $(i,j) \in S^x$, $i < j$ follow immediately from symmetry. 
\end{lem}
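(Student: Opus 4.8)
The plan is to reduce each transition probability to the computation of absorption probabilities of a finite Markov chain describing the zero--temperature Metropolis relaxation of the post--decision configuration $\sigma^{(a)}$. Since the stripes wrap around the torus and, in the generic regime, the two gaps are large, the relaxation is confined to a bounded \emph{active region} of columns adjacent to one stripe, on the side of the gap where the action was taken; by translation invariance along the stripe, only the local configuration of plus spins in these columns matters. I would first recall the susceptibility rules of Section~\ref{s:ising}: a plus spin flips (downhill) iff it has at least three minus neighbours, and a minus spin flips iff it has at least two plus neighbours. The single structural fact that drives every computation is that a \emph{vertical domino} of plus spins in the column adjacent to a stripe is stable (neither of its spins can flip back, each having two plus neighbours) while both of its vertical extensions are susceptible; hence, once such a domino appears, the column fills monotonically up to height $N$ without any reversal, merging with the stripe and decreasing the corresponding gap by one. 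This ``commitment'' step lets me collapse each column--filling episode to a single absorbing state and keeps the chain finite.

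For the distance--one actions $a_{\ell 1}$ and $a_{s1}$ in the generic regime, placing a single plus at distance $1$ produces one protuberance with exactly three susceptible sites: the protuberance itself (flip back) and its two neighbours along the stripe (each now having two plus neighbours). The dynamics branches three ways with equal probability $1/3$; flipping the protuberance back returns to the robust stripe configuration (probability $1/3$, gap unchanged), while either of the other two flips creates a vertical domino and, by the commitment step, fills the column (probability $2/3$, gap decreased by one). This yields \eqref{trans_probs_two_strips_1} and, by the symmetry between the two gaps, \eqref{trans_probs_two_strips_3}.

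The crux is the distance--two actions $a_{\ell 2}$ and $a_{s2}$. Here the essential observation, easy to overlook, is that the spin placed at distance $2$ is \emph{not} dynamically isolated: the site lying between it and the stripe acquires two plus neighbours (the stripe and the new spin) and is therefore susceptible, so a horizontal bridge can form. Tracking the downhill tree, the seed either dies or bridges to distance $1$, after which a genuine race begins among (i) the protuberance dying, (ii) the distance--one column filling (gap $-1$), and (iii) the distance--two column \emph{also} acquiring a stable domino (gap $-2$). I would encode this race as a small absorbing chain whose transient states record the extent of the distance--one front and whether the distance--two seed is still alive, exploiting that from any ``wide'' configuration (distance--one front already containing the active row and its two vertical neighbours) the chain has five susceptible sites and satisfies a self--consistent equation whose solution gives probability $2/3$ of completing the second column. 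Solving the resulting linear system produces the split $5/9$ (stay), $7/27$ (gap $-1$), $5/27$ (gap $-2$) of \eqref{trans_probs_two_strips_2}, and the analogous \eqref{trans_probs_two_strips_4}. This recursion, together with the verification that no plus spin ever flips back once either column is committed, is the main technical obstacle.

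Finally, the boundary formulas \eqref{trans_probs_two_strips_5}--\eqref{trans_probs_two_strips_8} follow by repeating the same enumeration with the additional susceptible sites forced by a small gap. When $i=2$, the bridge site created by $a_{\ell 1}$ touches the opposite stripe and is itself susceptible, adding a fourth initial branch and replacing $(1/3,2/3)$ by $(1/4,3/4)$, with the gap collapsing directly to $0$ (a gap of width $1$ is never robust, since its single minus column then has two plus neighbours). When $i=3$ under $a_{\ell 2}$, completing the second column brings the gap to width $1$, which again collapses to $0$; the modified susceptible counts near the opposite stripe produce the probabilities $7/18$, $31/144$, $19/48$. The short--gap cases $a_{s1}$ and $a_{s2}$ are identical after exchanging the roles of $i$ and $j$, and the remaining states with $i<j$ follow from the symmetry already noted in the statement.
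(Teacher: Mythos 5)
Your proposal is correct, and it is in substance the same method as the one underlying the paper's proof --- computing the absorption probabilities of the zero-temperature downhill relaxation started from the post-decision configuration, using the susceptibility rules and the uniformity of the Metropolis update over susceptible sites. The difference is one of presentation: the paper's own proof is essentially a citation, deferring each formula to specific figures and tables of \cite{deJongh2025}, whereas you derive the kernel from scratch. Your derivation buys self-containedness, and your two structural observations (the irreversible ``commitment'' of a vertical plus domino adjacent to a stripe, and the non-isolation of the distance-two seed via the susceptible bridge site) are exactly the right reductions; I checked that they reproduce the stated numbers. For instance, for $a_{\ell 2}$ the seed dies or bridges with probability $1/2$ each; after bridging, the three susceptible sites give seed-death (then $1/3$ vs.\ $2/3$ for the lone protuberance) or an L-shaped state $L$ with probability $2/3$; from $L$ one needs the absorption probability $\mathbb{P}(-2\,|\,L)=5/9$, obtained by combining your five-site self-consistent equation $p=\tfrac25+\tfrac25p$ (so $p=2/3$ for ``wide'' states) with the analogous four-site equation $q=\tfrac14+\tfrac14\cdot\tfrac23+\tfrac14 q$ for the one-sided states; assembling gives $\tfrac12+\tfrac1{18}=\tfrac59$, $\tfrac12(\tfrac29+\tfrac23\cdot\tfrac49)=\tfrac7{27}$, $\tfrac12\cdot\tfrac23\cdot\tfrac59=\tfrac5{27}$, matching \eqref{trans_probs_two_strips_2}. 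The only place your sketch is thinner than it should be is precisely this last point: the $2/3$ for wide configurations alone does not suffice, since the L-shaped state reached immediately after the bridge grows vertically has only four susceptible sites and its own absorption probability; you should state that second self-consistent equation explicitly. The boundary cases \eqref{trans_probs_two_strips_5}--\eqref{trans_probs_two_strips_8} are handled correctly, including the observation that a gap of width one is not robust and collapses, which is why the target gap jumps to $0$ rather than $1$.
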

\begin{proof}
    The expressions can be obtained by a similar argument as that presented in \cite[Lemma 5.6]{deJongh2025}.
    Expressions (\ref{trans_probs_two_strips_1}) and (\ref{trans_probs_two_strips_3}) follow immediately from \cite[Figure 12, Table 2]{deJongh2025}. Expressions (\ref{trans_probs_two_strips_2}) and (\ref{trans_probs_two_strips_4}) follow from \cite[Figure 11, Table 1]{deJongh2025}. Expressions (\ref{trans_probs_two_strips_5}) and (\ref{trans_probs_two_strips_7}) follow from \cite[Figure 16, Table 6]{deJongh2025}. Finally, expressions (\ref{trans_probs_two_strips_6}) and (\ref{trans_probs_two_strips_8}) follow from \cite[Figure 17, Table 7]{deJongh2025}.
\end{proof}
Furthermore, we define the reward function of the auxiliary process as
\begin{equation*}
    r^x(s) = \begin{cases}
        1, &\text{if } s = (0, 0), \\
        0, &\text{otherwise.}
    \end{cases}
\end{equation*}

\subsection{The optimal policy}
\label{The optimal policy_strip_strip}
The optimal policy of the auxiliary MDP $(S^x, A^x, P^x, r^x)$ is provided in Theorem \ref{opt_pol_two_strips}. 
\begin{thm}\label{opt_pol_two_strips}
A stationary, deterministic policy $\pi^* = (d^*)^{\infty}$ is optimal in the auxiliary MDP $(S^x, A^x, P^x, r^x)$ if and only if
\begin{equation*}
    d^*(i,j) \in \begin{cases}
        A_1(i,j), &\text{if } \lambda \in (\lambda_c, 1), \\
        A_1(i,j) \cup A_2(i,j), &\text{if } \lambda = \lambda_c, \\
        A_2(i,j), &\text{if } \lambda \in (0, \lambda_c), 
    \end{cases}
\end{equation*}
for all $(i,j) \in S^x$, where $\lambda_c = 15/17$ and the functions $A_k: S^x \rightarrow P(A^x)$, $k = 1,2$, are as defined in expressions (\ref{eq:action-ssc}) and (\ref{eq:action-ssnc}). 
  
\end{thm}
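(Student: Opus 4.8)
The plan is to verify the Bellman optimality equations \eqref{Bellman_equations} for an explicitly constructed candidate value function, exploiting the product structure of the two‑stripe dynamics. The crucial observation drawn from Lemma~\ref{Aux_trans_probs_two_strips} is that every action acts on only one gap: $a_{\ell 1},a_{\ell 2}$ modify only the coordinate $i$, while $a_{s1},a_{s2}$ modify only $j$. I would therefore look for a value function of product form $v_\lambda(i,j)=\psi_\lambda(i)\,\psi_\lambda(j)$, where $\psi_\lambda$ is the value function of the one‑dimensional problem describing the closing of a single gap, normalized by $\psi_\lambda(0)=1$. Substituting this ansatz into \eqref{Bellman_equations} and using that each $\ell$‑action value equals $\psi_\lambda(j)$ times a one‑dimensional operator at $i$ (and symmetrically for $s$‑actions), the two‑dimensional equation collapses to the requirement that $\psi_\lambda$ solve the one‑dimensional Bellman equation at both coordinates. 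By the uniqueness of the solution of the optimality equations (cited after \eqref{Bellman_equations}), it then suffices to solve the one‑dimensional problem and read off its maximizers.

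Next I would analyze the one‑dimensional problem. Solving $\psi(g)=\lambda\sum P\psi$ for each fixed action, the distance‑1 and distance‑2 options give the self‑consistent candidate values, from \eqref{trans_probs_two_strips_1}--\eqref{trans_probs_two_strips_2},
\[
\psi^{(1)}(g)=\frac{2\lambda}{3-\lambda}\,\psi(g-1),
\qquad
\psi^{(2)}(g)=\frac{\lambda\bigl(7\psi(g-1)+5\psi(g-2)\bigr)}{3(9-5\lambda)},
\]
while the small‑gap states $g=2,3,4$ are governed by the special laws \eqref{trans_probs_two_strips_5}--\eqref{trans_probs_two_strips_8}. A direct computation at these boundary states would establish that the one‑dimensional optimal action is distance‑1 at $g=2$, distance‑2 at $g=3$, and distance‑1 at $g=4$, \emph{independently} of $\lambda$; these fixed choices account exactly for the $\lambda$‑independent entries in \eqref{eq:action-ssc}. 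For the bulk $g\ge5$, dividing $\psi^{(1)}(g)$ by $\psi^{(2)}(g)$ reduces the decision to the sign of $54-30\lambda-(3-\lambda)\bigl(7+5\,\psi(g-2)/\psi(g-1)\bigr)$; substituting the ratio $\psi(g-2)/\psi(g-1)=(3-\lambda)/(2\lambda)$ produced self‑consistently by the optimal recursion, the comparison is governed by the quadratic $17\lambda^2-32\lambda+15=17(\lambda-1)(\lambda-15/17)$, which is nonpositive exactly on $[15/17,1)$. Hence distance‑1 is optimal for $\lambda\ge\lambda_c$ and distance‑2 for $\lambda\le\lambda_c$, with equality at $\lambda_c=15/17$.

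To make the bulk comparison rigorous I would argue by induction on $g$, showing that under the optimal action of each regime the ratio $\psi(g-1)/\psi(g)$ stabilizes. In the distance‑1 regime it equals the constant $(3-\lambda)/(2\lambda)$ from $g=4$ onward, so for $g\ge6$ the comparison reproduces the same quadratic and only the interface value $g=5$ (which couples to the boundary values $\psi(3),\psi(4)$) needs a separate check. In the distance‑2 regime the optimal recursion is second order, so I would instead control the dominant root of $3(9-5\lambda)\mu^2-7\lambda\mu-5\lambda=0$ and verify that the induced tail ratio again makes the comparison collapse to the factor $(\lambda-\lambda_c)$. I expect this second‑order analysis to be the main obstacle: one must ensure that the sign of the comparison is uniform across all bulk gaps and does not flip near the boundary, which requires monotonicity of the ratios $\psi(g-1)/\psi(g)$ together with a careful treatment of the matching at $g=5$.

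Finally I would assemble the two‑dimensional answer. By the product reduction, the optimal actions at $(i,j)$ are the union of the optimal $\ell$‑action (the one‑dimensional maximizer at gap $i$) and the optimal $s$‑action (the maximizer at gap $j$); this is precisely why each set in \eqref{eq:action-ssc}--\eqref{eq:action-ssnc} contains one $\ell$‑type and one $s$‑type action. Combining the fixed boundary choices with the bulk dichotomy reproduces $A_1(i,j)$ for $\lambda\in(\lambda_c,1)$ and $A_2(i,j)$ for $\lambda\in(0,\lambda_c)$. The \emph{only if} direction follows from the strictness of the quadratic inequality away from $\lambda_c$, which shows that no unlisted action attains the maximum, while at $\lambda=\lambda_c$ the equality $\psi^{(1)}=\psi^{(2)}$ in the bulk yields the union $A_1\cup A_2$. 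As a consistency check, the two endpoints agree with Theorem~\ref{t:value010} (as $\lambda\uparrow1$ the optimal policy minimizes the expected hitting time, favoring distance‑1) and with Theorem~\ref{t:value050} (for small $\lambda$ the shortest path length is favored, and distance‑2 closes two units per epoch).
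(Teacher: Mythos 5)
Your strategy is sound and, at its core, reaches the same decisive computation as the paper, but it is organized quite differently. The paper never states the product structure explicitly: it writes the policy-evaluation recursions \eqref{rec_1}--\eqref{rec_8}, computes closed forms for the boundary states $(2,0),(3,0),(2,2),(3,2),(3,3),(4,2),(4,3),(4,4)$, and then verifies the Bellman equalities \eqref{eq1_1}--\eqref{eq1_4} and strict inequalities \eqref{ineq1_1}--\eqref{ineq1_4} directly in two dimensions by a double induction over $i$ and $j$, propagating each relation with the factor $2\lambda/(3-\lambda)$. Your tensorization $v_\lambda(i,j)=\psi_\lambda(i)\psi_\lambda(j)/(1-\lambda)$ is a genuine reformulation: since every action modifies only one gap and the reward is supported on $(0,0)$, the two-dimensional optimality equation does collapse to the one-dimensional one in each coordinate, the argmax set becomes the union of the one-dimensional maximizers (which is exactly why each $A_k(i,j)$ contains one $\ell$-type and one $s$-type action), and the paper's inequality \eqref{ineq1_3}, after substituting the stabilized ratio $\psi(g-2)/\psi(g-1)=(3-\lambda)/(2\lambda)$, is precisely your quadratic $17\lambda^2-32\lambda+15=(17\lambda-15)(\lambda-1)$. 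Your comparison of the two fixed-point action values is also equivalent to the Bellman check once the self-loop is resolved, so the reduction is legitimate. What your route buys is transparency: the origin of $\lambda_c=15/17$ and the $\lambda$-independence of the choices at gaps $2,3,4$ become one-dimensional facts (essentially the single-stripe problem of the cited prior work, which the paper itself invokes for the states $(i,0)$) rather than emerging from a web of two-dimensional identities.

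Two caveats keep this from being a complete proof as written. First, the boundary claims at $g=2,3,4$ and the matching at $g=5$ are asserted rather than computed; in fact the $g=5$ comparison is already generic, because $\psi(3)/\psi(4)=(3-\lambda)/(2\lambda)$ under the candidate policy, whereas the genuinely non-generic check is at $g=4$, where $\psi(2)/\psi(3)$ is set by the special laws \eqref{trans_probs_two_strips_5}--\eqref{trans_probs_two_strips_8}. Second, you correctly identify the $\lambda<\lambda_c$ regime as the hard part: there the candidate recursion is second order, the ratio $\psi(g-1)/\psi(g)$ is no longer constant, and one must prove a uniform sign for the comparison along the whole tail. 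You should be aware, however, that the paper does not execute this either --- it carries out only the $\lambda\in(\lambda_c,1)$ case in detail and dismisses the other with ``can be treated similarly'' --- so your proposal is no less complete than the published argument on this point, but it is the step that still requires real work in either formulation.
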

\begin{proof}
By \cite[p.~152]{Puterman} it suffices to show that the value function $v^{\pi^*}_{\lambda}: S^x \rightarrow \mathbb{R}$ of a stationary, deterministic policy $\pi^* = (d^*)^{\infty}$ satisfies the Bellman equations, given in expression (\ref{Bellman_equations}), if and only if it has the form specified above. We prove the result for states $(i,j) \in S^x$ of the form $i \geq j$. Analogous results for the remaining states follow immediately from symmetry.

First, we define $\Pi_k$, $k = 1, 2$, as the sets of stationary, deterministic policies $\pi_k = (d_k)^{\infty}$ such that $d_k(i,j) \in A_k(i,j)$ for all $(i,j) \in S^x$. We prove the statement for the regime $\lambda \in (\lambda_c, 1)$. The case $\lambda \in (0, \lambda_c]$ can be established in a similar way. For $\lambda \in (\lambda_c, 1)$, we show that the value function $v_{\lambda}^{\pi}: S^x \rightarrow \mathbb{R}$ of policy $\pi \in \Pi$ satisfies the Bellman equations if and only if $\pi \in \Pi_1$. 

Let $\pi_1 = (d_1)^{\infty}$ denote a policy in the set $\Pi_1$. Using the transition probabilities presented in Lemma \ref{Aux_trans_probs_two_strips}, we obtain the following expressions for the value function  $v^{\pi_1}_{\lambda}: S^x \rightarrow \mathbb{R}$, $k = 1, 2$, for states $(i,j) \in S^x$, $i \geq j$.
\begin{align}
    \label{rec_1}&v^{\pi_1}_{\lambda}(i,j) = \dfrac{2\lambda}{3-\lambda}v^{\pi_1}_{\lambda}(i-1, j), \quad i > 2,  \\
    %\label{rec_2}&\tilde{v}^{\pi_k}_{\lambda}((i,j), a_{\ell 2}) = \dfrac{7\lambda}{3(9-5\lambda)}v^{\pi_k}_{\lambda}(i-1, j) + \dfrac{5\lambda}{3(9-5\lambda)}v^{\pi_k}_{\lambda}(i-2, j), \quad (i,j) \in S^x, \quad i > 3\\
    \label{rec_3}&v^{\pi_1}_{\lambda}(i,j) = \dfrac{2\lambda}{3-\lambda}v^{\pi_1}_{\lambda}(i, j-1), \quad j > 2, \\
    \label{rec_4}&v^{\pi_1}_{\lambda}(0, 0) = \dfrac{1}{1-\lambda}, \\
    %\label{rec_4}&\tilde{v}^{\pi_k}_{\lambda}((i,j), a_{s2}) = \dfrac{7\lambda}{3(9-5\lambda)}v^{\pi_k}_{\lambda}(i, j-1) + \dfrac{5\lambda}{3(9-5\lambda)}v^{\pi_k}_{\lambda}(i, j-2), \quad (i,j) \in S^x, \quad j>3, \quad i \neq j \\
    \label{rec_5}&v^{\pi_1}_{\lambda}(2,j) = \dfrac{3\lambda}{4-\lambda}v^{\pi_1}_{\lambda}(0, j),  \quad j = 0, 2\\
    \label{rec_6}&\tilde{v}^{\pi_1}_{\lambda}(3,j) = \dfrac{31\lambda}{8(18-7\lambda)}v^{\pi_1}_{\lambda}(2,j) + \dfrac{57\lambda}{8(18-7\lambda)}v^{\pi_1}_{\lambda}(0,j), \quad j =0,  2, 3\\
    \label{rec_7}&v^{\pi_1}_{\lambda}(i,2) = \dfrac{3\lambda}{4-\lambda}v^{\pi_1}_{\lambda}(i,0), \quad i > 2, \\
    \label{rec_8}&\tilde{v}^{\pi_1}_{\lambda}(i,3) = \dfrac{31\lambda}{8(18-7\lambda)}v^{\pi_1}_{\lambda}(i, 2) + \dfrac{57\lambda}{8(18-7\lambda)}v^{\pi_1}_{\lambda}(i, 0), \quad i > 3.
\end{align}
The expressions for states $(i,j) \in S^x$, $i < j$, follow from symmetry. 

For $\lambda \in (\lambda_c, 1)$, we show that for each $(i,j) \in S$, we have
\begin{align}
    \label{eqs_1}& r(i,j) + \lambda \sum\limits_{(i',j') \in S^x} P((i',j')|(i,j), a)v^{\pi_1}_{\lambda}(i,j) = r(i,j) + \lambda \sum\limits_{(i',j') \in S^x} P((i',j')|(i,j), a')v^{\pi_1}_{\lambda}(i,j),
\end{align}
for all $a, a' \in A_1(i,j)$ and
\begin{align}
    \label{ineqs_1}& r(i,j) + \lambda \sum\limits_{(i',j') \in S^x} P((i',j')|(i,j), a)v^{\pi_1}_{\lambda}(i,j) > r(i,j) + \lambda \sum\limits_{(i',j') \in S^x} P((i',j')|(i,j), a')v^{\pi_1}_{\lambda}(i,j),
\end{align}
for all $a \in A_1(i,j), \quad a' \notin A_1(i,j)$.

Using expressions (\ref{trans_probs_two_strips_6}) and (\ref{trans_probs_two_strips_7}), expression (\ref{eqs_1}) for state $(3,2)$ becomes
\begin{equation*}
    \dfrac{7\lambda}{18}v^{\pi_1}_{\lambda}(3,2) + \dfrac{31\lambda}{144}v^{\pi_1}_{\lambda}(2,2) + \dfrac{19\lambda}{48}v^{\pi_1}_{\lambda}(0, 2) = \dfrac{\lambda}{4}v^{\pi_1}_{\lambda}(3,2) + \dfrac{3\lambda}{4}v^{\pi_1}_{\lambda}(3,0),
\end{equation*}
which can be simplified to
\begin{equation}\label{eq1_1}
    20v^{\pi_1}_{\lambda}(3, 2) + 31v^{\pi_1}_{\lambda}(2, 2) + 57v^{\pi_1}_{\lambda}(0,2) - 108v^{\pi_1}_{\lambda}(3, 0) = 0,
\end{equation}
Using a similar approach, equation (\ref{eqs_1}) for the remaining states reduces to
\begin{align}
    \label{eq1_2}&v^{\pi_1}_{\lambda}(i, 2) + 8v^{\pi_1}_{\lambda}(i-1,2) - 9v^{\pi_1}_{\lambda}(i, 0) = 0, \quad i \geq 4,\\
    \label{eq1_3}&-8v^{\pi_1}_{\lambda}(i, 3) + 96v^{\pi_1}_{\lambda}(i-1,3)-31v^{\pi_1}_{\lambda}(i,2) -57v^{\pi_1}_{\lambda}(i, 0) = 0, \quad i \geq 4 \\
    \label{eq1_4}&v^{\pi_1}_{\lambda}(i-1,j) = v^{\pi_1}_{\lambda}(i, j-1) = 0, \quad i,j \geq 4.
\end{align}
Again using the recursive expressions (\ref{trans_probs_two_strips_1}--\ref{trans_probs_two_strips_8}) as well as equation (\ref{eqs_1}), we write expression (\ref{ineqs_1}) as:
\begin{align}
    \label{ineq1_1}&8v^{\pi_1}_{\lambda}(3,j) - 65v^{\pi_1}_{\lambda}(2, j) + 57v^{\pi_1}_{\lambda}(0, j) > 0, \quad j = 2, 3, \\
    \label{ineq1_2}&8v^{\pi_1}_{\lambda}(i,3) - 65v^{\pi_1}_{\lambda}(i,2) +57v^{\pi_1}_{\lambda}(i, 0) > 0, \quad i \geq 3,\\
    \label{ineq1_3}&-6v^{\pi_1}_{\lambda}(i,j) + 11v^{\pi_1}_{\lambda}(i-1,j) - 5v^{\pi_1}_{\lambda}(i-2,j) > 0, \quad i \geq 4, \\
    \label{ineq1_4}&-6v^{\pi_1}_{\lambda}(i,j) + 11v^{\pi_1}_{\lambda}(i,j-1) - 5v^{\pi_1}_{\lambda}(i,j-2) > 0, \quad i \geq j \geq 4.
\end{align}
Note that the statement for states of the form $(i, 0)$, $i \geq 0$, has been shown in \cite{deJongh2025}. 
%Some necessary explicit expressions for the values of such states, taken from their paper, can be found in Table \ref{expl_exp}. 

%The structure of the proof is as follows. We first prove the validity of expressions (\ref{eq1_1}) and (\ref{ineq1_1}). Then, we proceed to show the correctness of expressions (\ref{eq1_2}, \ref{eq1_3}, \ref{ineq1_2}) by induction over $i$. Finally, we prove that expressions (\ref{eq1_4}, \ref{ineq1_3}, \ref{ineq1_4}) hold by means of an induction argument of the following structure:

%\begin{enumerate}
%    \item \textbf{Induction base:} first we show that the concerned expressions hold for states $(4, j)$, where $j \leq 4$. 
%    \item \textbf{Induction hypothesis:} we proceed to assume that the expressions hold for states $(k-1, j)$, where $k \geq 5$ and $j \leq k-1$. We refer to this assumption as the \textit{global induction hypothesis}.
%    \item \textbf{Induction step:} we show that the global induction hypothesis implies the validity of the expressions for states $(k, j)$, $j \geq k$ by means of an embedded induction argument over $j$.  
%\end{enumerate}

\paragraph{Proof of expression (\ref{eq1_1}):}
We start by proving expression (\ref{eq1_1}). Using recursive expressions (\ref{rec_4}), (\ref{rec_6}) and (\ref{rec_7}), we obtain
\begin{align*}
    v^{\pi_1}_{\lambda}(2, 0) &= \dfrac{3\lambda}{(1-\lambda)(4-\lambda)}, \\
    v^{\pi_1}_{\lambda}(3, 0) &= \dfrac{3\lambda(19+3\lambda)}{2(4-\lambda)(1-\lambda)(18-7\lambda)}, \\
    v^{\pi_1}_{\lambda}(2, 2) &= \dfrac{9\lambda^2}{(1-\lambda)(4-\lambda)^2}, \\
    v^{\pi_1}_{\lambda}(3, 2) &= \dfrac{9\lambda^2(19+3\lambda)}{2(18-7\lambda)(1-\lambda)(4-\lambda)^2}.
\end{align*}
%These are also provided in Table \ref{expl_exp}. 
Inserting these in expression (\ref{eq1_1}) yields the desired result.

\paragraph{Proof of expression (\ref{eq1_2}):}
We proceed to show the validity of expression (\ref{eq1_2}) for all $i \geq 4$, by means of induction over $i$. First, we show that it holds for $i = 4$. Expression (\ref{rec_1}) yields
\begin{align*}
    v^{\pi_1}_{\lambda}(4,0) &= \dfrac{3\lambda^2(19+3\lambda)}{(4-\lambda)(3-\lambda)(1-\lambda)(18-7\lambda)}, \\
    v^{\pi_1}_{\lambda}(4,2) &= \dfrac{9\lambda^3(19+3\lambda)}{(18-7\lambda)(1-\lambda)(3-\lambda)(4-\lambda)^2}.
\end{align*}
%which is also given in Table \ref{expl_exp}.
Inserting these and the explicit expression for states $(3, 2)$ into expression (\ref{eq1_2}) yields the desired result for $i = 4$. Now, assume that expression (\ref{eq1_2}) holds for $i = k-1$ for some $k \geq 5$. This implies for $i = k$, using expression (\ref{rec_1}),
\begin{equation*}
    v^{\pi_1}_{\lambda}(k, 2) + 8v^{\pi_1}_{\lambda}(k-1,2) - 9v^{\pi_1}_{\lambda}(k, 0) = \dfrac{2\lambda}{3-\lambda}(v^{\pi_1}_{\lambda}(k-1, 2) + 8v^{\pi_1}_{\lambda}(k-2,2) - 9v^{\pi_1}_{\lambda}(k-1, 0)) = 0.
\end{equation*}
Hence, expression (\ref{eq1_2}) holds for all $i \geq 4$. 

\paragraph{Proof of expression (\ref{eq1_3}):}
We now show the validity of expression (\ref{eq1_3}) for all $i \geq 4$ in a similar way. Using expression (\ref{rec_8}), we obtain
\begin{align*}
    v^{\pi_1}_{\lambda}(4, 3) &= \dfrac{9\lambda^3(19+3\lambda)^2}{2(18-7\lambda)^2(4-\lambda)^2(3-\lambda)(1-\lambda)}.
\end{align*}
%which can also be found in Table \ref{expl_exp}. 
Inserting this and the explicit expressions for states $(4, 2)$ and $(4, 0)$ in expression (\ref{eq1_3}) yields the desired result for $i = 4$. Now, assume that expression (\ref{eq1_3}) is true for $i = k-1$ for some $k \geq 5$. Using this hypothesis and expression (\ref{rec_1}) now yields
\begin{align*}
    &-8v^{\pi_1}_{\lambda}(k, 3) + 96v^{\pi_1}_{\lambda}(k-1, 3) - 31v^{\pi_1}_{\lambda}(k, 2) - 57v^{\pi_1}_{\lambda}(k, 0) \\
    &= \dfrac{2\lambda}{3-\lambda}(-8v^{\pi_1}_{\lambda}(k-1, 3) + 96v^{\pi_1}_{\lambda}(k-2, 3) - 31v^{\pi_1}_{\lambda}(k-1, 2) - 57v^{\pi_1}_{\lambda}(k-1, 0)) = 0.
\end{align*}
This establishes the validity of expression (\ref{eq1_3}) for all $i \geq 4$. 

\paragraph{Proof of expression (\ref{eq1_4}):}
Invoking expression (\ref{rec_1}), the validity of expression (\ref{eq1_4}) follows easily from
\begin{equation*}
    \dfrac{2\lambda}{3-\lambda} v^{\pi_1}_{\lambda}(i-1,j) = v^{\pi_1}_{\lambda}(i,j) = \dfrac{2\lambda}{3-\lambda}v^{\pi_1}_{\lambda}(i, j-1). 
\end{equation*}

\paragraph{Proof of expression (\ref{ineq1_1}):}
Now, consider expression (\ref{ineq1_1}). Using expression (\ref{rec_8}), we obtain
\begin{equation*}
    v^{\pi_1}_{\lambda}(3, 3) = \dfrac{9\lambda^2(19+3\lambda)^2}{4(18-7\lambda)^2(4-\lambda)^2(1-\lambda)}.
\end{equation*}
%which is also given in Table \ref{expl_exp}. 
Inserting this and the explicit expressions for states $(3,2)$, $(2, 2)$, $(2, 0)$ and $(3, 0)$ yields the validity of expression (\ref{ineq1_1}) for $j = 2, 3$.

\paragraph{Proof of expression (\ref{ineq1_2}):}
We proceed to consider expression (\ref{ineq1_2}), which we again prove by means of induction over $i$. Inserting the explicit expressions for states $(3, 3)$, $(3, 2)$ and $(3, 0)$ yields the validity of expression (\ref{ineq1_2}) for $i = 3$. Assume now that the inequality holds for $i = k-1$ for some $k \geq 4$. This, in combination with expression (\ref{rec_1}), implies
\begin{equation*}
    8v^{\pi_1}_{\lambda}(k, 3) - 65v^{\pi_1}_{\lambda}(k, 2) + 57v^{\pi_1}_{\lambda}(k, 0) = \dfrac{2\lambda}{3-\lambda}(8v^{\pi_1}_{\lambda}(k-1, 3) - 65v^{\pi_1}_{\lambda}(k-1, 2) + 57v^{\pi_1}_{\lambda}(k-1, 0)) > 0.
\end{equation*}
Thus, expression (\ref{ineq1_2}) holds for all $i \geq 3$.

\paragraph{Proof of expression (\ref{ineq1_3}):}
To prove expression (\ref{ineq1_3}), we first compute, using expression (\ref{rec_1}),
\begin{equation*}
    v^{\pi_1}_{\lambda}(4, 4) = \dfrac{9\lambda^4(19+3\lambda)^2}{(18-7\lambda)^2(4-\lambda)^2(3-\lambda)^2(1-\lambda)}.
\end{equation*}
%which is also provided in Table \ref{expl_exp}.
Now, we use the explicit expressions for states $(4, 2)$, $(3, 2)$, $(2, 2)$, $(4, 3)$, $(3, 3)$ and $(4, 4)$ to verify expression (\ref{ineq1_3}) for states $(4, j)$, $j \leq 4$. We proceed to assume that expression (\ref{ineq1_3}) holds for all states $(i, j)$, $i \leq k-1$ and $j \leq i$, for some $k \geq 5$. We show that this induction hypothesis implies the correctness of expression (\ref{ineq1_3}) for states $(k, j)$, $j \leq k$. We distinguish between the cases $j \leq k-1$ and $j = k$. For $j \leq k-1$, we obtain, using expression (\ref{rec_1}),
    \begin{align*}
        &-6v^{\pi_1}_{\lambda}(k,j) + 11v^{\pi_1}_{\lambda}(k-1, j) - 5v^{\pi_1}_{\lambda}(k-2, j)\\
        &= \dfrac{2\lambda}{3-\lambda}(-6v^{\pi_1}_{\lambda}(k-1,j) + 11v^{\pi_1}_{\lambda}(k-2,j) - 5v^{\pi_1}_{\lambda}(k-3, j)) > 0, 
    \end{align*}
    by the induction hypothesis. 

    For $j = k$, on the other hand, applying expressions (\ref{rec_1}) and (\ref{rec_3}) yields
    \begin{align*}
        &-6v^{\pi_1}_{\lambda}(k, k) + 11v^{\pi_1}_{\lambda}(k-1, k) - 5v^{\pi_1}_{\lambda}(k-2, k) \\
        &= \dfrac{4\lambda^2}{(3-\lambda)^2}(-6v^{\pi_1}_{\lambda}(k-1, k-1) + 11v^{\pi_1}_{\lambda}(k-2, k-1) - 5v^{\pi_1}_{\lambda}(k-3, k-1)) > 0,
    \end{align*}
    by the induction hypothesis. Thus, we established the correctness of expression (\ref{ineq1_3}) for all $i \geq 4$, $j \geq 2$.

\paragraph{Proof of expression (\ref{ineq1_4}):}
    We prove the correctness of expression (\ref{ineq1_4}) by means of a similar argument to that used for expression (\ref{ineq1_3}). First, we verify the correctness of the expression for state $(4, 4)$, using the explicit expressions for states $(4, 4)$, $(4, 3)$ and $(4, 2)$ 
    provided above.
    %in Table \ref{expl_exp}. 
    Now, we assume that expression (\ref{ineq1_4}) is valid for all $(i,j)$, $4 \leq i \leq k-1$, $4 \leq j \leq i$, for some $k \geq 5$. We show that this hypothesis implies that the expression holds for all states $(k,j)$, $4 \leq j \leq k$. We again distinguish between the cases $j \leq k-1$ and $j = k$. For $j \leq k-1$, we obtain, using expression (\ref{rec_1}),
    \begin{align*}
        &-6v^{\pi_1}_{\lambda}(k,j) + 11v^{\pi_1}_{\lambda}(k, j-1) - 5v^{\pi_1}_{\lambda}(k, j-2)\\
        &= \dfrac{2\lambda}{3-\lambda}(-6v^{\pi_1}_{\lambda}(k-1, j) + 11v^{\pi_1}_{\lambda}(k-1, j-1) - 5v^{\pi_1}_{\lambda}(k-1, j-2)) > 0, 
    \end{align*}
    by the induction hypothesis.
    For $j = k$, using expressions (\ref{rec_1}) and (\ref{rec_3}) yields
    \begin{align*}
       &-6v^{\pi_1}_{\lambda}(k,j) + 11v^{\pi_1}_{\lambda}(k, j-1) - 5v^{\pi_1}_{\lambda}(k, j-2) \\
       &= \dfrac{4\lambda^2}{(3-\lambda)^2}(-6v^{\pi_1}_{\lambda}(k-1, j-1) + 11v^{\pi_1}_{\lambda}(k-1, j-2) - 5v^{\pi_1}_{\lambda}(k-1, j-3)) > 0, 
    \end{align*}
    by the induction hypothesis. It follows that expression (\ref{ineq1_4}) holds for all $i, j \geq 4$. 
    
    This concludes the proof for the range $\lambda \in (\lambda_c, 1)$.
The case
        $\lambda \in (0, \lambda_c]$
        can be treated similarly.
    \color{black}
\end{proof}

%\section{Optimal growth for non-striped seeds}
%\label{s:nostripes}

%\subsection{Optimal growth mechanism in case of a strip and a rectangle}

%In this section, we analyze the structure of the optimal policy for robust configurations consisting of two droplets of $+$-spins, of which one forms a strip that wraps around the torus and the other takes the shape of a rectangle. We denote the set of such configurations by $U^{(2)}_y$. 

%\subsubsection{The auxiliary MDP}

%We again construct an auxiliary MDP for this case, which is denoted by $(S^y, A^y, P^y, r^y)$. Here, the state space $S^y$ is defined as 
%\begin{equation*}
%    S^y = \{(i,j,k)|i,j,k = 0, 2, 3, \ldots, N\}.
%\end{equation*}
%A state $(i, j, k) \in S^y$ is a representation of the set of configurations in which the distances between the strip and the rectangle equal $i$ and $j$, and the distance between the two boundaries of the rectangle equals $k$. Here, a state $(i,j,0) \in S^y$ is equivalent to state $(i,j) \in S^x$, in the sense that they represent the same set of configurations. 

\section{Conclusions}
\label{s:conclusion}
We examined the optimization of lattice growth 
under spatial constraints by formulating the two--seed Ising 
dynamics as a Markov decision process. Using the zero--temperature 
Metropolis dynamics as the underlying evolution, we showed 
how carefully timed external actions can steer the system 
efficiently toward the absorbing all--plus state. Our analysis 
of the stripe--stripe, stripe--droplet, and droplet--droplet 
regimes revealed that optimal policies depend sensitively on 
both geometry and the discount factor. In the stripe--stripe 
case, we identified a sharp transition at the critical value 
$\lambda_{c}=15/17$, marking a switch from next-to-nearest-neighbor to nearest-neighbor preferred growth.

The stripe--droplet and droplet--droplet regimes exhibited 
similar qualitative behaviors, although the competition among 
growth geometries is richer. Simulations show that rapid 
front expansion generally accelerates absorption, while 
diagonal growth between separated droplets emerges as the 
most efficient coalescence mechanism. Policies acting on 
wider regions tend to be less efficient, emphasizing the 
importance of carefully targeting interventions based on 
both spatial configuration and temporal priorities. These 
results underline how MDPs provide a systematic framework 
to evaluate and select optimal strategies in stochastic 
spatial systems.

Our findings highlight the versatility of the MDP approach and 
suggest several directions for future work. These include 
extending the analysis to higher dimensions, studying the 
interaction of multiple droplets, and incorporating partial 
observability or explicit control costs. Moreover, 
reinforcement--learning algorithms may offer approximate 
optimal policies for larger and more complex state spaces, 
connecting naturally with metastability theory and providing 
insights into sequential intervention strategies in materials 
science, microbial biofilms, and other spatially extended 
stochastic processes.
Future studies could further 
exploit the connection with bootstrap percolation to 
characterize critical thresholds and universal growth patterns 
under minimal control interventions.

\bigskip
\par\noindent
\textbf{Acknowledgments}
\par\noindent
ENMC thanks the PRIN 2022 project
``Mathematical Modelling of Heterogeneous Systems (MMHS)",
financed by the European Union - Next Generation EU,
CUP B53D23009360006, Project Code 2022MKB7MM, PNRR M4.C2.1.1.
This work was carried out under the auspices of the
Italian National Group of Mathematical Physics (GNFM).
ENMC also thanks the Department of Mathematics of the Utrecht 
University.\\
MCJ thanks ENMC and the Dipartimento SBAI of Sapienza Università di Roma for their kind hospitality while conducting this research. MCJ is also grateful for financial support from the Erasmus+ programme for her stay in Rome.

\color{black}

\printbibliography

@book{olivieri2005large,
  title={Large deviations and metastability},
  author={E.~Olivieri and M.E.~Vares},
  volume={100},
  year={2005},
  publisher={Cambridge University Press}
}

@article{neves1991critical,
  title={Critical droplets and metastability for a {Glauber} dynamics at very low temperatures},
  author={E.J.~Neves and R.H.~Schonmann},
  journal={Commun. Math. Phys.},
  volume={137},
  number={2},
  pages={209--230},
  year={1991},
  publisher={Springer}
}

@article{cirillo2022review,
  title={Metastability of synchronous and asynchronous dynamics},
  author={E.N.M.~Cirillo and V.~Jacquier and C.~Spitoni},
  journal={Entropy},
  volume={24},
  pages={450},
  year={2022},
  publisher={MDPI}
}

@article{boviermanzo2002metastability,
  title={Metastability in {Glauber} dynamics in the low-temperature limit: beyond exponential asymptotics},
  author={A.~Bovier and F.~Manzo},
  journal={J. Stat. Phys.},
  volume={107},
  number={3-4},
  pages={757--779},
  year={2002},
  publisher={Springer}
}

@article{cirillo1998metastability,
  title={Metastability in the two-dimensional {Ising} model with free boundary conditions},
  author={E.N.M.~Cirillo and J.L.~Lebowitz},
  journal={J. Stat. Phys.},
  volume={90},
  number={1-2},
  pages={211--226},
  year={1998},
  publisher={Springer}
}

@preprint{deJongh2025,
  author    = {M.C.~de Jongh and R.J.~Boucherie and M.N.M.~van~Lieshout},
  title     = {Controlling the low-temperature Ising model using spatiotemporal Markov decision theory},
  year      = {2025},
  note      = {Preprint, available online}
}

@book{Bellman1957,
  author    = {Bellman, R.},
  title     = {Dynamic Programming},
  publisher = {Princeton University Press},
  address   = {Princeton, NJ},
  year      = {1957}
}

@book{Howard1960,
  author    = {Howard, R. A.},
  title     = {Dynamic Programming and Markov Processes},
  publisher = {Technology Press, Massachusetts Institute of Technology},
  address   = {Cambridge, MA},
  year      = {1960}
}

@article{Jin2024,
 author = {X.~Jin and I.H.~Riedel-Kruse},
  title = {Optogenetic patterning generates multi-strain biofilms with spatially distributed antibiotic resistance},
  journal = {Nat. Commun.},
  year = {2024},
  volume = {15},
  pages = {9443}
}

@article{Bistervels2023,
  author = {M.H.~Bistervels and B.~Antalicz and M.~Kamp and W.L.~Noorduin},
  title = {Light‐driven nucleation, growth, and patterning of biorelevant crystals using resonant near‐infrared laser heating},
  journal = {Nat. Commun.},
  year = {2023},
  volume = {14},
  pages = {6350}
}

@book{Puterman,
  added-at = {2017-04-07T12:13:11.000+0200},
  author = {Puterman, M.L},
  biburl = {https://www.bibsonomy.org/bibtex/22e7ac99cd30c4892171e5a7cef1bc7a7/becker},
  interhash = {6cec8f775a265d8741171d17e4a4e7d0},
  intrahash = {2e7ac99cd30c4892171e5a7cef1bc7a7},
  keywords = {inthesis diss markov chain decision process citedby:scholar:count:9594 citedby:scholar:timestamp:2017-4-7},
  publisher = {John Wiley \& Sons},
  timestamp = {2017-04-07T12:13:11.000+0200},
  title = {Markov decision processes: discrete stochastic dynamic programming},
  year = 2014
}

@inproceedings{Haksar2019,
  author    = {R.N.~Haksar and F.~Solowjow and S.~Trimpe and M.~Schwager},
  title     = {Controlling Heterogeneous Stochastic Growth Processes on Lattices with Limited Resources},
  booktitle = {Proceedings of the IEEE Conference on Decision and Control (CDC)},
  year      = {2019},
  pages     = {2956--2963},
  doi       = {10.1109/CDC40024.2019.9029379}
}

@inproceedings{Haksar2020,
  author    = {R.N.~Haksar and M.~Schwager},
  title     = {Distributed Deep Reinforcement Learning for Fighting Forest Fires with a Network of Aerial Robots},
  booktitle = {IEEE International Conference on Robotics and Automation (ICRA)},
  year      = {2020},
  pages     = {6576--6583},
  doi       = {10.1109/ICRA40945.2020.9196774}
}

@inproceedings{Tang2016,
  author    = {X.~Tang and M.A.~Bevan and M.A.~Grover},
  title     = {Markov Decision Process Based Time-Varying Optimal Control for Colloidal Self-Assembly},
  booktitle = {IFAC Symposium on Dynamics and Control of Process Systems (DYCOPS-CAB)},
  address   = {Trondheim, Norway},
  year      = {2016},
  doi       = {10.1016/j.ifacol.2016.07.656}
}

@article{Tang2017,
  author    = {X.~Tang and M.A.~Bevan and M.A.~Grover},
  title     = {Construction and Application of Markov State Models for Colloidal Self-Assembly Process Control},
  journal   = {Mol. Sys. Des. Eng.},
  volume    = {2},
  number    = {5},
  pages     = {358--369},
  year      = {2017},
  doi       = {10.1039/C7ME00027D}
}

@article{Nasir2023,
  author    = {A.~Nasir},
  title     = {Three-Layer Model for the Control of Epidemic Infection over Multiple Social Networks},
  journal   = {SN Appl. Sci.},
  volume    = {5},
  number    = {152},
  year      = {2023},
  doi       = {10.1007/s42452-023-05373-0}
}

@article{Ni2017,
  author    = {Y.~Ni},
  title     = {Sequential Seeding to Optimize Influence Diffusion in a Social Network},
  journal   = {Appl. Soft Comput.},
  volume    = {56},
  pages     = {730--737},
  year      = {2017},
  doi       = {10.1016/j.asoc.2017.03.012}
}

@inproceedings{Diao2020,
  author    = {T.~Diao and C.~Tang and J.P.~How},
  title     = {Uncertainty-Aware Wildfire Management: A {POMDP} Approach},
  booktitle = {Proceedings of the AAAI Fall Symposium on Artificial Intelligence for Natural Disaster Management},
  year      = {2020},
  series    = {CEUR Workshop Proceedings},
  volume    = {2884},
  url       = {https://ceur-ws.org/Vol-2884/paper_121.pdf}
}

@article{Altamimi2022,
  author    = {A.~Altamimi and R.~Mejia and C.~Bargagli-Stoffi},
  title     = {Large-Scale Wildfire Mitigation through Deep Reinforcement Learning in Spatial Markov Decision Processes},
  journal   = {Forests},
  volume    = {13},
  number    = {9},
  pages     = {1422},
  year      = {2022},
  doi       = {10.3390/f13091422}
}

@article{Song2023,
  author    = {J.~Song and W.~Yang and C.~Zhao},
  title     = {Decision-Dependent Distributionally Robust Markov Decision Process Method in Dynamic Epidemic Control},
  journal   = {Dyn. Games Appl.},
  year      = {2023},
  doi       = {10.1080/24725854.2023.2219281}
}

@inproceedings{Munos2001,
  author    = {R.~Munos},
  title     = {Efficient Resource Allocation for Markov Decision Processes},
  booktitle = {NeurIPS},
  volume    = {14},
  year      = {2001},
  pages     = {950--956}
}

@book{FellerVol2,
  author    = {W.~Feller},
  title     = {An Introduction to Probability Theory and Its Applications, Vol.~II},
  edition   = {2},
  publisher = {Wiley},
  year      = {1971},
  address   = {New York}
}

@book{Gripenberg,
  author    = {Gripenberg, G. and Londen, S.-O. and Staffans, O.},
  title     = {Volterra Integral and Functional Equations},
  publisher = {Cambridge University Press},
  year      = {1990},
  address   = {Cambridge}
}

@book{KemenySnell,
  author    = {Kemeny, J.G. and Snell, J.L.},
  title     = {Finite Markov Chains},
  publisher = {Springer},
  year      = {1976},
  address   = {New York}
}

@book{RevuzMarkovChains,
  author    = {D.~Revuz},
  title     = {Markov Chains},
  edition   = {2},
  publisher = {North-Holland},
  year      = {1984},
  address   = {Amsterdam}
}

@book{HLL,
  author    = {O.~Hern{\'a}ndez-Lerma and J.B.~Lasserre},
  title     = {Discrete-Time Markov Control Processes},
  publisher = {Springer},
  year      = {1996},
  address   = {New York}
}

@book{BertsekasShreve1978,
  author    = {D.P.~Bertsekas and S.E.~Shreve},
  title     = {Stochastic Optimal Control: The Discrete-Time Case},
  publisher = {Academic Press},
  year      = {1978}
}

@book{FlemingSoner2006,
  author    = {W.H.~Fleming and H.M.~Soner},
  title     = {Controlled Markov Processes and Viscosity Solutions},
  edition   = {2nd},
  publisher = {Springer},
  year      = {2006}
}

@book{OksendalSulem2007,
  author    = {O.~Bernt and A.~Sulem},
  title     = {Applied Stochastic Control of Jump Diffusions},
  edition   = {2nd},
  publisher = {Springer},
  year      = {2007}
}

\end{document}